\title{Classification of log del Pezzo surfaces of index three}
\author{Kento Fujita} 
\author{Kazunori Yasutake}
\date{\today}
\subjclass[2010]{Primary 14J26; Secondary 14E30}
\keywords{del Pezzo surface, rational surface, extremal ray.}
\address{Research Institute for Mathematical Sciences, 
Kyoto University, Kyoto 606-8502 Japan}
\email{fujita@kurims.kyoto-u.ac.jp}
\address{Organization for the Strategic Coordination of Research and 
Intellectual Properties, Meiji University, Kanagawa 214-8571 Japan}
\email{tz13008@meiji.ac.jp}
\newcommand{\pr}{\mathbb{P}}
\newcommand{\Z}{\mathbb{Z}}
\newcommand{\Q}{\mathbb{Q}}
\newcommand{\C}{\mathbb{C}}
\newcommand{\F}{\mathbb{F}}
\newcommand{\Supp}{\operatorname{Supp}}
\newcommand{\Pic}{\operatorname{Pic}}
\newcommand{\discrep}{\operatorname{discrep}}
\newcommand{\mult}{\operatorname{mult}}
\newcommand{\coeff}{\operatorname{coeff}}
\newcommand{\sI}{\mathcal{I}}
\newcommand{\sC}{\mathcal{C}}
\newcommand{\sO}{\mathcal{O}}
\newcommand{\sF}{\mathcal{F}}
\newcommand{\sB}{\mathcal{B}}
\newcommand{\dm}{\mathfrak{m}}
\newcommand{\gA}{\operatorname{A}}
\newcommand{\gD}{\operatorname{D}}
\newcommand{\bi}{\bfseries\itshape}
\newtheorem{thm}{Theorem}[section]
\newtheorem{lemma}[thm]{Lemma}
\newtheorem{proposition}[thm]{Proposition}
\newtheorem{corollary}[thm]{Corollary}
\newtheorem{claim}[thm]{Claim}
\theoremstyle{definition}
\newtheorem{definition}[thm]{Definition}
\newtheorem{proposition-definition}[thm]{Proposition-Definition}
\newtheorem{remark}[thm]{Remark}
\newtheorem*{ack}{Acknowledgments} 
\newtheorem*{nott}{Notation and terminology}
\begin{document}

\maketitle 

\begin{abstract}
A normal projective non-Gorenstein 
log-terminal surface $S$ is called a log del Pezzo surface 
of index three if the three-times of the anti-canonical divisor $-3K_S$ is an 
ample Cartier divisor. 
We classify all of the log del Pezzo surfaces of index three. 
The technique for the classification based on the argument of Nakayama.
\end{abstract}

\setcounter{tocdepth}{1}
\tableofcontents

\section{Introduction}\label{intro_section}

A normal projective surface $S$ is called a \emph{log del Pezzo surface} 
if $S$ is log-terminal and the anti-canonical divisor 
$-K_S$ is ample ($\Q$-Cartier divisor). 
Log del Pezzo surfaces constitute an interesting class of rational surfaces 
and naturally appear in the minimal model program (MMP, for short).   
An important invariant of a log del Pezzo surface S is the \emph{index}, 
which is defined to be the minimum of the 
positive integer $a$ such that $-aK_S$ is Cartier.   
Log del Pezzo surfaces with small index have been studied by many authors.
The classification of log del Pezzo surfaces with index one (that is,  with at most 
rational double points) is well-known 
(see \cite{brenton}, \cite{demazure}, \cite{HW}).

The next case, the classification of log del Pezzo surfaces of index two, 
was also studied by several authors. 
Alexeev and Nikulin specify all the deformation classes of log del Pezzo surfaces 
of index two over the complex number field $\C$ 
by using K3 surface theory \cite{AN1, AN2, AN3}.
Recently, Ohashi and Taki proceed this method and classify 
the deformation classes of log del Pezzo surfaces of index three 
under the condition $-3K_S\sim 2C$ 
where $C$ is a smooth curve 
which does not intersect the singularities. 
On the other hand, Nakayama introduce the geometric argument for the study of 
log del Pezzo surfaces of index two which is completely different to 
that of Alexeev-Nikulin, and he gave the complete list of isomorphic classes of 
log del Pezzo surfaces of index two in any characteristic \cite{N}. 
Nakayama's argument is useful in the study of 
log del Pezzo surfaces not only the case index is two but also 
the case index is arbitrary. In fact, by using Nakayama's idea, the first author 
classified some classes of log del Pezzo surfaces in \cite{F} that include 
the classes treated in the study of Ohashi and Taki.

In this paper, we extend a part of Nakayama's argument to work 
in arbitrary index. 
Moreover, we give the classification of log del Pezzo surfaces of index three 
by using this method. 
Our strategy to understand log del Pezzo surfaces is as follows. 
(Detail is given in Section \ref{dP_section}. See also \cite{dPvolume}.)
Let $S$ be a log del Pezzo surface of index $a>1$. 
Take the minimal resolution $\alpha\colon M\to S$ and set $E_M:=-aK_{M/S}$.
Then we know that $M$ is nonsingular rational and $E_M$ is nonzero effective.
We can recover $S$ from the pair $(M, E_M)$ by considering the 
morphism defined from a multiple of the divisor $L_M:=-aK_M-E_M$. 
Hence we can reduce the study of $S$ to the study of such $(M, E_M)$.
We remark that $K_M+L_M$ is nef and $(K_M+L_M\cdot L_M)>0$ hold 
(see Proposition \ref{dP-basic_prop}).
We call such pair $(M, E_M)$ an \emph{$a$-basic pair} 
(see Definition \ref{basic_dfn}). 

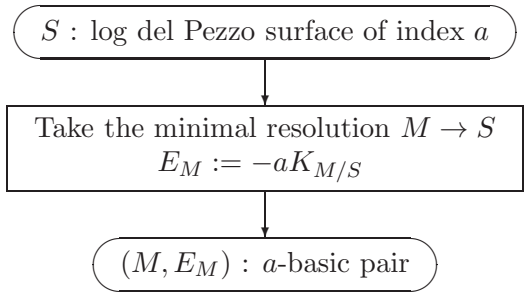
\begin{figure}[h]
\begin{center}
    \begin{picture}(100, 100)(0,0)
    \put(50, 90){\oval(190, 20)}
    \put(50, 90){\makebox(0, 0){{\small $S$ : log del Pezzo surface of index $a$}}}
    \put(50,80){\vector(0,-1){18}}
    \put(50, 46){\makebox(0, 0){\framebox{{\small\begin{minipage}[c]{65mm}
    \begin{center}
        \rule[0mm]{0mm}{3mm}
        Take the minimal resolution $M \rightarrow S$\\
        $E_{M}:=-aK_{M/S}$
    \end{center}
    \end{minipage}
    }}}}
    \put(50,30){\vector(0,-1){18}}
    \put(50, 2){\oval(130, 20)}
    \put(50, 2){\makebox(0, 0){{\small $(M,E_{M})$ : $a$-basic pair}}}
\end{picture}
\caption{Reduction to $a$-basic pairs}\label{fig1}
\end{center}
\end{figure}

From now on, let $(M, E_M)$ be an $a$-basic pair. Since $M$ is rational, 
we can get a birational morphism from $M$ to $\pr^2$ or a Hirzebruch surface 
$\F_n$ having a $(-n)$-curve. 
However, it is hard to analyze the morphism in general. 
To evade this problem, we ``decompose" the step contracting $(-1)$-curves into 
$((i+1)K+L)$-minimal model programs 
(\emph{$((i+1)K+L)$-MMPs}, for short) 
for $1\leqslant i\leqslant a-1$. 
More precisely, we give a sequence 
\[
M=M_0\xrightarrow{\pi_1}M_1\xrightarrow{\pi_2}\cdots\xrightarrow{\pi_b}M_b
\] 
for some integer $b$ such that $1\leqslant b\leqslant a-1$. 
The construction of each $\pi_i$ is done inductively as follows. 
We assume that $iK_{M_{i-1}}+L_{i-1}$ is nef and $E_{i-1}$ is nonzero effective, 
where $L_{i-1}$ (resp.\ $E_{i-1}$) is the strict transform of 
$L_M$ (resp.\ $E_M$) in $M_{i-1}$.
The morphism $\pi _i\colon M_{i-1}\rightarrow M_i$ is obtained by the composition 
of all the morphisms in the step of an $((i+1)K_{M_{i-1}}+L_{i-1})$-MMP. 
More precisely, in each step, we contract a $(-1)$-curve which intersects 
(the strict transform of) $(i+1)K_{M_{i-1}}+L_{i-1}$ negatively. 
We continue this process until we get a Mori fiber space or a 
minimal model with respect to $((i+1)K_{M_{i-1}}+L_{i-1})$-MMP. 
If this MMP induces a minimal model (with respect to 
the $((i+1)K_{M_{i-1}}+L_{i-1})$-MMP), then we go back to construct 
$\pi_{i+1}\colon M_i\to M_{i+1}$. If this MMP induces a Mori fiber space, then set 
$b:=i$ and stop the process.
We can show that $E_i$ is also nonzero effective for each $i$. 
We note that $1\leqslant b\leqslant a-1$ 
since $aK_{M_i}+L_i=-E_i$ cannot be nef for each $1\leqslant i\leqslant b$. 
The surface $M_b$ is either $\pr^2$ or $\F_n$. 
From the construction, we have $iK_{M_{i-1}}+L_{i-1}=\pi _i^*(iK_{M_{i}}+L_i)$ 
for each $i$. In particular, $-K_{M_{i-1}}$ is $\pi_i$-nef. 
Let $\Delta_i\subset M_i$ be a closed zero-dimensional subscheme such that 
the corresponding ideal sheaf $\sI_{\Delta_i}$ is defined as 
$\sI_{\Delta_i}:=(\pi_i) _*\sO_{M_{i-1}}(-K_{M_{i-1}/M_i})$.
The scheme $\Delta_i$ has a nice property (called the \emph{$(\nu1)$-condition} 
in Definition \ref{nu1_dfn}). For example, the morphism $\pi _i$ is recovered 
from $\Delta_i$ (see Definition \ref{elim_dfn} and Proposition \ref{elim_prop}).
The multiplet $(M_b, E_b; \Delta_1,\dots, \Delta_b)$ constructed as above 
is called an \emph{$a$-fundamental multiplet of length $b$}.
The classification of $a$-basic pairs reduce to the classification of 
$a$-fundamental multiplets. This is our strategy. 
In the case where $a=2$, this is nothing but Nakayama's argument.
(In Section \ref{dP_section}, we only consider the case $a=3$. However, 
the program we mentioned works for arbitrary $a$. See \cite{dPvolume} in detail.)
We summarize our strategy via flowcharts in Figures \ref{fig1} and \ref{fig2}.

\begin{figure}[ht]
\begin{center}
    \begin{picture}(200, 315)(0,0)
    \put(100, 300){\oval(130, 20)}
    \put(100, 300){\makebox(0, 0){{\small $(M, E_M)$ : $a$-basic pair}}}
    \put(100,290){\vector(0,-1){16}}
    \put(100, 258){\makebox(0, 0){\framebox{\begin{minipage}[]{50mm}
    \begin{center}
        \rule[0mm]{0mm}{3mm}
        {\small
        $M_0:=M$,
        $E_0:=E_M$\\
        $L_{0}:=-aK_{M_0}-E_{0}$,\,\,
        $i:=1$}
    \end{center}
    \end{minipage}
    }}}
    \put(100,242){\vector(0,-1){24}}
    \put(100, 176.3){\makebox(0, 0){\framebox{\begin{minipage}[]{60mm}
    \begin{center}
        {\small
        Run $((i+1)K_{M_{i-1}}+L_{i-1})$-MMP;\\
        $\pi_i\colon M_{i-1}\to M_i$ is the\\ 
        output of the MMP, where \\
        $\Delta_i\subset M_i$ : subscheme with
        $\sI_{\Delta_i}:=(\pi_i)_*\sO_{M_{i-1}}(-K_{M_{i-1}/M_i})$\\
        $E_i:=(\pi_i)_*E_{i-1}$,\, $L_i:=(\pi_i)_*L_{i-1}$}
    \end{center}
    \end{minipage}
    }}}
    \put(100,134.8){\vector(0,-1){16.8}}
    \put(100,118){\line(3,-1){82}}
    \put(100,118){\line(-3,-1){82}}
    \put(100,63){\line(3,1){82}}
    \put(100,63){\line(-3,1){82}}
    \put(42, 88){{\small Is $(i+1)K_{M_i}+L_i$ nef ?}}
    \put(100,63){\vector(0,-1){16}}
    \put(108, 52){{\small No}}
    \put(100, 40){\makebox(0, 0){{\small\framebox{$b:=i$}}}}
    \put(100,33){\vector(0,-1){16}}
    \put(100, 7){\oval(300, 20)}
    \put(100, 7){\makebox(0, 0){{\small $(M_b, E_b; \Delta_1,\dots, \Delta_b)$ : 
$a$-fundamental multiplet of length $b$}}}
    \put(182,90.5){\vector(1,0){30.5}}
    \put(187, 96){{\small Yes}}
    \put(237.9, 90){\makebox(0, 0){\framebox{{\small $i:=i+1$}}}}
    \put(238,97.2){\line(0,1){132.8}}
    \put(238,230){\vector(-1,0){138}}
\end{picture}
\caption{Reduction to $a$-fundamental multiplets}\label{fig2}
\end{center}
\end{figure}
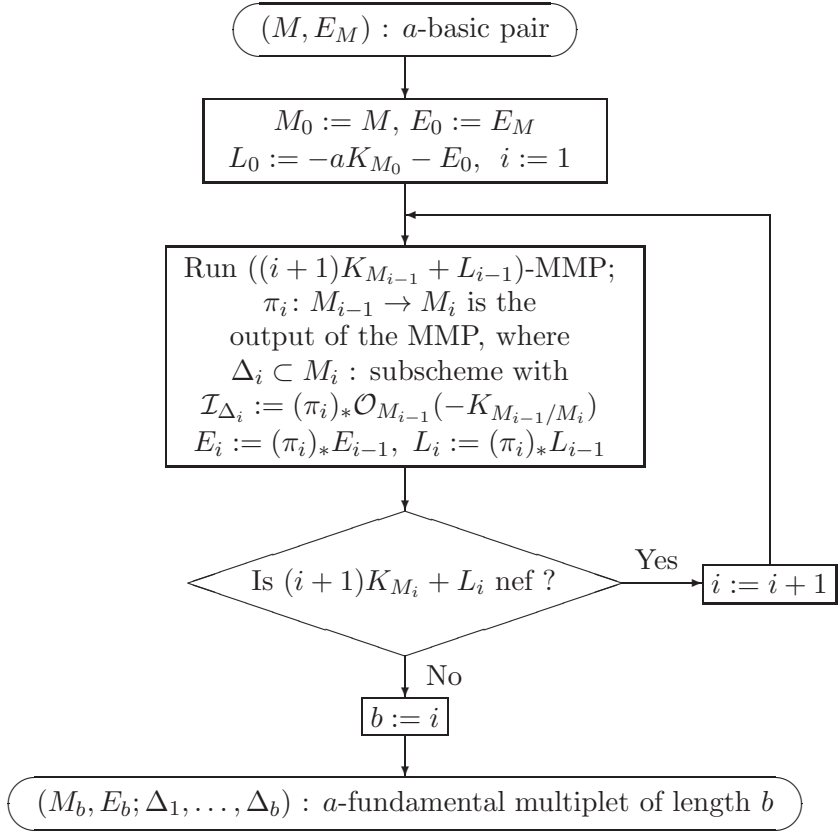

Our approach is useful from various viewpoints. 
For example, it is easy to handle $a$-fundamental multiplets 
$(M_b, E_b; \Delta_1,\dots, \Delta_b)$ since we can analyze 
each $\Delta_i$ deeply and
we can study the multiplets by somewhat numerical and combinatorial ways. 
Furthermore, the choice of the process $\pi_i\colon M_{i-1}\to M_i$ 
is less ambiguous. In fact, if $1\leqslant i\leqslant b-1$, then $\pi_i$ 
is uniquely determined since $iK_{M_{i-1}}+L_{i-1}$ is nef and big. 

Next we consider the case where $a=3$, which is the main subject treated 
in this paper. In this case we treat the following four objects:
\begin{itemize}
\item 
A log del Pezzo surface $S$ of index three.
\item 
A $3$-basic pair $(M, E_M)$ consisting of a kind of
nonsingular projective rational surface $M$ 
and an effective divisor $E_M$.
\item 
A \emph{median triplet} $(Z,E_Z;\Delta _Z)$, 
which is a kind of $3$-fundamental multiplet of length one, 
consisting of a rational surface $Z$ isomorphic to 
$\pr^2$ or $\F_n$, of an effective divisor $E_Z$, and of a zero-dimensional subscheme 
$\Delta _Z\subset Z$ with the $(\nu1)$-condition.      
\item 
A \emph{bottom tetrad} $(X,E_X;\Delta _Z, \Delta _X)$, 
which is a kind of $3$-funda-mental multiplet of length two, 
consisting of a rational surface $X$ 
isomorphic to $\pr^2$ or $\F_n$, of an effective divisor $E_X$, of a zero-dimensional 
subscheme $\Delta _X\subset X$, and of a zero-dimensional subscheme 
$\Delta _Z\subset Z$ with the $(\nu1)$-condition, where $Z\to X$ is the 
elimination of $\Delta_X$ (see Definition \ref{elim_dfn}). 
\end{itemize}
The classes of median triplets and bottom tetrads are introduced in order 
to get the list of log del Pezzo surfaces of index three without duplication. 
In Sections \ref{dP_section} and \ref{cubic_section}, 
we show that for any $3$-fundamental multiplet of 
length one (resp.\ of length two) we have a median triplet 
(resp.\ a bottom tetrad) such that the associated $3$-basic pairs are same. 
By virtue of this modifications we can obtain the list of 
log del Pezzo surfaces of index three without overlap. 

Now we summarize the contents of this paper. 
In Section \ref{prelim_section}, we review some basic properties of 
zero-dimensional schemes which satisfies the $(\nu1)$-condition and 
we give the list of (weighted) dual graphs associated with log-terminal 
singularities of index three. 
In Section \ref{dP_section}, we introduce the notions of 
$3$-basic pairs, $3$-(pseudo-)fundamental multiplets, 
(pseudo-)median triplets and bottom tetrads 
associated with log del Pezzo surfaces of index three. Moreover, 
we discuss relations among them.
Precisely, we show that the structure of log del Pezzo surfaces of index three 
is specified from the one of $3$-fundamental multiplets of length one and of 
length two through the $3$-basic pairs.
Furthermore, we see that the classification of $3$-fundamental multiplets 
of length one (resp.\ $3$-fundamental multiplets of length two)
can be reduced to that of median triplets (resp.\ bottom tetrads). 
In Section \ref{local_section}, we discuss some local properties of 
$3$-(pseudo-)fundamental multiplets which are used in latter sections.
More precisely, we determine the possibility of the structure of 
zero-dimensional subschemes $\Delta_Z$ and $\Delta_X$ with $(\nu 1)$-condition 
over a fixed point on some effective divisor.   
Thanks to the arguments in this section, we can pare down the candidates 
of zero-dimensional schemes of $3$-fundamental multiplets. 
Section \ref{cubic_section} is the most technical part in this paper. 
In this section, we treat $3$-fundamental multiplets 
of length two with trivial $2K_X+L_X$ which give the same log del Pezzo surface of 
index three. Thanks to the arguments in this section, it makes sense that 
the conditions of Definition \ref{bottom_dfn}. 
In Section \ref{ZC_section}, we classify median triplets.
There are exactly $77$ types of median triplets (see Theorem \ref{trip_thm}). 
From Section \ref{XCI_section} to Section \ref{XCIII_section}, 
we give the classification of bottom tetrads $(X, E_X; \Delta_Z, \Delta_X)$.
In Section \ref{XCI_section}, we classify bottom tetrads with big $2K_X+L_X$.
There are exactly $45$ types of such tetrads (see Theorem \ref{tetI_thm}).
In Section \ref{XCII_section}, we classify bottom tetrads with 
non-big and nontrivial $2K_X+L_X$.
There are exactly $115$ types of such tetrads (see Theorem \ref{tetII_thm}).
In Section \ref{XCIII_section}, we classify bottom tetrads such that 
$2K_X+L_X$ is trivial. 
There are exactly $63$ types of such tetrads (see Theorem \ref{tetIII_thm}). 
In Section \ref{struc_section}, we see some structure properties of log 
del Pezzo surfaces of index three. 
In Proposition \ref{kaburi_prop}, we show that the lists in Sections 
\ref{ZC_section}--\ref{XCIII_section} has no redundancy. 
In Proposition \ref{dual_graph_prop}, we tabulate the list of non-Gorenstein points for 
log del Pezzo surfaces of index three.

\begin{ack}
This work was started when the second author visited RIMS. 
The second author would like to express his gratitude to Professor Shigeru Mukai 
for hospitality. The authors thank Professor Noboru Nakayama for useful comments. 
The first author is partially supported by a JSPS Fellowship for Young Scientists. 
\end{ack}

\begin{nott}
We work in the category of algebraic (separated and of finite type) scheme over a 
fixed algebraically closed field $\Bbbk$ of arbitrary characteristic. 
A \emph{variety} means a reduced and irreducible algebraic scheme. A \emph{surface} 
means a two-dimensional variety. 
For a proper variety $X$, let $\rho(X)$ be the Picard number of $X$. 

For a normal variety $X$, we say that $D$ is a 
\emph{$\Q$-divisor} (resp.\ \emph{divisor} or \emph{$\Z$-divisor}) 
if $D$ is a finite sum $D=\sum a_iD_i$ where $D_i$ are prime divisors and 
$a_i\in\Q$ (resp.\ $a_i\in\Z$). 
For a $\Q$-divisor $D=\sum a_iD_i$, the value $a_i$ is denoted by $\coeff_{D_i}D$ and 
set $\coeff D:=\{a_i\}_i$. 
A normal variety $X$ is called \emph{log-terminal} if the canonical divisor $K_X$ is 
$\Q$-Cartier and the discrepancy $\discrep(X)$ of $X$ is bigger than $-1$ 
(see \cite[\S 2.3]{KoMo}).
For a proper birational morphism $f\colon Y\to X$ between normal varieties 
such that both $K_X$ and $K_Y$ are $\Q$-Cartier, we set 
\[
K_{Y/X}:=\sum_{E_0\subset Y \,\,f\text{-exceptional}}a(E_0, X)E_0, 
\]
where $a(E_0, X)$ is the discrepancy of $E_0$ with respects to $X$ 
(see \cite[\S 2.3]{KoMo}). (We note that 
if $aK_X$ and $aK_Y$ are Cartier for $a\in\Z_{>0}$, then 
$aK_{Y/X}$ is a $\Z$-divisor.)

For a nonsingular surface $S$ and a projective curve $C$ which is a closed subvariety 
of $S$, the curve $C$ is called a \emph{$(-n)$-curve} if $C$ is a nonsingular rational 
curve and $(C^2)=-n$. 
For a birational map $M\dashrightarrow S$ between normal surfaces 
and a curve $C\subset S$, the strict transform of $C$ on $M$ 
is denoted by $C^M$. 

For a zero-dimensional sucheme $\Delta$, let $|\Delta|$ be the support of $\Delta$.

Let $S$ be a nonsingular surface and let $E=\sum w_jD_j$ be an effective divisor on $S$ 
$(w_j>0)$. 
The \emph{weighted dual graph} of $E$ is defined as follows. A vertex corresponds 
to a component $D_j$. Let $v_j$ be the vertex corresponds to $D_j$. 
Assume that $D_i$ and $D_j$ satisfies that $|D_i\cap D_j|=\{P_1,\dots,P_m\}$
such that the local intersection number of $D_i$ and $D_j$ at $P_k$ is $s_k$. 
For any $1\leqslant h\leqslant m$, 
$v_i$ and $v_j$ are joined by a line with the numbered box \fbox{$s_h$} 
if $s_h\geqslant 2$. 
If $s_h=1$, then $v_i$ and $v_j$ are joined by a line with no box. 
Moreover, for each vertex $v$ corresponds to $D$, we define the \emph{weight} $w$
of $v$ as $w:=\coeff_DE$ and denoted by $v_{(w)}$. 
In the dual graphs of divisors, a vertex corresponding to $(-n)$-curve is expressed 
as \textcircled{\tiny $n$}. 
On the other hand, an arbitrary irreducible curve is expressed by the symbol 
{\large$\oslash$} when it is not necessary a $(-n)$-curve.

Let $\F_n\to\pr^1$ be a Hirzebruch surface $\pr_{\pr^1}(\sO\oplus\sO(n))$ of 
degree $n$ with the $\pr^1$-fibration. 
A section $\sigma\subset\F_n$ with $\sigma^2=-n$ 
is called a \emph{minimal section}. If $n>0$, then such $\sigma$ is unique.
A section $\sigma_{\infty}$ with $\sigma\cap\sigma_\infty=\emptyset$ is called a
\emph{section at infinity}. For a section at infinity $\sigma_\infty$, we have 
$\sigma_\infty\sim\sigma+nl$, where $l$ is a fiber of the fibration. 
For the projective plane $\pr^2$, we sometimes denote a line on $\pr^2$ by $l$. 

For two integers $c$ and $d$, we set $s(c, d):=\max\{0$, $c+d-1\}$.
\end{nott}

\section{Preliminaries}\label{prelim_section}

\subsection{Elimination of subschemes}\label{elim_subsec}

In this section, we recall the results in \cite{N}. 
Let $X$ be a nonsingular surface and $\Delta$ be a zero-dimensional subscheme 
of $X$. The ideal sheaf of $\Delta$ is denoted by $\sI_{\Delta}$.

\begin{definition}\label{nu1_dfn}
Let $P$ be a point of $\Delta$. 
\begin{enumerate}
\renewcommand{\theenumi}{\arabic{enumi}}
\renewcommand{\labelenumi}{(\theenumi)}
\item\label{nu1_dfn1}
Let 
$
\nu_P(\Delta):=\max\{\nu\in\Z_{>0} \, | \, \sI_{\Delta}\subset\dm_P^\nu\},
$
where $\dm_P$ is the maximal ideal sheaf in $\sO_X$ defining $P$.
If $\nu_P(\Delta)=1$ for any $P\in\Delta$, then we say that $\Delta$ \emph{satisfies 
the $(\nu1)$-condition}.
\item\label{nu1_dfn2}
The \emph{multiplicity} $\mult_P\Delta$ of $\Delta$ at $P$ is given by 
the length of the Artinian local ring $\sO_{\Delta, P}$.
\item\label{nu1_dfn3}
The \emph{degree} $\deg\Delta$ of $\Delta$ is 
given by $\sum_{P\in\Delta}\mult_P\Delta$.
\end{enumerate}
\end{definition}

\begin{definition}\label{mult_div}
For an effective divisor $D$ and a point $P$, we set 
$
\mult_PD:=\max\{\nu\in\Z_{>0} \, | \, \sO_X(-D)\subset\dm_P^\nu\}.
$
Let $\pi\colon Y\to X$ be the blowing up along $P$ and let $e$ be 
the exceptional curve. Then $\mult_PD$ is equal to $\coeff_{e}\pi^*D$. 
\end{definition}

\begin{definition}\label{elim_dfn}
Assume that $\Delta$ satisfies the $(\nu1)$-condition. 
Let $V\to X$ be the blowing up along $\Delta$. 
The \emph{elimination} of $\Delta$ is the birational projective morphism 
$\psi\colon Z\to X$ defined as the composition of the minimal resolution 
$Z\to V$ of $V$ and the morphism $V\to X$. 
For a divisor $E$ on $X$ and for a positive integer $s$, we set 
$E_Z^{\Delta,s}:=\psi^*E-sK_{Z/X}$. 
\end{definition}

\begin{proposition}[{\cite[Proposition 2.9]{N}}]\label{elim_prop}
\begin{enumerate}
\renewcommand{\theenumi}{\arabic{enumi}}
\renewcommand{\labelenumi}{(\theenumi)}
\item\label{elim_prop1}
We assume that the subscheme $\Delta$ satisfies the $(\nu1)$-condition 
and let $\psi\colon Z\to X$ 
be the elimination of $\Delta$. 
Then the anti-canonical divisor $-K_Z$ is $\psi$-nef. 
More precisely, for any $P\in\Delta$ with $\mult_P\Delta=k$, 
the set-theoretic inverse image $\psi^{-1}(P)$ is the 
straight chain $\sum_{j=1}^k\Gamma_{P,j}$ of 
nonsingular rational curves and the weighted dual graph of the divisor 
$K_{Z/X}$ around over $P$ is the following: 
\begin{center}
    \begin{picture}(150, 50)(0, 45)
    \put(0, 60){\makebox(0, -3){\textcircled{\tiny $2$}}}
    \put(0, 75){\makebox(0, 0)[b]{$\Gamma_{P,1}$}}
    \put(7, 52){\makebox(0, 0){\tiny $(1)$}}
    \put(5, 60){\line(1, 0){30}}
    \put(40, 60){\makebox(0, -3){\textcircled{\tiny $2$}}}
    \put(47, 52){\makebox(0, 0){\tiny $(2)$}}
    \put(40, 75){\makebox(0, 0)[b]{$\Gamma_{P,2}$}}
    \put(45, 60){\line(1, 0){20}} 
    \put(67, 60){\line(1, 0){2}}
    \put(71, 60){\line(1, 0){2}}
    \put(75, 60){\line(1, 0){2}}    
    \put(79, 60){\line(1, 0){21}}
    \put(105, 60){\makebox(0, -3){\textcircled{\tiny $2$}}}
    \put(105, 75){\makebox(0, 0)[b]{$\Gamma_{P,k-1}$}}
    \put(120, 52){\makebox(0, 0){\tiny $(k-1)$}}
    \put(110, 60){\line(1, 0){40}}
    \put(155, 60){\makebox(0, -3){\textcircled{\tiny $1$}}}
    \put(155, 75){\makebox(0, 0)[b]{$\Gamma_{P,k}$}}
    \put(162, 52){\makebox(0, 0){\tiny $(k)$}}
    \end{picture}
\end{center}
\item\label{elim_prop2}
Conversely, for a non-isomorphic proper birational morphism $\psi\colon Z\to X$
between nonsingular surfaces such that $-K_Z$ is $\psi$-nef, 
the morphism $\psi$ 
is the elimination of $\Delta$ which satisfies the $(\nu1)$-condition 
defined by the ideal $\sI_\Delta:=\psi_*\sO_Z(-K_{Z/X})$.
\end{enumerate}
\end{proposition}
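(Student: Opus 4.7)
The plan is to handle the two implications separately: part (1) by an explicit local computation after factoring the elimination as a sequence of point blowups, and part (2) by a structure theorem for the exceptional fiber followed by a definition-chase identifying $\Delta$ from $\psi$.

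For part (1), I would work one point $P \in \Delta$ at a time, since the construction of $\psi$ is local on $X$. The $(\nu1)$-condition allows me to choose local coordinates $(x, y)$ at $P$ with $x \in \sI_\Delta \setminus \dm_P^2$; reducing modulo $x$ the ideal becomes $\sI_\Delta \cdot \sO_{X,P} = (x, y^k)$ with $k = \mult_P \Delta$. The minimal resolution of the blowup factors as a chain of $k$ point blowups $X_k \to X_{k-1} \to \cdots \to X_0 = X$, obtained at each stage by blowing up the unique point on the newest exceptional curve where the residual ideal is still nonprincipal. A direct chart computation shows that after one blowup the residual ideal has the same shape $(u, y^{k-1})$ at a single point, and continuing this way the ideal becomes principal after exactly $k$ steps. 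The strict transform of $\Gamma_{P,i}$ drops to self-intersection $-2$ as soon as the next blowup is performed on it, which happens precisely for $i < k$; hence $\Gamma_{P,1}, \dots, \Gamma_{P,k-1}$ are $(-2)$-curves and $\Gamma_{P,k}$ is a $(-1)$-curve, forming the announced chain. The blowup formula for discrepancies, applied inductively, gives $a(\Gamma_{P,i}, X) = i$, matching the weights in the stated dual graph. A direct intersection calculation then yields $(K_{Z/X} \cdot \Gamma_{P,j}) = 0$ for $1 \leqslant j < k$ and $(K_{Z/X} \cdot \Gamma_{P,k}) = -1$, so that $(-K_Z \cdot \Gamma_{P,j}) = -(K_{Z/X} \cdot \Gamma_{P,j}) \geqslant 0$ on every $\psi$-exceptional curve, confirming the $\psi$-nefness of $-K_Z$.

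For part (2), since $\psi$ is birational between nonsingular surfaces, it factors as a composition of point blowups; the goal is to show that the $\psi$-nefness of $-K_Z$ forces each connected fiber to be exactly the chain produced in (1). For any $\psi$-exceptional component $\Gamma$, the projection formula gives $(K_Z \cdot \Gamma) = (K_{Z/X} \cdot \Gamma)$, and adjunction combined with $(-K_Z \cdot \Gamma) \geqslant 0$ shows that $\Gamma$ is a smooth rational curve with $\Gamma^2 \in \{-1, -2\}$. I would then proceed by induction on the number of exceptional components: pick a $(-1)$-curve $\Gamma$ in a fiber, contract it to obtain $\psi' \colon Z' \to X$, verify using $K_Z = \pi^* K_{Z'} + \Gamma$ that $-K_{Z'}$ is still $\psi'$-nef, and invoke the inductive hypothesis. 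Intersection bookkeeping forces the contracted $(-1)$-curve to sit at an endpoint of the chain in $\psi'^{-1}(\psi(\Gamma))$ and shows that each fiber contains only one $(-1)$-curve, giving the prescribed chain structure. Setting $\sI_\Delta := \psi_* \sO_Z(-K_{Z/X})$, local computations in the chain neighborhoods identify the colength of $\sI_\Delta$ at $P$ with the length of the chain over $P$ and exhibit an element of $\sI_\Delta \cap (\dm_P \setminus \dm_P^2)$, yielding the $(\nu1)$-condition. Since $\psi$ and the elimination of $\Delta$ are both minimal resolutions of the blowup $\Bl_\Delta X$, they must coincide.

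The hardest step, in my view, is the rigidity argument in part (2) that forces each connected fiber to be a single straight chain with exactly one $(-1)$-curve at its tip. Branching configurations and multiple $(-1)$-curves in a fiber must be excluded, and the inductive contraction argument requires verifying that after contracting a $(-1)$-curve the remaining configuration is still nef with respect to the new map and that the $(-1)$-curve we contract is indeed at an endpoint, so that the combinatorial type of the chain is preserved under the induction. A careful treatment along the lines of Nakayama's original argument in \cite{N} is needed to keep the inductive hypothesis alive and to identify $\Delta$ uniquely from $\psi$.
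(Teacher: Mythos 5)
The paper offers no proof of this statement at all: it is imported verbatim from Nakayama (\cite[Proposition 2.9]{N}), so there is nothing internal to compare against except that citation. Your outline correctly reconstructs the standard argument behind it --- for (1) the local normal form $\sI_\Delta=(x,y^k)$ furnished by the $(\nu 1)$-condition, the chain of $k$ point blowups with discrepancies $1,\dots,k$ and the intersection computation $(K_{Z/X}\cdot\Gamma_{P,j})=0$ for $j<k$, $=-1$ for $j=k$; for (2) the adjunction bound $\Gamma^2\in\{-1,-2\}$, induction by contracting a $(-1)$-curve while preserving relative nefness, and the forcing of the blown-up point onto the free point of the tip of the chain --- and the steps you leave implicit (that the $k$ blowups are the minimal resolution of $\Bl_\Delta X$, and the local identification $\psi_*\sO_Z(-K_{Z/X})=(x,y^k)$) are routine verifications, so the proposal is sound and matches the cited source's approach.
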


\begin{definition}\label{gamma_dfn}
Under the assumption of Proposition \ref{elim_prop} \eqref{elim_prop1}, we always 
denote the exceptional curves of $\psi$ over $P$ by 
$\Gamma_{P,1},\dots,\Gamma_{P,k}$. The order is determined as 
Proposition \ref{elim_prop} \eqref{elim_prop1}. We set $\Gamma_{P, 0}:=\emptyset$.
\end{definition}

\subsection{Curves in nonsingular surfaces}\label{singcurve_section}

\begin{lemma}\label{mult_seq_lem}
Let $\pi\colon M\to X$ be a birational morphism between nonsingular projective 
surfaces and let $C$ be a reduced and irreducible curve on $X$. Then 
$(C^2)-((C^M)^2)=(K_{M/X}\cdot C^M)+2p_a(C)-2p_a(C^M)$, where $p_a(\bullet)$ 
is the arithmetic genus. 
\end{lemma}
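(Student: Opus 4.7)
The plan is to derive the identity directly from the adjunction formula on both surfaces $X$ and $M$, together with the standard decomposition $K_M = \pi^*K_X + K_{M/X}$ and the projection formula; no induction on the number of blow-ups is needed.

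First, I would write the adjunction formula on each surface:
\begin{align*}
2p_a(C) - 2 &= (K_X \cdot C) + (C^2),\\
2p_a(C^M) - 2 &= (K_M \cdot C^M) + ((C^M)^2).
\end{align*}
Next, I would use the definition $K_M = \pi^*K_X + K_{M/X}$ (valid since both surfaces are nonsingular) together with the projection formula to compute
\[
(\pi^*K_X \cdot C^M) = (K_X \cdot \pi_*C^M) = (K_X \cdot C),
\]
the last equality because $C$ is reduced, irreducible, and not $\pi$-exceptional, so $\pi_*C^M = C$. Hence $(K_M \cdot C^M) = (K_X \cdot C) + (K_{M/X} \cdot C^M)$.

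Finally, I would subtract the two adjunction identities; the terms $(K_X \cdot C)$ cancel, and rearranging yields exactly
\[
(C^2) - ((C^M)^2) = (K_{M/X} \cdot C^M) + 2p_a(C) - 2p_a(C^M),
\]
as required. Since each step is a one-line application of a standard tool, there is no genuine obstacle; the only point to be careful about is ensuring that $\pi_*C^M = C$, which is where the hypothesis that $C$ is a reduced irreducible curve on $X$ (as opposed to being $\pi$-exceptional) is used.
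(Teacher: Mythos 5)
Your proof is correct and is precisely the argument the paper intends: the paper's proof is the one-line remark ``Follows from the genus formula,'' and your derivation via adjunction on $X$ and $M$, the decomposition $K_M=\pi^*K_X+K_{M/X}$, and the projection formula (using $\pi_*C^M=C$) is exactly how that remark unpacks.
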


\begin{proof}
Follows from the genus formula. 
\end{proof}

\begin{proposition}[{\cite[Corollary 2.10]{F}}]\label{twocurve_prop}
Let $X$ be a nonsingular complete surface, $\Delta$ be a zero-dimensional 
subscheme of $X$ which satisfies the $(\nu1)$-condition, $\pi\colon M\to X$ 
be the elimination of $\Delta$ and $C_1$, $C_2$ be distinct nonsingular curves on $X$. 
We set $k:=\deg\Delta$ and $k_h:=\deg(\Delta\cap C_h)$ for $h=1$, $2$. 
Then $(C_1\cdot C_2)\geqslant k_1+k_2-k$ holds.
\end{proposition}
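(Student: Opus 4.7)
The plan is to reduce the inequality to a pointwise statement at each point of $\Delta$, and then exploit the tower of blowups that factors $\pi$ over such a point, together with the $(\nu 1)$-condition, to bound the local intersection number. First I would localize: since $(C_1\cdot C_2)=\sum_{Q\in C_1\cap C_2}(C_1\cdot C_2)_Q$ has all nonnegative summands, while $k_h=\sum_{P\in\Delta}k_P^{(h)}$ and $k=\sum_{P\in\Delta}k_P$ for $k_P:=\mult_P\Delta$ and $k_P^{(h)}:=\deg(\Delta\cap C_h)_P$, it suffices to show
\[
(C_1\cdot C_2)_P\geqslant k_P^{(1)}+k_P^{(2)}-k_P\qquad\text{for every }P\in\Delta.
\]

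Fix such a $P$ and invoke Proposition \ref{elim_prop}: over $P$, the morphism $\pi$ factors as a tower of $k_P$ point blowups
\[
X=Y_0\xleftarrow{\pi_1}Y_1\xleftarrow{\pi_2}\cdots\xleftarrow{\pi_{k_P}}Y_{k_P},
\]
where each $\pi_i$ blows up a single point $P_{i-1}\in Y_{i-1}$ (with $P_0=P$), with exceptional divisor $E_i\subset Y_i$. Writing $\Delta^{(i)}\subset Y_i$ and $C_h^{(i)}\subset Y_i$ for the proper transforms, the $(\nu 1)$-condition propagates through the tower and gives $\sI_{\Delta^{(i-1)}}\cdot\sO_{Y_i}=\sO_{Y_i}(-E_i)\cdot\sI_{\Delta^{(i)}}$ with $\mult_{P_i}\Delta^{(i)}=k_P-i$. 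Put $m_h^{(i)}:=\mult_{P_{i-1}}C_h^{(i-1)}$; these multiplicities take values in $\{0,1\}$, for each $C_h^{(i-1)}$, being a strict transform of a smooth curve under an iterated point blowup, remains smooth. The iterated point-blowup formula for local intersection multiplicities then yields
\[
(C_1\cdot C_2)_P=\bigl(C_1^M\cdot C_2^M\bigr)_{\pi^{-1}(P)}+\sum_{i=1}^{k_P}m_1^{(i)}m_2^{(i)}\geqslant\sum_{i=1}^{k_P}m_1^{(i)}m_2^{(i)}.
\]

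The heart of the argument is the identity $k_P^{(h)}=\sum_{i=1}^{k_P}m_h^{(i)}$. To prove it, set $\gamma_h^{(i)}:=\deg(\Delta^{(i)}\cap C_h^{(i)})_{P_i}$, so $\gamma_h^{(0)}=k_P^{(h)}$ and $\gamma_h^{(k_P)}=0$; then restrict the factorization $\sI_{\Delta^{(i-1)}}\sO_{Y_i}=\sO_{Y_i}(-E_i)\sI_{\Delta^{(i)}}$ to $C_h^{(i)}$ (which is identified with $C_h^{(i-1)}$ via $\pi_i$, as blowing up a smooth point of a smooth curve is an isomorphism) and use that $E_i$ meets $C_h^{(i)}$ transversely in a single reduced point if $m_h^{(i)}=1$ and not at all if $m_h^{(i)}=0$. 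This yields the recursion $\gamma_h^{(i-1)}=m_h^{(i)}+\gamma_h^{(i)}$, whence the identity follows by telescoping. Combining with the elementary bound $m_1m_2\geqslant m_1+m_2-1$ valid for $m_1,m_2\in\{0,1\}$,
\[
(C_1\cdot C_2)_P\geqslant\sum_{i=1}^{k_P}m_1^{(i)}m_2^{(i)}\geqslant\sum_{i=1}^{k_P}\bigl(m_1^{(i)}+m_2^{(i)}-1\bigr)=k_P^{(1)}+k_P^{(2)}-k_P,
\]
closing the argument. The main obstacle is verifying the recursion $\gamma_h^{(i-1)}=m_h^{(i)}+\gamma_h^{(i)}$, which calls for ideal-theoretic bookkeeping along the tower; but the $(\nu 1)$-condition, furnishing the clean factorization at each step, makes it essentially routine.
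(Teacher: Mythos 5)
Your argument is correct. Note that the paper does not prove this statement at all: it is imported verbatim from \cite[Corollary 2.10]{F}, so there is no in-text proof to compare with, and your write-up should be judged as a self-contained argument — which it is, modulo the two local facts you assert. Both of those facts do hold, and for exactly the reason your set-up suggests: the $(\nu1)$-condition gives an element of $\sI_\Delta$ with nonzero linear part at $P$, so $\Delta$ is curvilinear there, i.e.\ equal to the length-$k_P$ infinitesimal neighbourhood of $P$ inside a smooth curve germ $D$; a one-line coordinate computation ($\sI=(y,x^{k_P})$ with $D=\{y=0\}$) then yields the factorization $\sI_{\Delta^{(i-1)}}\sO_{Y_i}=\sO_{Y_i}(-E_i)\,\sI_{\Delta^{(i)}}$ with $\Delta^{(i)}$ again curvilinear, of length $k_P-i$, supported at the single point $E_i\cap D^{(i)}$, and one checks (e.g.\ via Proposition \ref{elim_prop} \eqref{elim_prop2}, since $-K$ is relatively nef for the resulting tower) that this tower of $k_P$ blowups is indeed the elimination of $\Delta$ over $P$. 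Your recursion $\gamma_h^{(i-1)}=m_h^{(i)}+\gamma_h^{(i)}$ then follows as you indicate, because the strict transform of a smooth curve meets $E_i$ transversally in a single point and $\Delta^{(i)}$ is supported at $P_i$; together with Noether's formula for local intersection numbers under a point blowup and the inequality $m_1m_2\geqslant m_1+m_2-1$ for $m_1,m_2\in\{0,1\}$, this closes the local estimate. Two minor points you may wish to make explicit: the reduction to the pointwise inequality also uses $(C_1\cdot C_2)_Q\geqslant 0$ at points $Q\notin\Delta$ (clear, since $C_1\neq C_2$ are irreducible) and the degenerate case of a point $P\in\Delta$ lying on at most one $C_h$, which your uniform local argument covers anyway since then $m_h^{(i)}=0$ for all $i$ for the missing curve. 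An alternative, more global route (closer in spirit to what one expects in \cite{F}) is to compute $\pi^*C_h-C_h^M=\sum_P\sum_j\min\{j,\deg(\Delta\cap C_h)_P\}\Gamma_{P,j}$ and expand $(C_1^M\cdot C_2^M)\geqslant 0$ using the intersection matrix of the chains; your localized induction buys the same conclusion with less bookkeeping on $M$.
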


\subsection{Dual exceptional graphs}\label{graph_section}

In this section, we see the classification result of the weighted dual graphs of the 
exceptional divisors for non-Gorenstein log-terminal surface singularities of index three. 
If $\Bbbk=\C$, Ohashi-Taki completed the classification in \cite[\S 2]{OT}. 
We remark that the following list is same as the list in \cite[\S 2]{OT}.

\begin{proposition}
Let $P\in S$ be a non-Gorenstein log-terminal surface singularity and 
let $\alpha\colon M\to S$ be the minimal resolution of $P\in S$. 
Assume that $-3K_S$ is Cartier. 
Then the weighted dual graph of the effective $\Z$-divisor $-3K_{M/S}$ 
is one of the list in Table \ref{graph_table}. 
\end{proposition}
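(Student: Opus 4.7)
The plan is to combine the standard classification of log-terminal surface singularities with the integrality constraint coming from the Cartier property of $-3K_S$. Recall that every log-terminal surface singularity is (analytically) a quotient singularity, so its weighted dual graph is either a chain (the cyclic case) or a tree with a unique trivalent vertex from which emanate three chains (the non-cyclic case, of $D_n$, $E_6$, $E_7$ or $E_8$ topological shape). In either case every exceptional component $D_i$ is a nonsingular rational curve with $(D_i^2)=-n_i$ for some $n_i\geqslant 2$.

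Write $K_{M/S}=\sum d_iD_i$. Minimality of $\alpha$ together with log-terminality give $-1<d_i\leqslant 0$, while adjunction on $D_j$ combined with $\alpha^*K_S\cdot D_j=0$ yields
\[
\sum_id_i(D_i\cdot D_j)=K_{M/S}\cdot D_j=n_j-2\qquad\text{for each }j.
\]
Set $E:=-3K_{M/S}=\sum w_iD_i$, so $w_i=-3d_i$. The hypothesis that $-3K_S$ is Cartier forces $w_i\in\Z$, and the discrepancy bound then forces $w_i\in\{0,1,2\}$; non-Gorenstein-ness of $P\in S$ means $E\neq 0$. The linear system above rewrites as $n_jw_j-\sum_{i\neq j}w_i(D_i\cdot D_j)=3(n_j-2)$, which in the chain case is the local recursion $n_jw_j=w_{j-1}+w_{j+1}+3(n_j-2)$ with boundary $w_0=w_{k+1}=0$. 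The inequality $3(n_j-2)\leqslant n_jw_j\leqslant 2n_j$ immediately forces $n_j\in\{2,3,4,5,6\}$, and each triple $(n_j,w_j,\{w_{j-1},w_{j+1}\})$ is then restricted to a short explicit list.

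The remaining work is a bounded enumeration. For the non-cyclic case the topological shape of the graph is one of finitely many, and on each shape one solves the above linear system (with the trivalent vertex contributing an extra equation) and keeps only those $(n_i)$ for which every $w_i$ lies in $\{0,1,2\}$; this is a direct finite verification. For the cyclic case one propagates the recursion from one end of the chain, observing that a $(-2)$-curve of weight $0$ acts as a separator, that any maximal run of nonzero weights has bounded length (since the weights must return to $0$ at both ends), and that each admissible run is, up to reflection, one of a short list of patterns. The main obstacle is the cyclic bookkeeping: one must enumerate the admissible local blocks $(n_j,w_j)$, glue them at weight-zero nodes, and simultaneously verify that the intersection matrix remains negative definite and that the associated Hirzebruch-Jung datum really represents a non-Gorenstein index-three singularity. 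This case analysis is tedious but straightforward, and the outcome agrees with the list given (in characteristic zero) by Ohashi-Taki \cite[\S 2]{OT}; since the recursion used here is purely numerical, the same list holds in arbitrary characteristic, giving Table \ref{graph_table}.
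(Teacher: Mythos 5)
Your setup (discrepancies in $(-1,0]$, integrality of $w_i=-3d_i$ from the Cartier hypothesis, and the adjunction recursion $n_jw_j=w_{j-1}+w_{j+1}+3(n_j-2)$) matches the paper's, but the enumeration step as you describe it would not prove the statement. You claim that ``any maximal run of nonzero weights has bounded length'' and that the non-cyclic case is ``a direct finite verification''; both are false, and in fact contradict the conclusion: Table \ref{graph_table} contains the infinite families $\gA_t(1,1)$, $\gA_t(1,2)$, $\gA_t(2,2)$ and $\gD_t(1)$, $\gD_t(2)$ with $t$ arbitrary. The correct use of your own recursion (as in the paper's proof) is to show that every \emph{interior} vertex has $w_i=2$, which forces $c_i=2$ for all interior curves not adjacent to an end; the length of the resulting $(-2)$-chain is then a free parameter, and only the behaviour at the ends (and at the unique fork) is pinned down to finitely many possibilities. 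An argument that terminates in a finite list of bounded graphs cannot be right, so this is a genuine gap, not just a presentational one.

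Two further points. First, your bookkeeping with weight-$0$ vertices is both unnecessary and internally inconsistent: putting $w_j=0$ into your recursion gives $0=w_{j-1}+w_{j+1}+3(n_j-2)\geqslant 0$, hence $n_j=2$ and $w_{j\pm1}=0$; since the exceptional fibre over $P$ is connected, this propagates and forces $-3K_{M/S}=0$, contradicting non-Gorensteinness. So there are no ``weight-zero separators'' to glue along, and in fact $w_i\in\{1,2\}$ throughout -- which is exactly the paper's starting point. Second, you import the structure of the graph (chain, or star of shape $\gD_n$, $E_6$, $E_7$, $E_8$) from the identification of log-terminal surface singularities with quotient singularities; that identification is a characteristic-zero fact, and your closing remark that ``the recursion is purely numerical'' does not repair the structural input in positive characteristic. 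The paper instead takes only the characteristic-free numerical facts from \cite[\S 4]{KoMo} (exceptional curves are nonsingular rational, the dual graph is a tree which is a chain or has exactly one fork) and derives all further restrictions, including on the fork, from the same recursion you wrote down.
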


\begin{table}[h]
\begin{center}
\caption{The list of the weighted dual graphs of $-3K_{M/S}$.}\label{graph_table}
\begin{tabular}{|c|c|}\hline
Symbol & Graph \\ \hline \hline 
$\gA_1(1)$ & 
\begin{picture}(5,10)(0,0)
    \put(0, 2){\makebox(0, -3){\textcircled{\tiny $3$}}}
    \put(7, -6){\makebox(0, 0){\tiny $(1)$}}
\end{picture}
\\
 & 
\\ 
$\gA_1(2)$ & 
\begin{picture}(5,10)(0,0)
    \put(0, 2){\makebox(0, -3){\textcircled{\tiny $6$}}}
    \put(7, -6){\makebox(0, 0){\tiny $(2)$}}
\end{picture}
\\
 & 
\\ \hline
$\gA_2(1, 2)$ & 
\begin{picture}(60,10)(-15,0)
    \put(0, 2){\makebox(0, -3){\textcircled{\tiny $2$}}}
    \put(7, -6){\makebox(0, 0){\tiny $(1)$}}
    \put(5, 2){\line(1, 0){20}} 
    \put(30, 2){\makebox(0, -3){\textcircled{\tiny $5$}}}
    \put(37, -6){\makebox(0, 0){\tiny $(2)$}}    
\end{picture}
\\
 & 
\\ 
$\gA_2(2, 2)$ & 
\begin{picture}(60,10)(-15,0)
    \put(0, 2){\makebox(0, -3){\textcircled{\tiny $4$}}}
    \put(7, -6){\makebox(0, 0){\tiny $(2)$}}
    \put(5, 2){\line(1, 0){20}} 
    \put(30, 2){\makebox(0, -3){\textcircled{\tiny $4$}}}
    \put(37, -6){\makebox(0, 0){\tiny $(2)$}}    
\end{picture}
\\ & \\ \hline
$\gA_3(1, 1)$ & 
\begin{picture}(90,10)(-15,0)
    \put(0, 2){\makebox(0, -3){\textcircled{\tiny $2$}}}
    \put(7, -6){\makebox(0, 0){\tiny $(1)$}}
    \put(5, 2){\line(1, 0){20}} 
    \put(30, 2){\makebox(0, -3){\textcircled{\tiny $4$}}}
    \put(37, -6){\makebox(0, 0){\tiny $(2)$}}
    \put(35, 2){\line(1, 0){20}} 
    \put(60, 2){\makebox(0, -3){\textcircled{\tiny $2$}}}
    \put(67, -6){\makebox(0, 0){\tiny $(1)$}}
\end{picture}
\\
 & 
\\ 
$\gA_3(1, 2)$ & 
\begin{picture}(90,10)(-15,0)
    \put(0, 2){\makebox(0, -3){\textcircled{\tiny $2$}}}
    \put(7, -6){\makebox(0, 0){\tiny $(1)$}}
    \put(5, 2){\line(1, 0){20}} 
    \put(30, 2){\makebox(0, -3){\textcircled{\tiny $3$}}}
    \put(37, -6){\makebox(0, 0){\tiny $(2)$}}
    \put(35, 2){\line(1, 0){20}} 
    \put(60, 2){\makebox(0, -3){\textcircled{\tiny $4$}}}
    \put(67, -6){\makebox(0, 0){\tiny $(2)$}}
\end{picture}
\\
 & 
\\
$\gA_3(2, 2)$ & 
\begin{picture}(90,10)(-15,0)
    \put(0, 2){\makebox(0, -3){\textcircled{\tiny $4$}}}
    \put(7, -6){\makebox(0, 0){\tiny $(2)$}}
    \put(5, 2){\line(1, 0){20}} 
    \put(30, 2){\makebox(0, -3){\textcircled{\tiny $2$}}}
    \put(37, -6){\makebox(0, 0){\tiny $(2)$}}
    \put(35, 2){\line(1, 0){20}} 
    \put(60, 2){\makebox(0, -3){\textcircled{\tiny $4$}}}
    \put(67, -6){\makebox(0, 0){\tiny $(2)$}}
\end{picture}
\\ & \\ \hline
$\gA_t(1, 1)$ & 
\begin{picture}(205,10)(-15,0)
    \put(0, 2){\makebox(0, -3){\textcircled{\tiny $2$}}}
    \put(7, -6){\makebox(0, 0){\tiny $(1)$}}
    \put(5, 2){\line(1, 0){20}} 
    \put(30, 2){\makebox(0, -3){\textcircled{\tiny $3$}}}
    \put(37, -6){\makebox(0, 0){\tiny $(2)$}}
    \put(35, 2){\line(1, 0){20}} 
    \put(60, 2){\makebox(0, -3){\textcircled{\tiny $2$}}}
    \put(67, -6){\makebox(0, 0){\tiny $(2)$}}
    \put(65, 2){\line(1, 0){15}}
    \put(82, 2){\line(1, 0){2}}
    \put(86, 2){\line(1, 0){2}}
    \put(90, 2){\line(1, 0){2}}
    \put(94, 2){\line(1, 0){16}}
    \put(115, 2){\makebox(0, -3){\textcircled{\tiny $2$}}}
    \put(122, -6){\makebox(0, 0){\tiny $(2)$}}
    \put(120, 2){\line(1, 0){20}} 
    \put(145, 2){\makebox(0, -3){\textcircled{\tiny $3$}}}
    \put(152, -6){\makebox(0, 0){\tiny $(2)$}}
    \put(150, 2){\line(1, 0){20}} 
    \put(175, 2){\makebox(0, -3){\textcircled{\tiny $2$}}}
    \put(182, -6){\makebox(0, 0){\tiny $(1)$}}
\end{picture}
\\ & 
\\
$\gA_t(1, 2)$ & 
\begin{picture}(205,10)(-15,0)
    \put(0, 2){\makebox(0, -3){\textcircled{\tiny $2$}}}
    \put(7, -6){\makebox(0, 0){\tiny $(1)$}}
    \put(5, 2){\line(1, 0){20}} 
    \put(30, 2){\makebox(0, -3){\textcircled{\tiny $3$}}}
    \put(37, -6){\makebox(0, 0){\tiny $(2)$}}
    \put(35, 2){\line(1, 0){20}} 
    \put(60, 2){\makebox(0, -3){\textcircled{\tiny $2$}}}
    \put(67, -6){\makebox(0, 0){\tiny $(2)$}}
    \put(65, 2){\line(1, 0){15}}
    \put(82, 2){\line(1, 0){2}}
    \put(86, 2){\line(1, 0){2}}
    \put(90, 2){\line(1, 0){2}}
    \put(94, 2){\line(1, 0){16}}
    \put(115, 2){\makebox(0, -3){\textcircled{\tiny $2$}}}
    \put(122, -6){\makebox(0, 0){\tiny $(2)$}}
    \put(120, 2){\line(1, 0){20}} 
    \put(145, 2){\makebox(0, -3){\textcircled{\tiny $2$}}}
    \put(152, -6){\makebox(0, 0){\tiny $(2)$}}
    \put(150, 2){\line(1, 0){20}} 
    \put(175, 2){\makebox(0, -3){\textcircled{\tiny $4$}}}
    \put(182, -6){\makebox(0, 0){\tiny $(2)$}}
\end{picture}
\\
 & 
\\
$\gA_t(2, 2)$ & 
\begin{picture}(205,10)(-15,0)
    \put(0, 2){\makebox(0, -3){\textcircled{\tiny $4$}}}
    \put(7, -6){\makebox(0, 0){\tiny $(2)$}}
    \put(5, 2){\line(1, 0){20}} 
    \put(30, 2){\makebox(0, -3){\textcircled{\tiny $2$}}}
    \put(37, -6){\makebox(0, 0){\tiny $(2)$}}
    \put(35, 2){\line(1, 0){20}} 
    \put(60, 2){\makebox(0, -3){\textcircled{\tiny $2$}}}
    \put(67, -6){\makebox(0, 0){\tiny $(2)$}}
    \put(65, 2){\line(1, 0){15}}
    \put(82, 2){\line(1, 0){2}}
    \put(86, 2){\line(1, 0){2}}
    \put(90, 2){\line(1, 0){2}}
    \put(94, 2){\line(1, 0){16}}
    \put(115, 2){\makebox(0, -3){\textcircled{\tiny $2$}}}
    \put(122, -6){\makebox(0, 0){\tiny $(2)$}}
    \put(120, 2){\line(1, 0){20}} 
    \put(145, 2){\makebox(0, -3){\textcircled{\tiny $2$}}}
    \put(152, -6){\makebox(0, 0){\tiny $(2)$}}
    \put(150, 2){\line(1, 0){20}} 
    \put(175, 2){\makebox(0, -3){\textcircled{\tiny $4$}}}
    \put(182, -6){\makebox(0, 0){\tiny $(2)$}}
\end{picture}
\\ 
{\footnotesize $(t\geqslant 4)$}
& 
\\
\hline 
 & 
\begin{picture}(90,10)(-15,0)
    \put(0, 2){\makebox(0, -3){\textcircled{\tiny $2$}}}
    \put(7, -6){\makebox(0, 0){\tiny $(1)$}}
    \put(5, 2){\line(1, 0){20}} 
    \put(30, 2){\makebox(0, -3){\textcircled{\tiny $3$}}}
    \put(37, -6){\makebox(0, 0){\tiny $(2)$}}
    \put(35, 2){\line(1, 0){20}} 
    \put(60, 2){\makebox(0, -3){\textcircled{\tiny $2$}}}
    \put(67, -6){\makebox(0, 0){\tiny $(1)$}}
    \put(30, -3){\line(0, -1){11}}
    \put(30, -19){\makebox(0, -3){\textcircled{\tiny $2$}}}
    \put(37, -27){\makebox(0, 0){\tiny $(1)$}}
\end{picture}
\\
$\gD_4(1)$ & 
\\
 & 
\\
 & 
\begin{picture}(90,10)(-15,0)
    \put(0, 2){\makebox(0, -3){\textcircled{\tiny $4$}}}
    \put(7, -6){\makebox(0, 0){\tiny $(2)$}}
    \put(5, 2){\line(1, 0){20}} 
    \put(30, 2){\makebox(0, -3){\textcircled{\tiny $2$}}}
    \put(37, -6){\makebox(0, 0){\tiny $(2)$}}
    \put(35, 2){\line(1, 0){20}} 
    \put(60, 2){\makebox(0, -3){\textcircled{\tiny $2$}}}
    \put(67, -6){\makebox(0, 0){\tiny $(1)$}}
    \put(30, -3){\line(0, -1){11}}
    \put(30, -19){\makebox(0, -3){\textcircled{\tiny $2$}}}
    \put(37, -27){\makebox(0, 0){\tiny $(1)$}}
\end{picture}
\\
$\gD_4(2)$ & 
\\
 & 
\\
\hline
 & 
\begin{picture}(185,10)(-15,0)
    \put(0, 2){\makebox(0, -3){\textcircled{\tiny $2$}}}
    \put(7, -6){\makebox(0, 0){\tiny $(1)$}}
    \put(5, 2){\line(1, 0){20}} 
    \put(30, 2){\makebox(0, -3){\textcircled{\tiny $3$}}}
    \put(37, -6){\makebox(0, 0){\tiny $(2)$}}
    \put(35, 2){\line(1, 0){20}} 
    \put(60, 2){\makebox(0, -3){\textcircled{\tiny $2$}}}
    \put(67, -6){\makebox(0, 0){\tiny $(2)$}}
    \put(65, 2){\line(1, 0){15}}
    \put(82, 2){\line(1, 0){2}}
    \put(86, 2){\line(1, 0){2}}
    \put(90, 2){\line(1, 0){2}}
    \put(94, 2){\line(1, 0){16}}
    \put(115, 2){\makebox(0, -3){\textcircled{\tiny $2$}}}
    \put(122, -6){\makebox(0, 0){\tiny $(2)$}}
    \put(120, 2){\line(1, 0){20}} 
    \put(145, 2){\makebox(0, -3){\textcircled{\tiny $2$}}}
    \put(152, -6){\makebox(0, 0){\tiny $(1)$}}
    \put(115, -3){\line(0, -1){11}} 
    \put(115, -19){\makebox(0, -3){\textcircled{\tiny $2$}}}
    \put(122, -27){\makebox(0, 0){\tiny $(1)$}}
\end{picture}
\\
$\gD_t(1)$ & \\
 & 
\\
 & 
\begin{picture}(185,10)(-15,0)
    \put(0, 2){\makebox(0, -3){\textcircled{\tiny $4$}}}
    \put(7, -6){\makebox(0, 0){\tiny $(2)$}}
    \put(5, 2){\line(1, 0){20}} 
    \put(30, 2){\makebox(0, -3){\textcircled{\tiny $2$}}}
    \put(37, -6){\makebox(0, 0){\tiny $(2)$}}
    \put(35, 2){\line(1, 0){20}} 
    \put(60, 2){\makebox(0, -3){\textcircled{\tiny $2$}}}
    \put(67, -6){\makebox(0, 0){\tiny $(2)$}}
    \put(65, 2){\line(1, 0){15}}
    \put(82, 2){\line(1, 0){2}}
    \put(86, 2){\line(1, 0){2}}
    \put(90, 2){\line(1, 0){2}}
    \put(94, 2){\line(1, 0){16}}
    \put(115, 2){\makebox(0, -3){\textcircled{\tiny $2$}}}
    \put(122, -6){\makebox(0, 0){\tiny $(2)$}}
    \put(120, 2){\line(1, 0){20}} 
    \put(145, 2){\makebox(0, -3){\textcircled{\tiny $2$}}}
    \put(152, -6){\makebox(0, 0){\tiny $(1)$}}
    \put(115, -3){\line(0, -1){11}} 
    \put(115, -19){\makebox(0, -3){\textcircled{\tiny $2$}}}
    \put(122, -27){\makebox(0, 0){\tiny $(1)$}}
\end{picture}
\\
$\gD_t(2)$ & \\
{\footnotesize $(t\geqslant 5)$} & 
\\
\hline
\end{tabular}
\end{center}
(The dual graph of 
 $\gA_n(l, m)$ (resp.\ $\gD_n(m)$) is of type $\gA_n$ (resp.\ $\gD_n$).) 
\end{table}

\begin{proof}
By \cite[\S 4]{KoMo}, all of the exceptional curves are nonsingular rational curves 
and the weighted dual graph $\Gamma$ 
of $-3K_{M/S}$ is a tree and either a straight chain or having exactly one fork. 
Assume that $\Gamma$ is a straight chain. Then $\Gamma$ is of the form: 
\begin{center}
    \begin{picture}(150, 45)(0, 45)
    \put(0, 60){\makebox(0, -3){{$c_1$}}}
    \put(0, 60){\circle{20}}
    \put(0, 75){\makebox(0, 0)[b]{$E_1$}}
    \put(7, 52){\makebox(10, -10){\tiny $(w_1)$}}
    \put(10, 60){\line(1, 0){20}}
    \put(40, 60){\makebox(0, -3){{$c_2$}}}
    \put(40, 60){\circle{20}}
    \put(47, 52){\makebox(10, -10){\tiny $(w_2)$}}
    \put(40, 75){\makebox(0, 0)[b]{$E_2$}}
    \put(50, 60){\line(1, 0){15}} 
    \put(67, 60){\line(1, 0){2}}
    \put(71, 60){\line(1, 0){2}}
    \put(75, 60){\line(1, 0){2}}    
    \put(79, 60){\line(1, 0){16}}
    \put(105, 60){\makebox(0, -3){{\footnotesize $c_{t-1}$}}}
    \put(105, 60){\circle{20}}
    \put(105, 75){\makebox(0, 0)[b]{$E_{t-1}$}}
    \put(120, 52){\makebox(10, -10){\tiny $(w_{t-1})$}}
    \put(115, 60){\line(1, 0){30}}
    \put(155, 60){\makebox(0, -3){{$c_t$}}}
    \put(155, 60){\circle{20}}
    \put(155, 75){\makebox(0, 0)[b]{$E_t$}}
    \put(162, 52){\makebox(10, -10){\tiny $(w_t)$}}
    \end{picture}
\end{center}
We note that $c_i\geqslant 2$ and $w_i=1$ or $2$. 
We only consider the case $t\geqslant 4$. (The case $t\leqslant 3$ can be proven 
same way.) 
Since $(3K_{M/S}\cdot E_i)=3(K_M\cdot E_i)=3(c_i-2)$, we have 
\[
c_i=
\begin{cases}
(6-w_2)/(3-w_1) & \text{if }\,\,i=1, \\
(6-w_{i-1}-w_{i+1})/(3-w_i) & \text{if }\,\,2\leqslant i\leqslant t-1, \\
(6-w_{t-1})/(3-w_t) & \text{if }\,\,i=t. 
\end{cases}
\]
Suppose that $w_i=1$ for some $2\leqslant i\leqslant t-1$. We may assume that 
$w_j=2$ for any $2\leqslant j\leqslant i-1$ if $i\geqslant 3$. 
If $i\geqslant 3$, then $c_i=(6-w_{i-1}-w_{i+1})/2<2$, a contradiction. 
If $i=2$, then we have $c_2=2$. 
However, we see that $c_1=5/2$, a contradiction.
Thus $w_i=2$ for any $2\leqslant i\leqslant t-1$. 
Hence the form of $\Gamma$ is one of $\gA_t(1, 1)$, $\gA_t(1, 2)$ or $\gA_t(2, 2)$. 

Assume that $\Gamma$ has one fork. Then $\Gamma$ is of the form: 
\begin{center}
    \begin{picture}(250, 130)(40, 100)
    \put(40, 160){\makebox(0, -3){{$a_1$}}}
    \put(40, 160){\circle{20}}
    \put(47, 152){\makebox(10, -10){\tiny $(e_1)$}}
    \put(40, 175){\makebox(0, 0)[b]{$A_1$}}
    \put(50, 160){\line(1, 0){15}} 
    \put(67, 160){\line(1, 0){2}}
    \put(71, 160){\line(1, 0){2}}
    \put(75, 160){\line(1, 0){2}}    
    \put(79, 160){\line(1, 0){16}}
    \put(105, 160){\makebox(0, -3){{$a_s$}}}
    \put(105, 160){\circle{20}}
    \put(105, 175){\makebox(0, 0)[b]{$A_s$}}
    \put(112, 152){\makebox(10, -10){\tiny $(e_s)$}}
    \put(115, 160){\line(1, 0){30}}
    \put(155, 160){\makebox(0, -3){{$d$}}}
    \put(155, 160){\circle{20}}
    \put(155, 175){\makebox(0, 10){$D$}}
    \put(166, 159){\makebox(10, -10){\tiny $(h)$}}
    \put(162, 167){\line(1, 1){30}}
    \put(202, 200){\makebox(0, -3){{$b_t$}}}
    \put(202, 200){\circle{20}}
    \put(202, 215){\makebox(0, 0)[b]{$B_t$}}
    \put(209, 192){\makebox(10, -10){\tiny $(f_t)$}}
    \put(212, 200){\line(1, 0){15}} 
    \put(229, 200){\line(1, 0){2}}
    \put(233, 200){\line(1, 0){2}}
    \put(237, 200){\line(1, 0){2}}    
    \put(241, 200){\line(1, 0){16}}
    \put(267, 200){\makebox(0, -3){{$b_1$}}}
    \put(267, 200){\circle{20}}
    \put(267, 215){\makebox(0, 0)[b]{$B_1$}}
    \put(274, 192){\makebox(10, -10){\tiny $(f_1)$}}
    \put(162, 153){\line(1, -1){30}}
    \put(202, 120){\makebox(0, -3){{$c_u$}}}
    \put(202, 120){\circle{20}}
    \put(202, 140){\makebox(0, 0){$C_u$}}
    \put(209, 112){\makebox(10, -10){\tiny $(g_u)$}}
    \put(212, 120){\line(1, 0){15}}
    \put(229, 120){\line(1, 0){2}}
    \put(233, 120){\line(1, 0){2}}
    \put(237, 120){\line(1, 0){2}}
    \put(241, 120){\line(1, 0){16}}
    \put(267, 120){\circle{20}}
    \put(267, 120){\makebox(0, -3){{$c_1$}}}
    \put(267, 120){\circle{20}}
    \put(267, 140){\makebox(0, 0){$C_1$}}
    \put(274, 112){\makebox(10, -10){\tiny $(g_1)$}}
    \end{picture}
\end{center}
Using the same argument, we have $d=(6-e_s-f_t-g_u)/(3-h)$. Thus $h=2$ and 
we can assume that $e_s=f_t=1$. Then we must have $s=t=1$ and $g_i=2$ for 
any $2\leqslant i\leqslant u$ by the same argument. Therefore the assertion holds. 
\end{proof}

\section{Log del Pezzo surfaces and related objects}\label{dP_section}

In this section, we define the notion of log del Pezzo surfaces, 
the notion of $3$-basic pairs, the notion of $3$-fundamental triplets, 
and the notion of bottom tetrads, 
and see the correspondence among them.

\subsection{Log del Pezzo surfaces}

\begin{definition}\label{dP_dfn}
\begin{enumerate}
\renewcommand{\theenumi}{\arabic{enumi}}
\renewcommand{\labelenumi}{(\theenumi)}
\item\label{dP_dfn1}
A normal projective surface $S$ is called a \emph{log del Pezzo surface} 
if $S$ is log-terminal and the anti-canonical divisor $-K_S$ is an ample $\Q$-Cartier 
divisor.
\item\label{dP_dfn2}
Fix $a\geqslant 2$. 
A log del Pezzo surface is called a \emph{log del Pezzo surface of index $a$}
if $-aK_S$ is Cartier and $-a'K_S$ is not Cartier for any positive integer $a'<a$. 
\end{enumerate}
\end{definition}

\begin{remark}\label{dP_rmk}
Any log del Pezzo surface is a rational surface by \cite[Proposition 3.6]{N}. 
In particular, the Picard group $\Pic(S)$ of $S$ is a finitely generated and 
torsion-free Abelian group. 
\end{remark}

\subsection{$a$-basic pairs}\label{basic_section}

We introduce the following notion which is a kind of 
modification of the notion of basic pairs in the sense of \cite[\S3]{N}.

\begin{definition}\label{basic_dfn}
Fix $a\geqslant 2$. A pair $(M, E_M)$ is called an \emph{$a$-basic pair} 
if the following conditions are satisfied: 
\begin{enumerate}
\renewcommand{\theenumi}{\arabic{enumi}}
\renewcommand{\labelenumi}{($\sC$\theenumi)}
\item\label{basic_dfn1}
$M$ is a nonsingular projective rational surface.
\item\label{basic_dfn2}
$E_M$ is a nonzero effective divisor on $M$ such that 
$\coeff E_M\subset\{1,\dots, a-1\}$ 
and $\Supp E_M$ is simple normal crossing.
\item\label{basic_dfn3}
A Cartier divisor $L_M\sim-aK_M-E_M$ (called the 
\emph{fundamental divisor} of $(M, E_M)$) satisfies that 
$K_M+L_M$ is nef and $(K_M+L_M\cdot L_M)>0$.
\item\label{basic_dfn4}
For any component $E_0\leqslant E_M$, we have $(L_M\cdot E_0)=0$.
\end{enumerate}
\end{definition}

Now we see the correspondence between log del Pezzo surfaces of index $a$ 
and $a$-basic pairs. The proof is essentially same as the proof in 
\cite[Proposition 3.7]{F}. 

\begin{proposition}\label{dP-basic_prop}
Fix $a\geqslant 2$. 
\begin{enumerate}
\renewcommand{\theenumi}{\arabic{enumi}}
\renewcommand{\labelenumi}{(\theenumi)}
\item\label{dP-basic_prop1}
Let $S$ be a non-Gorenstein log del Pezzo surface such that $-aK_S$ is Cartier. 
Let $\alpha\colon M\to S$ be the minimal resolution 
of $S$ and let $E_M:=-aK_{M/S}$. 
Then $(M, E_M)$ is an $a$-basic pair and the divisor $\alpha^*(-aK_S)$ is the 
fundamental divisor of $(M, E_M)$. 
\item\label{dP-basic_prop2}
Let $(M, E_M)$ be a $a$-basic pair and $L_M$ be 
the fundamental divisor of $(M, E_M)$.
Then there exists a projective and birational morphism $\alpha\colon M\to S$ 
such that $S$ is a non-Gorenstein log del Pezzo surface with $-aK_S$ Cartier and 
$L_M\sim\alpha^*(-aK_S)$ holds. 
Moreover, the morphism $\alpha$ is the minimal resolution of $S$. 
\end{enumerate}
In particular, there is a one-to-one correspondence between the set of 
isomorphism classes of log del Pezzo surfaces of index three and the set of 
isomorphism classes of $3$-basic pairs. 
\end{proposition}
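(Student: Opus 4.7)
The plan is to establish the bijection by constructing mutually inverse operations in both directions and verifying the defining conditions.

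For direction (1), starting from $S$ with minimal resolution $\alpha\colon M\to S$, I set $E_M:=-aK_{M/S}$ and $L_M:=\alpha^*(-aK_S)$; the identity $K_M=\alpha^*K_S+K_{M/S}$ gives $L_M\sim -aK_M-E_M$ automatically. Condition $(\sC1)$ is Remark \ref{dP_rmk}. For $(\sC2)$, integrality together with $\coeff E_M\subset\{1,\dots,a-1\}$ follows from $aK_S$ being Cartier, from $-1<a(E_0,S)\le 0$ (log-terminality and minimality of the resolution), and from the non-Gorenstein hypothesis; the SNC condition on $\Supp E_M$ is read off from Table \ref{graph_table}. Condition $(\sC4)$ and the intersection number $(K_M+L_M)\cdot L_M=a(a-1)(K_S^2)>0$ in $(\sC3)$ are immediate from the projection formula. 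Nefness of $K_M+L_M$ I handle by rewriting $K_M+L_M=\alpha^*(-(a-1)K_S)+K_{M/S}$: for $\alpha$-exceptional curves $E_0$ the pullback term vanishes while $K_M\cdot E_0=-2-E_0^2\ge 0$ (since $E_0^2\le -2$ on a minimal resolution of a log-terminal singularity), and for a non-exceptional $C$ the contribution $K_{M/S}\cdot C\le 0$ is dominated by the strictly positive $\alpha^*(-(a-1)K_S)\cdot C$ through a local analysis at each singularity, as in \cite[Proposition 3.7]{F}.

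For direction (2), given an $a$-basic pair $(M,E_M)$ with fundamental divisor $L_M$, the key algebraic identity is $(a-1)L_M=a(K_M+L_M)+E_M$. This forces $L_M$ to be nef: for $C\not\le E_M$ both $(K_M+L_M)\cdot C\ge 0$ (by $(\sC3)$) and $E_M\cdot C\ge 0$ give $L_M\cdot C\ge 0$, while for $C\le E_M$ condition $(\sC4)$ gives $L_M\cdot C=0$. Using $(\sC4)$ once more yields $L_M^2=-aK_M\cdot L_M$, so $(K_M+L_M)\cdot L_M=\tfrac{a-1}{a}L_M^2$ and the hypothesis forces $L_M^2>0$; hence $L_M$ is nef and big. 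The base-point-free theorem for smooth projective surfaces (valid in any characteristic) then makes $|mL_M|$ base-point free for $m\gg 0$ and defines a birational morphism $\alpha\colon M\to S$ onto its normal image, contracting exactly the curves with $L_M\cdot C=0$. No $(-1)$-curve $\ell$ can be contracted, for then $(K_M+L_M)\cdot\ell=-1$ would contradict nefness, so $\alpha$ is the minimal resolution of $S$. Combining $L_M\sim -aK_M-E_M$ with $K_M=\alpha^*K_S+K_{M/S}$ forces $E_M=-aK_{M/S}$; condition $(\sC2)$ then yields $a(E_0,S)\in(-1,0]$ with $aK_S$ Cartier, so $S$ is log-terminal with $-aK_S$ Cartier, and $E_M\neq 0$ gives non-Gorenstein. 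Ampleness of $-K_S$ follows by Nakai--Moishezon from $L_M^2>0$ and $L_M\cdot C>0$ on every non-contracted curve.

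The two constructions are mutual inverses by their explicit definitions: the divisor $L_M$ produced from $S$ in (1) is exactly the one defining $\alpha$ in (2), and the discrepancy divisor recovered in (1) coincides with the $E_M$ given as input to (2). Passing to isomorphism classes and specialising to $a=3$ yields the asserted bijection. I expect the main obstacle to be the nefness check for $K_M+L_M$ in direction (1) on non-exceptional curves, which requires a careful local intersection analysis at each singular point using the dual graph information of Table \ref{graph_table}; the other technical input, the base-point-free theorem in direction (2), is standard for smooth projective rational surfaces in arbitrary characteristic.
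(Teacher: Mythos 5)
Your outline gets the easy parts of (1) right (and your direct computation $(K_M+L_M\cdot L_M)=a(a-1)(K_S^2)>0$ is in fact cleaner than the paper's Hodge-index detour), but the crux of (1) is the nefness of $K_M+L_M$, and your plan for it does not work. You propose to show, for non-exceptional $C$, that $-K_{M/S}\cdot C$ is ``dominated by'' $(a-1)\,\alpha^*(-K_S)\cdot C$ ``through a local analysis at each singularity.'' The inequality $(a-1)(-K_S\cdot\alpha_*C)\geqslant -K_{M/S}\cdot C$ compares a \emph{global} degree (which can be as small as $(a-1)/a$, since only $-aK_S$ is Cartier) with a \emph{sum of local} contributions, one for each singular point on $\alpha_*C$, each of which can be as large as $(a-1)/a$ per point of passage; a curve of minimal anticanonical degree through two such points would violate the inequality, and no analysis local to the singularities can exclude this configuration --- what excludes it is global geometry. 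The paper's argument is Mori-theoretic: if $K_M+L_M$ is not nef, then (as $L_M=\alpha^*(-aK_S)$ is nef) there is a $K_M$-negative extremal ray on which $K_M+L_M$ is negative; a $(-1)$-curve $\gamma$ spanning it would satisfy $(L_M\cdot\gamma)=0$, hence be $\alpha$-exceptional, contradicting minimality of the resolution; so by \cite[Theorem 2.1]{mori} the ray gives a Mori fiber space, forcing $M\simeq\pr^2$ or $\F_n$, and since $\alpha$ is not an isomorphism, $M\simeq\F_n$ and $S\simeq\pr(1,1,n)$, where the explicit formulas for $E_M$ and $K_M+L_M$ show nefness after all --- contradiction. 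Note also that \cite[Proposition 3.7]{F}, which you cite as the source of the ``local analysis,'' runs exactly this global extremal-ray argument, so the key idea of the paper's proof is missing from your proposal.

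In (2), your justification of semi-ampleness is also flawed: there is no ``base-point-free theorem for smooth projective surfaces'' saying that a nef and big divisor is semi-ample --- Zariski's example (a nef and big divisor on a blow-up of $\pr^2$ at twelve points on a cubic, restricting to a degree-zero non-torsion bundle on the strict transform of the cubic) lives on a smooth rational surface. What saves the argument here is extra structure: $-aK_M\sim L_M+E_M$ with $L_M$ nef and big and $E_M$ effective, so $-K_M$ is big, hence $M$ is a Mori dream space by \cite{testa} (this is Corollary \ref{dP-basic_cor}) and every nef divisor on $M$ is semi-ample; this, in arbitrary characteristic, is the mechanism behind \cite[Proposition 3.7]{F} and \cite{N}, which the paper invokes for (2). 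Two smaller points: the $L_M$-trivial locus may contain $(-2)$-curves not in $E_M$ (harmless, since their discrepancy is $0$), so ``contracting exactly the curves with $L_M\cdot C=0$'' still yields the minimal resolution but $\Exc(\alpha)\supsetneq\Supp E_M$ is possible and should be addressed; and the identity $E_M=-aK_{M/S}$ needs the standard negative-definiteness argument ($E_M+aK_M$ and $-aK_{M/S}+aK_M$ are both $\alpha$-numerically trivial, and their difference is exceptional), not just the linear-equivalence bookkeeping you give.
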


\begin{proof}
The proof of \eqref{dP-basic_prop2} is essentially same as the proof in 
\cite[Proposition 3.7]{F}. We only prove \eqref{dP-basic_prop1}. 
The conditions $(\sC\ref{basic_dfn1})$, $(\sC\ref{basic_dfn2})$ and 
$(\sC\ref{basic_dfn4})$ follow immediately. 
We check the condition $(\sC\ref{basic_dfn3})$. 
Assume that $K_M+L_M$ is not nef. 
If there exists a $(-1)$-curve $\gamma$ on $M$ such that 
$(K_M+L_M\cdot \gamma)<0$, then $(L_M\cdot\gamma)=0$. However, this implies that 
$\gamma$ is $\alpha$-exceptional. This leads to a contradiction since $\alpha$ is 
the minimal resolution. 
Hence $M\simeq\pr^2$ or $\F_n$ by \cite[Theorem 2.1]{mori} and the fact $M$ is a 
nonsingular rational surface. Since $\alpha$ is not an isomorphism, 
$M\simeq\F_n$ and 
$S$ is isomorphic 
to the weighted projective plane $\pr(1, 1, n)$ for some $n\geqslant 2$. 
This implies that $E_M=(a(n-2)/n)\sigma$ and 
$K_M+L_M\sim(-2+a(n+2)/n)\sigma+(a-1)(n+2)l$. However, this leads to a contradiction 
since we assume that $K_M+L_M$ is not nef. Thus $K_M+L_M$ must be nef. 
If $(K_M+L_M\cdot L_M)=0$, then $-K_M$ is numerically equivalent to $L_M$ 
by the Hodge index theorem. In particular, $-K_M$ is nef and big. 
This implies that $S$ has at most du Val singularities. This leads to a contradiction. 
Thus $(K_M+L_M\cdot L_M)>0$.
\end{proof}

As a corollary of Proposition \ref{dP-basic_prop}, we have the following result. 
The proof is same as the proof in \cite[Lemma 3.8]{F}. We omit the proof. 

\begin{corollary}\label{dP-basic_cor}
Let $(M, E_M)$ be a $3$-basic pair and $L_M$ be the fundamental divisor. 
Then the following hold. 
\begin{enumerate}
\renewcommand{\theenumi}{\arabic{enumi}}
\renewcommand{\labelenumi}{(\theenumi)}
\item\label{dP-basic_cor1}
Any connected component of the weighted dual graph of $E_M$ is of the form in 
Table \ref{graph_table}. 
\item\label{dP-basic_cor2}
If a curve $C$ on $M$ satisfies that $C\cap E_M\neq\emptyset$ and 
$(L_M\cdot C)=0$, then $C\leqslant E_M$ holds. 
\item\label{dP-basic_cor1}
The anti-canonical divisor $-K_M$ is big and non-nef. 
In particular, $M$ is a Mori dream space $($for the definition, see \cite{testa}$)$. 
\end{enumerate}
\end{corollary}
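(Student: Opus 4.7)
The plan is to exploit the correspondence in Proposition \ref{dP-basic_prop} to translate each assertion into a statement about the log del Pezzo surface $S$ of index three recovered from $(M, E_M)$ via the minimal resolution $\alpha\colon M\to S$, for which $L_M\sim\alpha^*(-3K_S)$ and $E_M=-3K_{M/S}$.

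For (1), the connected components of $\Supp E_M$ are precisely the reduced exceptional fibers of $\alpha$ sitting over the non-Gorenstein singular points of $S$. Indeed, $\alpha$ is crepant over Gorenstein (i.e., du Val) points, so those exceptional curves have coefficient zero in $-3K_{M/S}=E_M$; meanwhile the exceptional fibers over distinct singular points of a normal surface are disjoint. Thus each connected component of $\Supp E_M$ realises the weighted dual graph of $-3K_{M/S}$ over a single non-Gorenstein log-terminal singularity whose index divides three, and the classification of Section \ref{graph_section} (the proposition preceding Table \ref{graph_table}) provides the list.

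For (2), since $-3K_S$ is an ample Cartier divisor, the projection formula gives $(L_M\cdot C)=-3(K_S\cdot\alpha_*C)$; hence $(L_M\cdot C)=0$ forces $\alpha_*C=0$, so $C$ is $\alpha$-exceptional. Minimality of $\alpha$ implies $C$ is a component of the reduced exceptional fiber over some singular point $P\in S$. The hypothesis $C\cap E_M\neq\emptyset$ combined with disjointness of exceptional fibers over distinct points of $S$ forces $C$ to meet a component of $E_M$ lying over the same $P$; in particular $P$ is non-Gorenstein, so $C$ appears in $-3K_{M/S}=E_M$, yielding $C\leqslant E_M$.

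For (3), the relation $-3K_M\sim L_M+E_M$ writes $-3K_M$ as the sum of the big and nef divisor $L_M$ (the pullback of the ample divisor $-3K_S$ by a birational morphism) with the nonzero effective divisor $E_M$, so $-K_M$ is big. Non-nefness follows from (1): every connected component in Table \ref{graph_table} contains a vertex of weight $(-n)$ with $n\geqslant 3$, and for such a smooth rational curve $D\leqslant E_M$ adjunction gives $(-K_M\cdot D)=2+(D^2)=2-n\leqslant -1$. Finally, a smooth projective rational surface with big anticanonical divisor is a Mori dream space by \cite{testa}. The only real bookkeeping is in (2), where one must exclude the possibility that $C$ lies over a Gorenstein singular point; this is handled purely by the disjointness of distinct exceptional fibers over the normal surface $S$.
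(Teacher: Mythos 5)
Your argument is correct and takes essentially the route the paper intends (the paper omits the proof, citing \cite[Lemma 3.8]{F}): translate everything to the log del Pezzo surface $S$ via Proposition \ref{dP-basic_prop}, so that $E_M=-3K_{M/S}$, $L_M\sim\alpha^*(-3K_S)$, and the classification accompanying Table \ref{graph_table} applies over each non-Gorenstein point. The only step you leave implicit is the standard fact that on the minimal resolution every exceptional curve over a non-Gorenstein (non-du Val) log-terminal point has strictly negative discrepancy, hence positive coefficient in $-3K_{M/S}$ --- this is exactly what makes each connected component of $E_M$ the full fiber graph in (1) and what forces $C\leqslant E_M$ in (2); it follows from the usual negativity-plus-connectedness argument and is also implicitly used (the weights $w_i\in\{1,2\}$) in the proposition accompanying Table \ref{graph_table}.
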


\subsection{Median triplets}\label{triplet_section}

In order to classify $3$-basic pairs, we define the notion of 
median triplets which is a kind of modification of the notion of 
fundamental triplets in the sense of Nakayama \cite{N}. 
The correspondence between $3$-basic pairs and (pseudo-)median triplets 
will be given in Theorem \ref{corresp_thm}.

\begin{definition}\label{qfund_dfn}
A triplet $(Z, E_Z; \Delta_Z)$ is called a \emph{$3$-pseudo-fundamental multiplet 
of length one} if the following conditions are satisfied: 
\begin{enumerate}
\renewcommand{\theenumi}{\arabic{enumi}}
\renewcommand{\labelenumi}{($\sF$\theenumi)}
\item\label{fund_dfn1}
$Z$ is a nonsingular projective surface.
\item\label{fund_dfn2}
$\Delta_Z$ is a zero-dimensional subscheme of $Z$ 
which satisfies the $(\nu1)$-condition. 
\item\label{fund_dfn3}
$E_Z$ is a nonzero effective divisor on $Z$.
\item\label{fund_dfn4}
A divisor $L_Z\sim-3K_Z-E_Z$ $($called the \emph{fundamental 
divisor} of $(Z, E, \Delta_Z)$$)$ satisfies that
$(2K_Z+L_Z\cdot \gamma)\geqslant 0$ for any $(-1)$-curve $\gamma$ on $Z$.
\item\label{fund_dfn5}
Let $\phi\colon M\to Z$ be the elimination of $\Delta_Z$ and 
let $E_M:=(E_Z)_M^{\Delta_Z, 2}$. 
Then the pair $(M, E_M)$ is a $3$-basic pair 
$($called \emph{the associated $3$-basic pair}$)$. 
\end{enumerate}
Moreover, if $2K_Z+L_Z$ is not nef, then we call such triplet $(Z, E_Z; \Delta_Z)$ 
a \emph{$3$-fundamental multiplet of length one}.
\end{definition}

\begin{lemma}\label{fund_big_lem}
Let $(Z, E_Z; \Delta_Z)$ be a $3$-pseudo-fundamental multiplet of length one, 
$L_Z$ be the fundamental divisor of $(Z, E_Z; \Delta_Z)$ 
and $(M, E_M)$ be the associated $3$-basic pair. 
\begin{enumerate}
\renewcommand{\theenumi}{\arabic{enumi}}
\renewcommand{\labelenumi}{(\theenumi)}
\item\label{fund_big_lem1}
The divisor $L_M:=(L_Z)_M^{\Delta_Z, 1}$ is the fundamental divisor 
of the $3$-basic pair $(M, E_M)$. 
We have $L_Z$ is nef an big, $K_Z+L_Z$ is nef and $(K_Z+L_Z\cdot L_Z)>0$.
\item\label{fund_big_lem2}
If $2K_Z+L_Z$ is not nef, then $Z\simeq\pr^2$ or $\F_n$. Moreover, 
$(2K_Z+L_Z\cdot l)<0$ holds. 
\item\label{fund_big_lem3}
If $2K_Z+L_Z$ is nef, then $K_Z+L_Z$ is big. 
\item\label{fund_big_lem4}
We have $(L_Z\cdot E_Z)=2\deg\Delta_Z$. Moreover, for any nonsingular component 
$E_0\leqslant E_Z$, we have $(L_Z\cdot E_0)=\deg(\Delta_Z\cap E_0)$. 
\item\label{fund_big_lem5}
For any point $Q\in\Delta_Z$, $2\leqslant\mult_QE_Z\leqslant 4$ holds.
\end{enumerate}
\end{lemma}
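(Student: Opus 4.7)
The plan is to derive all five parts from the pullback identities $\phi^*K_Z = K_M - K_{M/Z}$ and $\phi^*E_Z = E_M + 2K_{M/Z}$, together with the explicit form of $K_{M/Z}$ from Proposition~\ref{elim_prop}(1) and the fact (via Proposition~\ref{dP-basic_prop}) that $L_M$ is the pullback of an ample Cartier divisor on the associated log del Pezzo surface, hence nef and big. For (1) I start by substituting, which gives $L_M \sim \phi^*L_Z - K_{M/Z} = (L_Z)_M^{\Delta_Z,1}$, so $L_M$ really is the fundamental divisor of $(M,E_M)$; moreover $K_M + L_M = \phi^*(K_Z + L_Z)$, whose nefness descends to $K_Z + L_Z$ by lifting curves to their strict transforms, and the projection formula gives $(K_Z + L_Z \cdot L_Z) = (K_M + L_M \cdot L_M) > 0$. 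For every irreducible curve $C \subset Z$,
\[
(L_Z \cdot C) = (\phi^*L_Z \cdot C^M) = (L_M \cdot C^M) + (K_{M/Z} \cdot C^M) \geqslant 0,
\]
because $L_M$ is nef and $K_{M/Z}$ is an effective $\phi$-exceptional divisor paired with a non-exceptional curve. Bigness of $L_Z$ then follows from $(L_Z^2) = (\phi^*L_Z)^2 = (L_M^2) - (K_{M/Z}^2) > 0$, since the exceptional intersection form is negative semi-definite while $(L_M^2) > 0$.

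For (2), supposing $2K_Z + L_Z$ is not nef, the Cone Theorem together with the nefness of $L_Z$ produces a $K_Z$-negative extremal ray on which $2K_Z + L_Z$ is negative. By $(\sF\ref{fund_dfn4})$ this ray is not spanned by a $(-1)$-curve, so the associated Mori contraction is of fiber type: either to a point, forcing $Z \simeq \pr^2$, or to $\pr^1$, making $Z$ a smooth rational ruled surface, hence $Z \simeq \F_n$. In either case the class $l$ of a line or a fiber spans the offending ray, so $(2K_Z + L_Z \cdot l) < 0$. For (3), set $A := K_Z + L_Z$ and $B := 2K_Z + L_Z$; both are nef, so $(A \cdot B) \geqslant 0$, and writing $B = 2A - L_Z$ yields $2(A^2) = (A \cdot B) + (A \cdot L_Z) > 0$ using $(A \cdot L_Z) > 0$ from (1); combined with nefness, $A$ is big.

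For (4) I expand
\[
(L_Z \cdot E_Z) = (\phi^*L_Z \cdot \phi^*E_Z) = (L_M + K_{M/Z})(E_M + 2K_{M/Z}).
\]
The term $(L_M \cdot E_M)$ vanishes by $(\sC\ref{basic_dfn4})$ (componentwise), and $\phi$-exceptionality of $K_{M/Z}$ collapses the remaining three terms to $-2(K_{M/Z})^2$. A direct calculation on each chain $\Gamma_{Q,1} + \cdots + \Gamma_{Q,k_Q}$ via Proposition~\ref{elim_prop}(1)---coefficients $1,\dots,k_Q$ in $K_{M/Z}$, self-intersections $-2$ except $-1$ at the tail, consecutive intersection $1$---gives $-(K_{M/Z})^2 = \sum_Q k_Q = \deg\Delta_Z$, proving the first claim. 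For a nonsingular component $E_0 \leqslant E_Z$, its strict transform $E_0^M$ is a component of $E_M$, so $(L_M \cdot E_0^M) = 0$ and it remains to match $(K_{M/Z} \cdot E_0^M)$ with $\deg(\Delta_Z \cap E_0)$; I handle this locally at each $P \in E_0 \cap \Delta_Z$ by splitting on whether $E_0$ contains the curvilinear direction of $\Delta_Z$ at $P$ (so $E_0^M$ meets only $\Gamma_{P,k_P}$, contributing $k_P = \deg_P(\Delta_Z \cap E_0)$) or is transverse to it (so $E_0^M$ meets only $\Gamma_{P,1}$, contributing $1 = \deg_P(\Delta_Z \cap E_0)$). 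For (5), the coefficient of $\Gamma_{Q,1}$ in $E_M = \phi^*E_Z - 2K_{M/Z}$ equals $\mult_Q E_Z - 2$, which by effectivity of $E_M$ and $(\sC\ref{basic_dfn2})$ must lie in $\{0,1,2\}$. The main technical point I expect is the local case analysis at the end of (4), where the position of the smooth curve $E_0$ relative to the curvilinear subscheme $\Delta_Z$ at $P$ must be unwound through the iterated blowups comprising the elimination, in order to match the intersection number on $M$ to the scheme-theoretic intersection length on $Z$.
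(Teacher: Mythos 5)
Your route is essentially the paper's: all five parts are extracted from the identities $L_M=\phi^*L_Z-K_{M/Z}$, $E_M=\phi^*E_Z-2K_{M/Z}$ and $K_M+L_M=\phi^*(K_Z+L_Z)$, with (2) via Mori's theorem and condition ($\sF$4), (3) by a nef-plus-positivity argument, and (4), (5) by bookkeeping on the exceptional chains of Proposition \ref{elim_prop}. Parts (1), (2), (3), (5) and the first identity of (4) are correct as you state them (including $(L_Z^2)=(L_M^2)-(K_{M/Z}^2)>0$ and $-(K_{M/Z}^2)=\deg\Delta_Z$); the paper is simply terser, quoting the elimination identities $-(K_{M/Z}^2)=\deg\Delta_Z$ and $(K_{M/Z}\cdot E_0^M)=\deg(\Delta_Z\cap E_0)$ from the cited machinery of Nakayama rather than reproving them.

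The one genuine gap is the local dichotomy at the end of (4). For a nonsingular component $E_0$ through $P\in\Delta_Z$ with $k:=\mult_P\Delta_Z$, it is not true that either $E_0^M$ meets $\Gamma_{P,k}$ (contributing $k$) or meets $\Gamma_{P,1}$ (contributing $1$): $E_0$ may osculate the curvilinear scheme to an intermediate order. Concretely, take $\sI_{\Delta_Z}=(y,x^3)$ and $E_0=\{y=x^2\}$; then $\sI_{\Delta_Z}+\sI_{E_0}=(y,x^2)$, so $\deg_P(\Delta_Z\cap E_0)=2$, while $E_0^M$ meets only $\Gamma_{P,2}$ and $(K_{M/Z}\cdot E_0^M)=2$. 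Under the ``tangent direction'' reading of your first case this is a counterexample to the claimed contribution $k_P$; under the ``$\Delta_Z\subset E_0$'' reading your two cases are not exhaustive. The statement you actually need is: $E_0^M$ meets exactly one curve $\Gamma_{P,j}$, transversally, where $j$ is the number of successive blowup centers over $P$ lying on the strict transforms of $E_0$, and this $j$ equals $\deg_P(\Delta_Z\cap E_0)$; equivalently, since $K_{M/Z}$ is the sum of the total transforms of the successive exceptional curves, $(K_{M/Z}\cdot E_0^M)$ counts exactly those centers, and a short induction on the blowups (using that $\Delta_Z$ is curvilinear at $P$ by the $(\nu1)$-condition) identifies that count with the length of $\sO_Z/(\sI_{\Delta_Z}+\sI_{E_0})$ at $P$. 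With that local lemma in place of your two-case split, part (4) is complete and the rest of your argument stands.
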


\begin{proof}
\eqref{fund_big_lem1}
We know that 
$-3K_M-E_M=\phi^*(-3K_Z-E_Z)-K_{M/Z}\sim\phi^*L_Z-K_{M/Z}$, 
where $\phi$ is the elimination of $\Delta_Z$.
Since $K_M+L_M=\phi^*(K_Z+L_Z)$ and $L_M=\phi^*L_Z-K_{M/Z}$, the assertions hold.

\eqref{fund_big_lem2}
Since $(2K_Z+L_Z\cdot\gamma)\geqslant 0$ for any $(-1)$-curve, $Z\simeq\pr^2$ or 
$\F_n$, and $(2K_Z+L_Z\cdot l)<0$ hold
by \cite[Theorem 2.1]{mori}. 

\eqref{fund_big_lem3}
Follows from the equality $2(K_Z+L_Z)=(2K_Z+L_Z)+L_Z$. 

\eqref{fund_big_lem4}
Since $0=(L_M\cdot E_M)=(L_Z\cdot E_Z)+2(K_{M/Z}^2)$, 
we have $(L_Z\cdot E_Z)=2\deg\Delta_Z$. 
Similarly, for any nonsingular component $E_0\leqslant E_Z$, 
we have $0=(L_M\cdot E_0^M)=(L_Z\cdot E_0)-(K_{M/Z}\cdot E_0^M)$. 

\eqref{fund_big_lem5}
Follows from the equality $\coeff_{\Gamma_{Q, 1}}E_M=\mult_QE_Z-2$.
\end{proof}

\begin{definition}\label{fund_dfn}
Let $(Z, E_Z; \Delta_Z)$ be a $3$-pseudo-fundamental multiplet of length one. 
Such $(Z, E_Z; \Delta_Z)$ is called a 
\emph{pseudo-median triplet} either if $K_Z+L_Z$ is big or if $K_Z+L_Z$ is not big, 
$Z\simeq\F_n$ ($K_Z+L_Z$ is trivial with respects to $\F_n\to\pr^1$), 
and the following two conditions are satisfied: 
\begin{enumerate}
\setcounter{enumi}{5}
\renewcommand{\theenumi}{\arabic{enumi}}
\renewcommand{\labelenumi}{($\sF$\theenumi)}
\item\label{fund_dfn6}
$\Delta_Z\cap\sigma=\emptyset$ holds, where $\sigma\subset Z$ is a minimal section. 
In particular, if $n=0$, then $\Delta_Z=\emptyset$. 
\item\label{fund_dfn7}
Assume that $E_Z$ contains a section $D$ of $\F_n/\pr^1$, then 
$\sigma\leqslant E_Z$ and 
$\coeff_\sigma E_Z\geqslant\coeff_DE_Z$ holds. Moreover, if 
$\coeff_\sigma E_Z=\coeff_DE_Z$, then $n+(D^2)\geqslant\deg(\Delta_Z\cap D)$ holds. 
\end{enumerate}
If $2K_Z+L_Z$ is not nef in addition, then we call such a triplet 
$(Z, E_Z; \Delta_Z)$ a \emph{median triplet}.
\end{definition}

\subsection{Bottom tetrads}\label{tetrad_section}

In this section, we define the notion of bottom tetrads which is also a kind of modification 
of the notion of fundamental triplets in the sense of Nakayama \cite{N}. 
The correspondence between (special) pseudo-median triplets and bottom tetrads 
will be given in Theorems \ref{corresp_thm} and \ref{corresp_triv_thm}.

\begin{definition}\label{qbottom_dfn}
A tetrad $(X, E_X; \Delta_Z, \Delta_X)$ is called a 
\emph{$3$-fundamental multiplet of length two} 
if the following conditions are satisfied: 
\begin{enumerate}
\renewcommand{\theenumi}{\arabic{enumi}}
\renewcommand{\labelenumi}{($\sB$\theenumi)}
\item\label{bottom_dfn1}
$X$ is a nonsingular projective surface.
\item\label{bottom_dfn2}
$\Delta_X$ is a zero-dimensional subscheme of $X$ 
which satisfies the $(\nu1)$-condition. 
\item\label{bottom_dfn3}
$E_X$ is a nonzero effective divisor on $X$.
\item\label{bottom_dfn4}
A divisor $L_X\sim-3K_X-E_X$ $($called the \emph{fundamental 
divisor} of $(X, E_X; \Delta_Z, \Delta_X)$$)$ satisfies that $2K_X+L_X$ is nef and 
$(3K_X+L_X\cdot \gamma)\geqslant 0$ for any $(-1)$-curve $\gamma$ on $X$. 
\item\label{bottom_dfn5}
Let $\psi\colon Z\to X$ be the elimination of $\Delta_X$ and 
let $E_Z:=(E_X)_Z^{\Delta_X, 1}$. 
Then the triplet $(Z, E_Z; \Delta_Z)$ is a $3$-pseudo-fundamental multiplet 
of length one.
$($The triplet is in fact a pseudo-median triplet $($Lemma \ref{FuBo_lem}$)$. 
We call the triplet \emph{the associated pseudo-median triplet}.$)$
\end{enumerate}
\end{definition}

\begin{lemma}\label{FuBo_lem}
Let $(X, E_X; \Delta_Z, \Delta_X)$ be a $3$-fundamental multiplet of length two, 
let $L_X$ be the fundamental divisor and let $(Z, E_Z; \Delta_Z)$ be the 
associated pseudo-median triplet. 
\begin{enumerate}
\renewcommand{\theenumi}{\arabic{enumi}}
\renewcommand{\labelenumi}{(\theenumi)}
\item\label{FuBo_lem1}
$X$ is isomorphic to either $\pr^2$ or $\F_n$. Moreover, $(E\cdot l)>0$ holds. 
\item\label{FuBo_lem2}
$L_Z:=(L_X)_Z^{\Delta_X, 2}$ is the 
fundamental divisor of $(Z, E_Z; \Delta_Z)$, $L_Z$ is nef and big, and 
$K_Z+L_Z$ is big. 
\item\label{FuBo_lem3}
We have $(L_X\cdot E_X)=2(\deg\Delta_Z+\deg\Delta_X)$. 
Moreover, for any nonsingular component 
$E_0\leqslant E_X$, we have 
$(L_X\cdot E_0)=\deg(\Delta_Z\cap E_0^Z)+2\deg(\Delta_X\cap E_0)$. 
\item\label{FuBo_lem4}
For any point $P\in\Delta_X$, $1\leqslant\mult_PE_X\leqslant 3$ holds.
\item\label{FuBo_lem5}
We have $(K_X+L_X\cdot L_X)>2\deg\Delta_X$.
\end{enumerate}
\end{lemma}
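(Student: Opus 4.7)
The plan is to transfer the statements on $X$ to the associated pseudo-fundamental multiplet $(Z,E_Z;\Delta_Z)$ via the elimination $\psi\colon Z\to X$, using the pull-back identities $E_Z=\psi^*E_X-K_{Z/X}$, $L_Z:=(L_X)_Z^{\Delta_X,2}=\psi^*L_X-2K_{Z/X}$, $K_Z+L_Z=\psi^*(K_X+L_X)-K_{Z/X}$, $2K_Z+L_Z=\psi^*(2K_X+L_X)$, together with $(\psi^*D\cdot K_{Z/X})=0$ and $(K_{Z/X}^2)=-\deg\Delta_X$. Rationality of $X$ is automatic, since the birational chain $M\to Z\to X$ starts with a rational $M$ by $(\sC\ref{basic_dfn1})$.

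For \eqref{FuBo_lem1}, the divisor $3K_X+L_X\sim -E_X$ is nonzero anti-effective, hence not nef. Writing $3K_X+L_X=K_X+(2K_X+L_X)$ and using the nefness of $2K_X+L_X$ from $(\sB\ref{bottom_dfn4})$, any curve $C$ with $(3K_X+L_X)\cdot C<0$ must satisfy $K_X\cdot C<0$; the cone theorem then supplies a $K_X$-negative extremal ray $R$ with $(3K_X+L_X)\cdot R<0$. The second part of $(\sB\ref{bottom_dfn4})$ forbids $R$ from being generated by a $(-1)$-curve, so its contraction is a Mori fiber space, and Mori's classification of rational surfaces forces $X\simeq\pr^2$ or $\F_n$ with $R$ spanned by the fiber class $[l]$; hence $(E_X\cdot l)=-((3K_X+L_X)\cdot l)>0$.

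For \eqref{FuBo_lem2}, direct expansion gives $-3K_Z-E_Z\sim\psi^*L_X-2K_{Z/X}$, identifying $L_Z=(L_X)_Z^{\Delta_X,2}$ as the fundamental divisor of $(Z,E_Z;\Delta_Z)$. By Lemma \ref{fund_big_lem}\eqref{fund_big_lem1}, $L_Z$ is nef and big, and because $2K_Z+L_Z=\psi^*(2K_X+L_X)$ is the pull-back of a nef divisor, Lemma \ref{fund_big_lem}\eqref{fund_big_lem3} gives $K_Z+L_Z$ big. Thus the first alternative of Definition \ref{fund_dfn} holds and $(Z,E_Z;\Delta_Z)$ is indeed a pseudo-median triplet. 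For \eqref{FuBo_lem3} and \eqref{FuBo_lem5}, the same substitutions yield $(L_Z\cdot E_Z)=(L_X\cdot E_X)-2\deg\Delta_X$ and $(K_Z+L_Z\cdot L_Z)=(K_X+L_X\cdot L_X)-2\deg\Delta_X$; combining with Lemma \ref{fund_big_lem}\eqref{fund_big_lem1},\eqref{fund_big_lem4} produces the total formula $(L_X\cdot E_X)=2(\deg\Delta_Z+\deg\Delta_X)$ and the inequality $(K_X+L_X\cdot L_X)>2\deg\Delta_X$. For the component refinement in \eqref{FuBo_lem3}, one first checks that the strict transform $E_0^Z$ of a nonsingular component $E_0\leqslant E_X$ stays nonsingular ($\mult_PE_0=1$ at each $P\in E_0\cap\Delta_X$), applies Lemma \ref{fund_big_lem}\eqref{fund_big_lem4} to obtain $(L_Z\cdot E_0^Z)=\deg(\Delta_Z\cap E_0^Z)$, and reads off $(K_{Z/X}\cdot E_0^Z)=\deg(\Delta_X\cap E_0)$ from $(\psi^*E_0\cdot K_{Z/X})=0$ together with the chain structure of $\psi^{-1}(P)$ in Proposition \ref{elim_prop}\eqref{elim_prop1}.

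For \eqref{FuBo_lem4}, effectivity of $E_Z$ forces $\coeff_{\Gamma_{P,1}}E_Z=\mult_PE_X-1\geqslant 0$, giving the lower bound. For the upper bound, $\Gamma_{P,1}^M$ is not $\phi$-exceptional, so $\coeff_{\Gamma_{P,1}^M}E_M=\coeff_{\Gamma_{P,1}}E_Z=\mult_PE_X-1$; this value lies in $\{0,1,2\}$ by $(\sC\ref{basic_dfn2})$, forcing $\mult_PE_X\leqslant 3$. The main technical step is the component part of \eqref{FuBo_lem3}, where the exact intersection of $E_0^Z$ with each exceptional chain must be controlled using the $(\nu1)$-condition; every other assertion reduces almost immediately to the pseudo-fundamental multiplet level via the pull-back identities and Lemma \ref{fund_big_lem}.
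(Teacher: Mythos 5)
Your proposal is correct and follows essentially the same route as the paper: everything is transferred to the associated pseudo-fundamental multiplet via the elimination identities $E_Z=\psi^*E_X-K_{Z/X}$, $L_Z=\psi^*L_X-2K_{Z/X}$, $2K_Z+L_Z=\psi^*(2K_X+L_X)$, then Lemma \ref{fund_big_lem} is applied, with (1) reduced to Mori's theorem (your cone-theorem discussion is just the unwound form of the paper's citation of \cite[Theorem 2.1]{mori}) and (4) read off from $\coeff_{\Gamma_{P,1}}E_Z=\mult_PE_X-1$ together with the coefficient bounds on $E_M$. Your extra details (rationality of $X$, the bound via $\coeff_{\Gamma_{P,1}^M}E_M\in\{0,1,2\}$, and the computation $(K_{Z/X}\cdot E_0^Z)=\deg(\Delta_X\cap E_0)$) only make explicit what the paper's terse proof leaves implicit, so no substantive difference.
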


\begin{proof}
\eqref{FuBo_lem1}
Since $E_X$ is nonzero effective, the divisor $3K_X+L_X$ is not nef. 
Then the assertion follows from \cite[Theorem 2.1]{mori}. 

\eqref{FuBo_lem2}
Follows from $L_Z\sim -3K_Z-(E_X)_Z^{\Delta_X, 1}$, 
$2K_Z+L_Z=\psi^*(2K_X+L_X)$ and 
Lemma \ref{fund_big_lem}, where $\psi$ is the elimination of $\Delta_X$. 

\eqref{FuBo_lem3}
We have $(L_X\cdot E_X)=(L_Z\cdot E_Z)+2(K_{Z/X}^2)$. 
Similarly, we have $(L_X\cdot E_0)=(L_Z\cdot E_0^Z)+2(K_{Z/X}\cdot E_0^Z)$. 
Thus the assertion holds by Lemma \ref{fund_big_lem}. 

\eqref{FuBo_lem4}
Follows from the equality $\coeff_{\Gamma_{P, 1}}E_Z=\mult_PE_X-1$.

\eqref{FuBo_lem5}
Follows from 
$(K_Z+L_Z\cdot L_Z)=(K_X+L_X\cdot L_X)-2\deg\Delta_X$.
\end{proof}

\begin{definition}\label{bottom_dfn}
Let $(X, E_X; \Delta_Z, \Delta_X)$ be a $3$-fundamental multiplet of length two 
and $L_X$ be a fundamental divisor. 
Such $(X, E_X; \Delta_Z, \Delta_X)$ is called a 
\emph{bottom tetrad} if one of the following holds: 
\begin{enumerate}
\renewcommand{\theenumi}{\Alph{enumi}}
\renewcommand{\labelenumi}{(\theenumi)}
\item\label{bot_dfn1}
$2K_X+L_X$ is big.
\item\label{bot_dfn2}
$2K_X+L_X$ is non-big and nontrivial, 
$X\simeq\F_n$ ($2K_X+L_X$ is trivial with respects to $\F_n\to\pr^1$) 
and the following conditions are satisfied: 
\begin{enumerate}
\setcounter{enumii}{5}
\renewcommand{\theenumii}{\arabic{enumii}}
\renewcommand{\labelenumii}{($\sB$\theenumii)}
\item\label{bottom_dfn6}
$\Delta_X\cap\sigma=\emptyset$ holds, where $\sigma\subset X$ is a minimal section. 
In particular, if $n=0$, then $\Delta_X=\emptyset$. 
\item\label{bottom_dfn7}
Assume that $\sigma\not\leqslant E_X$ or $n=0$, then any section 
$D\leqslant E_X$ of $\F_n/\pr^1$ satisfies that 
$(D^2)\geqslant\deg(\Delta_X\cap D)$. 
\item\label{bottom_dfn8}
Assume that $\sigma\leqslant E_X$ and $n\geqslant 1$. 
Then any section $D\leqslant E_X$ of $\F_n/\pr^1$ satisfies that 
$n+(D^2)\geqslant\deg(\Delta_X\cap D)$. 
\end{enumerate}
\item\label{bot_dfn3}
$2K_X+L_X$ is trivial. In this case, we require that either 
$X\simeq\pr^2$, or $\Delta_X=\emptyset$ and 
$X\simeq\pr^1\times\pr^1$, $\F_2$. Moreover, if $X\simeq\pr^2$, then 
the following conditions are satisfied: 
\begin{enumerate}
\setcounter{enumii}{8}
\renewcommand{\theenumii}{\arabic{enumii}}
\renewcommand{\labelenumii}{($\sB$\theenumii)}
\item\label{bottom_dfn9}
Assume that $E_X=C+l$, where $C$ is a nonsingular conic and $l$ is a line. 
Then $\Delta_X\cap C\cap l\neq\emptyset$. If we further assume that 
$|C\cap l|=\{P\}$ and $\deg(\Delta_X\setminus\{P\})\geqslant 4$, then 
$\Delta_Z\cap l\setminus\{P\}\neq\emptyset$. 
\item\label{bottom_dfn10}
Assume that $E_X=l_1+l_2+l_3$, where $l_1$, $l_2$, $l_3$ are distinct lines. 
Then $l_1\cap l_2\cap l_3=\emptyset$. Moreover, 
$\#|\Delta_X\cap((l_1\cap l_2)\cup(l_1\cap l_3)\cup(l_2\cap l_3))|\geqslant 2$. 
\item\label{bottom_dfn11}
Assume that $E_X=2l_1+l_2$, where $l_1$, $l_2$ are distinct lines. Set $P:=l_1\cap l_2$. 
Then the following conditions are satisfied: 
\begin{enumerate}
\renewcommand{\theenumiii}{\alph{enumiii}}
\renewcommand{\labelenumiii}{(\theenumiii)}
\item\label{bottom_dfn111}
$\#|\Delta_X\cap l_1\setminus\{P\}|\leqslant 1$. Moreover, if 
$\{P_1\}=|\Delta_X\cap l_1\setminus\{P\}|$, then $\mult_{P_1}\Delta_X\leqslant 2$ and 
$\mult_P\Delta_X=\mult_P(\Delta_X\cap l_2)$. 
\item\label{bottom_dfn112}
If $\deg\Delta_X=4$, then $\deg(\Delta_X\cap l_2)=3$. 
\item\label{bottom_dfn113}
If $\deg\Delta_X\geqslant 5$ and $\{P_1\}=|\Delta_X\cap l_1\setminus\{P\}|$, then 
either $\mult_{P_1}(\Delta_X\cap l_1)=2$ or $\deg(\Delta_X\cap l_1)=1$ holds. 
\end{enumerate}
\end{enumerate}
\end{enumerate}
\end{definition}

Now we see the correspondence among $3$-basic pairs, pseudo-median triplets 
and bottom tetrads. The relationship between pseudo-median triplets 
$(Z, E_Z; \Delta_Z)$ with $2K_Z+L_Z$ trivial and special bottom tetrads will be treaded 
in Section \ref{cubic_section}.

\begin{thm}\label{corresp_thm}
\begin{enumerate}
\renewcommand{\theenumi}{\arabic{enumi}}
\renewcommand{\labelenumi}{(\theenumi)}
\item\label{corresp_thm1}
Let $(M, E_M)$ be a $3$-basic pair and $L_M$ be the fundamental divisor. 
Then there exists a projective birational morphism $\phi\colon M\to Z$ onto 
a nonsingular surface and a zero-dimensional subscheme $\Delta_Z\subset Z$ 
satisfying the $(\nu1)$-condition such that the morphism $\phi$ is the elimination 
of $\Delta_Z$, the triplet $(Z, E_Z; \Delta_Z)$ is a pseudo-median triplet and 
the associated $3$-basic pair is equal to $(M, E_M)$, where 
$E_Z:=\phi_*E_M$. Moreover, the divisor $\phi_*L_M$ is the fundamental divisor of 
$(Z, E_Z; \Delta_Z)$. 
\item\label{corresp_thm2}
Let $(Z, E_Z; \Delta_Z)$ be a pseudo-median triplet such that $2K_Z+L_Z$ is nef 
and nontrivial, where $L_Z$ is the fundamental divisor. 
Then there exists a projective birational morphism $\psi\colon Z\to X$ onto 
a nonsingular surface and a zero-dimensional subscheme $\Delta_X\subset X$ 
satisfying the $(\nu1)$-condition such that the morphism $\psi$ is the elimination 
of $\Delta_X$, the tetrad $(X, E_X; \Delta_Z, \Delta_X)$ is a bottom tetrad and 
the associated pseudo-median triplet is equal to $(Z, E_Z; \Delta_Z)$, where 
$E_X:=\psi_*E_Z$. Moreover, the divisor $\psi_*L_Z$ is the fundamental divisor of 
$(X, E_X; \Delta_Z, \Delta_X)$. 
\end{enumerate}
\end{thm}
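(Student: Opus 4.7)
The plan is to prove both statements by running an appropriate minimal model program and recognizing the resulting birational contraction as an elimination via Proposition \ref{elim_prop}\eqref{elim_prop2}. For part \eqref{corresp_thm1}, I run a $(2K_M+L_M)$-MMP starting from $(M,E_M)$; for part \eqref{corresp_thm2}, a $(3K_Z+L_Z)$-MMP starting from $(Z,E_Z;\Delta_Z)$. In each case termination of the MMP produces the desired $\phi\colon M\to Z$ or $\psi\colon Z\to X$.

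For part \eqref{corresp_thm1}, each step contracts a $(-1)$-curve $\gamma$ with $(2K+L)\cdot\gamma<0$, while $K+L$ remains nef throughout (it is $\pi$-trivial at each step, so nefness passes to the pushforward). Combining $(K+L)\cdot\gamma\geqslant 0$ with the $(-1)$-curve arithmetic gives the uniform intersection data
\[
(K+L)\cdot\gamma=0,\quad (L\cdot\gamma)=1,\quad (E\cdot\gamma)=2.
\]
Since $L_M=\alpha^*(-3K_S)$ is nef (Proposition \ref{dP-basic_prop}) and $K+L$ is $\phi$-trivial, $-K$ is $\phi$-nef on the composition $\phi\colon M\to Z$. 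Proposition \ref{elim_prop}\eqref{elim_prop2} then provides a zero-dimensional $\Delta_Z$ satisfying the $(\nu1)$-condition such that $\phi$ is its elimination. The step-wise relations
\[
\pi_i^*L_{M_i}=L_{M_{i-1}}+K_{M_{i-1}/M_i},\qquad \pi_i^*E_{M_i}=E_{M_{i-1}}+2K_{M_{i-1}/M_i}
\]
telescope to $L_M=(L_Z)_M^{\Delta_Z,1}$ and $E_M=(E_Z)_M^{\Delta_Z,2}$ with $E_Z:=\phi_*E_M$ and $L_Z:=\phi_*L_M$, verifying that the associated $3$-basic pair of the resulting triplet coincides with the given $(M,E_M)$. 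The axioms ($\sF$\ref{fund_dfn1})--($\sF$\ref{fund_dfn5}) are now immediate from construction and termination; when $K_Z+L_Z$ fails to be big, $((K_Z+L_Z)^2)=0$ together with Lemma \ref{fund_big_lem} forces $Z\simeq\F_n$ with $K_Z+L_Z$ trivial on fibers, as required by Definition \ref{fund_dfn}.

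Part \eqref{corresp_thm2} is parallel: each step of the $(3K_Z+L_Z)$-MMP contracts a $(-1)$-curve $\gamma$ with $(L\cdot\gamma)=2$, $(E\cdot\gamma)=1$, and is $(2K+L)$-trivial (using the hypothesis that $2K_Z+L_Z$ is nef). The same Proposition \ref{elim_prop}\eqref{elim_prop2} argument identifies $\psi\colon Z\to X$ as the elimination of some $\Delta_X$ with the $(\nu1)$-condition, analogous telescoping recovers $L_Z$ and $E_Z$ from $L_X:=\psi_*L_Z$ and $E_X:=\psi_*E_Z$ via the formulas $(L_X)_Z^{\Delta_X,2}$ and $(E_X)_Z^{\Delta_X,1}$, and Lemma \ref{FuBo_lem}\eqref{FuBo_lem1} forces $X\simeq\pr^2$ or $\F_n$. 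Case \eqref{bot_dfn3} of Definition \ref{bottom_dfn} is ruled out by $2K_Z+L_Z=\psi^*(2K_X+L_X)\not\equiv 0$, so the output lies in case \eqref{bot_dfn1} or \eqref{bot_dfn2}, and axioms ($\sB$\ref{bottom_dfn1})--($\sB$\ref{bottom_dfn5}) are verified routinely.

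The principal obstacle is the ``normalization'' conditions ($\sF$\ref{fund_dfn6})--($\sF$\ref{fund_dfn7}) and ($\sB$\ref{bottom_dfn6})--($\sB$\ref{bottom_dfn8}), which single out preferred representatives among the many MMP outputs realizing the same $3$-basic pair (resp.\ pseudo-median triplet). These conditions may fail for a naive MMP and must be enforced by modifying the MMP route via elementary transformations on the Hirzebruch output: given a bad point $P\in\Delta\cap\sigma$ on $\F_n$, blowing up $P$ and contracting the strict transform of the fiber through $P$ produces $\F_{n\pm 1}$, and the new minimal section avoids the image of $\Delta\setminus\{P\}$; iterating handles all bad points, and a parallel manipulation establishes the section-coefficient/degree inequalities in ($\sF$\ref{fund_dfn7}) and ($\sB$\ref{bottom_dfn7})--($\sB$\ref{bottom_dfn8}). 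The delicate point is to check that these elementary transformations leave the telescoped formulas invariant, so that the associated $3$-basic pair (resp.\ pseudo-median triplet) is preserved; this is true because they only permute the blow-up centers of $\phi$ (resp.\ $\psi$) infinitely near a single point on the base and do not alter the underlying subscheme with $(\nu1)$-condition up to the isomorphism class of the eliminating morphism.
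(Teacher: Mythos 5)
Your first step---running the $(2K_M+L_M)$-MMP (resp.\ the $(3K_Z+L_Z)$-MMP), noting that every contracted $(-1)$-curve $\gamma$ is $(K+L)$-trivial (resp.\ $(2K+L)$-trivial) with $(L\cdot\gamma)=1$, $(E\cdot\gamma)=2$ (resp.\ $(L\cdot\gamma)=2$, $(E\cdot\gamma)=1$), recognizing the composite as an elimination via Proposition \ref{elim_prop} \eqref{elim_prop2}, and telescoping to get $E_M=(E_Z)_M^{\Delta_Z,2}$, $L_M=(L_Z)_M^{\Delta_Z,1}$ (resp.\ the analogous identities over $X$)---is exactly the paper's construction of a $3$-pseudo-fundamental multiplet of length one (resp.\ a $3$-fundamental multiplet of length two), and that part is fine.

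The gap is in the normalization step, which is where the real content of the theorem lies. Your elementary transformations centered at points of $\Delta\cap\sigma$ only bear on ($\sF$6)/($\sB$6) (and even there a single transformation does not make the new minimal section avoid the images of the remaining points of $\Delta\cap\sigma$, so one needs an iteration with a termination argument, as in \cite[Proposition 4.5, Step 1]{N}). For ($\sF$7) and ($\sB$7)--($\sB$8) you only assert that ``a parallel manipulation'' works, but these conditions have nothing to do with $\Delta\cap\sigma$: they demand that the minimal section of the target $\F_{n'}$ be a section of $E$ of maximal coefficient, together with degree bounds such as $n+(D^2)\geqslant\deg(\Delta_Z\cap D)$ for the other sections $D\leqslant E$. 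Enforcing them requires changing \emph{which} section becomes the minimal section, i.e.\ re-choosing the contraction to a Hirzebruch surface: the paper picks $D\leqslant E_M$ with largest coefficient and, among those, smallest $(D^2)=-n'$ (resp.\ $D\leqslant E_Z$ with smallest $(D^2)$), and invokes \cite[Lemma 4.4]{N} to obtain $\phi'\colon M\to\F_{n'}$ (resp.\ $\psi'\colon Z\to\F_{n'}$) over $\pr^1$ under which $D$ is the total transform of the minimal section; the degree bound in the equality case of ($\sF$7) then follows precisely from the minimality of $(D^2)$. Your proposal identifies neither this extremal choice nor the lemma realizing $D$ as the minimal section, so ($\sF$7)/($\sB$7)/($\sB$8) are not established. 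Finally, your closing invariance claim is misstated: the subscheme does change (it lives on a different surface and is not just a permutation of infinitely near centers); what must actually be checked is that the exceptional curves of the new morphism lie in fibers, on which $K_M+L_M$ (resp.\ $2K_Z+L_Z$) is trivial, so that $-K$ is again relatively nef and the new morphism is an elimination, and that $E_M=\phi'^*\phi'_*E_M-2K_{M/Z'}$ (resp.\ the analogue for $\psi'$), which is what guarantees that the associated $3$-basic pair (resp.\ pseudo-median triplet) is unchanged.
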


\begin{proof}
The idea of the proof based on the technique in \cite[Proposition 4.5]{N}. 
It is easy to get a $3$-pseudo-fundamental multiplet of length one from 
a $3$-basic pair (resp.\ to get a $3$-fundamental multiplet of length two 
from a pseudo-median triplet). 
Indeed, if there exists a $(-1)$-curve $\gamma$ 
such that $(2K_M+L_M\cdot\gamma)<0$ (resp.\ $(3K_Z+L_Z\cdot\gamma)<0$), then 
we contract the curve $\gamma$. We note that $(L_M\cdot\gamma)=1$ 
since $K_M+L_M$ is nef. 
(resp.\ $(L_Z\cdot\gamma)=2$ since $2K_Z+L_Z$ is nef). 
By continuing this process, we get a $3$-pseudo-fundamental multiplet of length one  
(resp.\ $3$-fundamental multiplet of length two). 

From now on, we assume that $K_M+L_M$ (resp.\ $2K_Z+L_Z$) is 
non-big and nontrivial. 
Then $Z$ (resp.\ $X$) is isomorphic to $\F_n$. 
We will replace the triplet (resp.\ the tetrad) if necessary. 
The condition ($\sF$\ref{fund_dfn6}) (resp.\ the condition ($\sB$6)) 
follows easily (see \cite[Proposition 4.5 Step 1]{N}).

\eqref{corresp_thm1}
We check the condition ($\sF$\ref{fund_dfn7}). 
Assume that $E_M$ contains a section of $M/\pr^1$.
We pick a section $D\leqslant E_M$ of $M/\pr^1$ 
such that the value $c:=\coeff_DE_M$ is 
largest among sections of $M/\pr^1$. Moreover, we replace $D$ such that 
the value $-n':=(D^2)$ is smallest among sections with $c=\coeff_DE_M$. 
Note that $n'\geqslant 2$ by Corollary \ref{dP-basic_cor}. 
By \cite[Lemma 4.4]{N}, there exists a morphism $\phi'\colon M\to Z'=\F_{n'}$ 
over $\pr^1$ 
such that $D$ is the total transform of the minimal section $\sigma'\subset\F_{n'}$. 
Then the triplet $(Z', \phi'_*E_M; \Delta_{Z'})$ satisfies the conditions 
($\sF$\ref{fund_dfn6}) and ($\sF$\ref{fund_dfn7}), where $\Delta_{Z'}$ corresponds to 
the morphism $\phi'$. 

\eqref{corresp_thm2}
We check the conditions ($\sB$7) and ($\sB$8). 
Assume that $E_Z$ contains a section of $Z/\pr^1$. 
If any section $D\leqslant E_Z$ satisfies that $(D^2)\geqslant 0$, then the condition 
($\sB$7) is satisfied. 
We assume that there exists a section $D\leqslant E_Z$ such that $(D^2)<0$. 
We replace $D\leqslant E_Z$ such that the value $-n':=(D^2)$ is smallest. 
By \cite[Lemma 4.4]{N}, there exists a morphism $\psi'\colon Z\to X'=\F_{n'}$ 
over $\pr^1$ 
such that $D$ is the total transform of the minimal section $\sigma'\subset\F_{n'}$. 
Then the tetrad $(X', \psi'_*E_Z; \Delta_Z, \Delta_{X'})$ satisfies the conditions 
($\sB$6) and ($\sB$8), 
where $\Delta_{X'}$ corresponds to 
the morphism $\psi'$. 
\end{proof}

\begin{proposition}\label{converse_prop}
\begin{enumerate}
\renewcommand{\theenumi}{\arabic{enumi}}
\renewcommand{\labelenumi}{(\theenumi)}
\item\label{converse_prop1}
Let $Z$ be a nonsingular projective rational surface, $E_Z$ be a nonzero effective 
divisor on $Z$, $L_Z$ be a divisor with $L_Z\sim -3K_Z-E_Z$, 
$\Delta_Z$ be a zero-dimensional closed subscheme of $Z$ which satisfies the 
$(\nu1)$-condition, $\phi\colon M\to Z$ be the elimination of $\Delta_Z$, 
$E_M:=(E_Z)_M^{\Delta_Z, 2}$ and $L_M:=(L_Z)_M^{\Delta_Z, 1}$. 
Assume that 
$K_Z+L_Z$ is nef and $(K_Z+L_Z\cdot L_Z)>0$, 
$\Supp E_M$ is simple normal crossing, 
$\coeff E_M\subset\{1, 2\}$ and $(L_M\cdot E_0)=0$ for any component 
$E_0\leqslant E_M$. Then the pair 
$(M, E_M)$ is a $3$-basic pair.
\item\label{converse_prop2}
Let $X$ be a nonsingular projective rational surface, $E_X$ be a nonzero effective 
divisor on $X$, $L_X$ be a divisor with $L_X\sim -3K_X-E_X$, 
$\Delta_X$ be a zero-dimensional closed subscheme of $X$ which satisfies the 
$(\nu1)$-condition, $\psi\colon Z\to X$ be the elimination of $\Delta_X$, 
$E_Z:=(E_X)_Z^{\Delta_X, 1}$, $L_Z:=(L_X)_Z^{\Delta_X, 2}$, 
$\Delta_Z$ be a zero-dimensional closed subscheme of $Z$ which satisfies the 
$(\nu1)$-condition, $\phi\colon M\to Z$ be the elimination of $\Delta_Z$, 
$E_M:=(E_Z)_M^{\Delta_Z, 2}$ and $L_M:=(L_Z)_M^{\Delta_Z, 1}$. 
Assume that 
$2K_X+L_X$ is nef, $(K_X+L_X\cdot L_X)>2\deg\Delta_X$, 
$\Supp E_M$ is simple normal crossing, 
$\coeff E_M\subset\{1, 2\}$ and $(L_M\cdot E_0)=0$ for any component 
$E_0\leqslant E_M$. Then the pair 
$(M, E_M)$ is a $3$-basic pair.
\end{enumerate}
\end{proposition}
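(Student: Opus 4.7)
The plan is to verify conditions $(\sC\ref{basic_dfn1})$--$(\sC\ref{basic_dfn4})$ of Definition \ref{basic_dfn} for the pair $(M, E_M)$. In both parts, $(\sC\ref{basic_dfn1})$, $(\sC\ref{basic_dfn2})$ and $(\sC\ref{basic_dfn4})$ follow directly from the hypotheses together with the fact that $M$ is a successive blow-up of a rational surface, so the real content lies in $(\sC\ref{basic_dfn3})$: the nefness of $K_M + L_M$ and the positivity of $(K_M + L_M \cdot L_M)$.

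For part \eqref{converse_prop1}, I would carry out a direct computation. From $L_M = \phi^* L_Z - K_{M/Z}$ and $K_M = \phi^* K_Z + K_{M/Z}$ one deduces $K_M + L_M = \phi^*(K_Z + L_Z)$, so $K_M + L_M$ is nef as the pullback of a nef divisor. Using $\phi^*(K_Z + L_Z) \cdot K_{M/Z} = 0$, the projection formula yields
\[
(K_M + L_M \cdot L_M) = (\phi^*(K_Z + L_Z) \cdot \phi^* L_Z - K_{M/Z}) = (K_Z + L_Z \cdot L_Z) > 0.
\]

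For part \eqref{converse_prop2}, the strategy is to apply part \eqref{converse_prop1} to the triplet $(Z, E_Z; \Delta_Z)$ once its hypotheses are verified. The relation $L_Z \sim -3K_Z - E_Z$ is immediate from the definitions, and
\[
(K_Z + L_Z \cdot L_Z) = (K_X + L_X \cdot L_X) + 2(K_{Z/X}^2) = (K_X + L_X \cdot L_X) - 2\deg\Delta_X > 0
\]
using $(K_{Z/X}^2) = -\deg\Delta_X$, a standard local computation on each straight chain from Proposition \ref{elim_prop}. The main obstacle is the nefness of $K_Z + L_Z$. Here I would write $2(K_Z + L_Z) = (2K_Z + L_Z) + L_Z$, observe that $2K_Z + L_Z = \psi^*(2K_X + L_X)$ is nef by pullback, and thus reduce the task to showing $L_Z$ is nef. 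Nefness of $L_Z$ on $\psi$-exceptional curves follows immediately from the discrepancy data of Proposition \ref{elim_prop}; for a non-exceptional $C \subset Z$ with image $C' = \psi(C)$, I would expand
\[
(L_Z \cdot C) = (L_X \cdot C') - 2(K_{Z/X} \cdot C),
\]
and exploit the hypothesis $(L_M \cdot E_0) = 0$ for every component $E_0 \leqslant E_M$, together with the SNC and coefficient constraints, to force $L_X$ to be nef on $X$: the case $C' \not\leqslant \Supp E_X$ uses the nefness of $2K_X + L_X = -K_X - E_X$ to give $(K_X \cdot C') \leqslant 0$ and $(E_X \cdot C') \leqslant -(K_X \cdot C')$, whence $(L_X \cdot C') = -2(K_X \cdot C') - (E_X \cdot C') \geqslant 0$, while the case $C' \leqslant \Supp E_X$ is handled by passing through the proper transform relation on $M$ and the given intersection condition. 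A local analysis of the decomposition $\psi^* C' = C + F$ at the points of $\Delta_X$ then controls the correction term $(K_{Z/X} \cdot C) = -(K_{Z/X} \cdot F)$, completing the argument.
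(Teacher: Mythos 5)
Your part \eqref{converse_prop1} and the numerical identity in part \eqref{converse_prop2} (namely $(K_Z+L_Z\cdot L_Z)=(K_X+L_X\cdot L_X)+2(K_{Z/X}^2)=(K_X+L_X\cdot L_X)-2\deg\Delta_X>0$) are correct and agree with the paper. The gap is in your treatment of the nefness of $K_Z+L_Z$. You reduce it, via $2(K_Z+L_Z)=(2K_Z+L_Z)+L_Z$, to proving that $L_Z$ itself is nef. The $\psi$-exceptional curves and the strict transforms of components of $E_X$ are indeed unproblematic (for the latter, $C^M\leqslant E_M$ gives $(L_Z\cdot C)=(L_M\cdot C^M)+(K_{M/Z}\cdot C^M)\geqslant 0$), but for a curve $C\subset Z$ whose image $C'$ is \emph{not} contained in $\Supp E_X$ you only establish $(L_X\cdot C')\geqslant 0$ (incidentally, the identity there should be $(L_X\cdot C')=-3(K_X\cdot C')-(E_X\cdot C')$, not $-2(K_X\cdot C')-(E_X\cdot C')$, though nonnegativity survives), whereas what is needed is $(L_X\cdot C')\geqslant 2(K_{Z/X}\cdot C)$. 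This stronger inequality is not a consequence of nefness of $2K_X+L_X$ plus a local analysis at the points of $\Delta_X$: nothing local forbids, say, a line $C'\not\leqslant E_X$ on $X=\pr^2$ having maximal contact with a curvilinear $\Delta_X$ of multiplicity $k$ at a point, which gives $(K_{Z/X}\cdot C)\geqslant k$ while $(L_X\cdot C')\leqslant 8$. Ruling out such curves ultimately uses the hypothesis $(L_M\cdot E_0)=0$ on the components of $E_M$ in a global, indirect way; in the paper, nefness (and bigness) of $L_Z$ is only ever deduced \emph{after} one knows the pair is $3$-basic (Lemma \ref{fund_big_lem}), so your intermediate claim, though true a posteriori, is not available with the sketch you give.

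The detour is also unnecessary, and this is where the paper's proof differs. If $(K_Z+L_Z\cdot C)<0$, then writing $K_Z+L_Z=2(2K_Z+L_Z)+E_Z$ and using that $2K_Z+L_Z=\psi^*(2K_X+L_X)$ is nef forces $(E_Z\cdot C)<0$, so $C$ is a component of $E_Z$ and hence $C^M$ is a component of $E_M$; on the other hand $2(K_Z+L_Z)=(2K_Z+L_Z)+L_Z$ forces $(L_Z\cdot C)<0$, while $(L_Z\cdot C)=(L_M+K_{M/Z}\cdot C^M)\geqslant(L_M\cdot C^M)=0$ by hypothesis, a contradiction. In other words, the only curves on which nefness of $K_Z+L_Z$ could fail are components of $E_Z$ --- precisely the curves your hypotheses control --- so the problematic case of your argument (curves not lying over $E_X$) never has to be confronted. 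If you replace your reduction to ``$L_Z$ is nef'' by this two-step use of the decomposition of $K_Z+L_Z$, the proof closes.
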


\begin{proof}
\eqref{converse_prop1}
Since $K_M+L_M=\phi^*(K_Z+L_Z)$, the divisor $K_M+L_M$ is nef and 
$(K_M+L_M\cdot L_M)>0$. Thus the assertion holds. 

\eqref{converse_prop2}
We know that $(K_Z+L_Z\cdot L_Z)=(K_X+L_X\cdot L_X)-2\deg\Delta_X$. 
By \eqref{converse_prop1}, it is enough to show that $K_Z+L_Z$ is nef. 
Assume that there exists a curve $C\subset Z$ such that 
$(K_Z+L_Z\cdot C)<0$. Since $2K_Z+L_Z=\psi^*(2K_X+L_X)$ is nef, we have 
\[
0>(K_Z+L_Z\cdot C)=2(2K_Z+L_Z\cdot C)+(E_Z\cdot C)\geqslant(E_Z\cdot C). 
\]
Thus $C\leqslant E_Z$. In particular, $C^M\leqslant E_M$. 
However, we have 
\begin{eqnarray*}
0 & > & 2(K_Z+L_Z\cdot C)=(2K_Z+L_Z\cdot C)+(L_Z\cdot C)\\
 & \geqslant & (L_Z\cdot C)=(L_M+K_{M/Z}\cdot C^M)\geqslant(L_M\cdot C^M).
\end{eqnarray*}
This contradicts to the assumption. Thus $K_Z+L_Z$ is nef. 
\end{proof}

\section{Local properties}\label{local_section}

In this section, we analyze the local properties of pseudo-median triplets 
and bottom tetrads.

\subsection{Local properties of pseudo-median triplets}\label{ZS_section}

Let $(Z, E_Z; \Delta_Z)$ be a $3$-pseudo-fundamental multiplet of length one, 
$Q\in\Delta_Z$ 
be a point, $\phi\colon M\to Z$ be the elimination of $\Delta_Z$ and 
$(M, E_M)$ be the associated $3$-basic pair.

\begin{lemma}\label{ZS1}
Assume that $E_Z=sl$ around $Q$, where $Q\in l$ is nonsingular and $s\geqslant 0$. 
Then $s=2$ and $\Delta\subset l$ around $Q$. 
Moreover, $E_M=2l^M$ around over $Q$. 
\end{lemma}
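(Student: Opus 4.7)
The strategy is to exploit the identity $E_M = \phi^* E_Z - 2K_{M/Z}$ coming from the definition $E_M = (E_Z)_M^{\Delta_Z,2}$ (Definition~\ref{elim_dfn}), together with the explicit chain description of $K_{M/Z}$ near $Q$ in Proposition~\ref{elim_prop}~\eqref{elim_prop1} and the coefficient bound $\coeff E_M\subset\{1,2\}$ from ($\sC$\ref{basic_dfn2}).

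Let $k:=\mult_Q\Delta_Z$ and set $b_j:=\coeff_{\Gamma_{Q,j}}(\phi^*l)$ for $j=1,\dots,k$. Since $K_{M/Z}=\sum_{j=1}^k j\,\Gamma_{Q,j}$ near $Q$ and $\phi^*E_Z=s\,\phi^*l$ around $Q$, I would obtain
\[
E_M = s\,l^M + \sum_{j=1}^k (s b_j - 2j)\,\Gamma_{Q,j}
\]
in a neighborhood of $\phi^{-1}(Q)$. Because $l$ is smooth at $Q$, $b_1=\mult_Q l=1$, so $\coeff_{\Gamma_{Q,1}}E_M = s-2$; effectivity of $E_M$ forces $s\geq 2$ (which also excludes $s=0$), while ($\sC$\ref{basic_dfn2}) gives $\coeff_{l^M}E_M=s\leq 2$, so $s=2$.

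Let $P_0=Q,P_1,\dots,P_{k-1}$ denote the centers of the successive blowups realizing $\phi$. By induction I would prove that $b_1=1$ and, for $j\geq 2$, $b_j=b_{j-1}+1$ if $P_{j-1}$ lies on the strict transform of $l$ and $b_j=b_{j-1}$ otherwise, using the chain structure of the exceptional curves from Proposition~\ref{elim_prop}~\eqref{elim_prop1}. In particular $b_j\leq j$ with equality precisely when all of $P_0,P_1,\dots,P_{j-1}$ lie on the strict transforms of $l$. With $s=2$ the non-negativity of $\coeff_{\Gamma_{Q,j}}E_M=2(b_j-j)$ combined with $b_j\leq j$ forces $b_j=j$ for every $j$, so all of these coefficients vanish and $E_M=2\,l^M$ around $\phi^{-1}(Q)$.

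Finally, having shown that every elimination center lies on the strict transform of $l$, I would upgrade this to $\Delta_Z\subset l$ near $Q$ by invoking the converse in Proposition~\ref{elim_prop}~\eqref{elim_prop2}. The same chain of blowups is also the elimination of the length-$k$ curvilinear subscheme of $l$ supported at $Q$ (locally the ideal $(y,x^k)$ for coordinates with $l=\{y=0\}$), so the uniqueness of a $(\nu 1)$-subscheme with a prescribed elimination identifies $\Delta_Z$ locally with this subscheme of $l$. The hardest step will be this final scheme-theoretic identification; the preceding steps are essentially bookkeeping with the explicit formula for $E_M$.
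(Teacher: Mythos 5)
Your argument is correct, but it is organized differently from the paper's. The paper disposes of Lemma \ref{ZS1} in one line: since $E_M=\phi^*E_Z-2K_{M/Z}$ is effective and contains no $(-1)$-curve, it quotes the worked local example \cite[Example 2.5]{F}, which is precisely the computation you carry out by hand. So your proposal is essentially a self-contained replacement for that citation, and the bookkeeping is sound: the chain structure and discrepancies in Proposition \ref{elim_prop} \eqref{elim_prop1} do force the elimination over $Q$ to be a tower of blowups at free points of the last exceptional curve, your recursion for $b_j=\coeff_{\Gamma_{Q,j}}\phi^*l$ is right, and effectivity then gives $b_j=j$ for all $j$, hence $E_M=2l^M$ over $Q$. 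Two small points of comparison. First, you pin down $s\leqslant 2$ via the coefficient condition ($\sC$\ref{basic_dfn2}), whereas the paper's phrasing leans on the absence of $(-1)$-curves in $E_M$; both are available (via ($\sF$\ref{fund_dfn5})), so this is only a cosmetic difference. Second, the step you flag as hardest can be shortened: to get $\Delta_Z\subset l$ near $Q$ you do not need to identify $\phi$ with the elimination of the curvilinear scheme $(y,x^k)$; it suffices to observe that $\sI_{\Delta_Z}=\phi_*\sO_M(-K_{M/Z})$ by Proposition \ref{elim_prop} \eqref{elim_prop2}, and that a local equation $y$ of $l$ satisfies $\coeff_{\Gamma_{Q,j}}\phi^*l=b_j=j$, so $y\in\sI_{\Delta_Z}$, i.e. $\sI_l\subset\sI_{\Delta_Z}$ at $Q$. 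Your uniqueness argument also works, but it silently relies on the (standard, yet unproved in your sketch) fact that the elimination of $(y,x^k)$ is exactly this chain of blowups, which the shortcut avoids.
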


\begin{proof}
Since $E_M=\phi^*E_Z-2K_{M/Z}$ is effective and does not contain a $(-1)$-curve, 
the assertion follows from \cite[Example 2.5]{F}. 
\end{proof}

\begin{lemma}\label{ZS2}
Assume that $E_Z=s_1l_1+s_2l_2$ around $Q$, where $Q\in l_i$ is nonsingular, 
$s_1\geqslant s_2\geqslant 1$, and $l_1$ and $l_2$ intersect transversally at $Q$. 
\begin{enumerate}
\renewcommand{\theenumi}{\arabic{enumi}}
\renewcommand{\labelenumi}{(\theenumi)}
\item\label{ZS21}
If $(s_1, s_2)=(1, 1)$, then $\mult_Q\Delta_Z=1$ and $E_M=l_1^M+l_2^M$ around over $Q$. 
The weighted dual graph of $E_M$ around over $Q$ is the following: 
\begin{center}
    \begin{picture}(40, 40)(0, 43)
    \put(0, 60){\makebox(0, 0){\large$\oslash$}}
    \put(3, 70){\makebox(0, 0)[b]{$l_1^M$}}
    \put(7, 52){\makebox(0, 0){\tiny $(1)$}}
    \put(21, 55){\makebox(0, 0)[b]{$\sqcup$}}
    \put(40, 60){\makebox(0, 0){\large$\oslash$}}
    \put(47, 52){\makebox(0, 0){\tiny $(1)$}}
    \put(43, 70){\makebox(0, 0)[b]{$l_2^M$}}
    \end{picture}
\end{center}
\item\label{ZS21}
If $(s_1, s_2)=(2, 1)$, then $\mult_Q\Delta_Z=\mult_Q(\Delta_Z\cap l_2)=2$, 
$\mult_Q(\Delta_Z\cap l_1)=1$ and $E_M=2l_1^M+\Gamma_{Q,1}+l_2^M$ around 
over $Q$. 
The weighted dual graph of $E_M$ around over $Q$ is the following: 
\begin{center}
    \begin{picture}(80, 43)(0, 43)
    \put(0, 60){\makebox(0, 0){\large$\oslash$}}
    \put(3, 70){\makebox(0, 0)[b]{$l_1^M$}}
    \put(7, 52){\makebox(0, 0){\tiny $(2)$}}
    \put(5, 60){\line(1, 0){30}}
    \put(40, 60){\makebox(0, -3){\textcircled{\tiny $2$}}}
    \put(47, 52){\makebox(0, 0){\tiny $(1)$}}
    \put(43, 70){\makebox(0, 0)[b]{$\Gamma_{Q, 1}$}}
    \put(61, 55){\makebox(0, 0)[b]{$\sqcup$}}
    \put(80, 60){\makebox(0, 0){\large$\oslash$}}
    \put(87, 52){\makebox(0, 0){\tiny $(1)$}}
    \put(83, 70){\makebox(0, 0)[b]{$l_2^M$}}
    \end{picture}
\end{center}
\item\label{ZS21}
If $(s_1, s_2)=(2, 2)$, we can assume that $\mult_Q(\Delta_Z\cap l_1)=1$. 
Let $k:=\mult_Q\Delta_Z$. Then $k=\mult_Q(\Delta_Z\cap l_2)+1$ and 
$E_M=2l_1^M+2\Gamma_{Q, 1}+\cdots+2\Gamma_{Q, k-1}+2l_2^M$ around over $Q$. 
The weighted dual graph of $E_M$ around over $Q$ is the following: 
\begin{center}
    \begin{picture}(150, 43)(0, 43)
    \put(0, 60){\makebox(0, 0){\large$\oslash$}}
    \put(3, 70){\makebox(0, 0)[b]{$l_1^M$}}
    \put(7, 52){\makebox(0, 0){\tiny $(2)$}}
    \put(5, 60){\line(1, 0){30}}
    \put(40, 60){\makebox(0, -3){\textcircled{\tiny $2$}}}
    \put(47, 52){\makebox(0, 0){\tiny $(2)$}}
    \put(43, 70){\makebox(0, 0)[b]{$\Gamma_{Q, 1}$}}
    \put(45, 60){\line(1, 0){20}}
    \put(67, 60){\line(1, 0){2}}
    \put(71, 60){\line(1, 0){2}}
    \put(75, 60){\line(1, 0){2}}
    \put(79, 60){\line(1, 0){21}}
    \put(105, 60){\makebox(0, -3){\textcircled{\tiny $2$}}}
    \put(112, 52){\makebox(0, 0){\tiny $(2)$}}
    \put(107, 70){\makebox(0, 0)[b]{$\Gamma_{Q, k-1}$}}
    \put(110, 60){\line(1, 0){30}}
    \put(145, 60){\makebox(0, 0){\large$\oslash$}}
    \put(152, 52){\makebox(0, 0){\tiny $(2)$}}
    \put(147, 70){\makebox(0, 0)[b]{$l_2^M$}}
    \end{picture}
\end{center}
\end{enumerate}
\end{lemma}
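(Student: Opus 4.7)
The plan is to compute $E_M = \phi^* E_Z - 2K_{M/Z}$ explicitly and then pin down the admissible configurations of $\Delta_Z$ at $Q$ using that $(M,E_M)$ is a $3$-basic pair. By Proposition \ref{elim_prop}\eqref{elim_prop1}, writing $k := \mult_Q \Delta_Z$, the exceptional fibre $\phi^{-1}(Q)$ is the straight chain $\Gamma_{Q,1} + \cdots + \Gamma_{Q,k}$ with $\Gamma_{Q,j}$ a $(-2)$-curve for $j < k$, $\Gamma_{Q,k}$ a $(-1)$-curve, and $K_{M/Z} = \sum_{j=1}^{k} j\,\Gamma_{Q,j}$. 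A short intersection-theoretic computation using $(\phi^* C \cdot \Gamma_{Q,j}) = 0$ for any smooth curve $C \subset Z$ through $Q$ yields the pullback formula
$$\phi^* C \;=\; C^M + \sum_{j=1}^{k} \min(j, j_C)\,\Gamma_{Q,j},$$
where $j_C \in \{1,\dots,k\}$ is the unique index with $(C^M \cdot \Gamma_{Q,j_C}) = 1$.

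The second ingredient is the observation that the chain structure of $\phi^{-1}(Q)$ forces $\min(j_1,j_2)=1$. Indeed, after blowing up $Q$ the strict transforms $l_1^{(1)}$ and $l_2^{(1)}$ meet $\Gamma_{Q,1}^{(1)}$ at distinct points (since $l_1$ and $l_2$ are transverse at $Q$), and each subsequent infinitely near point must lie on the current terminal exceptional curve in order to preserve the straight-chain structure; consequently the sequence of infinitely near points can follow the direction of at most one of $l_1,l_2$. Combining this with the pullback formula, the coefficient of $\Gamma_{Q,j}$ in $E_M$ is
$$c_j \;=\; s_1\min(j,j_1) + s_2\min(j,j_2) - 2j,$$
subject to (a) $c_j \in \{0,1,2\}$ for all $j$, because $\coeff E_M \subset \{1,2\}$, and (b) $c_k = 0$, since $\Gamma_{Q,k}$ is a $(-1)$-curve and, by Corollary \ref{dP-basic_cor} combined with Table \ref{graph_table}, no $(-1)$-curve may appear in $\Supp E_M$.

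The three cases now reduce to short numerical checks. In Case (1) ($s_1=s_2=1$), conditions (a), (b) and $\min(j_1,j_2)=1$ force $k=1$ and $j_1=j_2=1$, so $c_1=0$ and $E_M = l_1^M + l_2^M$ over $Q$; the two strict transforms meet $\Gamma_{Q,1}$ at distinct points and are therefore disjoint on $M$. In Case (2) ($s_1=2$, $s_2=1$), enumerating the possibilities compatible with the chain constraint, the only solution of $c_k=0$ with all $c_j \in \{0,1,2\}$ is $(k,j_1,j_2)=(2,1,2)$, giving $c_1=1$, $c_2=0$, and the chain following $l_2$; hence $\mult_Q(\Delta_Z \cap l_2) = 2$ and $\mult_Q(\Delta_Z \cap l_1) = 1$. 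In Case (3) ($s_1=s_2=2$), the symmetry of $l_1,l_2$ allows us to assume $j_1=1$, and then (b) forces $\min(k,j_2)=k-1$, i.e.\ $j_2=k-1$, giving $c_j=2$ for $1\leq j\leq k-1$ and $c_k=0$; the identity $k = \mult_Q(\Delta_Z \cap l_2) + 1$ follows. In each case the weighted dual graph is read off directly from which of $l_1^M$, $l_2^M$, $\Gamma_{Q,j}$ lie in $\Supp E_M$ and how they meet.

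The main obstacle is the chain-structure observation $\min(j_1,j_2)=1$: once this is in place, the three cases become routine numerical checks. A secondary point is verifying that the resulting configurations satisfy the simple normal crossing requirement on $\Supp E_M$; this is automatic from the explicit description, since all components appearing in $E_M$ are smooth and pairwise transverse.
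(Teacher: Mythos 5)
Your proposal is correct. A direct comparison with the paper is not really possible here, because the paper's own ``proof'' of this lemma is a one-line citation to \cite[Example 2.6]{F}; your argument supplies exactly the kind of local computation that citation stands in for, and it goes through. The two load-bearing points are sound: (i) the fact that the elimination over $Q$ is a chain of $k$ blow-ups at consecutive infinitely near points (each on the newest exceptional curve only), which, together with the transversality of $l_1$ and $l_2$ at $Q$ (their strict transforms separate after the first blow-up), gives $\min(j_1,j_2)=1$; and (ii) the exclusion $c_k=0$, which is justified since $\Supp E_M$ is the exceptional locus of a minimal resolution (equivalently, by Corollary \ref{dP-basic_cor} and Table \ref{graph_table}), so no $(-1)$-curve can occur in $E_M$ --- the same fact the paper invokes in the proof of Lemma \ref{ZS1}. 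Your pullback formula $\phi^*C=C^M+\sum_j\min(j,j_C)\Gamma_{Q,j}$ and the coefficient formula $c_j=s_1\min(j,j_1)+s_2\min(j,j_2)-2j$ are correct, and the three numerical case checks (forcing $(k,j_1,j_2)=(1,1,1)$, $(2,1,2)$, and $(k,1,k-1)$ respectively) match the statements, including the resulting dual graphs. The only step you use tacitly is the identification $j_C=\mult_Q(\Delta_Z\cap C)$ for a smooth curve $C$ through $Q$, i.e.\ $\mult_Q(\Delta_Z\cap C)=(K_{M/Z}\cdot C^M)$ locally over $Q$; this is standard and is the same identification the paper uses globally in Lemma \ref{fund_big_lem} (4), so it is consistent with the paper's conventions, but a sentence making it explicit would make the write-up airtight.
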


\begin{proof}
Follows immediately from \cite[Example 2.6]{F}. 
\end{proof}

\begin{lemma}\label{ZS3}
\begin{enumerate}
\renewcommand{\theenumi}{\arabic{enumi}}
\renewcommand{\labelenumi}{(\theenumi)}
\item\label{ZS31}
The divisor $E_Z$ is not of the form
$E_Z=l_1+l_2+l_3$ around $Q$, where $l_1$, $l_2$, $l_3$ are distinct and
$Q\in l_i$ is nonsingular for $1\leqslant i\leqslant 3$.
\item\label{ZS32}
Assume that $E_Z=2l_1+l_2+l_3$ around $Q$, where 
$l_1$, $l_2$, $l_3$ are distinct and $Q\in l_i$ is nonsingular for $1\leqslant i\leqslant 3$. 
Then either $l_2^M$ or $l_3^M$ is not a connected component of $E_M$.
\end{enumerate}
\end{lemma}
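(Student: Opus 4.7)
The key tool is an explicit local formula. Since $\Delta_Z$ satisfies the $(\nu 1)$-condition, at $Q$ there exist coordinates $(x, y)$ with $\sI_{\Delta_Z} = (x, y^k)$ (where $k := \mult_Q\Delta_Z$), so $\Delta_Z$ is supported on the smooth arc $\{x = 0\}$ and $\phi$ is the sequence of $k$ blowups following this arc. For a smooth curve $l$ through $Q$, let $m(l) \in \Z_{\geq 1} \cup \{\infty\}$ be its order of tangency with the arc and set $m^*(l) := \min(m(l), k)$. Then $l^M$ meets $\Gamma_{Q, m^*(l)}$ transversally at a single point disjoint from the rest of the chain, and $\coeff_{\Gamma_{Q, j}}\phi^*l = \min(j, m^*(l))$ (verified by solving $(\phi^*l \cdot \Gamma_{Q, j}) = 0$ inductively, in the style of Lemmas \ref{ZS1}--\ref{ZS2}). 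Writing $m_i := m^*(l_i)$ and $c_i := \coeff_{l_i} E_Z$, the identity $E_M = \phi^*E_Z - 2K_{M/Z}$ yields
\[
\coeff_{\Gamma_{Q, j}} E_M \;=\; \sum_{i=1}^3 c_i \min(j, m_i) - 2j.
\]
Two global facts will be used repeatedly: first, since $(L_M \cdot \Gamma_{Q, k}) = 1 \neq 0$, condition $(\sC 4)$ forces $\coeff_{\Gamma_{Q, k}} E_M = 0$, i.e.\ $\sum_i c_i m_i = 2k$; second, by Corollary \ref{dP-basic_cor}(1), every connected component of the weighted dual graph of $E_M$ appears in Table \ref{graph_table}, so every weight-$1$ vertex is a leaf whose unique neighbor has weight $2$, every vertex has degree at most $3$, and each connected component contains at most one vertex of degree $3$.

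For part (1), all $c_i = 1$. At $j = 1$ the coefficient on $\Gamma_{Q, 1}$ equals $1$, so $\Gamma_{Q, 1}$ is a weight-$1$ vertex; the case $k = 1$ is excluded at once, since then $\Gamma_{Q, 1} = \Gamma_{Q, k}$ would have to lie outside $E_M$. For $k \geq 2$, the potential neighbors of $\Gamma_{Q, 1}$ are $\Gamma_{Q, 2}$ and those $l_i^M$ with $m_i = 1$. Each $l_i^M$ has weight $1$ in $E_M$, but the unique neighbor of $\Gamma_{Q, 1}$ must have weight $2$, so the neighbor is $\Gamma_{Q, 2}$: every $l_i$ is tangent to the arc ($m_i \geq 2$) and $\coeff_{\Gamma_{Q, 2}} E_M = 2$. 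The identity $\sum_i m_i = 2k$ with $m_i \geq 2$ forces $k \geq 3$. For $k \geq 3$ set $M_j := \#\{i : m_i = j\}$; then $\coeff_{\Gamma_{Q, 3}} E_M = 3 - M_2$ and $\deg \Gamma_{Q, 2} = 1 + [\Gamma_{Q, 3} \in E_M] + M_2$, so the max-degree-$3$ bound rules out $M_2 \in \{2, 3\}$ and we conclude $M_2 = 1$ with $\Gamma_{Q, 2}$ the unique fork. The other two values $m, m' \in \{3, \dots, k\}$ satisfy $m + m' = 2k - 2$; there is no solution for $k = 3$, while for $k \geq 4$ the only possibilities are $(m, m') \in \{(k-1, k-1), (k-2, k)\}$. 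A short computation then shows that in the first case $\Gamma_{Q, k-1}$, and in the second $\Gamma_{Q, k-2}$, has weight $2$ and degree $3$, producing a second fork in the same connected component and contradicting Table \ref{graph_table}.

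For part (2), $c_1 = 2$ and $c_2 = c_3 = 1$. Suppose for contradiction that both $l_2^M$ and $l_3^M$ are connected components of $E_M$. Since $l_i^M$ meets $\Gamma_{Q, m_i}$ on $M$ for $i \in \{2, 3\}$, both $\Gamma_{Q, m_2}$ and $\Gamma_{Q, m_3}$ must have coefficient zero in $E_M$. Evaluating the formula at $j = m_2$ and case-splitting on the ordering of $m_1, m_2, m_3$ shows that $\coeff_{\Gamma_{Q, m_2}} E_M = 0$ happens exactly when $m_2 > m_1, m_3$ and $m_2 = 2m_1 + m_3$; the symmetric analysis at $j = m_3$ forces $m_3 > m_1, m_2$ and $m_3 = 2m_1 + m_2$, and these two conditions contradict one another. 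The edge case where some $m_i$ equals $k$ (so $\Gamma_{Q, m_i} = \Gamma_{Q, k}$ is automatically absent from $E_M$ by $(\sC 4)$) is handled via the identity $2m_1 + m_2 + m_3 = 2k$ coming from $(\sC 4)$ on $\Gamma_{Q, k}$, and yields the same type of contradiction. The principal challenge is ensuring the case analysis in part (1) is truly exhaustive, but each case reduces to a short coefficient computation using the displayed formula.
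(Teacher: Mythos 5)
Your local machinery is sound: under the $(\nu1)$-condition the ideal of $\Delta_Z$ at $Q$ is indeed of the form $(x,y^k)$, the elimination is the chain of $k$ blowups along the arc, and the formula $\coeff_{\Gamma_{Q,j}}E_M=\sum_i c_i\min(j,m_i)-2j$ together with $\sum_i c_im_i=2k$ (from $(\sC 4)$ applied to $\Gamma_{Q,k}$) reproduces Lemmas \ref{ZS1} and \ref{ZS2}, so I accept it. Your part (2) is correct as it stands and is a genuinely different route from the paper, which instead runs an infinite descent through successive monoidal transforms; in fact your "edge case'' remark is superfluous, since the dichotomy extracted from $\coeff_{\Gamma_{Q,m_2}}E_M=0$ is valid whether or not $m_2=k$, and the two conclusions $m_2>m_3$ and $m_3>m_2$ already clash. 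The problem is in part (1), at exactly the spot you flagged: after excluding $M_2\in\{2,3\}$ by the degree bound at $\Gamma_{Q,2}$ you write ``we conclude $M_2=1$'', but none of the three consequences of Corollary \ref{dP-basic_cor} you listed (weight-$1$ vertices are leaves with weight-$2$ neighbor, degree $\leqslant 3$, at most one fork) excludes $M_2=0$. That case is numerically consistent with everything you have used: e.g.\ $k=5$, $(m_1,m_2,m_3)=(3,3,4)$ satisfies $\sum m_i=2k$, all $m_i\geqslant 2$, and gives $\deg\Gamma_{Q,2}=2$, so no fork appears at $\Gamma_{Q,2}$ and your subsequent analysis of $(m,m')\in\{(k-1,k-1),(k-2,k)\}$, which presupposes that one of the $m_i$ equals $2$, never starts.

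The repair is one line, but it needs an input you never invoke: the coefficients of $E_M$ lie in $\{1,2\}$ (condition $(\sC 2)$ of Definition \ref{basic_dfn}, equivalently all weights in Table \ref{graph_table} are at most $2$). Your own formula gives $\coeff_{\Gamma_{Q,3}}E_M=3-M_2$, and in the case $M_2=0$ one has $2k=\sum m_i\geqslant 9$, so $\Gamma_{Q,3}$ exists and would carry coefficient $3$ — impossible; hence $M_2\geqslant 1$, and combined with your exclusion of $M_2\in\{2,3\}$ the argument closes. (Alternatively one can detect the $M_2=0$ case by a degree violation further down the chain, but the coefficient bound is uniform.) With that sentence added, part (1) is complete; its final two-fork contradiction is the same kind of contradiction the paper's proof of (1) reaches, though the paper organizes the bookkeeping through the pairwise multiplicities $\mult_Q(l_i\cap l_j)$ rather than contact orders with the arc.
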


\begin{proof}
\eqref{ZS31}
Assume the contrary. 
Set $m_{ij}:=\mult_Q(l_i\cap l_j)$ for $1\leqslant i<j\leqslant 3$.
We can assume that $m_{12}\geqslant m_{13}\geqslant m_{23}\geqslant 1$.
Then $\mult_Q\Delta_Z\geqslant m_{23}$ and 
$\coeff_{\Gamma_{Q, m_{23}}}E_M=m_{23}$. 
Thus $m_{23}\leqslant 2$. Assume that $m_{23}=1$. Then 
$\coeff_{\Gamma_{Q, 1}}E_M=\coeff_{l_3^M}E_M=1$ and 
$\Gamma_{Q, 1}\cap l_3^M\neq\emptyset$. This contradicts to 
Corollary \ref{dP-basic_cor}. 
Thus $m_{23}=2$. Set $m:=\mult_Q(\Delta_Z\cap l_2)$. Then 
$\coeff_{\Gamma_{Q, m}}E_M=2$, and $\Gamma_{Q, m}$ intersects $l_2^M$. Moreover, 
$\Gamma_{Q, m}$ intersects $l_1^M$ or $\Gamma_{Q, m+1}$, and 
$\coeff_{\Gamma_{Q, m+1}}E_M\geqslant 1$ (if $m+1\leqslant\mult_Q\Delta_Z$). 
Thus the vertex of the dual graph of $E_M$ corresponds to $\Gamma_{Q, m}$ is a fork. 
On the other hand, $\Gamma_{Q, 2}$ intersects $l_3^M$ and $\Gamma_{Q, 1}$. 
Thus the vertex of the dual graph of $E_M$ corresponds to $\Gamma_{Q, 2}$ 
is also a fork. However, $\Gamma_{Q, 2}$ and $\Gamma_{Q, m}$ belong to a same 
connected component of $E_M$. This contradicts to Corollary \ref{dP-basic_cor}. 

\eqref{ZS32}
Assume the contrary. The morphism $\phi\colon M\to Z$ factors through the 
monoidal transform $Z_1\to Z$ at $Q$. Then $E_{Z_1}:=E_M^{Z_1}$ is equal to 
$2l_1^{Z_1}+2\Gamma_{Q,1}^{Z_1}+l_2^{Z_1}+l_3^{Z_1}$. 
If $\Gamma_{Q,1}^{Z_1}\cap l_2^{Z_1}\cap l_3^{Z_1}=\emptyset$, then 
either $\Gamma_{Q,1}\cap l_2^M\neq\emptyset$ or 
$\Gamma_{Q,1}\cap l_3^M\neq\emptyset$ holds, which leads to a contradiction. 
Thus we can take $Q_1\in\Gamma_{Q,1}^{Z_1}\cap l_2^{Z_1}\cap l_3^{Z_1}$ and 
the morphism $M\to Z_1$ factors through the monoidal transform $Z_2\to Z_1$ 
at $Q_1$. We note that $Q_1\not\in l_1^{Z_1}$ by Lemma \ref{fund_big_lem} 
\eqref{fund_big_lem5}. We must continue this process infinitely many times. 
This leads to a contradiction.
\end{proof}

\begin{lemma}\label{ZS4}
Assume that $E_Z=l_1+l_2$ around $Q$, where $Q\in l_i$ is nonsingular, 
$\{Q\}=|l_1\cap l_2|$, and $\mult_Q(l_1\cap l_2)=m\geqslant 2$. 
Then 
$\mult_Q\Delta_Z=\mult_Q(\Delta_Z\cap l_1)=\mult_Q(\Delta\cap l_2)=m$ holds. 
In other words, $\Delta_Z$ is equal to $l_1\cap l_2$ around $Q$. 
Moreover, 
$E_M=l_1^M+l_2^M$ and the weighted dual graph of $E_M$ around over $Q$ 
is the following:
\begin{center}
    \begin{picture}(40, 40)(0, 45)
    \put(0, 60){\makebox(0, 0){\large$\oslash$}}
    \put(3, 70){\makebox(0, 0)[b]{$l_1^M$}}
    \put(7, 52){\makebox(0, 0){\tiny $(1)$}}
    \put(21, 55){\makebox(0, 0)[b]{$\sqcup$}}
    \put(40, 60){\makebox(0, 0){\large$\oslash$}}
    \put(47, 52){\makebox(0, 0){\tiny $(1)$}}
    \put(43, 70){\makebox(0, 0)[b]{$l_2^M$}}
    \end{picture}
\end{center}
\end{lemma}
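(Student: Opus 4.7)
The plan is to analyze the local structure of the elimination $\phi\colon M\to Z$ around over $Q$ using Proposition \ref{elim_prop}, combine effectivity of $E_M=\phi^*E_Z-2K_{M/Z}$ with the dual-graph restriction from Corollary \ref{dP-basic_cor}, and ultimately rule out the possibility $\mult_Q\Delta_Z<m$. Set $k:=\mult_Q\Delta_Z$. Around over $Q$, the morphism $\phi$ is a chain of $k$ blowups $Z=Z_0\leftarrow Z_1\leftarrow\cdots\leftarrow Z_k=M$ at infinitely near points $Q=Q_0,Q_1,\dots,Q_{k-1}$, producing exceptional curves $\Gamma_{Q,1},\dots,\Gamma_{Q,k}$ with $\coeff_{\Gamma_{Q,j}}K_{M/Z}=j$.

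A straightforward induction on blowup steps shows that $\alpha_j^{(i)}:=\coeff_{\Gamma_{Q,j}}(\phi^*l_i)=\#\{0\leqslant h<j\mid l_i^{Z_h}\ni Q_h\}$, and this is at most $j$ since each $l_i$ is smooth at $Q$. Effectivity of $E_M$ yields $\coeff_{\Gamma_{Q,j}}E_M=\alpha_j^{(1)}+\alpha_j^{(2)}-2j\geqslant 0$, which combined with $\alpha_j^{(i)}\leqslant j$ forces $\alpha_j^{(i)}=j$ for $i=1,2$ and $1\leqslant j\leqslant k$. Consequently $\coeff_{\Gamma_{Q,j}}E_M=0$ for every $j$, so $E_M=l_1^M+l_2^M$ around over $Q$ with both coefficients equal to $1$. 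The equality $\alpha_j^{(i)}=j$ also forces $l_1$ and $l_2$ both to follow the entire chain $Q_0,\dots,Q_{k-1}$, which implies $m=(l_1\cdot l_2)_Q\geqslant k$.

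The local intersection $l_1^M\cdot l_2^M$ around over $Q$ is then computed as follows. Around over $Q$ one has $\phi^*l_i=l_i^M+K_{M/Z}$, and $l_i^M$ meets the exceptional chain only on $\Gamma_{Q,k}$ transversally at one point, giving $(l_i^M\cdot\Gamma_{Q,j})=\delta_{j,k}$ and hence $l_i^M\cdot K_{M/Z}=k$. The identity $(K_{M/Z}^2)=-k$ (local contribution at $Q$, from the chain structure, or equivalently from Lemma \ref{fund_big_lem}\eqref{fund_big_lem4}) then expands $(\phi^*l_1\cdot\phi^*l_2)|_Q=m$ into $l_1^M\cdot l_2^M=m-k$. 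Since $l_1^M$ and $l_2^M$ both carry weight $1$ in $E_M$ and are the only components around over $Q$, any intersection between them would produce two adjacent weight-$1$ vertices in some connected dual-graph component of $E_M$; no graph in Table \ref{graph_table} has this feature, as every neighbour of a weight-$1$ vertex carries weight $2$. Therefore $l_1^M\cap l_2^M=\emptyset$ around over $Q$, forcing $k=m$.

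With $k=m$ in hand, I would finish by choosing local coordinates at $Q$ so that $l_1=\{x=0\}$ and $l_2=\{x-g(y)=0\}$ with $g$ vanishing to order exactly $m$; the smooth germ $C$ locally containing $\Delta_Z$ shares the common tangent of $l_1,l_2$ and satisfies $(l_i\cdot C)_Q\geqslant k=m$, so $\sI_{\Delta_Z}=(x,y^m)=\sI_{l_1\cap l_2}$. Hence $\Delta_Z=l_1\cap l_2$ around $Q$ and $\mult_Q(\Delta_Z\cap l_i)=m$, while the weighted dual graph of $E_M$ around over $Q$ is the claimed disjoint pair of weight-$1$ vertices. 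The central obstacle is the dual-graph step excluding $k<m$; everything else reduces to bookkeeping for a linear chain of blowups.
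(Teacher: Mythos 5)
Your proof is correct and rests on the same decisive ingredients as the paper's: effectivity of $E_M$ forces every blowup centre over $Q$ to lie on both $l_1$ and $l_2$, and Corollary \ref{dP-basic_cor} (two coefficient-one components of $E_M$ cannot meet) forces the chain to separate them completely, giving $\mult_Q\Delta_Z=m$ and the displayed graph. The difference is only presentational: you carry out the bookkeeping with closed-form pullback coefficients along the chain and the intersection identity $l_1^M\cdot l_2^M=m-k$ (plus a short ideal/colength check at the end), whereas the paper factors $\phi$ step by step through monoidal transforms at the successive intersection points, which is an equivalent computation.
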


\begin{proof}
The morphism $\phi\colon M\to Z$ factors through the monoidal 
transform $\pi_1\colon Z_1\to Z$ at $Q$. Then $E_{Z_1}:=E_M^{Z_1}$ is equal to 
$l_1^{Z_1}+l_2^{Z_1}$ around over $Q$ 
such that $\{Q_1\}:=|l_1^{Z_1}\cap l_2^{Z_1}|$ and 
$\mult_{Q_1}(l_1^{Z_1}\cap l_2^{Z_1})=m-1$ hold. If $m-1\geqslant 2$, then 
$\phi_1\colon M\to Z_1$ factors through the monoidal 
transform $\pi_2\colon Z_2\to Z_1$ at $Q_1$. 
By repeating the same argument, we get the following sequence:
\[
M\xrightarrow{\phi_{m-1}}Z_{m-1}\xrightarrow{\pi_{m-1}}Z_{m-2}
\xrightarrow{\pi_{m-2}}\cdots\xrightarrow{\pi_1}Z. 
\]
If $\phi_{m-1}$ is an isomorphism around over $Q$, 
then the weighted dual graph of $E_M$ over $Q$ is the following:
\begin{center}
    \begin{picture}(40, 40)(0, 40)
    \put(0, 60){\makebox(0, 0){\large$\oslash$}}
    \put(3, 70){\makebox(0, 0)[b]{$l_1^M$}}
    \put(7, 52){\makebox(0, 0){\tiny $(1)$}}
    \put(5, 60){\line(1, 0){30}}
    \put(40, 60){\makebox(0, 0){\large$\oslash$}}
    \put(47, 52){\makebox(0, 0){\tiny $(1)$}}
    \put(43, 70){\makebox(0, 0)[b]{$l_2^M$}}
    \end{picture}
\end{center}
This contradicts to Corollary \ref{dP-basic_cor}. Indeed, two curves in $E_M$ such 
that both coefficients are equal to one 
does not meet together.
Thus $\phi_{m-1}$ around over $Q$ 
is equal to the monoidal transform at $Q_{m-1}$ by 
Lemmas \ref{ZS1} and \ref{ZS2}. 
\end{proof}

\begin{lemma}\label{ZS5}
Assume that $E_Z=2l_1+l_2$ around $Q$, where $Q\in l_i$ is nonsingular, 
$\{Q\}=|l_1\cap l_2|$, $\mult_Q(l_1\cap l_2)=2$.
\begin{enumerate}
\renewcommand{\theenumi}{\arabic{enumi}}
\renewcommand{\labelenumi}{(\theenumi)}
\item\label{ZS51}
Assume that $\mult_Q(\Delta_Z\cap l_2)\geqslant 3$. 
Then $\mult_Q\Delta_Z=\mult_Q(\Delta_Z\cap l_2)=4$, $\mult_Q(\Delta_Z\cap l_1)=2$ 
and $E_M=2l_1^M+\Gamma_{Q, 1}+2\Gamma_{Q, 2}+\Gamma_{Q, 3}+l_2^M$. 
The weighted dual graph of $E_M$ around over $Q$ is the following: 
\begin{center}
    \begin{picture}(120, 65)(0, 20)
    \put(0, 60){\makebox(0, 0){\large$\oslash$}}
    \put(3, 70){\makebox(0, 0)[b]{$l_1^M$}}
    \put(7, 52){\makebox(0, 0){\tiny $(2)$}}
    \put(5, 60){\line(1, 0){30}}
    \put(40, 60){\makebox(0, -3){\textcircled{\tiny $2$}}}
    \put(47, 52){\makebox(0, 0){\tiny $(2)$}}
    \put(43, 70){\makebox(0, 0)[b]{$\Gamma_{Q, 2}$}}
    \put(45, 60){\line(1, 0){30}}
    \put(80, 60){\makebox(0, -3){\textcircled{\tiny $2$}}}
    \put(87, 52){\makebox(0, 0){\tiny $(1)$}}
    \put(83, 70){\makebox(0, 0)[b]{$\Gamma_{Q, 3}$}}
    \put(40, 55){\line(0, -1){20}}
    \put(40, 30){\makebox(0, -3){\textcircled{\tiny $2$}}}
    \put(47, 22){\makebox(0, 0){\tiny $(2)$}}
    \put(20, 25){\makebox(0, 0)[b]{$\Gamma_{Q, 1}$}}
    \put(101, 55){\makebox(0, 0)[b]{$\sqcup$}}
    \put(120, 60){\makebox(0, 0){\large$\oslash$}}
    \put(127, 52){\makebox(0, 0){\tiny $(1)$}}
    \put(123, 70){\makebox(0, 0)[b]{$l_2^M$}}
    \end{picture}
\end{center}
\item\label{ZS52}
Assume that $\mult_Q(\Delta_Z\cap l_2)=2$. 
Set $k:=\mult_Q\Delta_Z$.
Then $\mult_Q(\Delta_Z\cap l_1)=k-1$ and $E_M=2l_1^M+\Gamma_{Q, 1}
+2\Gamma_{Q, 2}+\cdots+2\Gamma_{Q, k-1}+l_2^M$. 
The weighted dual graph of $E_M$ around over $Q$ is the following: 
\begin{center}
    \begin{picture}(180, 65)(0, 20)
    \put(0, 60){\makebox(0, 0){\large$\oslash$}}
    \put(3, 70){\makebox(0, 0)[b]{$l_1^M$}}
    \put(7, 52){\makebox(0, 0){\tiny $(2)$}}
    \put(5, 60){\line(1, 0){30}}
    \put(40, 60){\makebox(0, -3){\textcircled{\tiny $2$}}}
    \put(47, 52){\makebox(0, 0){\tiny $(2)$}}
    \put(43, 70){\makebox(0, 0)[b]{$\Gamma_{Q, k-1}$}}
    \put(45, 60){\line(1, 0){20}}
    \put(67, 60){\line(1, 0){2}}
    \put(71, 60){\line(1, 0){2}}
    \put(75, 60){\line(1, 0){2}}
    \put(79, 60){\line(1, 0){21}}
    \put(105, 60){\makebox(0, -3){\textcircled{\tiny $2$}}}
    \put(112, 52){\makebox(0, 0){\tiny $(2)$}}
    \put(108, 70){\makebox(0, 0)[b]{$\Gamma_{Q, 3}$}}
    \put(110, 60){\line(1, 0){30}}
    \put(145, 60){\makebox(0, -3){\textcircled{\tiny $2$}}}
    \put(152, 52){\makebox(0, 0){\tiny $(2)$}}
    \put(148, 70){\makebox(0, 0)[b]{$\Gamma_{Q, 2}$}}
    \put(150, 60){\line(1, 0){30}}
    \put(145, 55){\line(0, -1){20}}
    \put(145, 30){\makebox(0, -3){\textcircled{\tiny $2$}}}
    \put(152, 22){\makebox(0, 0){\tiny $(1)$}}
    \put(125, 25){\makebox(0, 0)[b]{$\Gamma_{Q, 1}$}}
    \put(185, 60){\makebox(0, 0){\large$\oslash$}}
    \put(192, 52){\makebox(0, 0){\tiny $(1)$}}
    \put(188, 70){\makebox(0, 0)[b]{$l_2^M$}}
    \end{picture}
\end{center}
\end{enumerate}
\end{lemma}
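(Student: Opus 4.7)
The plan is to compute the divisor $E_{M} = \phi^{*} E_{Z} - 2 K_{M/Z}$ explicitly on the exceptional chain over $Q$, and then read off the stated conclusions from the conditions imposed on $(M, E_{M})$ by Definition \ref{basic_dfn} and Corollary \ref{dP-basic_cor}.

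First I would fix local coordinates $(x, y)$ at $Q$ with $\sI_{\Delta_{Z}, Q} = (x, y^{k})$, where $k = \mult_{Q} \Delta_{Z}$ (this normal form is available by the $(\nu 1)$-condition). Let $T = \{x = 0\}$ be the smooth formal curve locally supporting $\Delta_{Z}$, and put $m := (l_{1} \cdot T)_{Q}$ and $n := (l_{2} \cdot T)_{Q}$. A direct local computation gives $\mult_{Q}(\Delta_{Z} \cap l_{j}) = \min(k, (l_{j} \cdot T)_{Q})$ for $j = 1, 2$. Since $(l_{1} \cdot l_{2})_{Q} = 2$, the curves $l_{1}$ and $l_{2}$ share a tangent direction at $Q$, and in both parts the hypothesis forces $n \geqslant 2$, so that $m \geqslant 2$ as well.

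The main technical step is to compute $E_{M}$ explicitly. Writing $\phi$ as a composition of $k$ point blowups $Z = Z_{0} \leftarrow Z_{1} \leftarrow \dots \leftarrow Z_{k} = M$ with $\pi_{i}$ centered at $Q_{i-1} \in T^{Z_{i-1}} \cap \Gamma_{Q, i-1}^{Z_{i-1}}$, and using that $Q_{i-1} \in l_{j}^{Z_{i-1}}$ precisely when $(l_{j} \cdot T)_{Q} \geqslant i$, a straightforward induction on $i$ yields, locally over $Q$,
\begin{align*}
K_{M/Z} & = \sum_{i=1}^{k} i\, \Gamma_{Q, i}, \\
\phi^{*} l_{1} & = l_{1}^{M} + \sum_{i=1}^{k} \min(i, m)\, \Gamma_{Q, i}, \\
\phi^{*} l_{2} & = l_{2}^{M} + \sum_{i=1}^{k} \min(i, n)\, \Gamma_{Q, i}.
\end{align*}
Therefore $\coeff_{\Gamma_{Q, i}} E_{M} = 2\min(i, m) + \min(i, n) - 2i$, and $l_{j}^{M}$ meets only $\Gamma_{Q, \min(k, \alpha_{j})}$ (with $\alpha_{1} = m$, $\alpha_{2} = n$), transversally.

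To conclude, I would invoke the constraints from $(M, E_{M})$ being a $3$-basic pair: all coefficients of $E_{M}$ lie in $\{0, 1, 2\}$, and by Corollary \ref{dP-basic_cor} \eqref{dP-basic_cor1} together with the absence of $(-1)$-curves in Table \ref{graph_table}, the unique $(-1)$-curve $\Gamma_{Q, k}$ in the exceptional chain must have coefficient zero in $E_{M}$. This gives the key identity $2\min(k, m) + \min(k, n) = 2k$. For part \eqref{ZS51}, the coefficient $\min(m, n)$ at $i = m$ combined with $\min(k, n) \geqslant 3$ forces $m = 2$, whence $\min(k, n) = 2k - 4$, and the requirement that the coefficient at $i \geqslant 5$ be nonnegative forces $k = 4$ and so $\min(k, n) = 4$. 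For part \eqref{ZS52}, the case $k = 2$ is ruled out since it produces a nonzero coefficient on the $(-1)$-curve $\Gamma_{Q, 2}$; hence $n = 2$ and $k \geqslant 3$, and the identity yields $m = k - 1$. Substituting back into the coefficient formula recovers the claimed $E_{M}$, and the incidence of the strict transforms described above produces the weighted dual graphs. The main obstacle is the careful bookkeeping of the coefficients of $\phi^{*} l_{j}$ and $K_{M/Z}$ along the exceptional chain and the simultaneous verification that all intermediate coefficient bounds are compatible; once this is in place, the remaining analysis reduces to a mechanical numerical case-check. A minor subtlety is that $(l_{j} \cdot T)_{Q}$ could exceed $k$ (for instance if $l_{2}$ coincides formally with $T$), but since everything enters through $\min(k, \cdot)$, this does not affect the conclusions.
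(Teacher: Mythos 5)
Your proposal is correct, but it runs along a different track from the paper's own proof. The paper factors the elimination $\phi$ through the first two monoidal transforms (at $Q$, then at $Q_1=l_1^{Z_1}\cap l_2^{Z_1}$, which lies on $\Gamma_{Q,1}$), observes that after these two blowups $l_1$ and $l_2$ are separated or meet transversally, and then simply quotes the already-established transversal cases (Lemmas \ref{ZS1} and \ref{ZS2}) at the relevant point $Q_{22}$ resp.\ $Q_{21}$ to read off the multiplicities and the graph. You instead work with the curvilinear normal form $\sI_{\Delta_Z,Q}=(x,y^k)$ coming from the $(\nu1)$-condition, describe $\phi$ over $Q$ as the chain of $k$ blowups along the infinitely near points of the supporting smooth germ $T$, and compute closed formulas for $K_{M/Z}$ and $\phi^*l_j$, so that $\coeff_{\Gamma_{Q,i}}E_M=2\min(i,m)+\min(i,n)-2i$ with $m=(l_1\cdot T)_Q$, $n=(l_2\cdot T)_Q$; the constraints that $E_M$ is effective, has coefficients in $\{1,2\}$, and contains no $(-1)$-curve (the coefficient of $\Gamma_{Q,k}$ must vanish) then pin down $(k,m,n)$ and yield exactly the stated $E_M$ and dual graphs — I checked your numerics in both parts and they are right, including the exclusion of $k=2$ in part (2). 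What your route buys is a self-contained computation that produces all coefficients and incidences at once, without invoking Lemmas \ref{ZS1}--\ref{ZS2}; what the paper's route buys is brevity, since the tangential case collapses to the transversal one after two blowups. Two small expository points if you write this up: the coefficient formula at $i=m$ used in part (1) presupposes $m\leqslant k$ (the case $m>k$ is instantly killed by the identity $2\min(k,m)+\min(k,n)=2k$, so say so), and the identification of the elimination of $(x,y^k)$ with the chain of blowups along $T$, together with $\mult_Q(\Delta_Z\cap C)=\min(k,(C\cdot T)_Q)$, should be justified or cited (it is exactly the local picture used in \cite[\S2]{N} and \cite[Examples 2.5, 2.6]{F}).
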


\begin{proof}
The morphism $\phi\colon M\to Z$ factors through the monoidal 
transform $\pi_1\colon Z_1\to Z$ at $Q$. Then $E_{Z_1}:=E_M^{Z_1}$ 
around over $Q$ 
is equal to 
$2l_1^{Z_1}+l_2^{Z_1}+\Gamma_{Q, 1}^{Z_1}$ such that 
$Q_1:=l_1^{Z_1}\cap l_2^{Z_1}$ (meet transversally) and $Q_1\in\Gamma_{Q, 1}^{Z_1}$. 
Thus $\phi_1\colon M\to Z_1$ factors through the monoidal 
transform $\pi_2\colon Z_2\to Z_1$ at $Q_1$. Then $E_{Z_2}:=E_M^{Z_2}$ 
around over $Q$ is equal to 
$2l_1^{Z_2}+l_2^{Z_2}+\Gamma_{Q, 1}^{Z_2}+2\Gamma_{Q, 2}^{Z_2}$. 
For the case \eqref{ZS51}, the morphism $M\to Z_2$ is not isomorphic over 
$Q_{22}:=l_2^{Z_1}\cap\Gamma_{Q, 2}^{Z_2}$ 
since $\mult_Q(\Delta_Z\cap l_2)\geqslant 3$. 
Then we can apply Lemma \ref{ZS2} for the local property around $Q_{22}$; 
$\mult_Q\Delta_Z=\mult_Q(\Delta_Z\cap l_2)=4$ and 
$\mult_Q(\Delta_Z\cap l_1)=2$. 
For the case \eqref{ZS52}, if $\mult_Q(\Delta_Z\cap l_1)\geqslant 3$, 
then the morphism $M\to Z_2$ is not isomorphic over 
$Q_{21}:=l_1^{Z_2}\cap\Gamma_{Q, 2}^{Z_2}$. 
Then we can apply Lemma \ref{ZS2} for the local property around $Q_{21}$; 
we obtain that $\mult_Q(\Delta\cap l_1)=k-1$. 
The remaining parts follow easily.
\end{proof}

\subsection{Local properties of bottom tetrads}\label{XS_section}

Let $(X, E_X; \Delta_Z, \Delta_X)$ be a 
$3$-fundamental multiplet of length two, 
$P\in\Delta_X$ be a point, $\psi\colon Z\to X$ be the elimination of $\Delta_X$, 
$(Z, E_Z; \Delta_Z)$ be the associated pseudo-median triplet, 
$\phi\colon M\to Z$ be the elimination of $\Delta_Z$ and $(M, E_M)$ be the 
associated $3$-basic pair. 

\begin{lemma}\label{XS1}
Assume that $E_X=sl$ around $P$, where $P\in l$ is nonsingular and $s\geqslant 0$. 
Then $s=1$ or $2$ holds. If $s=1$, then $\Delta_X\subset l$ and 
$\Delta_Z=\emptyset$ around over $P$. In this case, $E_Z=l^Z$ and $E_M=l^M$ 
around over $P$. 
Assume that $s=2$. Set $k:=\mult_P\Delta_X$ and $j:=\mult_P(\Delta_X\cap l)$. 
Then one of the following holds: 
\begin{enumerate}
\renewcommand{\theenumi}{\arabic{enumi}}
\renewcommand{\labelenumi}{(\theenumi)}
\item\label{XS11}
$(k ,j)=(4, 2)$. In this case, $\Delta_Z=\emptyset$ and 
$E_Z(=E_M)=2l^Z+\Gamma_{P, 1}+2\Gamma_{P, 2}+\Gamma_{P, 3}$ around over $P$. 
The weighted dual graph of $E_Z$ around over $P$ is the following: 
\begin{center}
    \begin{picture}(80, 60)(0, 20)
    \put(0, 60){\makebox(0, 0){\large$\oslash$}}
    \put(3, 72){\makebox(0, 0)[b]{$l^Z$}}
    \put(7, 52){\makebox(0, 0){\tiny $(2)$}}
    \put(5, 60){\line(1, 0){30}}
    \put(40, 60){\makebox(0, -3){\textcircled{\tiny $2$}}}
    \put(47, 52){\makebox(0, 0){\tiny $(2)$}}
    \put(43, 70){\makebox(0, 0)[b]{$\Gamma_{P, 2}$}}
    \put(45, 60){\line(1, 0){30}}
    \put(80, 60){\makebox(0, -3){\textcircled{\tiny $2$}}}
    \put(87, 52){\makebox(0, 0){\tiny $(1)$}}
    \put(83, 70){\makebox(0, 0)[b]{$\Gamma_{P, 1}$}}
    \put(40, 55){\line(0, -1){20}}
    \put(40, 30){\makebox(0, -3){\textcircled{\tiny $2$}}}
    \put(47, 22){\makebox(0, 0){\tiny $(1)$}}
    \put(20, 25){\makebox(0, 0)[b]{$\Gamma_{P, 3}$}}
    \end{picture}
\end{center}
\item\label{XS12}
$(k, j)=(2, 2)$. In this case, $E_Z=2l^Z+\Gamma_{P, 1}+2\Gamma_{P, 2}$, 
$|\Delta_Z|\subset\Gamma_{P, 2}$ around over $P$ 
and $\deg(\Delta_Z\cap\Gamma_{P, 2})=2$. 
The weighted dual graph of $E_Z$ around over $P$ is the following: 
\begin{center}
    \begin{picture}(80, 43)(0, 40)
    \put(0, 60){\makebox(0, 0){\large$\oslash$}}
    \put(3, 72){\makebox(0, 0)[b]{$l^Z$}}
    \put(7, 52){\makebox(0, 0){\tiny $(2)$}}
    \put(5, 60){\line(1, 0){30}}
    \put(40, 60){\makebox(0, -3){\textcircled{\tiny $1$}}}
    \put(47, 52){\makebox(0, 0){\tiny $(2)$}}
    \put(43, 70){\makebox(0, 0)[b]{$\Gamma_{P, 2}$}}
    \put(45, 60){\line(1, 0){30}}
    \put(80, 60){\makebox(0, -3){\textcircled{\tiny $2$}}}
    \put(87, 52){\makebox(0, 0){\tiny $(1)$}}
    \put(83, 70){\makebox(0, 0)[b]{$\Gamma_{P, 1}$}}
    \end{picture}
\end{center}
\item\label{XS13}
$(k ,j)=(2, 1)$. In this case, $\Delta_Z=\emptyset$ and 
$E_Z(=E_M)=2l^Z+\Gamma_{P, 1}$ around over $P$. 
The weighted dual graph of $E_Z$ around over $P$ is the following: 
\begin{center}
    \begin{picture}(40, 43)(0, 40)
    \put(0, 60){\makebox(0, 0){\large$\oslash$}}
    \put(3, 72){\makebox(0, 0)[b]{$l^Z$}}
    \put(7, 52){\makebox(0, 0){\tiny $(2)$}}
    \put(5, 60){\line(1, 0){30}}
    \put(40, 60){\makebox(0, -3){\textcircled{\tiny $2$}}}
    \put(47, 52){\makebox(0, 0){\tiny $(1)$}}
    \put(43, 70){\makebox(0, 0)[b]{$\Gamma_{P, 1}$}}
    \end{picture}
\end{center}
\item\label{XS14}
$(k ,j)=(1, 1)$. In this case, $|\Delta_Z|=\{Q\}$ around over $P$, 
where $Q:=l^Z\cap\Gamma_{P, 1}$. Moreover, we have 
$\mult_Q\Delta_Z=\mult_Q(\Delta_Z\cap\Gamma_{P, 1})=2$ and 
$\mult_Q(\Delta_Z\cap l^Z)=1$ hold. The weighted dual graph of $E_Z$ 
around over $P$ is the following: 
\begin{center}
    \begin{picture}(40, 43)(0, 40)
    \put(0, 60){\makebox(0, 0){\large$\oslash$}}
    \put(3, 72){\makebox(0, 0)[b]{$l^Z$}}
    \put(7, 52){\makebox(0, 0){\tiny $(2)$}}
    \put(5, 60){\line(1, 0){30}}
    \put(40, 60){\makebox(0, -3){\textcircled{\tiny $1$}}}
    \put(47, 52){\makebox(0, 0){\tiny $(1)$}}
    \put(43, 70){\makebox(0, 0)[b]{$\Gamma_{P, 1}$}}
    \end{picture}
\end{center}
The weighted dual graph of $E_M$ around over $P$ is the following: 
\begin{center}
    \begin{picture}(80, 43)(0, 43)
    \put(0, 60){\makebox(0, 0){\large$\oslash$}}
    \put(3, 73){\makebox(0, 0)[b]{$l^M$}}
    \put(7, 52){\makebox(0, 0){\tiny $(2)$}}
    \put(5, 60){\line(1, 0){30}}
    \put(40, 60){\makebox(0, -3){\textcircled{\tiny $2$}}}
    \put(47, 52){\makebox(0, 0){\tiny $(1)$}}
    \put(43, 71){\makebox(0, 0)[b]{$\Gamma_{Q, 1}$}}
    \put(61, 55){\makebox(0, 0)[b]{$\sqcup$}}
    \put(80, 60){\makebox(0, -3){\textcircled{\tiny $3$}}}
    \put(87, 52){\makebox(0, 0){\tiny $(1)$}}
    \put(83, 70){\makebox(0, 0)[b]{$\Gamma_{P, 1}^M$}}
    \end{picture}
\end{center}
\end{enumerate}
\end{lemma}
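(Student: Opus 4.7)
The plan is to compute $E_Z$ locally around $P$ from $E_Z = \psi^*E_X - K_{Z/X}$, bound the pair $(k,j) := (\mult_P\Delta_X, \mult_P(\Delta_X \cap l))$ by effectiveness together with $(\sC2)$, and in each surviving case apply Lemmas \ref{ZS1}--\ref{ZS5} and the dual-graph classification of Corollary \ref{dP-basic_cor}(1) to pin down $\Delta_Z$ over $P$.

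First, Lemma \ref{FuBo_lem}(4) gives $s \in \{1, 2, 3\}$; since $l^Z$ is not $\phi$-exceptional, $\coeff_{l^M}E_M = \coeff_{l^Z}E_Z = s$, and $(\sC2)$ forces $s \leq 2$. Because $\Delta_X$ is curvilinear at $P$ by the $(\nu 1)$-condition, blowup-by-blowup tracking of $\psi$ yields $\coeff_{\Gamma_{P,i}}\psi^*l = \min(i,j)$ and $\coeff_{\Gamma_{P,i}}K_{Z/X} = i$, so
\[
\coeff_{\Gamma_{P,i}}E_Z = s\min(i,j) - i.
\]
Effectiveness forces $k \leq sj$, and $\coeff_{\Gamma_{P,j}^M}E_M = (s-1)j \in \{0,1,2\}$ forces $(s-1)j \leq 2$. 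For $s = 1$ these constraints give $k = j$ and $E_Z = l^Z$ over $P$; any $Q \in \Delta_Z$ over $P$ would lie in $\Supp E_Z = l^Z$, and Lemma \ref{ZS1} applied at $Q$ would force $\coeff_{l^Z}E_Z = 2$, a contradiction. Hence $\Delta_Z = \emptyset$ over $P$, $\Delta_X \subset l$ locally, and $E_M = l^M$ over $P$. For $s = 2$ the admissible candidates are $(k,j) \in \{(1,1), (2,1), (2,2), (3,2), (4,2)\}$.

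In each remaining case one writes $E_Z$ explicitly, enumerates the intersection points of $\Supp E_Z$ over $P$, and applies the relevant ZS-lemma at each candidate location of $\Delta_Z$; the resulting dual graph of $E_M$ around over $P$ must then match a component of Table \ref{graph_table} via Corollary \ref{dP-basic_cor}(1). In $(2,1)$ the only candidate location is $Q = l^Z \cap \Gamma_{P,1}$, and Lemma \ref{ZS2}(2) there produces an isolated $(-4)$-curve of weight $(1)$, not in Table \ref{graph_table}; hence $\Delta_Z = \emptyset$. In $(4,2)$ any point of $\Delta_Z$ at one of the intersection points $l^Z \cap \Gamma_{P,2}$, $\Gamma_{P,1} \cap \Gamma_{P,2}$, or $\Gamma_{P,2} \cap \Gamma_{P,3}$ either introduces an isolated $(-4)$-curve of weight $(1)$ or turns $\Gamma_{P,2}^M$ into a $(-3)$-curve (or lower) at a fork, both forbidden; hence $\Delta_Z = \emptyset$ and the dual graph of $E_M$ is of $\gD_4(2)$-shape. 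In $(1,1)$ leaving $\Delta_Z = \emptyset$ keeps $\Gamma_{P,1}$ as a $(-1)$-curve in $E_M$, forbidden; Lemma \ref{ZS2}(2) at the unique legitimate point $Q = l^Z \cap \Gamma_{P,1}$ then delivers the stated structure. In $(2,2)$ the $(-1)$-curve $\Gamma_{P,2}$ must be blown up by $\phi$, and the combination of Lemma \ref{ZS1} applied at smooth points of $\Gamma_{P,2}$ together with the coefficient bound on $E_M$ forces $|\Delta_Z| \subset \Gamma_{P,2}$ of degree exactly $2$.

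The main obstacle is excluding $(k,j) = (3,2)$: here $\Gamma_{P,3}$ is a $(-1)$-curve of coefficient $1$ in $E_Z$, so leaving it unmodified leaves a $(-1)$-curve in $E_M$, forbidden. The only point of $\Gamma_{P,3}$ where $\Delta_Z$ can sit is $Q = \Gamma_{P,2} \cap \Gamma_{P,3}$, and Lemma \ref{ZS2}(2) there forces $\mult_Q(\Delta_Z \cap \Gamma_{P,3}) = 2$, which turns $\Gamma_{P,2}^M$ into a $(-3)$-curve still carrying three distinct branches (to $l^Z$, $\Gamma_{P,1}$, and the new exceptional $\Gamma_{Q,1}$) while disconnecting $\Gamma_{P,3}^M$ from the rest in $E_M$. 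Since every trivalent vertex in a graph from Table \ref{graph_table} is a $(-2)$-curve of weight $(2)$, this configuration is impossible, and placing further points of $\Delta_Z$ at $l^Z \cap \Gamma_{P,2}$ or $\Gamma_{P,1} \cap \Gamma_{P,2}$ only worsens the self-intersection of $\Gamma_{P,2}^M$; hence $(3,2)$ is excluded.
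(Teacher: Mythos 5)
Your overall skeleton is the same as the paper's: bound $(k,j)$ through the coefficient computation $\coeff_{\Gamma_{P,i}}E_Z=s\min(i,j)-i$ (which reproduces the paper's values, e.g.\ $\coeff_{\Gamma_{P,3}}E_Z=3$ for $j\geqslant 3$ and $=-1$ for $j=1$, $k\geqslant 3$), and then settle each surviving case by the local Lemmas \ref{ZS1} and \ref{ZS2}. The $s=1$, $(2,1)$ and $(1,1)$ cases are handled correctly. However, the two places where you deviate from the paper's mechanism are exactly where the write-up breaks down.

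First, your exclusion of $(k,j)=(3,2)$ — which you yourself flag as the main obstacle — and part of your $(4,2)$ discussion rest on the claim that every trivalent vertex of a graph in Table \ref{graph_table} is a $(-2)$-curve of weight $(2)$. That is false: the fork of $\gD_4(1)$ is a $(-3)$-curve of weight $(2)$. So "a $(-3)$-curve at a fork" is not forbidden per se; to salvage the argument you must add that in $\gD_4(1)$ all three neighbours of the fork carry weight $(1)$, whereas in your configuration the neighbour of $\Gamma_{P,2}^M$ on the $l$-side has weight $(2)$ (either $l^M$ itself, or the weight-$2$ chain produced by Lemma \ref{ZS2} when $l^Z\cap\Gamma_{P,2}\in\Delta_Z$). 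Relatedly, in $(4,2)$ you only test the three intersection points, but a point of $\Delta_Z$ at a general point of the weight-$2$ curve $\Gamma_{P,2}$ is not excluded by Lemma \ref{ZS1} (indeed Lemma \ref{fund_big_lem}\,(5) allows it) and must be ruled out by the same corrected fork/valence argument. The paper avoids all of this by the numerical identity of Lemma \ref{fund_big_lem}\,(4): $(L_Z\cdot\Gamma_{P,3})=2$ forces $\Delta_Z\cap\Gamma_{P,3}\neq\emptyset$, while $(L_Z\cdot\Gamma_{P,2})=0$ together with Lemma \ref{ZS1} forces $\Delta_Z\cap\Gamma_{P,3}=\emptyset$, a contradiction. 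Second, in case $(2,2)$ you assert that Lemma \ref{ZS1} plus the coefficient bound on $E_M$ force $\deg(\Delta_Z\cap\Gamma_{P,2})=2$ exactly; those local tools only give $\geqslant 1$ (no $(-1)$-curve in $E_M$) and, with an extra valence argument, $\leqslant 2$ — they do not exclude degree $1$. The equality is most directly $\deg(\Delta_Z\cap\Gamma_{P,2})=(L_Z\cdot\Gamma_{P,2})=2$, again Lemma \ref{fund_big_lem}\,(4). So the statement is provable along your lines, but these two steps need repair, most cleanly by importing the intersection-number identity that the paper's proof uses.
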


\begin{proof}
If $s=1$, then the assertion is trivial by \cite[Example 2.5]{F} and Lemma \ref{ZS1}. 
We assume that $s=2$. 
If $j\geqslant 3$, then $\coeff_{\Gamma_{P, 3}}E_Z=3$. This leads to a contradiction. 
Thus $j=1$ or $2$. 
If $j=1$ and $k\geqslant 3$, then $\coeff_{\Gamma_{P, 3}}E_Z=-1$, 
which is a contradiction. 
If $j=2$ and $k\geqslant 5$, then $\coeff_{\Gamma_{P, 5}}E_Z=-1$, 
which is a contradiction. 
If $(k ,j)=(3, 2)$ then $\Gamma_{P, 3}\cap\Delta_Z\neq\emptyset$ and 
$\Gamma_{P, 2}\cap\Delta_Z=\emptyset$. 
Indeed, $(L_Z\cdot\Gamma_{P, 3})=2$ and $(L_Z\cdot\Gamma_{P, 2})=0$ hold, 
where $L_Z$ is the fundamental divisor of $(Z, E_Z, \Delta_Z)$. 
However, we know that $\coeff_{\Gamma_{P, 3}}E_Z=1$ and the curve 
$\Gamma_{P, 2}$ is the only component of $E_Z$ which meets $\Gamma_{P, 3}$.
Thus $\Delta_Z\cap\Gamma_{P, 3}=\emptyset$, which is a contradiction. 
Therefore $(k ,j)=(4, 2)$, $(2, 2)$, $(2, 1)$ or $(1, 1)$. 
The remaining parts follow easily from Lemmas \ref{ZS1} and \ref{ZS2}. 
\end{proof}

\begin{lemma}\label{XS2}
Assume that $E_X=s_1l_1+s_2l_2$ around $P$, where $P\in l_i$ is nonsingular, 
$s_1\geqslant s_2\geqslant 1$, and $l_1$ and $l_2$ intersect transversally at $P$. 
Then $(s_1, s_2)=(1, 1)$ or $(2, 1)$. Moreover, we have the following:
\begin{enumerate}
\renewcommand{\theenumi}{\arabic{enumi}}
\renewcommand{\labelenumi}{(\theenumi)}
\item\label{XS21}
Assume that $(s_1, s_2)=(1, 1)$. Then $\mult_P\Delta_X=1$. Set 
$Q_i:=l_i^Z\cap \Gamma_{P, 1}$. Then $|\Delta_Z|=\{Q_1, Q_2\}$ around over $P$ and 
$\mult_{Q_i}\Delta_Z=1$. In this case, $E_Z=l_1^Z+\Gamma_{P, 1}+l_2^Z$ and 
$E_M=l_1^M+\Gamma_{P, 1}^M+l_2^M$ around over $P$. 
The weighted dual graph of $E_Z$ around over $P$ is the following: 
\begin{center}
    \begin{picture}(80, 47)(0, 40)
    \put(0, 60){\makebox(0, 0){\large$\oslash$}}
    \put(3, 72){\makebox(0, 0)[b]{$l_1^Z$}}
    \put(7, 52){\makebox(0, 0){\tiny $(1)$}}
    \put(5, 60){\line(1, 0){30}}
    \put(40, 60){\makebox(0, -3){\textcircled{\tiny $1$}}}
    \put(47, 52){\makebox(0, 0){\tiny $(1)$}}
    \put(43, 70){\makebox(0, 0)[b]{$\Gamma_{P, 1}$}}
    \put(45, 60){\line(1, 0){30}}
    \put(80, 60){\makebox(0, 0){\large$\oslash$}}
    \put(87, 52){\makebox(0, 0){\tiny $(1)$}}
    \put(83, 72){\makebox(0, 0)[b]{$l_2^Z$}}
    \end{picture}
\end{center}
The weighted dual graph of $E_M$ around over $P$ is the following: 
\begin{center}
    \begin{picture}(80, 47)(0, 40)
    \put(0, 60){\makebox(0, 0){\large$\oslash$}}
    \put(3, 72){\makebox(0, 0)[b]{$l_1^M$}}
    \put(7, 52){\makebox(0, 0){\tiny $(1)$}}
    \put(21, 55){\makebox(0, 0)[b]{$\sqcup$}}
    \put(40, 60){\makebox(0, -3){\textcircled{\tiny $3$}}}
    \put(47, 52){\makebox(0, 0){\tiny $(1)$}}
    \put(43, 70){\makebox(0, 0)[b]{$\Gamma_{P, 1}^M$}}
    \put(61, 55){\makebox(0, 0)[b]{$\sqcup$}}
    \put(80, 60){\makebox(0, 0){\large$\oslash$}}
    \put(87, 52){\makebox(0, 0){\tiny $(1)$}}
    \put(83, 72){\makebox(0, 0)[b]{$l_2^M$}}
    \end{picture}
\end{center}
\item\label{XS22}
Assume that $(s_1, s_2)=(2, 1)$. Then $\mult_P(\Delta_X\cap l_1)=1$. Set 
$k:=\mult_P\Delta_X$ and $j:=\mult_P(\Delta_X\cap l_2)$. 
Then one of the following holds: 
\begin{enumerate}
\renewcommand{\theenumii}{\alph{enumii}}
\renewcommand{\labelenumii}{(\theenumii)}
\item\label{XS221}
$k=j\geqslant 1$ holds. In this case, $|\Delta_Z|\subset\Gamma_{P, k}$, 
$\deg(\Delta_Z\cap\Gamma_{P, k})=2$ and 
$E_Z=2l_1^Z+2\Gamma_{P, 1}+\cdots+2\Gamma_{P, k}+l_2^Z$ 
around over $P$. 
The weighted dual graph of $E_Z$ around over $P$ is the following: 
\begin{center}
    \begin{picture}(190, 43)(0, 43)
    \put(0, 60){\makebox(0, 0){\large$\oslash$}}
    \put(3, 70){\makebox(0, 0)[b]{$l_1^Z$}}
    \put(7, 52){\makebox(0, 0){\tiny $(2)$}}
    \put(5, 60){\line(1, 0){30}}
    \put(40, 60){\makebox(0, -3){\textcircled{\tiny $2$}}}
    \put(47, 52){\makebox(0, 0){\tiny $(2)$}}
    \put(43, 70){\makebox(0, 0)[b]{$\Gamma_{P, 1}$}}
    \put(45, 60){\line(1, 0){20}}
    \put(67, 60){\line(1, 0){2}}
    \put(71, 60){\line(1, 0){2}}
    \put(75, 60){\line(1, 0){2}}
    \put(79, 60){\line(1, 0){21}}
    \put(105, 60){\makebox(0, -3){\textcircled{\tiny $2$}}}
    \put(112, 52){\makebox(0, 0){\tiny $(2)$}}
    \put(107, 70){\makebox(0, 0)[b]{$\Gamma_{P, k-1}$}}
    \put(110, 60){\line(1, 0){30}}
    \put(145, 60){\makebox(0, -3){\textcircled{\tiny $1$}}}
    \put(152, 52){\makebox(0, 0){\tiny $(2)$}}
    \put(147, 70){\makebox(0, 0)[b]{$\Gamma_{P, k}$}}
    \put(150, 60){\line(1, 0){30}}
    \put(185, 60){\makebox(0, 0){\large$\oslash$}}
    \put(192, 52){\makebox(0, 0){\tiny $(1)$}}
    \put(188, 70){\makebox(0, 0)[b]{$l_2^Z$}}
    \end{picture}
\end{center}
\item\label{XS222}
$k=j+2\geqslant 3$ holds. In this case, $\Delta_Z=\emptyset$ and 
$E_Z(=E_M)=2l_1^Z+2\Gamma_{P, 1}+\cdots+2\Gamma_{P, k-2}
+\Gamma_{P, k-1}+l_2^Z$ 
around over $P$. 
The weighted dual graph of $E_Z$ around over $P$ is the following: 
\begin{center}
    \begin{picture}(150, 80)(0, 10)
    \put(0, 60){\makebox(0, 0){\large$\oslash$}}
    \put(3, 70){\makebox(0, 0)[b]{$l_1^Z$}}
    \put(7, 52){\makebox(0, 0){\tiny $(2)$}}
    \put(5, 60){\line(1, 0){30}}
    \put(40, 60){\makebox(0, -3){\textcircled{\tiny $2$}}}
    \put(47, 52){\makebox(0, 0){\tiny $(2)$}}
    \put(43, 70){\makebox(0, 0)[b]{$\Gamma_{P, 1}$}}
    \put(45, 60){\line(1, 0){20}}
    \put(67, 60){\line(1, 0){2}}
    \put(71, 60){\line(1, 0){2}}
    \put(75, 60){\line(1, 0){2}}
    \put(79, 60){\line(1, 0){21}}
    \put(105, 60){\makebox(0, -3){\textcircled{\tiny $2$}}}
    \put(112, 52){\makebox(0, 0){\tiny $(2)$}}
    \put(107, 70){\makebox(0, 0)[b]{$\Gamma_{P, k-2}$}}
    \put(110, 60){\line(1, 0){30}}
    \put(145, 60){\makebox(0, -3){\textcircled{\tiny $2$}}}
    \put(152, 52){\makebox(0, 0){\tiny $(1)$}}
    \put(147, 70){\makebox(0, 0)[b]{$\Gamma_{P, k-1}$}}
    \put(105, 55){\line(0, -1){20}}
    \put(105, 30){\makebox(0, 0){\large$\oslash$}}
    \put(112, 22){\makebox(0, 0){\tiny $(1)$}}
    \put(90, 22){\makebox(0, 0)[b]{$l_2^Z$}}
    \end{picture}
\end{center}
\end{enumerate}
\end{enumerate}
\end{lemma}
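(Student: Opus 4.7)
The plan is to follow the template of Lemma \ref{XS1} together with the local lemmas in Section \ref{ZS_section}: analyze $E_Z$ locally around $\psi^{-1}(P)$ by running through the successive point blowups that make up the elimination $\psi\colon Z\to X$ of $\Delta_X$ at $P$, and then invoke Lemma \ref{ZS2} (together with the dual-graph classification in Table \ref{graph_table}) to pin down $\Delta_Z$ and $E_M$.

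First, the constraint $(s_1,s_2)\in\{(1,1),(2,1)\}$ is immediate from Lemma \ref{FuBo_lem}\eqref{FuBo_lem4}: $\mult_PE_X=s_1+s_2\leqslant 3$ combined with $s_1\geqslant s_2\geqslant 1$ leaves no other possibility. Next, factor $\psi=\pi_k\circ\cdots\circ\pi_1$, where $k=\mult_P\Delta_X$ and $\pi_j\colon X_j\to X_{j-1}$ is the blowup at a single point $P_{j-1}$ (with $X_0=X$, $P_0=P$); the $(\nu1)$-condition guarantees that each $P_j$ is a single point on the newly created exceptional $e_j$, which in $Z$ becomes $\Gamma_{P,j}$. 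Setting $E_{X_j}:=\psi_j^*E_X-K_{X_j/X}$ with $\psi_j:=\pi_j\circ\cdots\circ\pi_1$, a short induction gives $E_{X_j}=\pi_j^*E_{X_{j-1}}-e_j$, hence
\[
\coeff_{e_j}E_Z=\mult_{P_{j-1}}E_{X_{j-1}}-1,
\]
while strict transforms inherit their original coefficients. Effectivity of $E_Z$ combined with $\coeff E_M\subset\{1,2\}$ (Corollary \ref{dP-basic_cor}\eqref{dP-basic_cor1}) forces each $\coeff_{e_j}E_Z\in\{0,1,2\}$.

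In case $(s_1,s_2)=(1,1)$ the first blowup yields $\coeff_{e_1}E_Z=1$ and the transforms $l_1^{X_1},l_2^{X_1}$ meet $e_1$ at two distinct points. If $k\geqslant 2$, each of the three possible positions of $P_1$ (on $l_1^{X_1}\cap e_1$, on $l_2^{X_1}\cap e_1$, or generic on $e_1$) leads, after any admissible placement of $\Delta_Z$ consistent with $\mult_QE_Z\geqslant 2$, to a dual graph of $E_M$ that fits no entry of Table \ref{graph_table}; in particular the only all-weight-$1$ $\gA$-chain allowed is $\gA_3(1,1)$, whose central vertex must be a $(-4)$-curve, a shape none of the produced graphs attains. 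Hence $k=1$ and $Z=X_1$ locally. Applying Lemma \ref{ZS2}(1) at each $Q_i:=l_i^Z\cap\Gamma_{P,1}$ gives $\mult_{Q_i}\Delta_Z=1$; since the $(-1)$-curve $\Gamma_{P,1}$ is then blown up once at each $Q_i$, $\Gamma_{P,1}^M$ is a $(-3)$-curve, and the claimed weighted dual graphs of $E_Z$ and $E_M$ follow.

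In case $(s_1,s_2)=(2,1)$, assuming $\mult_P(\Delta_X\cap l_1)\geqslant 2$ forces the tangent of $\Delta_X$ to lie along $l_1$, so $P_1=l_1^{X_1}\cap e_1$; the iterative formula then gives $\coeff_{e_2}E_Z=(2+2)-1=3$, a contradiction. Hence $\mult_P(\Delta_X\cap l_1)=1$. Writing $N$ for the formal contact order of the smooth curvilinear support $C\supset\Delta_X$ with $l_2$ at $P$, one has $j=\min(N,k)$. When $N\geqslant k$ every center lies on the successive strict transform of $l_2$, giving $j=k$ and every $\coeff_{e_j}E_Z=2$, which is case (a); when $N<k$ the centers lie on $l_2$ for the first $N$ steps and are generic on the exceptional chain thereafter, producing coefficients $2,\ldots,2,1,0,\ldots$, and effectivity forces $k-N\leqslant 2$. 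The borderline value $k=N+1$ produces a fork at $\Gamma_{P,k-1}$ in $E_Z$; the only way to convert the $(-1)$-curve $\Gamma_{P,k}$ with coefficient $1$ into an admissible component of $E_M$ is via $\Delta_Z$ at $Q_0:=\Gamma_{P,k-1}\cap\Gamma_{P,k}$ using Lemma \ref{ZS2}(2), but then $\Gamma_{P,k-1}^M$ drops to a $(-3)$-curve with weight $2$, and no $\gD_t$-type component of Table \ref{graph_table} has a fork vertex of that kind carrying a weight-$2$ chain branch; this excludes $k=N+1$. The value $k=N+2$ yields case (b): the local dual graph of $E_Z$ with $\Delta_Z=\emptyset$ already matches a $\gD_{k+1}(2)$-type component, and adding any $\Delta_Z$-point would spoil the match, so $\Delta_Z=\emptyset$ locally. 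Finally, in case (a), the chain $l_1^Z(2)-\Gamma_{P,1}(2)-\cdots-\Gamma_{P,k}(2)-l_2^Z(1)$ must match the reversed $\gA_{k+2}(1,2)$-type component, forcing $\Gamma_{P,k}^M$ to have self-intersection $-3$ and hence $\deg(\Delta_Z\cap\Gamma_{P,k})=2$, while the interior $\Gamma_{P,j}^M$ being $(-2)$'s forces $|\Delta_Z|$ to be concentrated on $\Gamma_{P,k}$ locally.

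The main obstacle is the delicate bookkeeping in the $(2,1)$-scenario: excluding the borderline value $k=N+1$, identifying case (b) with $\Delta_Z=\emptyset$, and pinning down the exact degree $\deg(\Delta_Z\cap\Gamma_{P,k})=2$ in case (a). All three steps require propagating self-intersections and weights along the exceptional chain and repeatedly comparing the resulting dual graph of $E_M$ against the $\gA$- and $\gD$-type entries of Table \ref{graph_table}.
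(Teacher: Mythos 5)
Your skeleton is the same as the paper's: exclude $(2,2)$ and $\mult_P(\Delta_X\cap l_1)\geqslant 2$ by the coefficient computation $\coeff_{\Gamma_{P,i}}E_Z=\mult_{P_{i-1}}E_{X_{i-1}}-1$, bound $k-j\leqslant 2$ by effectivity, kill the middle case $k=j+1$, and finish with Lemmas \ref{ZS1}, \ref{ZS2} and Table \ref{graph_table}. The real difference is the tool used in the delicate steps. The paper's proof rests (implicitly) on Lemma \ref{fund_big_lem} (4): $\deg(\Delta_Z\cap E_0)=(L_Z\cdot E_0)$ for every nonsingular component $E_0\leqslant E_Z$, which gives at once $\deg(\Delta_Z\cap\Gamma_{P,i})=0$ for $i<k$ and $\deg(\Delta_Z\cap\Gamma_{P,k})=2$ when $\Gamma_{P,k}\leqslant E_Z$; this simultaneously excludes $k=j+1$ (the forced $\Delta_Z$-point at $\Gamma_{P,k-1}\cap\Gamma_{P,k}$ would violate $\deg(\Delta_Z\cap\Gamma_{P,k-1})=0$), forces $\Delta_Z=\emptyset$ over $P$ in case (b), and pins $\Delta_Z$ to $\Gamma_{P,k}$ with degree exactly $2$ in case (a). You never invoke this identity and instead argue by matching dual graphs against Table \ref{graph_table} via Corollary \ref{dP-basic_cor}. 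Where you argue about local vertex shapes this works: your fork-vertex exclusion of $k=N+1$ (a weight-$2$ $(-3)$-fork with a weight-$2$ branch never occurs, since the only $(-3)$ fork in the table, $\gD_4(1)$, has three weight-$1$ branches) is a valid, if longer, substitute for the paper's one-line argument.

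Two of your matching claims, however, are stated too strongly. First, in case (a) you assert that the local chain "must match the reversed $\gA_{k+2}(1,2)$-type component" and deduce $\deg(\Delta_Z\cap\Gamma_{P,k})=2$ and the concentration of $\Delta_Z$ on $\Gamma_{P,k}$ from that; but $l_1^M$ and $l_2^M$ are global curves, so the chain over $P$ is only a subgraph of a connected component of the dual graph of $E_M$, and whole-type matching is not available without further argument (one would have to rule out, vertex by vertex, the other possible self-intersections and $\Delta_Z$-placements, or simply quote Lemma \ref{fund_big_lem} (4)). Second, in the $(1,1)$ case your aside that "the only all-weight-$1$ $\gA$-chain allowed is $\gA_3(1,1)$" is wrong as stated (its middle vertex has weight $2$); the fact you actually need, and which does make your $k\geqslant 2$ case check go through together with $\mult_QE_Z\geqslant 2$ from Lemma \ref{fund_big_lem} (5), is that no graph in Table \ref{graph_table} has two adjacent weight-$1$ vertices, and its only weight-$1$ vertices are isolated $(-3)$-curves or $(-2)$-curves attached to weight-$2$ vertices. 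These are repairable soft spots rather than a failure of the approach, but as written the case-(a) conclusion and the $(1,1)$ exclusion are under-justified; importing the intersection identity of Lemma \ref{fund_big_lem} (4) closes both gaps immediately, which is exactly what the paper does.
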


\begin{proof}
If $(s_1, s_2)=(2, 2)$, then $\coeff_{\Gamma_{P, 1}}E_Z=3$, a contradiction. Thus 
$(s_1, s_2)=(1, 1)$ or $(2, 1)$. 

\eqref{XS21}
Assume that $(s_1, s_2)=(1, 1)$.
Set $k:=\mult_P\Delta_X$.
 If $k\geqslant 2$, then 
$\Gamma_{P, 1}\cap\Delta_Z=\emptyset$, $\coeff_{\Gamma_{P, 1}}E_Z=1$, 
$\coeff_{l_1^Z}E_Z=1$ and the curve $l_1^Z$ is the unique component of $E_Z$ which 
meets $\Gamma_{P, 1}$. This contradicts to Corollary \ref{dP-basic_cor}. 
Thus $k=1$. Then $\deg(\Gamma_{P, 1}\cap\Delta_Z)=2$. 
By Lemma \ref{ZS2}, we have $\Delta_Z=\{Q_1, Q_2\}$ and $\mult_{Q_i}E_Z=1$ 
around over $P$. 

\eqref{XS22}
Assume that $(s_1, s_2)=(2, 1)$. If $\mult_P(\Delta_X\cap l_1)\geqslant 2$, then 
$\coeff_{\Gamma_{P, 2}}E_Z=3$. This leads to a contradiction. 
Thus $\mult_P(\Delta_X\cap l_1)=1$. 
If $k\geqslant j+3$, then $\coeff_{\Gamma_{P, j+3}}E_Z=-1$, a contradiction. 
If $k=j+1$, then $\coeff_{\Gamma_{P, k}}E_Z=1$, $\deg(\Delta_Z\cap\Gamma_{P, k})=2$
and $\deg(\Delta_Z\cap\Gamma_{P, k-1})=0$. Note that the curve 
$\Gamma_{P, k-1}$ is the unique component of $E_Z$ which meets $\Gamma_{P, k}$. 
Thus $\Delta_X\cap\Gamma_{P, k}=\emptyset$, a contradiction. 
Thus either $k=j$ or $j+2$ holds. 
The remaining assertions follow from Lemmas \ref{ZS1} and \ref{ZS2}. 
\end{proof}

\begin{lemma}\label{XS3}
Assume that $E_X=l_1+l_2+l_3$ around $P$, where $P\in l_i$ is nonsingular, and 
$l_i$ and $l_j$ intersect transversally at $P$ for any $1\leqslant i<j\leqslant 3$.
Then we can assume that $\mult_P(\Delta_X\cap l_2)=\mult_P(\Delta_X\cap l_3)=1$. 
Set $k:=\mult_P\Delta_X$ and $j:=\mult_P(\Delta_X\cap l_1)$. 
Then $k=j$, $|\Delta_Z|\subset\Gamma_{P, k}$, 
$\deg(\Delta_Z\cap\Gamma_{P, k})=2$ and 
$E_Z=l_2^Z+l_3^Z+2\Gamma_{P, 1}+\cdots+2\Gamma_{P, k}+l_1^Z$ around over $P$. 
The weighted dual graph of $E_Z$ around over $P$ 
is the following $($if $k=1$, then 
$\Gamma_{P, 1}$ is a $(-1)$-curve and meets $l_1^Z$, $l_2^Z$ and $l_3^Z$$)$: 
\begin{center}
    \begin{picture}(190, 70)(0, 20)
    \put(0, 60){\makebox(0, 0){\large$\oslash$}}
    \put(3, 70){\makebox(0, 0)[b]{$l_1^Z$}}
    \put(7, 52){\makebox(0, 0){\tiny $(1)$}}
    \put(5, 60){\line(1, 0){30}}
    \put(40, 60){\makebox(0, -3){\textcircled{\tiny $1$}}}
    \put(47, 52){\makebox(0, 0){\tiny $(2)$}}
    \put(43, 70){\makebox(0, 0)[b]{$\Gamma_{P, k}$}}
    \put(45, 60){\line(1, 0){20}}
    \put(67, 60){\line(1, 0){2}}
    \put(71, 60){\line(1, 0){2}}
    \put(75, 60){\line(1, 0){2}}
    \put(79, 60){\line(1, 0){21}}
    \put(105, 60){\makebox(0, -3){\textcircled{\tiny $2$}}}
    \put(112, 52){\makebox(0, 0){\tiny $(2)$}}
    \put(107, 70){\makebox(0, 0)[b]{$\Gamma_{P, 2}$}}
    \put(110, 60){\line(1, 0){30}}
    \put(145, 60){\makebox(0, -3){\textcircled{\tiny $2$}}}
    \put(152, 52){\makebox(0, 0){\tiny $(2)$}}
    \put(147, 70){\makebox(0, 0)[b]{$\Gamma_{P, 1}$}}
    \put(150, 60){\line(1, 0){30}}
    \put(185, 60){\makebox(0, 0){\large$\oslash$}}
    \put(192, 52){\makebox(0, 0){\tiny $(1)$}}
    \put(188, 70){\makebox(0, 0)[b]{$l_2^Z$}}
    \put(145, 55){\line(0, -1){20}}
    \put(145, 30){\makebox(0, 0){\large$\oslash$}}
    \put(152, 22){\makebox(0, 0){\tiny $(1)$}}
    \put(130, 22){\makebox(0, 0)[b]{$l_3^Z$}}
    \end{picture}
\end{center}
\end{lemma}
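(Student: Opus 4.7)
The plan is to analyze the chain $P=P_0,P_1,\dots,P_{k-1}$ of infinitely near points underlying the elimination $\psi$ of $\Delta_X$ at $P$, where $k:=\mult_P\Delta_X$. The starting observation is that since the three lines $l_1,l_2,l_3$ have pairwise distinct tangent directions at $P$, their strict transforms meet the first exceptional divisor $E_1$ in three distinct points, so $P_1\in E_1$ lies on at most one of the $l_r^{Z_1}$. Hence at most one of the numbers $\mult_P(\Delta_X\cap l_r)$ exceeds $1$, and after relabeling I may assume this is $l_1$, giving $\mult_P(\Delta_X\cap l_2)=\mult_P(\Delta_X\cap l_3)=1$ and $j:=\mult_P(\Delta_X\cap l_1)\geqslant 1$. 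Once the chain leaves $l_1^{Z_i}$ it cannot return, so the first $j$ steps of the chain follow $l_1$ and the remaining steps lie off every $l_r^{Z_i}$.

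Next I would compute $\coeff_{\Gamma_{P,i}}E_Z$ by iterating the pullback formula along the chain, obtaining $\coeff_{\Gamma_{P,i}}E_Z=\min(i,j)+2-i$ for $1\leqslant i\leqslant k$. Effectiveness of $E_Z$ (axiom $(\sF\ref{fund_dfn3})$) then forces $k\leqslant j+2$, so $k\in\{j,j+1,j+2\}$.

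The hard part will be excluding $k=j+1$ and $k=j+2$. In the case $k=j+1$, direct calculation gives $\coeff_{\Gamma_{P,k}}E_Z=1$, $\coeff_{\Gamma_{P,k-1}}E_Z=2$, $(L_Z\cdot\Gamma_{P,k})=2$ and $(L_Z\cdot\Gamma_{P,k-1})=0$, so Lemma \ref{fund_big_lem} \eqref{fund_big_lem4} yields $\deg(\Delta_Z\cap\Gamma_{P,k})=2$ and $\deg(\Delta_Z\cap\Gamma_{P,k-1})=0$. I then rule out every candidate point of $\Delta_Z$ on $\Gamma_{P,k}$: a point off the intersection with $\Gamma_{P,k-1}$ has $\mult E_Z=1$ and contradicts Lemma \ref{ZS1}, while at $Q:=\Gamma_{P,k-1}\cap\Gamma_{P,k}$ the case $(s_1,s_2)=(2,1)$ of Lemma \ref{ZS2} forces $\mult_Q(\Delta_Z\cap\Gamma_{P,k-1})=1$, contradicting $\deg(\Delta_Z\cap\Gamma_{P,k-1})=0$. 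In the case $k=j+2$, one has $\coeff_{\Gamma_{P,k}}E_Z=0$ and $(L_Z\cdot\Gamma_{P,i})=0$ for every $i<k$; analogous applications of Lemmas \ref{ZS1} and \ref{ZS2} at each intersection and at generic points forbid $\Delta_Z$ from having any point in the neighborhood of the exceptional fiber over $P$, so $\phi$ is an isomorphism there. The dual graph of $E_M$ over $P$ then inherits two $3$-valent vertices ($\Gamma_{P,1}^M$ and $\Gamma_{P,j}^M$, collapsing to a single $4$-valent vertex when $j=1$) in one connected component, contradicting Corollary \ref{dP-basic_cor} and Table \ref{graph_table}.

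Once $k=j$ is established the chain follows $l_1$ throughout; each $\coeff_{\Gamma_{P,i}}E_Z$ equals $2$, and the intersection pattern of the figure reads off from the blowup sequence (with $l_2^Z,l_3^Z$ meeting $\Gamma_{P,1}$, $l_1^Z$ meeting $\Gamma_{P,k}$, and the straight chain $\Gamma_{P,1}$--$\cdots$--$\Gamma_{P,k}$). Finally I verify $(L_Z\cdot\Gamma_{P,i})=0$ for $i<k$ and $(L_Z\cdot\Gamma_{P,k})=2$ by direct computation; combined with Lemmas \ref{ZS1} and \ref{ZS2}, these vanishings force $|\Delta_Z|\subset\Gamma_{P,k}$ in the neighborhood over $P$ and $\deg(\Delta_Z\cap\Gamma_{P,k})=2$.
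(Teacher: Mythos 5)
Your proposal is correct and follows essentially the same route as the paper: after the relabeling, you bound $k\leqslant j+2$ via the coefficient computation $\coeff_{\Gamma_{P,i}}E_Z=\min(i,j)+2-i$, exclude $k=j+1$ by showing $\deg(\Delta_Z\cap\Gamma_{P,k})=2$ has no admissible support (Lemmas \ref{ZS1}, \ref{ZS2} and $\deg(\Delta_Z\cap\Gamma_{P,k-1})=0$), exclude $k=j+2$ by showing $\Delta_Z=\emptyset$ over $P$ so that the dual graph of $E_M$ acquires a forbidden two-fork (or $4$-valent) configuration contradicting Corollary \ref{dP-basic_cor}, and read off the $k=j$ structure from the blowup chain — exactly the paper's argument, with the intermediate computations spelled out in more detail.
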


\begin{proof}
Assume that $k\geqslant j+3$. 
Then $\coeff_{\Gamma_{P, k}}E_Z\leqslant -1$, which is a contradiction. 
Assume that $k=j+1$. Then $\coeff_{\Gamma_{P, k}}E_Z=1$, 
$\deg(\Delta_Z\cap\Gamma_{P, k})=2$, $\Delta_Z\cap\Gamma_{P, k-1}=\emptyset$, 
and the curve $\Gamma_{P, k-1}$ is the unique component of $E_Z$ which meets 
$\Gamma_{P, k}$. This leads to a contradiction. 
Assume that $k=j+2$. Then $\Delta_Z=\emptyset$ around over $P$
and the weighted dual graph of $E_Z(=E_M)$ around over $P$ is the following: 
\begin{center}
    \begin{picture}(150, 60)(0, 20)
    \put(0, 60){\makebox(0, -3){\textcircled{\tiny $2$}}}
    \put(3, 70){\makebox(0, 0)[b]{$\Gamma_{P, k-1}$}}
    \put(7, 52){\makebox(0, 0){\tiny $(1)$}}
    \put(5, 60){\line(1, 0){30}}
    \put(40, 60){\makebox(0, -3){\textcircled{\tiny $2$}}}
    \put(47, 52){\makebox(0, 0){\tiny $(2)$}}
    \put(43, 70){\makebox(0, 0)[b]{$\Gamma_{P, k-2}$}}
    \put(45, 60){\line(1, 0){20}}
    \put(67, 60){\line(1, 0){2}}
    \put(71, 60){\line(1, 0){2}}
    \put(75, 60){\line(1, 0){2}}
    \put(79, 60){\line(1, 0){21}}
    \put(105, 60){\makebox(0, -3){\textcircled{\tiny $2$}}}
    \put(112, 52){\makebox(0, 0){\tiny $(2)$}}
    \put(107, 70){\makebox(0, 0)[b]{$\Gamma_{P, 1}$}}
    \put(110, 60){\line(1, 0){30}}
    \put(145, 60){\makebox(0, 0){\large$\oslash$}}
    \put(152, 52){\makebox(0, 0){\tiny $(1)$}}
    \put(147, 70){\makebox(0, 0)[b]{$l_2^Z$}}
    \put(105, 55){\line(0, -1){20}}
    \put(105, 30){\makebox(0, 0){\large$\oslash$}}
    \put(112, 22){\makebox(0, 0){\tiny $(1)$}}
    \put(90, 22){\makebox(0, 0)[b]{$l_3^Z$}}
    \put(40, 55){\line(0, -1){20}}
    \put(40, 30){\makebox(0, 0){\large$\oslash$}}
    \put(47, 22){\makebox(0, 0){\tiny $(1)$}}
    \put(15, 22){\makebox(0, 0)[b]{$l_1^Z$}}
    \end{picture}
\end{center}
This leads a contradiction to Corollary \ref{dP-basic_cor}. 
The remaining assertions follow from Lemmas \ref{ZS1} and \ref{ZS2}. 
\end{proof}

\begin{lemma}\label{XS4}
Assume that $E_X=l_1+l_2$ around $P$, where $P\in l_i$ is nonsingular, 
$\{P\}=|l_1\cap l_2|$, and $\mult_P(l_1\cap l_2)=2$. 
Set $k:=\mult_P\Delta_X$, $j_i:=\mult_P(\Delta_X\cap l_i)$ and assume that 
$j_1\geqslant j_2$. Then $k=j_1$, $j_2=2$, $|\Delta_Z|\subset\Gamma_{P, k}$,
$\deg(\Delta_Z\cap\Gamma_{P, k})=2$ and 
$E_Z=l_2^Z+\Gamma_{P, 1}+2\Gamma_{P, 2}+\cdots+2\Gamma_{P, k}+l_1^Z$ 
around over $P$. 
The weighted dual graph of $E_Z$ around over $P$ is the following: 
\begin{center}
    \begin{picture}(190, 70)(0, 20)
    \put(0, 60){\makebox(0, -3){\textcircled{\tiny $2$}}}
    \put(3, 70){\makebox(0, 0)[b]{$\Gamma_{P, 1}$}}
    \put(7, 52){\makebox(0, 0){\tiny $(1)$}}
    \put(5, 60){\line(1, 0){30}}
    \put(40, 60){\makebox(0, -3){\textcircled{\tiny $2$}}}
    \put(47, 52){\makebox(0, 0){\tiny $(2)$}}
    \put(43, 70){\makebox(0, 0)[b]{$\Gamma_{P, 2}$}}
    \put(45, 60){\line(1, 0){20}}
    \put(67, 60){\line(1, 0){2}}
    \put(71, 60){\line(1, 0){2}}
    \put(75, 60){\line(1, 0){2}}
    \put(79, 60){\line(1, 0){21}}
    \put(105, 60){\makebox(0, -3){\textcircled{\tiny $2$}}}
    \put(112, 52){\makebox(0, 0){\tiny $(2)$}}
    \put(107, 70){\makebox(0, 0)[b]{$\Gamma_{P, k-1}$}}
    \put(110, 60){\line(1, 0){30}}
    \put(145, 60){\makebox(0, -3){\textcircled{\tiny $1$}}}
    \put(152, 52){\makebox(0, 0){\tiny $(2)$}}
    \put(147, 70){\makebox(0, 0)[b]{$\Gamma_{P, k}$}}
    \put(150, 60){\line(1, 0){30}}
    \put(185, 60){\makebox(0, 0){\large$\oslash$}}
    \put(192, 52){\makebox(0, 0){\tiny $(1)$}}
    \put(188, 70){\makebox(0, 0)[b]{$l_1^Z$}}
    \put(40, 55){\line(0, -1){20}}
    \put(40, 30){\makebox(0, 0){\large$\oslash$}}
    \put(47, 22){\makebox(0, 0){\tiny $(1)$}}
    \put(25, 22){\makebox(0, 0)[b]{$l_2^Z$}}
    \end{picture}
\end{center}
\end{lemma}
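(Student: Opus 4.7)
The plan is to mimic the template of Lemmas \ref{XS1}--\ref{XS3}: compute coefficients of the exceptional curves $\Gamma_{P,j}$ in $E_Z=\psi^{*}E_X-K_{Z/X}$, then combine effectivity of $E_Z$ with the dual-graph constraints from Corollary \ref{dP-basic_cor} and Lemma \ref{ZS3} to pin the configuration down. Writing $P_0=P,P_1,\ldots,P_{k-1}$ for the infinitely near points underlying $\psi$ above $P$ and $\mu_{j'}:=\mult_{P_{j'}}(l_1^{(j')}+l_2^{(j')})\in\{0,1,2\}$, the pullback-plus-discrepancy calculation gives $\coeff_{\Gamma_{P,j}}E_Z=\sum_{j'=0}^{j-1}\mu_{j'}-j$. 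In particular $\coeff_{\Gamma_{P,1}}E_Z=\mu_0-1=1$. To exclude $k=1$: the tangency $\mult_P(l_1\cap l_2)=2$ forces $l_1^Z$ and $l_2^Z$ still to meet at a single point on $\Gamma_{P,1}$, producing three coefficient-$1$ curves through a single point and contradicting Lemma \ref{ZS3}\eqref{ZS31}. The same triple-point obstruction rules out the possibility that $P_1$ differs from the common tangent-direction point of $l_1,l_2$ on the first exceptional curve: in that case $\mu_{j'}=0$ for every $j'\geq 1$, so effectivity forces $k\leq 2$, and $k=2$ recreates the same triple point on $\Gamma_{P,1}$.

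Hence $k\geq 2$, $\mu_1=2$, $\coeff_{\Gamma_{P,2}}E_Z=2$, and $j_1,j_2\geq 2$. As the tangency order is exactly $2$, after the second blow-up the strict transforms $l_1,l_2$ separate and meet the new exceptional curve at distinct points, so $\mu_{j'}\leq 1$ for every $j'\geq 2$. The hypothesis $j_1\geq j_2$ then immediately gives $j_2=2$ and means the chain may only extend along $l_1$. The decisive step is to prove that the chain follows $l_1$ all the way to $P_{k-1}$, so that $k=j_1$. If instead the chain deviated at some step $r+1\leq k-1$, then $l_1^Z$ would attach to $\Gamma_{P,r+1}$ (not $\Gamma_{P,k}$), while $l_2^Z$ still attaches to $\Gamma_{P,2}$. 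The corresponding connected component of the dual graph of $E_M$ then contains either a vertex of degree $4$ at $\Gamma_{P,2}$ (when $r=1$) or two distinct degree-$3$ fork vertices at $\Gamma_{P,2}$ and $\Gamma_{P,r+1}$, both of coefficient $2$. Neither configuration appears in Table \ref{graph_table}, contradicting Corollary \ref{dP-basic_cor}; hence $k=j_1$ and the coefficients along the chain come out as claimed.

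Finally, $\Gamma_{P,k}$ is a $(-1)$-curve on $Z$ meeting $l_1^Z$ and $\Gamma_{P,k-1}$; since $E_M$ contains no $(-1)$-curves, the elimination $\phi\colon M\to Z$ of $\Delta_Z$ must blow up length $2$ along $\Gamma_{P,k}$. Applying Lemma \ref{ZS2} at each point of $\Delta_Z\cap\Gamma_{P,k}$ pins down $|\Delta_Z|\subset\Gamma_{P,k}$ above $P$ with $\deg(\Delta_Z\cap\Gamma_{P,k})=2$, realizing the claimed weighted dual graph. The main obstacle is the exclusion of premature deviation of the chain from $l_1$, which relies crucially on the structural fact, extracted from Table \ref{graph_table}, that no connected component of the dual graph of $E_M$ admits two forks or a vertex of degree at least $4$.
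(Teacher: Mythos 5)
Your overall route is the same as the paper's: factor $\psi$ through the blow-up at $P$, observe that $l_1$, $l_2$ and the first exceptional curve meet pairwise transversally at one point, use the triple-point obstruction (Lemma \ref{ZS3}, resp.\ the simple normal crossing condition on $E_M$) to force the second center to be that point, and then decide how far the chain can follow $l_1$; at that last stage the paper simply invokes the argument of Lemma \ref{XS3}. The genuine gap is in your ``decisive step''. To exclude a deviation of the chain from $l_1$ you assert that the dual graph of $E_M$ would contain a degree-$4$ vertex or two forks, but what your configuration actually produces is such a vertex in the dual graph of $E_Z$. That shape does not automatically persist in $E_M$: the elimination of $\Delta_Z$ can destroy adjacencies (in Lemma \ref{ZS2}\,(2) the coefficient-one branch becomes \emph{disconnected} from the coefficient-two curve), and $\Delta_Z$ is not free to avoid the chain. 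Concretely, in the deviation case $k=j_1+1$ one has $(L_Z\cdot\Gamma_{P,k})=2$, so by Lemma \ref{fund_big_lem}\,(4) $\Delta_Z$ \emph{must} meet $\Gamma_{P,k}$; by Lemma \ref{fund_big_lem}\,(5) and Lemma \ref{ZS2} its only admissible location is $\Gamma_{P,k}\cap\Gamma_{P,j_1}$, and that placement forces $\mult(\Delta_Z\cap\Gamma_{P,j_1})\geqslant 1$ while $(L_Z\cdot\Gamma_{P,j_1})=0$ — this intersection-degree clash, not a graph shape, is how the case is killed (it is precisely the $k=j+1$ step inside Lemma \ref{XS3}). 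In the case $k=j_1+2$ your graph argument is correct, but only after checking $(L_Z\cdot C)=0$ for every exceptional component $C$ over $P$, hence $\Delta_Z=\emptyset$ over $P$ and $E_M=E_Z$ locally — again the verification Lemma \ref{XS3} performs and you skip. (Effectivity of $E_Z$, which disposes of $k\geqslant j_1+3$, should also be stated for this branch.)

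A smaller instance of the same omission occurs at the end: from ``$E_M$ has no $(-1)$-curves'' you get $\deg(\Delta_Z\cap\Gamma_{P,k})\geqslant 1$, and the exact value $2$ as well as $|\Delta_Z|\subset\Gamma_{P,k}$ over $P$ come from $(L_Z\cdot\Gamma_{P,k})=2$ together with $(L_Z\cdot\Gamma_{P,j})=0$ for $j<k$ via Lemma \ref{fund_big_lem}\,(4); applying Lemma \ref{ZS2} only at points of $\Delta_Z\cap\Gamma_{P,k}$ cannot by itself rule out points of $\Delta_Z$ elsewhere in the fibre. With these $(L_Z\cdot C)=\deg(\Delta_Z\cap C)$ computations inserted, your argument closes and agrees with the paper's conclusion; without them, the transfer from $E_Z$ to $E_M$ on which your contradiction rests is unjustified.
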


\begin{proof}
The morphism $\psi\colon Z\to X$ factors though the monoidal transform 
$\pi\colon X_1\to X$ at $P$. Set $E_{X_1}:=E_Z^{X_1}$. Then 
$E_{X_1}=l_1^{X_1}+l_2^{X_1}+\Gamma_{P, 1}^{X_1}$ around over $P$. 
We note that any two curves 
intersect transversally at $P_1:=l_1^{X_1}\cap l_2^{X_1}$. 
If $\psi_1\colon Z\to X_1$ is isomorphic around $P_1$, then contradicts to 
Lemma \ref{ZS3}. Thus $\psi_1$ factors through the monoidal transform at $P_1$. 
Then we can apply the argument of Lemma \ref{XS3} and we can get the assertion. 
\end{proof}

\begin{lemma}\label{XS5}
Assume that $E_X=C$ around $P$, where $C$ is defined by $x^2=y^3$ such that 
$\{x$, $y\}$ is the regular parameter system of $P$. 
Then $\mult_P\Delta_X=1$, $|\Delta_Z|=\{Q\}$, 
$\mult_Q\Delta_Z=\mult_Q(\Delta_Z\cap C^Z)=\mult_Q(\Delta_Z\cap\Gamma_{P, 1})=2$
around over $P$, 
where $Q:=C^Z\cap\Gamma_{P, 1}$. The weighted dual graph of $E_M$ 
around over $P$ is the following: 
\begin{center}
    \begin{picture}(40, 45)(0, 40)
    \put(0, 60){\makebox(0, 0){\large$\oslash$}}
    \put(3, 74){\makebox(0, 0)[b]{$C^M$}}
    \put(7, 52){\makebox(0, 0){\tiny $(1)$}}
    \put(21, 55){\makebox(0, 0)[b]{$\sqcup$}}
    \put(40, 60){\makebox(0, -3){\textcircled{\tiny $3$}}}
    \put(47, 52){\makebox(0, 0){\tiny $(1)$}}
    \put(43, 70){\makebox(0, 0)[b]{$\Gamma_{P, 1}^M$}}
    \end{picture}
\end{center}
\end{lemma}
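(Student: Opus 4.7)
The plan is first to show $\mult_P \Delta_X = 1$, and then to apply Lemma \ref{ZS4} at the unique tangency point of $C^Z$ with the exceptional curve to read off the remaining claims. Since $C$ has an ordinary cusp at $P$, we have $\mult_P E_X = 2$ and the multiplicity sequence of $C$ at $P$ is $(2,1)$: one blow-up at $P$ turns $C$ into a nonsingular curve tangent to the exceptional divisor $\Gamma_{P,1}$ at a single point with intersection multiplicity $2$, and already $\coeff_{\Gamma_{P,1}} E_Z = 1$.

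To establish $k := \mult_P \Delta_X = 1$ I would argue by contradiction, factoring the local part of $\psi$ over $P$ as a chain of point blow-ups $\psi_j \colon X_j \to X_{j-1}$, with $P_1 \in \Gamma_{P,1}|_{X_1}$ the tangency point of $C^{X_1}$ and $P_1'$ the center of the second blow-up. If $P_1' = P_1$, a direct computation yields $E_{X_2} = C^{X_2} + \Gamma_{P,1}^{X_2} + \Gamma_{P,2}$ with all three curves meeting at one common point with pairwise transverse intersections; Proposition \ref{elim_prop} \eqref{elim_prop1} forces every subsequent blow-up in the elimination to avoid this triple point (otherwise the canonical $(-2),\dots,(-2),(-1)$ self-intersection pattern of the $\Gamma_{P,j}$ would be broken), so the triple intersection persists in $E_Z$, contradicting Lemma \ref{ZS3} \eqref{ZS31}. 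If instead $P_1' \neq P_1$ and $k \geqslant 3$, a further computation gives $\coeff_{\Gamma_{P,3}} E_{X_3} = -1$, violating effectivity of $E_Z$; while if $P_1' \neq P_1$ and $k = 2$, then $E_Z = C^Z + \Gamma_{P,1}^Z$ with $\Gamma_{P,1}^Z$ a $(-2)$-curve tangent to $C^Z$ at a point $Q$, and Lemma \ref{ZS4} applied at $Q$ forces $\Gamma_{P,1}^M$ to become a $(-4)$-curve forming a singleton connected component of $E_M$ with weight $1$, incompatible with Corollary \ref{dP-basic_cor} and Table \ref{graph_table}.

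With $k = 1$ in hand, $\psi$ is the single blow-up at $P$, $\Gamma_{P,1}$ is a $(-1)$-curve, and $E_Z = C^Z + \Gamma_{P,1}$ with $C^Z$ smooth tangent to $\Gamma_{P,1}$ at a unique point $Q$ of intersection multiplicity $2$. Invoking Lemma \ref{ZS4} with $\{l_1, l_2\} = \{\Gamma_{P,1}, C^Z\}$ and $m = 2$ yields $\mult_Q \Delta_Z = \mult_Q(\Delta_Z \cap C^Z) = \mult_Q(\Delta_Z \cap \Gamma_{P,1}) = 2$ together with the description of $E_M$ near $\phi^{-1}(Q)$ as the union of two disjoint smooth components $C^M$ and $\Gamma_{P,1}^M$ each with weight $1$. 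In local coordinates at $Q$ with $\Gamma_{P,1} = \{y = 0\}$ and $C^Z = \{y = x^2\}$ the ideal of $\Delta_Z$ is $(y, x^2)$, so both blow-up centers in $\phi$ lie on the strict transform of $\Gamma_{P,1}$ and $(\Gamma_{P,1}^M)^2 = -1 - 2 = -3$, as asserted. The main obstacle is the bookkeeping in the case $P_1' = P_1$, where one needs the chain self-intersection pattern of Proposition \ref{elim_prop} \eqref{elim_prop1} to rule out any later blow-up that could separate the triple intersection.
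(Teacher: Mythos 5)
Your argument is correct and follows the paper's skeleton: factor $\psi$ through the single blow-up at $P$, note that $E_{X_1}=C^{X_1}+\Gamma_{P,1}^{X_1}$ consists of two nonsingular coefficient-one branches tangent to order two at $Q$, rule out any further blow-ups over $P$, and then read off all remaining assertions from Lemma \ref{ZS4} at $Q$. Where you genuinely diverge is the exclusion step. The paper handles it in one line: if $Z\to X_1$ is not an isomorphism around $Q$, the forced continuation described by Lemma \ref{XS4} would contradict $\psi$-nefness of $-K_Z$; the possibility of a second center on $\Gamma_{P,1}^{X_1}$ away from $Q$ (which is what $\mult_P\Delta_X=1$ actually requires) is left implicit. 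You instead give a direct three-case analysis: a second blow-up at $Q$ creates a triple point which the chain pattern of Proposition \ref{elim_prop} forbids ever blowing up again, so it persists in $E_Z$ and is killed by Lemma \ref{ZS3}; a second center off $Q$ with $\mult_P\Delta_X\geqslant 3$ forces $\coeff_{\Gamma_{P,3}}E_Z=-1$, contradicting effectivity; and the remaining case $\mult_P\Delta_X=2$ is excluded because $\Gamma_{P,1}^M$ would be a weight-one component of self-intersection $\leqslant -4$, impossible by Corollary \ref{dP-basic_cor} and Table \ref{graph_table}. Your route is more self-contained and in fact more complete (it treats the off-$Q$ centers explicitly, which the paper's appeal to Lemma \ref{XS4} does not literally cover), at the cost of extra bookkeeping; the paper's route is shorter because it recycles Lemma \ref{XS4} wholesale. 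Two small touch-ups: in your first and third cases you should also dispose of the sub-case where the offending point ($Q_1$, resp.\ $Q$) does not lie in $\Delta_Z$ — Lemmas \ref{ZS3} and \ref{ZS4} are stated for points of $\Delta_Z$ — but there the contradiction is immediate because $E_M$ would fail to be simple normal crossing, violating condition ($\sC$2) of Definition \ref{basic_dfn} (this is the same level of brevity the paper itself allows in the proofs of Lemmas \ref{XS4} and \ref{XS5}); and in the $\mult_P\Delta_X=2$ case the phrase ``singleton connected component'' is neither needed nor automatic, since a weight-one curve with self-intersection $\leqslant -4$ is already excluded by Table \ref{graph_table} regardless of adjacency.
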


\begin{proof}
The morphism $\psi\colon Z\to X$ factors though the monoidal transform 
$\pi\colon X_1\to X$ at $P$. Set $E_{X_1}:=E_Z^{X_1}$. Then 
$E_{X_1}=C^{X_1}+\Gamma_{P, 1}^{X_1}$ and both components are nonsingular 
around over $P$. Moreover, $\{Q\}:=|C^{X_1}\cap\Gamma_{P, 1}^{X_1}|$ satisfies 
that $\mult_Q(C^{X_1}\cap\Gamma_{P, 1}^{X_1})=2$. 
If $Z\to X_1$ is not isomorphism around $Q$, then $-K_Z$ is not 
$\psi$-nef by Lemma \ref{XS4}, which leads to a contradiction. 
Thus $Z\to X_1$ is an isomorphism around $Q$. The remaining 
assertions follows from Lemma \ref{ZS4}. 
\end{proof}

\section{Special bottom tetrads}\label{cubic_section}

In this section, we consider the relationship between pseudo-median triplets 
$(Z, E_Z; \Delta_Z)$ ($L_Z$: the fundamental divisor) with $2K_Z+L_Z$ trivial and bottom 
tetrads $(X, E_X; \Delta_Z, \Delta_X)$ ($L_X$: the fundamental divisor) 
with $2K_X+L_X$ trivial. 
Since $-K_Z$ is nef and big, there exists a birational morphism $Z\to X=\pr^2$ 
unless $Z=\pr^1\times\pr^1$ or $\F_2$ by \cite[Corollary 3.6]{HW}. 
Moreover, for any birational morphism $Z\to X=\pr^2$, there exists a zero-dimensional 
subscheme $\Delta_X\subset X$ which satisfies the $(\nu1)$-condition and the 
morphism $Z\to X$ is the elimination of $\Delta_X$. 
By this way, we obtain a $3$-fundamental multiplet $(X, E_X; \Delta_Z, \Delta_X)$ 
of length two. 
The following lemmas show that we can replace the tetrad with a ``suitable " one. 

\begin{lemma}\label{reduction1_lem}
Let $(X=\pr^2, E_X; \Delta_Z, \Delta_X)$ be a 
$3$-fundamental multiplet of length two with $E_X=2l_1+l_2$, 
where $l_1$, $l_2$ are distinct lines. Set $P:=l_1\cap l_2$. 
Assume that one of the following holds: 
\begin{enumerate}
\renewcommand{\theenumi}{\arabic{enumi}}
\renewcommand{\labelenumi}{(\theenumi)}
\item\label{reduction1_lem1}
There exists a point $P_1\in\Delta_X\cap l_1\setminus\{P\}$ such that 
one of the following holds: 
\begin{enumerate}
\renewcommand{\theenumii}{\alph{enumii}}
\renewcommand{\labelenumii}{(\theenumii)}
\item\label{reduction1_lem11}
$\mult_{P_1}\Delta_X>2$.
\item\label{reduction1_lem12}
$\deg\Delta_X\geqslant 5$, $\mult_{P_1}(\Delta_X\cap l_1)=1$ and 
$\deg(\Delta_X\cap l_1)\geqslant 2$. 
\end{enumerate}
\item\label{reduction1_lem2}
$\#|\Delta_X\cap l_1\setminus\{P\}|\geqslant 2$. 
\item\label{reduction1_lem3}
$\#|\Delta_X\cap l_1\setminus\{P\}|=1$ and 
$\mult_P\Delta_X>\mult_P(\Delta_X\cap l_2)$. 
\item\label{reduction1_lem4}
$\deg\Delta_X=4$ and $\deg(\Delta_X\cap l_2)=2$. 
\end{enumerate}
Then there exists a $3$-fundamental multiplet 
$(X'=\pr^2, E_{X'}; \Delta_Z, \Delta_{X'})$ of length two 
such that both $(X, E_X; \Delta_Z, \Delta_X)$ and 
$(X', E_{X'}; \Delta_Z, \Delta_{X'})$ induces the same pseudo-median triplet, 
and either holds: 
\begin{enumerate}
\renewcommand{\theenumi}{\roman{enumi}}
\renewcommand{\labelenumi}{(\theenumi)}
\item\label{reduction1_lem01}
$E_{X'}$ is reduced, or 
\item\label{reduction1_lem02}
$E_{X'}=2l'_1+l'_2$ such that $l'_1$, $l'_2$ are distinct lines and none of the conditions 
\eqref{reduction1_lem1}, \eqref{reduction1_lem2}, \eqref{reduction1_lem3}, 
\eqref{reduction1_lem4} hold. 
\end{enumerate}
\end{lemma}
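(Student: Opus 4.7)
The key observation is that two different $3$-fundamental multiplets of length two may induce the same pseudo-median triplet $(Z,E_Z;\Delta_Z)$ through different birational morphisms $\psi'\colon Z\to X'$. By Proposition \ref{elim_prop}\eqref{elim_prop2}, whenever $-K_Z$ is $\psi'$-nef, $\psi'$ is automatically the elimination of some $\Delta_{X'}$ satisfying the $(\nu1)$-condition. The associated $3$-basic pair $(M,E_M)$ depends only on $(Z,E_Z;\Delta_Z)$ through $\phi\colon M\to Z$, so the local conditions on $E_M$ in Proposition \ref{converse_prop}\eqref{converse_prop2} are automatically preserved. Moreover $2K_{X'}+L_{X'}\sim 0$ holds because its $\psi'$-pullback equals $2K_Z+L_Z\sim 0$, and condition $(\sB\ref{bottom_dfn4})$ on $X'\cong\pr^2$ is vacuous since $\pr^2$ has no $(-1)$-curves. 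Thus the task reduces to exhibiting $\psi'\colon Z\to X'\cong\pr^2$ for which $E_{X'}:=\psi'_*E_Z$ yields a tetrad as in \eqref{reduction1_lem01} or \eqref{reduction1_lem02}.

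I would construct $\psi'$ by performing a different sequence of $(-1)$-curve contractions on $Z$ than the one realizing $\psi$. Using the local descriptions in Lemmas \ref{XS1}--\ref{XS4} together with Lemma \ref{mult_seq_lem}, I would first compute $(l_1^Z)^2$ and catalogue the $(-1)$-curves lying over $l_1$: these always include the chain endpoints $\Gamma_{P_i,\mult_{P_i}\Delta_X}$ of $\psi$'s exceptional fibres, and in each of the bad cases \eqref{reduction1_lem1}--\eqref{reduction1_lem4}, also eventually the strict transform $l_1^Z$ itself (in some cases only after a preliminary contraction of certain chain endpoints). Contracting $l_1^Z$ first, geometrically, amounts to a Cremona-type transformation on $\pr^2$ centred at $P$ together with the bad points on $l_1$, swapping the role of the double line $l_1$ with a pencil through those points. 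Continuing to contract $(-1)$-curves until none remain yields a minimal model; case-by-case inspection, using that the initial $X$ was already $\pr^2$, confirms that this minimal model is $\pr^2$ rather than any $\F_n$. Throughout, I would verify that $-K_Z$ remains $\psi'$-nef by checking that every exceptional curve of $\psi'$ has self-intersection in $\{-1,-2\}$, so that Proposition \ref{elim_prop}\eqref{elim_prop2} applies.

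Finally, I would show that $E_{X'}=\psi'_*E_Z$ either has reduced support, giving conclusion \eqref{reduction1_lem01}, or is again of the form $2l'_1+l'_2$ but with a strict decrease in the lexicographic invariant $(\max_{P'}\mult_{P'}\Delta_{X'},\,\#|\Delta_{X'}\cap l'_1\setminus\{l'_1\cap l'_2\}|)$, where the maximum runs over $P'\in|\Delta_{X'}|\cap l'_1\setminus\{l'_1\cap l'_2\}$. Since this invariant is a pair of non-negative integers bounded above, iterating the replacement terminates at \eqref{reduction1_lem02}. The main obstacle is the exhaustive subcase analysis at the bad points: the local configurations catalogued in Lemmas \ref{XS1}--\ref{XS5} split into many discrete cases depending on $\mult_{P'}\Delta_X$, $\mult_{P'}(\Delta_X\cap l_i)$, and tangency directions, and each case demands its own explicit Cremona transformation together with careful bookkeeping of the new $\Delta_{X'}$, in particular verifying that it again satisfies the $(\nu1)$-condition and that the resulting $\deg\Delta_{X'}$ is consistent with the invariant $(K_Z+L_Z\cdot L_Z)$ computed from the fixed pseudo-median triplet.
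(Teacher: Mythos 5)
Your reduction of the problem to exhibiting \emph{some} birational morphism $\psi'\colon Z\to X'=\pr^2$ is sound and is exactly the framework the paper sets up at the beginning of Section \ref{cubic_section}: since $L_Z\sim-2K_Z$ is nef and big, $-K_Z$ is $\psi'$-nef for every $\psi'$, so Proposition \ref{elim_prop} \eqref{elim_prop2} produces $\Delta_{X'}$ with the $(\nu1)$-condition; triviality of $2K_{X'}+L_{X'}$ makes ($\sB$\ref{bottom_dfn4}) vacuous on $\pr^2$; and negativity of the exceptional lattice identifies $(E_{X'})_Z^{\Delta_{X'},1}$ with $E_Z$, so the same pseudo-median triplet is induced. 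The gap is that the actual content of the lemma --- producing, in each of the cases \eqref{reduction1_lem1}--\eqref{reduction1_lem4}, a concrete $\psi'$ and verifying what $E_{X'}$ and $\Delta_{X'}$ then are --- is only announced, not carried out. In the paper this \emph{is} the proof: one factors $Z\to X$ through explicit partial eliminations $X_1\to X$ so that $l_1^{X_1}$ becomes a $(-1)$-curve attached to specified $(-2)$-chains, uses $\rho$-counts to get $Z\to X_1\to X'=\pr^2$ contracting exactly those curves, and uses the identity $\deg\Delta_X+\deg\Delta_Z=(L_X\cdot E_X)/2=9$ to pin down the local data (for instance, in case \eqref{reduction1_lem12} it forces $\mult_{P_1}\Delta_X=2$, which is what makes the image configuration controllable). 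None of these verifications appear in your sketch, and they are not routine bookkeeping that can be waved through.

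Moreover, the two places where you do commit to an argument would not stand as written. First, ``contract $(-1)$-curves until none remain and observe that the minimal model is $\pr^2$'' is not automatic: an arbitrary run of contractions on $Z$ may well terminate on some $\F_n$, and the fact that the original $X$ was $\pr^2$ gives no control over a different run; one must \emph{choose} the contractions (as the paper does, contracting precisely $l_1^Z$ together with prescribed $\Gamma$-chains and then invoking a morphism to $\pr^2$) so that the target is $\pr^2$. Second, your termination invariant need not decrease. In case \eqref{reduction1_lem12} --- one of only two cases where the replacement can output a non-reduced $E_{X'}$ --- the input (after the Step-1-type reductions) has a single point $P_1\in\Delta_X\cap l_1\setminus\{P\}$ with $\mult_{P_1}\Delta_X=2$, and the paper's replacement produces a tetrad again of the form $2l'_1+l'_2$ with a single point $P'_1$ on $l'_1$ off the node satisfying $\mult_{P'_1}\Delta_{X'}=2$; both entries of your lexicographic invariant are unchanged. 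The iteration actually stops for a different reason: the output is arranged so that $\mult_{P'_1}(\Delta_{X'}\cap l'_1)=2$ and $\deg\Delta_{X'}\geqslant 5$ (resp.\ $\deg(\Delta_{X'}\cap l'_2)=3$ in case \eqref{reduction1_lem4}), so conditions \eqref{reduction1_lem1}, \eqref{reduction1_lem2}, \eqref{reduction1_lem4} fail for the new tetrad and at most one further application of the constructions for \eqref{reduction1_lem2}/\eqref{reduction1_lem3} is needed. Without this explicit control of the output configuration your induction has no engine, so as it stands the proposal does not prove the lemma.
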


\begin{proof}
Set $d_i^X:=\deg(\Delta_X\cap l_i)$, $d_i^Z:=\deg(\Delta_Z\cap l_i^Z)$ for $i=1$, $2$, 
and $b:=\mult_P\Delta_X$. 
Note that $2d_i^X+d_i^Z=6$ and 
$d_i^X+d_i^Z=1-((l_i^M)^2)$.
Thus $(d_1^X, d_1^Z)=(3, 0)$, $(2, 2)$, $(1, 4)$, $(0, 6)$, and 
$(d_2^X, d_2^Z)=(3, 0)$, $(2, 2)$. 
By Lemma \ref{XS2}, $\mult_P(\Delta_X\cap l_1)=1$ if $b\geqslant 1$. 
Let $(Z, E_Z; \Delta_Z)$ be the associated pseudo-median triplet and $L_Z$ be 
the fundamental divisor. We note that $E_Z\sim -K_Z$. 

\emph{Step 1:}
Assume that \eqref{reduction1_lem11}, 
\eqref{reduction1_lem2} or \eqref{reduction1_lem3}. 
We will show that we can replace with 
another tetrad such that the condition \eqref{reduction1_lem01} holds. 

\eqref{reduction1_lem11}
By Lemma \ref{ZS1}, $(\mult_{P_1}\Delta_X, \mult_{P_1}(\Delta_X\cap l_1))=(4, 2)$. 
Let $X_1\to X$ be the elimination of $\Delta_X$ around $P_1$. 
Then $\rho(X_1)=5$, $Z\to X$ factors through $X_1\to X$ and 
$E_Z^{X_1}=l_2^{X_1}+2l_1^{X_1}+2\Gamma_{P_1, 2}^{X_1}+\Gamma_{P_1, 1}^{X_1}+
\Gamma_{P_1, 3}^{X_1}$. Since $(l_1^{X_1})^2=-1$, $(\Gamma_{P_1, 2}^{X_1})^2=-2$ 
and $\rho(X_1)=5$, there exists a birational morphism 
$\psi'\colon Z\to X_1\to X'=\pr^2$ such that $\psi'_*(l_1^Z+\Gamma_{P_1, 2})=0$. 
Thus $E_{X'}:=\psi'_*E_Z=\psi'_*(l_2^Z+\Gamma_{P_1, 1}+\Gamma_{P_1, 3})$ is reduced. 

\eqref{reduction1_lem2}
Set $\{P_1,\dots,P_j\}=|\Delta_Z\cap l_1\setminus\{P\}|$ $(j\geqslant 2)$. 
Assume that $j\geqslant 3$. Then $(d_1^X, d_1^Z)=(3, 0)$, $j=3$ and $P\not\in\Delta_X$. 
Moreover, $(\mult_{P_i}\Delta_X, \mult_{P_i}(\Delta_X\cap l_i))=(2, 1)$ 
for any $1\leqslant i\leqslant 3$. This implies that $l_1^Z$ intersects with 
$\Gamma_{P_1, 1}^M$, $\Gamma_{P_2, 1}^M$, $\Gamma_{P_3, 1}^M$ and $l_2^M$, 
which leads to a contradiction. Thus $j=2$. 
Assume that $P\in\Delta_X$. Then $(d_1^X, d_1^Z)=(3, 0)$ and 
$(\mult_{P_i}\Delta_X, \mult_{P_i}(\Delta_X\cap l_i))=(2, 1)$ for $i=1$, $2$. 
Let $X_1\to X$ be the elimination of $\Delta_X$ around $P_1$, $P_2$. 
Then $\rho(X_1)=5$, $Z\to X$ factors through $X_1\to X$ and 
$E_Z^{X_1}=l_2^{X_1}+2l_1^{X_1}+\Gamma_{P_1, 1}^{X_1}+\Gamma_{P_2, 1}^{X_1}$. 
Since $(l_1^{X_1})^2=-1$, there exists a birational morphism 
$\psi'\colon Z\to X_1\to X'=\pr^2$ such that $\psi'_*l_1^Z=0$. 
Thus $E_{X'}:=\psi'_*E_Z=\psi'_*(l_2^Z+\Gamma_{P_1, 1}+\Gamma_{P_2, 1})$ is reduced. 
Assume that $P\not\in\Delta_X$. 
Let $X_1\to X$ be the composition of the elimination of $\Delta_X$ around $l_2$ 
and the monoidal transform at $P_1$, $P_2$. 
Then $\rho(X_1)\geqslant 4$, $Z\to X$ factors through $X_1\to X$ and 
$E_Z^{X_1}=l_2^{X_1}+2l_1^{X_1}+\Gamma_{P_1, 1}^{X_1}+\Gamma_{P_2, 1}^{X_1}$. 
Since $(l_1^{X_1})^2=-1$, there exists a birational morphism 
$\psi'\colon Z\to X_1\to X'=\pr^2$ such that $\psi'_*l_1^Z=0$. 
Thus $E_{X'}:=\psi'_*E_Z=\psi'_*(l_2^Z+\Gamma_{P_1, 1}+\Gamma_{P_2, 1})$ is reduced. 

\eqref{reduction1_lem3}
Set $\{P_1\}=|\Delta_Z\cap l_1\setminus\{P\}|$. 
By Lemma \ref{XS2}, $b=\mult_P(\Delta_X\cap l_2)+2\geqslant 3$. 
Let $X_1\to X$ be the composition of the elimination of $\Delta_X$ around $P$ 
and the monoidal transform at $P_1$. 
Then $\rho(X_1)=b+2$, $Z\to X$ factors through $X_1\to X$ and 
$E_Z^{X_1}=l_2^{X_1}+2l_1^{X_1}+2\Gamma_{P, 1}^{X_1}+\dots+
2\Gamma_{P, b-2}^{X_1}+\Gamma_{P, b-1}^{X_1}+\Gamma_{P_1, 1}^{X_1}$. 
Since $(l_1^{X_1})^2=-1$ and $(\Gamma_{P, i}^{X_1})^2=-2$ 
for $1\leqslant i\leqslant b-2$, there exists a birational morphism 
$\psi'\colon Z\to X_1\to X'=\pr^2$ such that 
$\psi'_*(l_1^Z+\Gamma_{P, 1}+\dots+\Gamma_{P, b-2})=0$. 
Thus $E_{X'}:=\psi'_*E_Z=\psi'_*(l_2^Z+\Gamma_{P_1, 1}+\Gamma_{P, b-1})$ is reduced. 

\emph{Step 2:}
We assume the case \eqref{reduction1_lem12}. 
We can assume that $\{P_1\}=|\Delta_X\cap l_1\setminus\{P\}|$, 
$d_1^X=2$ and $b=\mult_P(\Delta_X\cap l_2)$. 
Assume that $\mult_{P_1}\Delta_X=1$. 
Set $Q_1:=l_1^Z\cap\Gamma_{P_1, 1}$. Since $\mult_{Q_1}\Delta_Z=2$ and 
$\mult_{Q_1}(\Delta_Z\cap l_1^Z)=1$, we have 
$\deg\Delta_Z\geqslant\deg(\Delta_Z\cap l_1^Z)+\deg(\Delta_Z\cap\Gamma_{P, b})
+(2-1)=5$. However, $\deg\Delta_X+\deg\Delta_Z=(L_X\cdot E_X)/2=9$. This leads 
to a contradiction. Thus $\mult_{P_1}\Delta_X=2$, $((l_1^Z)^2)=-1$ and 
$((\Gamma_{P_1, 1})^2)=-2$. 
There exists a birational morphism $\chi\colon Z\to X_0$ such that 
$\rho(Z)-\rho(X_0)=2$ and $\chi(l_1^Z\cup\Gamma_{P_1,1})=\{R\}$. 
Moreover, there exists a birational morphism $\tau\colon X_0\to X'=\pr^2$. 
Set $\psi':=\tau\circ\chi$. Since $E_{X'}:=\psi'_*E_Z=\psi'_*(l_2^Z+2(\Gamma_{P, 1}
+\dots+\Gamma_{P, b}))$, unless $E_{X'}$ is reduced, we can write that 
$E_{X'}=2l_1'+l'_2$ with $l'_1$, $l'_2$ distinct lines, where $l'_1=\psi'_*\Gamma_{P, 1}$ 
and $l'_2=\psi'_*l_2^Z$. Indeed, $\psi'_*\Gamma_{P, 1}\neq 0$ since 
$((\chi_*\Gamma_{P, 1})^2)\geqslant 0$. 
Let $P'_1$ be the image of $R$. 
Since $E_Z\sim -K_Z$, $\tau$ is an isomorphism around $R$. 
Thus $\mult_{P'_1}\Delta_{X'}=\mult_{P'_1}(\Delta_{X'}\cap l'_1)=2$, where $\Delta_{X'}$ 
corresponds to the morphism $\psi'$. 
Moreover, $\deg\Delta_{X'}=\deg\Delta_X\geqslant 5$. 
Therefore, by combining with the argument in Step 1, we can get another tetrad 
which satisfies that none of the conditions \eqref{reduction1_lem1}, 
\eqref{reduction1_lem2}, \eqref{reduction1_lem3}, \eqref{reduction1_lem4} 
are satisfied and $\deg\Delta_{X'}\geqslant 5$. 

We assume the case \eqref{reduction1_lem4}. 
We can assume that $b=\mult_P(\Delta_X\cap l_2)$. 
If $\Delta_X\cap l_1\setminus\{P\}=\emptyset$, then $\Delta_X\subset l_2$. This 
implies that $\deg\Delta_X=2$, which leads to a contradiction. 
Thus we can assume that $\{P_1\}=|\Delta_X\cap l_1\setminus\{P\}|$ 
and $\mult_{P_1}\Delta_X=2$. 
Then we can write that $E_Z=\Gamma_{P_1, 1}+2D+l_2^Z$, where $D$ is an effective 
divisor on $Z$. Moreover, $\rho(Z)\geqslant 5$. 
There exists a birational morphism 
$\psi'\colon Z\to X_1\to X'=\pr^2$ such that 
$\psi'_*l_2^Z=0$. 
Unless $E_{X'}:=\psi'_*E_Z=\psi'_*(\Gamma_{P_1, 1}+2D)$ is not reduced, 
we can write that 
$E_{X'}=2l_1'+l'_2$ with $l'_1$, $l'_2$ distinct lines, where $l'_2=\psi'_*\Gamma_{P_1, 1}$. 
Note that $\deg(\Delta_{X'}\cap l'_2)=3$. By combining with the previous 
arguments, we can get another tetrad satisfying the conditions 
\eqref{reduction1_lem01} and \eqref{reduction1_lem02}. 
\end{proof}

\begin{lemma}\label{reduction2_lem}
Let $(X=\pr^2, E_X; \Delta_Z, \Delta_X)$ be a $3$-fundamental multiplet of length two 
with $E_X=l_1+l_2+l_3$, 
where $l_1$, $l_2$, $l_3$ are distinct lines. Assume that one of the following holds: 
\begin{enumerate}
\renewcommand{\theenumi}{\arabic{enumi}}
\renewcommand{\labelenumi}{(\theenumi)}
\item\label{reduction2_lem1}
$l_1\cap l_2\cap l_3\neq\emptyset$.
\item\label{reduction2_lem2}
$l_1\cap l_2\cap l_3=\emptyset$ and 
$\#|\Delta_X\cap((l_1\cap l_2)\cup(l_1\cap l_3)\cup(l_2\cap l_3))|\leqslant 1$. 
\end{enumerate}
Then there exists a $3$-fundamental multiplet 
$(X'=\pr^2, E_{X'}; \Delta_Z, \Delta_{X'})$ of length two 
such that both $(X, E_X; \Delta_Z, \Delta_X)$ and 
$(X', E_{X'}; \Delta_Z, \Delta_{X'})$ induces the same pseudo-median triplet, 
$E_{X'}$ is reduced and the number of the component of $E_{X'}$ is less than three.  
\end{lemma}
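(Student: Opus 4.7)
Following the approach of Lemma \ref{reduction1_lem}, in each case I construct an alternative birational morphism $\psi'\colon Z\to X'=\pr^2$ that factors through the existing $Z$ and contracts at least one component $l_i^Z$ of $E_Z$, so that $E_{X'}:=\psi'_*E_Z$ is reduced with strictly fewer than three components. Since the source $(Z,E_Z;\Delta_Z)$ is preserved, the resulting tetrad automatically induces the same pseudo-median triplet. The key numerical inputs are $(l_i^Z)^2=1-d_i^X$ with $d_i^X:=\deg(\Delta_X\cap l_i)$; by Lemma \ref{FuBo_lem} (3) one has $d_i^Z+2d_i^X=(L_X\cdot l_i)=6$ so $d_i^X\leqslant 3$; and $\deg\Delta_X+\deg\Delta_Z=(L_X\cdot E_X)/2=9$.

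\textbf{Case (1): three concurrent lines at $P:=l_1\cap l_2\cap l_3$.} By Lemma \ref{ZS3} (1), three nonsingular curves cannot meet transversally at a single point of $Z$, so $\psi$ must blow up $P$, i.e.\ $P\in\Delta_X$. Writing $k:=\mult_P\Delta_X$, Lemma \ref{XS3} lets us relabel so that $\mult_P(\Delta_X\cap l_1)=k$, $\mult_P(\Delta_X\cap l_j)=1$ for $j=2,3$, with the exceptional chain $\Gamma_{P,1},\dotsc,\Gamma_{P,k}$ on $Z$: $\Gamma_{P,k}$ is a $(-1)$-curve attached to $l_1^Z$, and $\Gamma_{P,1}$ is attached to both $l_2^Z$ and $l_3^Z$. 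I first contract $\Gamma_{P,k}$ (which has coefficient $2$ in $E_Z$, so the operation removes it from the pushforward without producing an inflated multiplicity); this turns both $l_1^Z$ and $\Gamma_{P,k-1}$ into more negative curves, and successive contractions along the chain $\Gamma_{P,k-1}\to\dotsb\to\Gamma_{P,1}$, intertwined with contractions of exceptional curves arising from points $P_s\in\Delta_X\setminus\{P\}$ on $l_2$ or $l_3$ (which must exist by the numerical constraints above for the tetrad to be valid), eventually make some $l_i^Z$ into a $(-1)$-curve, which I contract. Iterating until $\rho$ reaches $1$ yields $\psi'\colon Z\to X'=\pr^2$ with $E_{X'}$ reduced and of at most two components.

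\textbf{Case (2): non-concurrent lines, at most one vertex $P_{ij}$ in $\Delta_X$.} Without loss of generality, say $P_{13},P_{23}\notin\Delta_X$. For each such $P_{ij}$, the strict transforms $l_i^Z,l_j^Z$ still meet transversally at $P_{ij}\in Z$, both with coefficient $1$ in $E_Z$. By Corollary \ref{dP-basic_cor} (1) together with Table \ref{graph_table}, no two weight-one components of $E_M$ can be adjacent, so Lemma \ref{ZS2} (1) forces $P_{ij}\in\Delta_Z$ with $\mult_{P_{ij}}\Delta_Z=1$. Thus $P_{13},P_{23}\in\Delta_Z$. Combining this with $d_i^Z=6-2d_i^X$, $\deg\Delta_X+\deg\Delta_Z=9$, and double-counting intersection points shared by the $l_i^Z$, one obtains the relation $\sum_i d_i^Z=\deg\Delta_Z+|\{P_{ij}\in\Delta_Z\}|$; this narrows $(\deg\Delta_X,\deg\Delta_Z)$ and $(d_1^X,d_2^X,d_3^X)$ to a short list of admissible possibilities. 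In each sub-case I exhibit either an $l_i^Z$ with $d_i^X=2$ directly (hence a $(-1)$-curve to contract) or first contract a suitable exceptional $(-1)$-curve to produce one, after which the construction proceeds as in Case (1).

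\textbf{Main obstacle.} The principal technical burden is the sub-case analysis: for each admissible distribution of $\Delta_X$ across the three lines (and its multiplicities at each point), one must verify (i) that the chosen contraction sequence terminates at $X'\cong\pr^2$ (and not at some $\F_n$ with $n\geqslant 2$ or at $\F_0$), (ii) that $E_{X'}$ is reduced (in particular that no contraction promotes a surviving coefficient-$1$ component into an effective coefficient-$\geqslant 2$ one), and (iii) that the new tetrad $(X',E_{X'};\Delta_Z,\Delta_{X'})$ satisfies all conditions of Definition \ref{qbottom_dfn}, most delicately the $(\nu 1)$-condition on the new $\Delta_{X'}$ read off from $\psi'$. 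These checks mirror those carried out in the proof of Lemma \ref{reduction1_lem}.
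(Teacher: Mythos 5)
Your overall route is the paper's: since $2K_X+L_X\sim 0$ gives $E_Z\sim -K_Z$ nef and big, any birational morphism $\psi'\colon Z\to X'=\pr^2$ is automatically the elimination of a subscheme satisfying the $(\nu1)$-condition and yields a tetrad inducing the same pseudo-median triplet, so the whole problem is to produce a $\psi'$ that contracts all coefficient-two components of $E_Z$ together with at least one $l_i^Z$. The genuine gap is that in Case (1) you never prove such a $\psi'$ exists. The curves contracted by a birational morphism between nonsingular surfaces span a negative definite sublattice, so the chain $\Gamma_{P,1},\dots,\Gamma_{P,k}$ and a line $l_i^Z$ can be contracted by one morphism only if $\deg(\Delta_X\cap l_i\setminus\{P\})=2$; your mechanism of contracting exceptional curves over points $P_s\in\Delta_X\cap l_i\setminus\{P\}$ only increases $((l_i)^2)$ and cannot create the needed $(-1)$-curve unless that degree count is already two. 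In the configuration where $\deg(\Delta_X\cap l_i\setminus\{P\})=1$ for every $i$ no admissible $\psi'$ exists at all: for instance, if $\mult_P\Delta_X=1$ then $((l_i^Z+\Gamma_{P,1})^2)=0$, so $l_i^Z$ and $\Gamma_{P,1}$ can never both be contracted. The paper's proof therefore first excludes this configuration (if it held, Lemma \ref{XS3} would give $\mult_P\Delta_X=1$ and $(\deg(\Delta_X\cap l_i),\deg(\Delta_Z\cap l_i^Z))=(2,2)$ for all $i$, forcing $\deg(\Delta_Z\cap\Gamma_{P,1})\geqslant 3$, a contradiction), and only then takes $l_1$ with $\deg(\Delta_X\cap l_1\setminus\{P\})=2$, passes to the elimination $X_1$ of $\Delta_X\setminus\{P\}$, where $\rho(X_1)\geqslant 5$ and $((l_1^{X_1})^2)=-1$, and contracts onward to $\pr^2$. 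Your appeal to unspecified numerical constraints does not substitute for this step, and without it the claimed contraction sequence need not exist.

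In Case (2) the same care is needed in milder form: Lemmas \ref{ZS2} and \ref{XS2} in fact force $\deg(\Delta_X\cap l_i)=2$ for all $i$, so no long sub-case list arises, but when the remaining vertex ($P_{23}$ in the paper's labelling) lies in $\Delta_X$ you must also contract the coefficient-two chain over it, and only the line avoiding that vertex can be contracted together with the chain; picking a line through it merely because $\deg(\Delta_X\cap l_i)=2$ runs into the same lattice obstruction. Your identity $\sum_i d_i^Z=\deg\Delta_Z+\#\{P_{ij}\in\Delta_Z\}$ is also false in general, since $\Delta_Z$ may have points away from the $l_i^Z$. By contrast, the items you list as the main obstacles are comparatively harmless here: the $(\nu1)$-condition and the agreement of the induced triplet are automatic because $-K_Z$ is $\psi'$-nef and $E_Z\sim -K_Z$ (an effective exceptional divisor linearly equivalent to $K_{Z/X'}$ equals it), and landing on $\pr^2$ rather than some $\F_n$ follows because the intermediate surfaces still have nef and big anticanonical divisor and Picard number at least three, which is how the paper uses $\rho(X_1)\geqslant 5$.
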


\begin{proof}
Set $d_i^X:=\deg(\Delta_X\cap l_i)$, $d_i^Z:=\deg(\Delta_Z\cap l_i^Z)$ for 
$1\leqslant i\leqslant 3$. Then, we have $(d_i^X, d_i^Z)=(2, 2)$ or $(3, 0)$. 

Assume the case \eqref{reduction2_lem1}. Set $P:=l_1\cap l_2\cap l_3$. 
By Lemma \ref{ZS3}, $P\in\Delta_X$. 
If $\deg(\Delta_X\cap l_i\setminus\{P\})=1$ for all $1\leqslant i\leqslant 3$, then 
$\mult_P\Delta_X=1$ and $(d_i^X, d_i^Z)=(2, 2)$ for all $1\leqslant i\leqslant 3$ 
by Lemma \ref{XS3}. However, this implies that 
$\deg(\Delta_Z\cap\Gamma_{P, 1})\geqslant 3$. This leads to a contradiction. 
Thus we can assume that $\deg(\Delta_X\cap l_1\setminus\{P\})=2$. 
Let $X_1\to X$ be the elimination of $\Delta_X\setminus\{P\}$. 
Then $\rho(X_1)\geqslant 5$, $Z\to X$ factors through $X_1\to X$ and 
$E_Z^{X_1}=l_1^{X_1}+l_2^{X_1}+l_3^{X_1}$. Since $((l_1^{X_1})^2)=-1$, 
there exists a birational morphism 
$\psi'\colon Z\to X_1\to X'=\pr^2$ such that $\psi'_*l_1^Z=0$. 
Thus $E_{X'}:=\psi'_*E_Z=\psi'_*(l_2^Z+l_3^Z)$. 

Assume the case \eqref{reduction2_lem2}. Set $P_{ij}:=l_i\cap l_j$
for $1\leqslant i<j\leqslant 3$. We can assume that $P_{12}$, $P_{13}\not\in\Delta_X$. 
By Lemmas \ref{ZS2} and \ref{XS2}, $d_i^X=2$
for any $1\leqslant i\leqslant 3$. 
Let $X_1\to X$ be the elimination of $\Delta_X\setminus\{P_{23}\}$. 
Then $\rho(X_1)\geqslant 5$, $Z\to X$ factors through $X_1\to X$ and 
$E_Z^{X_1}=l_1^{X_1}+l_2^{X_1}+l_3^{X_1}$. Since $((l_1^{X_1})^2)=-1$, 
there exists a birational morphism 
$\psi'\colon Z\to X_1\to X'=\pr^2$ such that $\psi'_*l_1^Z=0$. 
Thus $E_{X'}:=\psi'_*E_Z=\psi'_*(l_2^Z+l_3^Z)$. 
\end{proof}

\begin{lemma}\label{reduction3_lem}
Let $(X=\pr^2, E_X; \Delta_Z, \Delta_X)$ be a $3$-fundamental multiplet of length two  
with $E_X=C+l$, 
where $C$ is a nonsingular conic and $l$ is a line. 
Assume that one of the following holds: 
\begin{enumerate}
\renewcommand{\theenumi}{\arabic{enumi}}
\renewcommand{\labelenumi}{(\theenumi)}
\item\label{reduction3_lem1}
$\Delta_X\cap C\cap l=\emptyset$.
\item\label{reduction3_lem2}
$|C\cap l|=\{P\}$, $\deg(\Delta_X\setminus\{P\})\geqslant 4$ 
and $\Delta_X\cap l\setminus\{P\}=\emptyset$.
\end{enumerate}
Then there exists a $3$-fundamental multiplet 
$(X'=\pr^2, E_{X'}; \Delta_Z, \Delta_{X'})$ of length two 
such that both $(X, E_X; \Delta_Z, \Delta_X)$ and 
$(X', E_{X'}; \Delta_Z, \Delta_{X'})$ induces the same $3$-fundamental triplet, 
$E_{X'}$ is the union of a nonsingular conic and a line and neither the conditions 
\eqref{reduction3_lem1} nor \eqref{reduction3_lem2} holds 
unless $E_{X'}$ is reduced and irreducible. 
\end{lemma}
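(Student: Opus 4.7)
The plan is to mimic the strategies of Lemmas \ref{reduction1_lem} and \ref{reduction2_lem}: I would extract the fixed pseudo-median triplet $(Z, E_Z; \Delta_Z)$ from the tetrad $(X, E_X; \Delta_Z, \Delta_X)$ and construct an alternative birational morphism $\psi' \colon Z \to X' = \pr^2$. Setting $E_{X'} := \psi'_* E_Z$ and letting $\Delta_{X'}$ be the subscheme determined by $\psi'$ via Proposition \ref{elim_prop}\eqref{elim_prop2}, I would obtain a second $3$-fundamental multiplet of length two that induces the same pseudo-median triplet. Because $2K_X+L_X\sim 0$ gives $E_X\sim -K_X$ and hence $E_Z\sim -K_Z$, the push-forward $E_{X'}\sim -K_{X'}$ is automatically a cubic curve on $\pr^2$, so the new tetrad lies in case \eqref{bot_dfn3} of Definition \ref{bottom_dfn}.

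The core computation uses Lemmas \ref{FuBo_lem} and \ref{mult_seq_lem}: writing $d_C^X:=\deg(\Delta_X\cap C)$, $d_C^Z:=\deg(\Delta_Z\cap C^Z)$, and analogously for $l$, one has the balance equations $d_C^Z+2d_C^X=12$ and $d_l^Z+2d_l^X=6$ together with $(C^Z)^2=4-d_C^X$ and $(l^Z)^2=1-d_l^X$. In case \eqref{reduction3_lem1} the curves $C^Z$, $l^Z$ meet on $Z$ exactly as $C$, $l$ meet on $X$ (because no point of their intersection is blown up); I would identify a $(-1)$-curve sitting in an exceptional chain over a point of $\Delta_X\cap l$ (or of $\Delta_X\cap C$) and contract it together with the rest of its chain and with $l^Z$ (respectively $C^Z$) itself, then complete a sequence of blow-downs. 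A Picard number count then forces the terminus $X'$ to be $\pr^2$, and in $E_{X'}$ either the line component or the conic component is absorbed into the contracted locus, leaving either an irreducible cubic or a conic-plus-line whose intersection now meets $\Delta_{X'}$. In case \eqref{reduction3_lem2} the hypothesis $\deg(\Delta_X\setminus\{P\})\geqslant 4$ forces $d_C^X\geqslant 4$ with $\Delta_X\setminus\{P\}\subset C$, so after blowing up the tangency at $P$ (as in the local picture of Lemma \ref{XS2}\eqref{XS22} with $l_1=C$, $l_2=l$) one finds a chain of exceptional curves over $P$ ending at $l^Z$; this chain together with $l^Z$ admits the required contraction.

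In each subcase I would verify that the output $(X', E_{X'}; \Delta_Z, \Delta_{X'})$ is a genuine $3$-fundamental multiplet of length two, iterating the construction if the new pair $(C', l')$ still fails one of the two hypotheses. The nef and $(\nu 1)$-conditions transfer by Proposition \ref{elim_prop}\eqref{elim_prop2}, and the pseudo-median triplet is preserved because $Z$, $E_Z$, $\Delta_Z$ are held fixed throughout. The main obstacle is the combinatorial bookkeeping: one must stratify according to the pairs $(d_C^X, d_l^X)$ and the intersection type $|C\cap l|\in\{1,2\}$, and in each instance pinpoint the $(-1)$-curve that initiates the correct chain of contractions. As in Lemma \ref{reduction1_lem}, the only delicate point is excluding a landing in some $\F_n$ rather than $\pr^2$, which is controlled by tracking Picard ranks through the successive blow-downs and by using that $E_{X'}\sim -K_{X'}$ together with Corollary \ref{dP-basic_cor} restricts the possible shapes of $E_{X'}$ to those compatible with case \eqref{bot_dfn3}.
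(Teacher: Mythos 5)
Your overall framework (keep $(Z,E_Z;\Delta_Z)$ fixed, choose a different contraction $\psi'\colon Z\to X'=\pr^2$, and use $E_{X'}=\psi'_*E_Z\sim-K_{X'}$) is the same as the paper's, but the two concrete contraction schemes you sketch are exactly where the content of the lemma lies, and they do not work as described. In case \eqref{reduction3_lem1} you never establish the numerical facts that make a contraction available: the paper first shows, via Lemmas \ref{ZS1}, \ref{ZS2}, \ref{ZS4} and Corollary \ref{dP-basic_cor}, that $\Delta_Z$ can meet $C^Z\cup l^Z$ only at the points of $C^Z\cap l^Z$, so $\deg(\Delta_Z\cap l^Z)=\deg(\Delta_Z\cap C^Z)=2$, whence $\deg(\Delta_X\cap l)=2$, $((l^Z)^2)=-1$ and $\rho(Z)=8$; one then contracts $l^Z$ itself and $E_{X'}=\psi'_*C^Z$ is automatically reduced and irreducible. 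Your alternative of contracting an exceptional chain over a point of $\Delta_X\cap l$ (or of $\Delta_X\cap C$) together with $l^Z$ (or $C^Z$) is not justified, and without $((l^Z)^2)=-1$ the contraction of $l^Z$ is not even available; your claim that the conic-plus-line output has its intersection meeting $\Delta_{X'}$ is asserted, not proved.

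The serious gap is case \eqref{reduction3_lem2}. Contracting ``the chain over $P$ together with $l^Z$'' cannot yield the required morphism: blowing down $\Gamma_{P,k},\dots,\Gamma_{P,1}$ merely undoes the elimination over $P$, after which the strict transform of $l$ has self-intersection $1$ (there are no other points of $\Delta_X$ on $l$) and cannot be contracted; in any other order some curve of the chain reaches self-intersection $0$, or one of the coefficient-two curves $\Gamma_{P,i}\leqslant E_Z$ (see Lemma \ref{XS4}) survives and $\psi'_*E_Z$ acquires a non-reduced component, contradicting the required shape of $E_{X'}$. Even granting some contraction, you do not verify that the new tetrad violates \eqref{reduction3_lem1} and \eqref{reduction3_lem2}; the fallback ``iterate the construction'' has no termination argument. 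The paper's key idea is different: it leaves the geometry over $P$ untouched, chooses an auxiliary line $l_0$ with $P\notin l_0$ and $\deg(\Delta_X\cap l_0)=2$ (possible since $\Delta_X\setminus\{P\}\subset C$ has degree at least $4$), together with a $(-1)$-curve $\Gamma$ over a point of $\Delta_X\cap C\setminus\{P\}$ disjoint from $l_0$'s strict transform, and contracts these; the images of $l_0$ and $\Gamma$ then become points of $\Delta_{X'}$ lying on $l'\setminus\{P'\}$ and on $C'\setminus\{P'\}$ respectively, while $P'\in\Delta_{X'}$, so both bad conditions fail for the new tetrad in a single step, with no iteration needed.
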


\begin{proof}
Assume the case \eqref{reduction3_lem1}. 
Then $E_Z=C^Z+l^Z$. By Lemmas \ref{ZS2} and \ref{XS4}, 
$\deg(\Delta_Z\cap C^Z)=\deg(\Delta_Z\cap l^Z)=2$. Thus $((l^Z)^2)=-1$ and 
$\rho(Z)=8$. 
Then there exists a birational morphism 
$\psi'\colon Z\to X'=\pr^2$ such that $E_{X'}:=\psi'_*E_Z=\psi'_*C^Z$ is reduced 
and irreducible. 

Assume the case \eqref{reduction3_lem2}. 
We can assume that $P\in\Delta_X$. By the assumption, 
$\deg(\Delta_X\cap C\setminus\{P\})\geqslant 4$. 
There exists a line $l_0\subset X$ such that $P\not\in l_0$ and 
$\deg(\Delta_X\cap l_0)=2$ since $\Delta_X\setminus\{P\}\subset C$. 
Let $X_1\to X$ be the elimination of $\Delta_X\setminus\{P\}$. 
Then $\rho(X_1)\geqslant 5$, $Z\to X$ factors through $X_1\to X$ and 
$E_Z^{X_1}=C^{X_1}+l^{X_1}$. We note that there exists a $(-1)$-curve 
$\Gamma$ on $X_1$ over $X$ such that $C^{X_1}\cap\Gamma\neq\emptyset$ 
and $l_0^{X_1}\cap\Gamma=\emptyset$ since 
$\deg(\Delta_X\cap C\setminus\{P\})\geqslant 4$. 
There exists a birational morphism 
$\psi'\colon Z\to X_1\to X'=\pr^2$ such that 
the strict transforms of $l_0$ and $\Gamma$ map $\psi'$ to points. 
In this case, $E_{X'}=\psi'_*(C^Z+l^Z)$. We can assume that $E_{X'}=C'+l'$, where 
$C'$ is a nonsingular conic and $l'$ is a line. 
By construction, $|C'\cap l'|=\{P'\}$, 
$\Delta_{X'}\cap C'\setminus\{P'\}\neq\emptyset$ and 
$\Delta_{X'}\cap l'\setminus\{P'\}\neq\emptyset$. 
Thus the assertion holds. 
\end{proof}

As an immediate consequence of Lemmas \ref{reduction1_lem}, 
\ref{reduction2_lem} and \ref{reduction3_lem}, we have the following theorem. 

\begin{thm}\label{corresp_triv_thm}
Let $(Z, E_Z; \Delta_Z)$ be a pseudo-median triplet such that $2K_Z+L_Z$ is trivial, 
where $L_Z$ is the fundamental divisor. 
Then there exists a projective birational morphism $\psi\colon Z\to X$ onto 
a nonsingular surface and a zero-dimensional subscheme $\Delta_X\subset X$ 
satisfying the $(\nu1)$-condition such that the morphism $\psi$ is the elimination 
of $\Delta_X$, the tetrad $(X, E_X; \Delta_Z, \Delta_X)$ is a bottom tetrad and 
the associated pseudo-median triplet is equal to $(Z, E_Z; \Delta_Z)$, where 
$E_X:=\psi_*E_Z$. Moreover, the divisor $\psi_*L_Z$ is the fundamental divisor of 
$(X, E_X; \Delta_Z, \Delta_X)$. 
\end{thm}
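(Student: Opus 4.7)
The plan is to first produce any $3$-fundamental multiplet of length two that induces the given pseudo-median triplet $(Z,E_Z;\Delta_Z)$, and then to repair it by means of Lemmas \ref{reduction1_lem}--\ref{reduction3_lem} until the bottom-tetrad conditions in Definition \ref{bottom_dfn}(C) are all fulfilled.

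The existence of an initial tetrad is already observed at the start of this section: since $L_Z$ is nef and big (Lemma \ref{fund_big_lem}) and $2K_Z+L_Z\sim 0$, the anti-canonical divisor $-K_Z$ is nef and big, so \cite[Corollary 3.6]{HW} provides a birational morphism $\psi\colon Z\to X$ with $X=\pr^2$ or with $Z\simeq X\in\{\pr^1\times\pr^1,\F_2\}$. When $X=Z$ is $\pr^1\times\pr^1$ or $\F_2$ I would set $\Delta_X:=\emptyset$, $E_X:=E_Z$, $L_X:=L_Z$; the resulting tetrad falls into case (C) of Definition \ref{bottom_dfn} with the auxiliary conditions ($\sB$9)--($\sB$11) vacuous (they are only imposed when $X\simeq\pr^2$), so nothing further is needed. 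The substantive case is therefore $X=\pr^2$.

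For $X=\pr^2$ I would choose $\psi$ so that $-K_Z$ is $\psi$-nef---equivalently, by Proposition \ref{elim_prop}, so that $\psi$ is the elimination of a $(\nu1)$-subscheme $\Delta_X\subset X$. Defining $E_X:=\psi_*E_Z$ and $L_X:=\psi_*L_Z$, conditions ($\sB$1)--($\sB$5) of Definition \ref{qbottom_dfn} are verified by arguments analogous to those of Theorem \ref{corresp_thm}: $2K_X+L_X\sim 0$ is nef, the $(-1)$-curve condition in ($\sB$4) is vacuous on $\pr^2$, and the associated pseudo-median triplet of the tetrad equals $(Z,E_Z;\Delta_Z)$ by construction. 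Since $L_X\sim -2K_X\sim 6H$ and $E_X\sim 3H$ with $\coeff E_X\subset\{1,2\}$ (the component $3l$ is excluded by Corollary \ref{dP-basic_cor}, which forces $\coeff_{l^M}E_M\leqslant 2$), the cubic $E_X$ must be one of: an irreducible reduced cubic, $C+l$, $l_1+l_2+l_3$, or $2l_1+l_2$ with distinct components.

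I would then test the tetrad against Definition \ref{bottom_dfn}(C). An irreducible $E_X$ needs no further work. Otherwise, the failures of ($\sB$9), ($\sB$10), ($\sB$11) correspond exactly to the hypotheses of Lemmas \ref{reduction3_lem}, \ref{reduction2_lem}, \ref{reduction1_lem}, respectively; in particular the four cases of Lemma \ref{reduction1_lem} are exhaustive among the negations of ($\sB$11a)--($\sB$11c). Each application of a reduction lemma replaces the tetrad by another one inducing the \emph{same} pseudo-median triplet, so the correspondence asserted in the theorem is preserved. Termination is transparent: Lemma \ref{reduction3_lem} outputs either an irreducible cubic or a $C+l$ already satisfying ($\sB$9); Lemma \ref{reduction2_lem} outputs an $E_{X'}$ with strictly fewer components; Lemma \ref{reduction1_lem} outputs either a reduced $E_{X'}$ or a $2l'_1+l'_2$ already satisfying ($\sB$11). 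The worst chain $2l_1+l_2\to l_1+l_2+l_3\to C+l\to\text{irreducible}$ therefore terminates in at most three iterations.

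The main obstacle I anticipate is the verification of ($\sB$5) at each stage---namely, that the associated pseudo-median triplet of every intermediate tetrad still coincides with the given $(Z,E_Z;\Delta_Z)$, equivalently that $E_Z=\psi^*E_X-K_{Z/X}$ persists. This is a local statement about the $\psi$-exceptional divisors and is controlled by the explicit $(\nu1)$-configurations computed in Section \ref{local_section}; once those local constraints are invoked, both ($\sB$5) and the identity $\psi_*L_Z=L_X$ follow routinely.
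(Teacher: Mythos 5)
Your proposal is correct and is essentially the paper's own argument: the paper deduces the theorem as an immediate consequence of Lemmas \ref{reduction1_lem}, \ref{reduction2_lem} and \ref{reduction3_lem}, after the same initial observation that $-K_Z$ nef and big gives a birational morphism to $\pr^2$ (or $Z\simeq\pr^1\times\pr^1$, $\F_2$ with $\Delta_X=\emptyset$) which is automatically the elimination of a $(\nu1)$-subscheme, producing a first $3$-fundamental multiplet of length two whose defects against ($\sB$9)--($\sB$11) are exactly the hypotheses of those lemmas. The only cosmetic point is that the persistence of ($\sB$5) you flag as the main obstacle is already part of the reduction lemmas' conclusions, and for the initial tetrad the identity $E_Z=\psi^*E_X-K_{Z/X}$ follows at once from $E_Z\sim -K_Z$ and negative definiteness of the exceptional lattice, so no appeal to Section \ref{local_section} is needed.
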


\section{Classification of median triplets}\label{ZC_section}

We classify median triplets $(Z, E_Z; \Delta_Z)$. 

\begin{thm}\label{trip_thm}
The median triplets $(Z, E_Z; \Delta_Z)$ are classified by the types defined as follows: 

\smallskip

The case $Z=\pr^2:$

\begin{description}
\item[{$[$4$]_0$}]
$E_Z=2C$ $(C$: nonsingular conic$)$, $\deg\Delta_Z=10$ and 
$\Delta_Z\subset C$. 

\smallskip

\item[{$[$4$]_2$({\bi c,d})} $((c,d)=(0,0), (1,1),\dots,(5,1))$]
$E_Z=2l_1+2l_2$ $(l_1$, $l_2$: distinct lines$)$, $\deg\Delta_Z=10$, 
$\deg(\Delta_Z\cap l_1)=\deg(\Delta_Z\cap l_2)=5$, $\mult_Q(\Delta_Z\cap l_1)=c$, 
$\mult_Q(\Delta_Z\cap l_2)=d$ and $\mult_Q\Delta_Z=c+d$, 
where $Q=l_1\cap l_2$. 

\smallskip

\item[{$[$5$]_K$}]
$E_Z=2C+l$ $(C$: nonsingular conic, $l$: line$)$, $|C\cap l|=\{Q\}$, 
$\deg\Delta_Z=10$, $\deg(\Delta_Z\cap C)=8$, 
$\mult_Q\Delta_Z=\mult_Q(\Delta_Z\cap l)=4$ and 
$\mult_Q(\Delta_Z\cap C)=2$.  

\smallskip

\item[{$[$5$]_A$}]
$E_Z=2C+l$ $(C$: nonsingular conic, $l$: line$)$, $|C\cap l|=\{Q_1, Q_2\}$, 
$\deg\Delta_Z=10$, $\deg(\Delta_Z\cap C)=8$ and 
$\mult_{Q_i}\Delta_Z=\mult_{Q_i}(\Delta_Z\cap l)=2$ for $i=1$, $2$. 

\smallskip

\item[{$[$5$]_3$({\bi c,d})} $((c,d)=(0,0), (1,1), (2,1), (3,1))$]
$E_Z=2l_1+2l_2+l_3$ $(l_1$, $l_2$, $l_3$: distinct lines$)$, 
$l_1\cap l_2\cap l_3=\emptyset$, $\deg\Delta_Z=10$, 
$\deg(\Delta_Z\cap l_i)=4$, $\mult_{Q_{i3}}\Delta_Z=\mult_{Q_{i3}}(\Delta_Z\cap l_3)=2$ 
for $i=1$, $2$, $\mult_{Q_{12}}(\Delta_Z\cap l_1)=c$, 
$\mult_{Q_{12}}(\Delta_Z\cap l_2)=d$ and $\mult_{Q_{12}}\Delta_Z=c+d$, 
where $Q_{ij}=l_i\cap l_j$ for $1\leqslant i<j\leqslant 3$. 

\smallskip

\item[{$[$5$]_4$}]
$E_Z=2l_1+l_2+l_3+l_4$ $(l_1$,\dots, $l_4$: distinct lines$)$, 
$Q_{ij}$ are distinct for $1\leqslant i<j\leqslant 4$, 
$\deg\Delta_Z=10$, $\deg(\Delta_Z\cap l_1)=4$, 
$\mult_{Q_{ij}}\Delta_Z=1$ for $2\leqslant i<j\leqslant 4$ and 
$\mult_{Q_{1j}}\Delta_Z=\mult_{Q_{1j}}(\Delta_Z\cap l_j)=2$ 
for $2\leqslant j\leqslant 4$, 
where $Q_{ij}=l_i\cap l_j$ for $1\leqslant i<j\leqslant 4$. 

\smallskip

\item[{$[$5$]_5$}]
$E_Z=l_1+l_2+l_3+l_4+l_5$ $(l_1,\dots,l_5$: distinct lines$)$, 
$Q_{ij}$ are distinct for $1\leqslant i<j\leqslant 5$, 
$\deg\Delta_Z=10$ and $\mult_{Q_{ij}}\Delta_Z=1$ for $1\leqslant i<j\leqslant 5$, 
where $Q_{ij}=l_i\cap l_j$ for $1\leqslant i<j\leqslant 5$. 
\end{description}

\smallskip

The case $Z=\pr^1\times\pr^1:$

\begin{description}
\item[{$[$0;3,3$]_D$}]
$E_Z=2C+\sigma+l$ $(C\sim\sigma+l$ nonsingular$)$, 
$C\cap\sigma\cap l=\emptyset$, 
$\deg\Delta_Z=9$, $\deg(\Delta_Z\cap C)=6$, 
$\mult_Q\Delta_Z=1$, 
$\mult_{Q_\sigma}\Delta_Z=\mult_{Q_\sigma}(\Delta_Z\cap\sigma)=2$ and 
$\mult_{Q_l}\Delta_Z=\mult_{Q_l}(\Delta_Z\cap l)=2$, 
where $Q=\sigma\cap l$, $Q_\sigma=C\cap l$ and $Q_l=C\cap l$. 

\smallskip

\item[{$[$0;3,3$]_{22}$({\bi c,d})} $((c,d)=(0,0), (1,1), (2,1))$]
$E_Z=2\sigma_1+\sigma_2+2l_1+l_2$
$(\sigma_1$, $\sigma_2$: distinct minimal sections, $l_1$, $l_2$: distinct fibers$)$, 
$\deg\Delta_Z=9$, $\deg(\Delta_Z\cap\sigma_1)=\deg(\Delta_Z\cap l_1)=3$,
$\mult_{Q_{11}}(\Delta_Z\cap\sigma_1)=c$, 
$\mult_{Q_{11}}(\Delta_Z\cap l_1)=d$, $\mult_{Q_{11}}\Delta_Z=c+d$, 
$\mult_{Q_{12}}\Delta_Z=\mult_{Q_{12}}(\Delta_Z\cap l_2)=2$, 
$\mult_{Q_{21}}\Delta_Z=\mult_{Q_{21}}(\Delta_Z\cap\sigma_2)=2$ and 
$\mult_{Q_{22}}\Delta_Z=1$, 
where $Q_{ij}=\sigma_i\cap l_j$ for $1\leqslant i, j\leqslant 2$.

\smallskip

\item[{$[$0;3,3$]_{23}$}]
$E_Z=2\sigma_1+\sigma_2+l_1+l_2+l_3$ 
$(\sigma_1$, $\sigma_2$: distinct minimal sections, $l_1$, $l_2$, $l_3$: distinct 
fibers$)$, $\deg\Delta_Z=9$, $\mult_{Q_{1j}}\Delta_Z=\mult_{Q_{1j}}(\Delta_Z\cap l_j)=2$ 
and $\mult_{Q_{2j}}\Delta_Z=1$ for $1\leqslant j\leqslant 3$, where 
$Q_{ij}=\sigma_i\cap l_j$ for $1\leqslant i\leqslant 2$ and $1\leqslant j\leqslant 3$. 

\smallskip

\item[{$[$0;3,3$]_{33}$}]
$E_Z=\sigma_1+\sigma_2+\sigma_3+l_1+l_2+l_3$ 
$(\sigma_1$, $\sigma_2$, $\sigma_3$: distinct minimal sections, 
$l_1$, $l_2$, $l_3$: distinct fibers$)$, 
$\deg\Delta_Z=9$ and $\mult_{Q_{ij}}\Delta_Z=1$, 
where $Q_{ij}=\sigma_i\cap l_j$ for $1\leqslant i, j\leqslant 3$. 
\end{description}

\smallskip

The case $Z=\F_1:$

\begin{description}
\item[{$[$1;3,4$]_0$}]
$E_Z=2C+\sigma$ $(C\sim\sigma+2l$ nonsingular$)$, 
$\deg\Delta_Z=9$, $\deg(\Delta_Z\cap C)=8$ and 
$\mult_Q\Delta_Z=\mult_Q(\Delta_Z\cap\sigma)=2$, where $Q=C\cap\sigma$. 

\smallskip

\item[{$[$1;3,4$]_1$({\bi c,d})} $((c,d)=(0,0), (1,1),\dots,(5,1), (1,2))$]
$E_Z=2\sigma_\infty+\sigma+2l$, 
$\deg\Delta_Z=9$, $\deg(\Delta_Z\cap\sigma)=\mult_Q\Delta_Z=2$, 
$\deg(\Delta_Z\cap l)=3$, $\deg(\Delta_Z\cap\sigma_\infty)=5$, 
$\mult_{Q_\infty}(\Delta_Z\cap\sigma_\infty)=c$, 
$\mult_{Q_\infty}(\Delta_Z\cap l)=d$ and 
$\mult_{Q_\infty}\Delta_Z=c+d$, where $Q=\sigma\cap l$ and 
$Q_\infty=\sigma_\infty\cap l$. 

\smallskip

\item[{$[$1;3,4$]_2$}]
$E_Z=2\sigma_\infty+\sigma+l_1+l_2$ $(l_1$, $l_2$: distinct fibers$)$, 
$\deg\Delta_Z=9$, $\deg(\Delta_Z\cap\sigma_\infty)=5$, 
$\mult_{Q_1}\Delta_Z=\mult_{Q_2}\Delta_Z=1$, 
$\mult_{Q_{\infty 1}}\Delta_Z=\mult_{Q_{\infty 1}}(\Delta_Z\cap l_1)
=\mult_{Q_{\infty 2}}\Delta_Z=\mult_{Q_{\infty 2}}(\Delta_Z\cap l_2)=2$, 
where $Q_i=\sigma\cap l_i$ and $Q_{\infty i}=\sigma_\infty\cap l_i$ 
for $i=1$, $2$. 

\smallskip

\item[{$[$1;4,4$]$}]
$E_Z=2C$ $(C\sim 2\sigma+2l$ nonsingular$)$, 
$\deg\Delta_Z=10$ and $\Delta_Z\subset C$. 

\smallskip

\item[{$[$1;4,5$]_K$({\bi c})} $(3\leqslant c\leqslant 9)$]
$E_Z=2C+l$ $(C\sim 2\sigma+2l$ nonsingular, $C\cap l =\{Q\})$, 
$\deg\Delta_Z=9$, $\deg(\Delta_Z\cap C)=8$, $\mult_Q\Delta_Z=c$, 
$\mult_Q(\Delta_Z\cap C)=c-1$ and $\mult_Q(\Delta_Z\cap l)=2$. 

\smallskip

\item[{$[$1;4,5$]_A$}]
$E_Z=2C+l$ $(C\sim 2\sigma+2l$ nonsingular, $C\cap l =\{Q_1, Q_2\})$, 
$\deg\Delta_Z=9$, $\deg(\Delta_Z\cap C)=8$, 
$|\Delta_Z|\cap l=\{Q_1\}$ and $\mult_{Q_1}\Delta_Z=\mult_{Q_1}(\Delta_Z\cap l)=2$. 
\end{description}

\smallskip

The case $Z=\F_2:$

\begin{description}
\item[{$[$2;3,5$]_1$}]
$E_Z=2\sigma_\infty+\sigma+l$, 
$\deg\Delta_Z=9$, $\deg(\Delta_Z\cap\sigma_\infty)=7$, 
$\mult_Q\Delta_Z=1$ and $\mult_{Q_\infty}\Delta_Z=\mult_{Q_\infty}(\Delta_Z\cap l)=2$, 
where $Q=\sigma\cap l$ and $Q_\infty=\sigma_\infty\cap l$. 

\smallskip

\item[{$[$2;3,6$]_0$}]
$E_Z=2C+\sigma$ $(C\sim\sigma+3l$ nonsingular$)$, 
$\deg\Delta_Z=9$, $\deg(\Delta_Z\cap C)=9$ and $\Delta_Z\cap\sigma=\emptyset$. 

\smallskip

\item[{$[$2;3,6$]_1$({\bi c,d})} $((c, d)=(0, 0), (1, 1),\dots,(6, 1), (2, 1), (3, 1))$]
$E_Z=2\sigma_\infty+\sigma+2l$, 
$\deg\Delta_Z=9$, $\Delta_Z\cap\sigma=\emptyset$, 
$\deg(\Delta_Z\cap\sigma_\infty)=6$, $\deg(\Delta_Z\cap l)=3$, 
$\mult_Q(\Delta_Z\cap\sigma_\infty)=c$, $\mult_Q(\Delta_Z\cap l)=d$ 
and $\mult_Q\Delta_Z=c+d$, where $Q=\sigma_\infty\cap l$. 
\end{description}

\smallskip

The case $Z=\F_3:$

\begin{description}
\item[{$[$3;3,6$]$}]
$E_Z=2\sigma_\infty+\sigma$, $\deg\Delta_Z=9$ and $\Delta_Z\subset\sigma_\infty$. 

\smallskip

\item[{$[$3;4,9$]_A$}]
$E_Z=2C+2\sigma+l$ $(C\sim\sigma+4l$ nonsingular, 
$\sigma\cap C\cap l=\emptyset)$, $\deg\Delta_Z=9$, 
$\Delta_Z\cap\sigma=\emptyset$, 
$\deg(\Delta_Z\cap C)=8$ and $\mult_Q\Delta_Z=\mult_Q(\Delta_Z\cap l)=2$, 
where $Q=C\cap l$. 

\smallskip

\item[{$[$3;4,9$]_B$}]
$E_Z=\sigma_{\infty, 1}+\sigma_{\infty, 2}+\sigma_{\infty, 3}+\sigma$ 
$(\sigma_{\infty, 1}$, $\sigma_{\infty, 2}$, $\sigma_{\infty, 3}$: distinct 
sections at infinity$)$, 
$\sigma_{\infty, 1}\cap\sigma_{\infty, 2}\cap\sigma_{\infty, 3}=\emptyset$, 
$\deg\Delta_Z=9$ such that $\Delta_Z$ is the disjoint union of 
$\sigma_{\infty, 1}\cap\sigma_{\infty, 2}$, $\sigma_{\infty, 1}\cap\sigma_{\infty, 3}$ 
and $\sigma_{\infty, 2}\cap\sigma_{\infty, 3}$. 

\smallskip

\item[{$[$3;4,9$]_C$({\bi c,d})} $((c,d)=(0,0), (1,1),\dots,(5,1), (1,2))$]
$E_Z=2\sigma_\infty+2\sigma+2l_1+l_2$ $(l_1$, $l_2$: distinct fibers$)$, 
$\deg\Delta_Z=9$, $\Delta_Z\cap\sigma=\emptyset$, 
$\deg(\Delta_Z\cap\sigma_\infty)=6$, $\deg(\Delta_Z\cap l_1)=2$, 
$\mult_{Q_1}(\Delta_Z\cap\sigma_\infty)=c$, $\mult_{Q_1}(\Delta_Z\cap l_1)=d$, 
$\mult_{Q_1}\Delta_Z=c+d$ and $\mult_{Q_2}\Delta_Z=\mult_{Q_2}(\Delta_Z\cap l_2)=2$, 
where $Q_i=\sigma_\infty\cap l_i$ for $i=1$, $2$. 
 
\smallskip

\item[{$[$3;4,9$]_D$}]
$E_Z=2\sigma_\infty+2\sigma+l_1+l_2+l_3$ $(l_1$, $l_2$, $l_3$: distinct fibers$)$, 
$\deg\Delta_Z=9$, $\deg(\Delta_Z\cap\sigma_\infty)=6$ and 
$\mult_{Q_i}\Delta_Z=\mult_{Q_i}(\Delta_Z\cap l_i)=2$ for $1\leqslant i\leqslant 3$, 
where $Q_i=\sigma_\infty\cap l_i$ for $1\leqslant i\leqslant 3$. 

\smallskip

\item[{$[$3;4,9$]_E$}]
$E_Z=\sigma_{\infty, 1}+\sigma_{\infty, 2}+2\sigma+2l_1+l_2$ 
$(\sigma_{\infty, 1}$, $\sigma_{\infty, 2}$: distinct sections at infinity, 
$l_1$, $l_2$: distinct fibers$)$, 
$\sigma_{\infty, 1}\cap\sigma_{\infty, 2}\cap(l_1\cup l_2)=\emptyset$, 
$\deg\Delta_Z=9$, 
$\mult_{Q_{i1}}\Delta_Z=\mult_{Q_{i1}}(\Delta_Z\cap\sigma_{\infty, i})=2$, 
$\mult_{Q_{i2}}\Delta_Z=1$ for $i=1$, $2$, and 
$\Delta_Z\setminus\{Q_{11}, Q_{12}, Q_{21}, Q_{22}\}
=\sigma_{\infty, 1}\cap\sigma_{\infty, 2}$, where 
$Q_{ij}=\sigma_{\infty, i}\cap l_j$ for $1\leqslant i, j\leqslant 2$. 

\smallskip

\item[{$[$3;4,9$]_F$}]
$E_Z=\sigma_{\infty, 1}+\sigma_{\infty, 2}+2\sigma+l_1+l_2+l_3$ 
$(\sigma_{\infty, 1}$, $\sigma_{\infty, 2}$: distinct sections at infinity, 
$l_1$, $l_2$, $l_3$: distinct fibers$)$, 
$\sigma_{\infty, 1}\cap\sigma_{\infty, 2}\cap(l_1\cup l_2\cup l_3)=\emptyset$, 
$\deg\Delta_Z=9$, 
$\mult_{Q_{ij}}\Delta_Z=1$ for $1\leqslant i\leqslant 2$, $1\leqslant j\leqslant 3$, and 
$\Delta_Z\setminus\{Q_{ij}\}_{ij}=\sigma_{\infty, 1}\cap\sigma_{\infty, 2}$, 
where $Q_{ij}=\sigma_{\infty, i}\cap l_j$ for 
$1\leqslant i\leqslant 2$, $1\leqslant j\leqslant 3$. 
\end{description}

\smallskip

The case $Z=\F_4:$

\begin{description}
\item[{$[$4;4,10$]_0$}]
$E_Z=2C+2\sigma$ $(C\sim\sigma+5l$ nonsingular$)$, 
$\deg\Delta_Z=10$, $\Delta_Z\cap\sigma=\emptyset$ and $\Delta_Z\subset C$. 

\smallskip

\item[{$[$4;4,10$]_1$({\bi c,d})} $((c, d)=(0, 0), (1, 1),\dots,(1, 8), (2, 1))$]
$E_Z=2\sigma+2\sigma_\infty+2l$, $\deg\Delta_Z=10$, 
$\Delta_Z\cap\sigma=\emptyset$, $\deg(\Delta_Z\cap\sigma_\infty)=8$, 
$\deg(\Delta_Z\cap l)=2$, $\mult_Q(\Delta_Z\cap\sigma_\infty)=c$, 
$\mult_Q(\Delta_Z\cap l)=d$ and $\mult_Q\Delta_Z=c+d$, where 
$Q=\sigma_\infty\cap l$. 

\smallskip

\item[{$[$4;4,10$]_2$}]
$E_Z=2\sigma_\infty+2\sigma+l_1+l_2$ $(l_1$, $l_2$: distinct fibers$)$, 
$\deg\Delta_Z=10$, $\deg(\Delta_Z\cap\sigma_\infty)=8$ and 
$\mult_{Q_i}\Delta_Z=\mult_{Q_i}(\Delta_Z\cap l_i)=2$, 
where $Q_i=\sigma_\infty\cap l_i$ 
for $i=1$, $2$. 
\end{description}

\smallskip

The case $Z=\F_5:$

\begin{description}
\item[{$[$5;4,11$]_1$}]
$E_Z=2\sigma_\infty+2\sigma+l$, $\deg\Delta_Z=11$, 
$\deg(\Delta_Z\cap\sigma_\infty)=10$ and 
$\mult_Q\Delta_Z=\mult_Q(\Delta_Z\cap l)=2$, where $Q=\sigma_\infty\cap l$. 
\end{description}

\smallskip

The case $Z=\F_6:$

\begin{description}
\item[{$[$6;4,12$]_0$}]
$E_Z=2\sigma_\infty+2\sigma$, $\deg\Delta_Z=12$ and $\Delta_Z\subset\sigma_\infty$. 
\end{description}
\end{thm}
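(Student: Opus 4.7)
The plan is to run a case analysis on the surface $Z$, which by Lemma \ref{fund_big_lem}\eqref{fund_big_lem2} must be $\pr^2$ or $\F_n$, with $(2K_Z+L_Z\cdot l)<0$ for a line/fiber $l$. Writing $L_Z\sim-3K_Z-E_Z$ and using that $L_Z$ is nef and big together with $(L_Z\cdot E_0)=\deg(\Delta_Z\cap E_0)\geqslant 0$ for every nonsingular component $E_0\leqslant E_Z$ (Lemma \ref{fund_big_lem}\eqref{fund_big_lem4}), I would first pin down the possibilities for $E_Z$ as a Weil divisor, and then for each such $E_Z$ enumerate the subschemes $\Delta_Z$ satisfying the $(\nu 1)$-condition, the local constraints of Section \ref{ZS_section}, and the extra conditions $(\sF\ref{fund_dfn6})$, $(\sF\ref{fund_dfn7})$ of Definition \ref{fund_dfn}.

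For the case $Z=\pr^2$, write $E_Z=\sum e_iC_i$ with $C_i$ irreducible; numerical constraints force $(L_Z\cdot l)=9-\deg E_Z>0$ while the bound $2\leqslant\mult_QE_Z\leqslant 4$ and $(L_Z\cdot E_0)=\deg(\Delta_Z\cap E_0)$ applied to each reduced component produce a finite combinatorial list of possible $E_Z$ (a double conic, double conic plus line, double line plus double line, arrangements of lines, etc.), recovering the types $[4]$ and $[5]$. For $Z=\F_n$, I would compute $(L_Z\cdot\sigma)$ and $(L_Z\cdot l)$ to get numerical bounds on $n$: these constraints together with condition $(\sF\ref{fund_dfn7})$ (on coefficients of sections in $E_Z$) force $n\leqslant 6$, and for each $n$ they dictate which curves (sections, minimal sections, fibers, bisections) can appear in $E_Z$, and with which coefficients. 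This produces the divisors listed for each $\F_n$.

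Once the possibilities for the pair $(Z,E_Z)$ are fixed, the zero-dimensional subscheme $\Delta_Z$ is constrained as follows. From $\deg\Delta_Z=(L_Z\cdot E_Z)/2$ we know the total degree, and from $(L_Z\cdot E_0)=\deg(\Delta_Z\cap E_0)$ we know how much of $\Delta_Z$ lies on each nonsingular component. Locally at each point $Q\in|\Delta_Z|$ the structure of $\Delta_Z$ is dictated by Lemmas \ref{ZS1}--\ref{ZS5}, which describe the intersection pattern of $\Delta_Z$ with the branches of $E_Z$ at $Q$ in terms of the coefficients $s_i=\coeff_{l_i}E_Z$ and the multiplicities $\mult_Q(l_i\cap l_j)$. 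Combined with $(\sF\ref{fund_dfn6})$ (so $\Delta_Z\cap\sigma=\emptyset$ for a minimal section on $\F_n$), this leaves only finitely many choices for how $\Delta_Z$ sits on $E_Z$; the residual free parameter --- essentially the multiplicities $(c,d)$ at the marked point --- is what produces the subdivisions of each type.

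The main obstacle is the sheer amount of casework needed to enumerate the divisors $E_Z$ together with the subschemes $\Delta_Z$ and to check that no admissible configuration has been overlooked. Concretely, the delicate points are (i) ruling out $E_Z$ of the form $l_1+l_2+l_3$ locally at a common point, or $2l_1+l_2+l_3$ with certain $\Delta_Z$-configurations, via Lemma \ref{ZS3}; (ii) showing that the condition $2K_Z+L_Z$ is not nef forces $\deg E_Z$ to be large enough in each case (ruling out, for instance, $E_Z=2C$ on $\F_n$ for $n\geqslant 2$ with $C\sim\sigma+nl$); and (iii) verifying that $(\sF\ref{fund_dfn7})$ is satisfied, which in particular forces $\sigma\leqslant E_Z$ whenever $E_Z$ contains a section, and constrains the coefficient pattern on $\F_n$. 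Once these are handled, a direct check shows that each entry in the list gives a valid median triplet --- $L_Z$ is nef and big, $\Delta_Z$ satisfies the $(\nu1)$-condition, and the associated $(M,E_M)$ obtained via the elimination $\phi\colon M\to Z$ is a $3$-basic pair by Proposition \ref{converse_prop}\eqref{converse_prop1} --- completing the classification.
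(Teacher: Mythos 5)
Your outline coincides with the paper's strategy: split on $Z=\pr^2$ versus $\F_n$ (Lemma \ref{fund_big_lem}), use $\deg\Delta_Z=(L_Z\cdot E_Z)/2$ and $(L_Z\cdot E_0)=\deg(\Delta_Z\cap E_0)$ to control $\Delta_Z$, invoke Lemmas \ref{ZS1}--\ref{ZS5} for the local configurations, use $(\sF6)$/$(\sF7)$ on $\F_n$, and get the converse from Proposition \ref{converse_prop}. So the approach is not wrong; the problem is that nothing beyond the strategy is actually carried out, and for a classification theorem the enumeration \emph{is} the proof. Concretely, you never pin down the numerical invariants that the paper establishes before any list can be written: on $\pr^2$, that $L_Z\sim hl$ with $h\in\{4,5\}$ (forced jointly by $K_Z+L_Z$ nef, $(K_Z+L_Z\cdot L_Z)>0$ and $2K_Z+L_Z$ not nef) and, via the genus-formula computation of $((C^M)^2)$ together with Corollary \ref{dP-basic_cor}, that every component of $E_Z$ is a line or a nonsingular conic with coefficient $2$ except lines when $h=4$ (Claim \ref{P2_claim}); on $\F_n$, the dichotomy $K_Z+L_Z$ big (giving $h_0=3$, $k_Z=9$ and only $(n,h)=(0,3),(1,5),(2,6),(2,7),(3,9)$) versus non-big (giving $h_0=2$ and only $(n,h)=(1,4),(1,5),(3,6),(4,8),(5,10),(6,12)$, where $(\sF6)$, $(\sF7)$ enter), including the exclusions of cases such as $(1,6),(1,7),(2,8)$, which require the local lemmas and are not ``routine'' sign checks.

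Equally missing is the point-by-point analysis of $\Delta_Z$ that actually distinguishes the $77$ types and fixes the admissible ranges of the parameters $(c,d)$ (and $c$ in $[1;4,5]_K$), e.g.\ ruling out $E_Z=2\sigma_\infty+\sigma+l_1+l_2$ on $\F_2$, or using Proposition \ref{twocurve_prop} in the $(3,6)$ case to exclude two distinct non-minimal sections in $E_Z$. Without these verifications your proposal neither shows that every median triplet appears in the stated list nor that the multiplicity data attached to each type are the only possibilities, so as it stands it is a correct plan for the paper's proof rather than a proof of the theorem.
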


We start to prove Theorem \ref{trip_thm}. 
Any of the triplet in Theorem \ref{trip_thm} is a median triplet 
by Proposition \ref{converse_prop}. 
We see the converse. 
Let $(Z, E_Z; \Delta_Z)$ be a 
median triplet, $L_Z$ be the fundamental divisor, $\phi\colon M\to Z$ be 
the elimination of $\Delta_Z$, $E_M:=(E_Z)_M^{\Delta_Z, 2}$ and $k_Z:=\deg\Delta_Z$. 
By Lemma \ref{fund_big_lem}, 
$Z=\pr^2$ or $\F_n$ and $(2K_Z+L_Z\cdot l)<0$.

\subsection{The case $Z=\pr^2$}\label{ZP_section}

We consider the case $Z=\pr^2$. 
Set $L_Z\sim hl$ and $E_Z\sim el$. Then $e=9-h$ and $4\leqslant h\leqslant 5$ hold 
since $E_Z\sim -3K_Z-L_Z$, $(K_Z+L_Z\cdot L_Z)>0$ and $2K_Z+L_Z$ is not nef. 
Thus $(h, e)=(5, 4)$ or $(4, 5)$. Moreover, $k_Z=(L_Z\cdot E_Z)/2=10$.

\begin{claim}\label{P2_claim}
Any component $C\leqslant E_Z$ is either a nonsingular conic or a line. 
Moreover, $\coeff_CE_Z=2$ holds unless $C$ is a line and $h=4$.
\end{claim}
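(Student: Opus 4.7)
The plan is to analyse an arbitrary irreducible component $C\leqslant E_Z$ of degree $d$ and coefficient $c:=\coeff_CE_Z$ by pulling back to the associated $3$-basic pair $(M, E_M)$ and matching against the classification in Table~\ref{graph_table}. Since $C^M$ is not $\phi$-exceptional we have $\coeff_{C^M}E_M=c$, and condition $(\sC\ref{basic_dfn2})$ immediately forces $c\in\{1,2\}$.

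Next I would compute $((C^M)^2)$ in closed form. Condition $(\sC\ref{basic_dfn4})$ applied to $C^M\leqslant E_M$ gives $(L_M\cdot C^M)=0$, which combined with $L_M=\phi^*L_Z-K_{M/Z}$ and the projection formula yields $(K_{M/Z}\cdot C^M)=(L_Z\cdot C)=hd$. By Corollary~\ref{dP-basic_cor}\,(1), every component of $E_M$ is a nonsingular rational curve listed in Table~\ref{graph_table}, so $p_a(C^M)=0$. Plugging $(C^2)=d^2$ and $p_a(C)=(d-1)(d-2)/2$ into Lemma~\ref{mult_seq_lem} then produces
\[
  ((C^M)^2) \;=\; d^2 - hd - (d-1)(d-2) \;=\; d(3-h) - 2.
\]

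The last step is to match this self-intersection against Table~\ref{graph_table}. A direct inspection of the seven families shows that every vertex with coefficient $1$ is a $(-2)$- or $(-3)$-curve, while every vertex with coefficient $2$ is a $(-n)$-curve with $n\in\{2,3,4,5,6\}$; in particular the pairs $(-4,1)$, $(-5,1)$, $(-6,1)$ and $(-n,\ast)$ with $n\geqslant 7$ never occur. Hence $d(h-3)\leqslant 4$ in general, and the sharper $d(h-3)\leqslant 1$ when $c=1$. Since $h\in\{4,5\}$, the latter inequality forces $(c,d,h)=(1,1,4)$, while the former together with the degree constraint $cd\leqslant e\leqslant 5$ forces $d\leqslant 2$ in every remaining case. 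An irreducible reduced degree-$2$ plane curve is automatically a nonsingular conic, so the claim follows. The only non-routine point is this inspection of Table~\ref{graph_table}; once the admissible pairs $((D)^2,\coeff_DE_M)$ are tabulated, the rest is a short numerical check.
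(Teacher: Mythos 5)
Your argument is correct and follows essentially the same route as the paper: compute $((C^M)^2)=-2-d(h-3)$ from Lemma \ref{mult_seq_lem} together with $(L_M\cdot C^M)=0$ and $p_a(C^M)=0$, then compare against the admissible (self-intersection, weight) pairs in Table \ref{graph_table} via Corollary \ref{dP-basic_cor} and use $e\leqslant 5$ to bound the degree. The only cosmetic difference is that you also invoke the cap $-6$ on weight-$2$ vertices, whereas the paper only needs the weight-$1$ cap $-3$ plus the degree constraint; this changes nothing of substance.
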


\begin{proof}
Set $m:=\deg C$. By Lemma \ref{mult_seq_lem}, 
$m^2-((C^M)^2)=(L_Z\cdot C)+2p_a(C)=m^2+(h-3)m+2$. 
Thus $-2-((C^M)^2)=(h-3)m$. Hence $((C^M)^2)\leqslant -4$ 
(this implies that $\coeff_CE_Z=2$) unless $(h, m)=(4, 1)$. 
Therefore $m\leqslant 2$ since $e\leqslant 5$.
\end{proof}

\subsubsection{The case $(h, e)=(5, 4)$.}\label{P2_54_section}

By Claim \ref{P2_claim}, 
we have either $E_Z=2C$ for a nonsingular conic $C$, or $E_Z=2l_1+2l_2$ for 
distinct lines $l_1$, $l_2$.

\textbf{The case $E_Z=2C$:}
In this case, $\deg(\Delta_Z\cap C)=10$. Thus $\Delta_Z\subset C$. 
This triplet is nothing but the type \textbf{$[$4$]_0$}.

\textbf{The case $E_Z=2l_1+2l_2$:}
We know that $\deg(\Delta_Z\cap l_i)=5$ for $i=1$, $2$.
Set $Q:=l_1\cap l_2$, $c:=\mult_Q(\Delta_Z\cap l_1)$ and 
$d:=\mult_Q(\Delta_Z\cap l_2)$. 
We may assume that $c\geqslant d$. By Lemma \ref{ZS2}, 
$\mult_Q\Delta_Z=c+d$. 
This triplet is nothing but the type \textbf{$[$4$]_2$({\bi c,d})}.

\subsubsection{The case $(h, e)=(4, 5)$.}\label{P2_45_section}

By Claim \ref{P2_claim}, any component of $E_Z$ is either a nonsingular conic or a line. 

{\noindent\textbf{The case $E_Z=2C+l$:}}

We consider the case $E_Z$ contains a nonsingular conic $C$. Then 
$E_Z=2C+l$, where $l$ is a line. 
We know that $\deg(\Delta_Z\cap C)=8$ and $\deg(\Delta_Z\cap l)=4$. 
We assume that $C$ contacts $l$ at one point $Q$. 
Note that $\mult_Q(\Delta_Z\cap l)=\deg(\Delta_Z\cap l)=4$. 
By Lemma \ref{ZS5}, we have 
$\mult_Q\Delta_Z=4$ and $\mult_Q(\Delta_Z\cap C)=2$. 
This triplet is nothing but the type \textbf{$[$5$]_K$}. 
We assume that $C$ and $l$ meet two points $Q_1$ and $Q_2$. 
By Lemma \ref{ZS2}, we have 
$\mult_{Q_i}\Delta_Z=\mult_{Q_i}(\Delta_Z\cap l)=2$ for $i=1$, $2$. 
This triplet is nothing but the type \textbf{$[$5$]_A$}. 

{\noindent\textbf{The case $E_Z=2l_1+2l_2+l_3$:}}

We consider the case 
$E_Z=2l_1+2l_2+l_3$, where $l_1$, $l_2$, $l_3$ are distinct lines. Set 
$Q_{ij}:=l_i\cap l_j$ for $1\leqslant i<j\leqslant 3$, 
$c:=\mult_{Q_{12}}(\Delta_Z\cap l_1)$ and $d:=\mult_{Q_{12}}(\Delta_Z\cap l_2)$. 
We may assume that $c\geqslant d$.
By Lemma \ref{fund_big_lem}, $Q_{ij}$ are distinct points. 
By Lemma \ref{ZS2}, $\mult_{Q_{i3}}\Delta_Z=\mult_{Q_{i3}}(\Delta_Z\cap l_3)=2$ 
for $i=1$, $2$. Moreover, $\mult_{Q_{12}}\Delta_Z=c+d$. 
This triplet is nothing but the type \textbf{$[$5$]_3$({\bi c,d})}. 

{\noindent\textbf{The case $E_Z=2l_1+l_2+l_3+l_4$:}}

We assume that $E_Z=2l_1+l_2+l_3+l_4$, where $l_1,\dots, l_4$ are distinct lines. Set 
$Q_{ij}:=l_i\cap l_j$ for $1\leqslant i<j\leqslant 4$. 
By Lemmas \ref{fund_big_lem} and \ref{ZS3}, 
$Q_{ij}$ are distinct for $1\leqslant i<j\leqslant 4$. 
By Lemma \ref{ZS2}, 
$\mult_{Q_{1j}}\Delta_Z=\mult_{Q_{1j}}(\Delta_Z\cap l_j)=2$ for $2\leqslant j\leqslant 4$ 
and $\mult_{Q_{ij}}\Delta_Z=1$ for $2\leqslant i<j\leqslant 4$. 
This triplet is nothing but the type \textbf{$[$5$]_4$}. 

{\noindent\textbf{The case $E_Z=l_1+l_2+l_3+l_4+l_5$:}}

We assume that $E_Z=l_1+\cdots+l_5$, where $l_1,\dots, l_5$ are distinct lines. Set 
$Q_{ij}:=l_i\cap l_j$ for $1\leqslant i<j\leqslant 5$. 
Assume that $Q_{12}=Q_{13}$. By Lemmas \ref{fund_big_lem} and \ref{ZS3}, 
we can assume that $Q_{12}=Q_{14}$ and $Q_{12}\neq Q_{15}$. 
Then we can assume that $\mult_{Q_{12}}(\Delta_Z\cap l_1)\leqslant 1$. 
Since $|\Delta_Z|\cap l_1\subset\{Q_{12}, Q_{15}\}$ and 
$\mult_{Q_{15}}(\Delta_Z\cap l_1)\leqslant 1$, 
we have $\deg(\Delta_Z\cap l_1)\leqslant 2$. 
This leads to a contradiction. 
Therefore $Q_{ij}$ are distinct for $1\leqslant i<j\leqslant 5$. 
We know that $\#\{Q_{ij}\}_{ij}=10$, $\deg\Delta_Z=10$, $|\Delta_Z|\subset\{Q_{ij}\}_{ij}$ 
and $\mult_{Q_{ij}}\Delta_Z\leqslant 1$. Thus 
$\mult_{Q_{ij}}\Delta_Z=1$ for $1\leqslant i<j\leqslant 5$. 
This triplet is nothing but the type \textbf{$[$5$]_5$}. 

\subsection{The case $Z=\F_n$ with $K_Z+L_Z$ big}\label{ZF_nsection}

We consider the case $Z=\F_n$ such that $K_Z+L_Z$ is big. 
Set $L_Z\sim h_0\sigma+hl$, $E_Z\sim e_0\sigma+el$ and $k_Z:=\deg\Delta_Z$. 
Then $e_0=6-h_0$ and $e=3(n+2)-h$ hold 
since $E_Z\sim -3K_Z-L_Z$ and $K_Z\sim-2\sigma-(n+2)l$.

\begin{claim}\label{Z_num_claim} 
We have $h_0=3$ $($hence $e_0=3$$)$, $k_Z=9$ and 
$\max\{2n+2, 3n\}\leqslant h\leqslant 2n+6$. In particular, $n\leqslant 6$. 
Furthermore, we have $3\leqslant h\leqslant 6$ if $n=0$, 
and $5\leqslant h\leqslant 8$ if $n=1$. 
\end{claim}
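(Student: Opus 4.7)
The plan is to translate each condition on the median triplet into a linear or bilinear inequality in the triple $(h_0,h,n)$ and to extract the claimed bounds by a short case analysis on $h_0$. First I set up the numerics: from $K_Z\sim -2\sigma-(n+2)l$ and $E_Z\sim -3K_Z-L_Z$ one reads $e_0=6-h_0$ and $e=3(n+2)-h$, and then
\begin{equation*}
L_Z\cdot E_Z \;=\; -n h_0 e_0 + h_0 e + h e_0 \;=\; n h_0^2 - 3n h_0 + 6 h_0 + 6h - 2 h_0 h,
\end{equation*}
so that $k_Z=(L_Z\cdot E_Z)/2$; at $h_0=3$ this collapses to $k_Z=9$.

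To prove $h_0=3$, bigness of $K_Z+L_Z\sim(h_0-2)\sigma+(h-n-2)l$ (the hypothesis of this subsection) forces $h_0\geqslant 3$, while effectivity of $E_Z$ forces $h_0\leqslant 6$. To rule out $h_0\in\{4,5,6\}$, combine nefness of $L_Z$ (giving $h\geqslant nh_0$), nefness of $K_Z+L_Z$ from Lemma~\ref{fund_big_lem}\eqref{fund_big_lem1} (giving $h\geqslant nh_0-n+2$), and non-nefness of $2K_Z+L_Z\sim(h_0-4)\sigma+(h-2n-4)l$. A direct computation in each of the three cases forces $n\leqslant 1$: for $n=1$ the resulting forced value of $h$ satisfies $(2K_Z+L_Z)\cdot\sigma<0$, violating $(\sF\ref{fund_dfn4})$ applied to the $(-1)$-curve $\sigma\subset\F_1$; for $n=0$ the two rulings of $\F_0$ are interchangeable, and swapping them sends $(h_0,h)$ to $(h,h_0)=(3,h_0)$, reducing to the case $h_0=3$.

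With $h_0=3$ in hand, $k_Z=9$ is immediate from the formula, and the non-nefness of $2K_Z+L_Z\sim -\sigma+(h-2n-4)l$ is automatic since $(2K_Z+L_Z)\cdot l=-1<0$. The lower bound $h\geqslant\max\{2n+2,3n\}$ follows from nefness of $L_Z$ and of $K_Z+L_Z$; for $n=0$ the sharper $h\geqslant 3$ is forced by bigness of $K_Z+L_Z$, and for $n=1$ the sharper $h\geqslant 5$ comes from $(\sF\ref{fund_dfn4})$ on the $(-1)$-curve $\sigma\subset\F_1$. For the upper bound, effectivity of $E_Z$ gives $h\leqslant 3n+6$, and for $n\geqslant 1$ the sharper bound $h\leqslant 2n+6$ comes from the constraint $\coeff_\sigma E_Z\leqslant 2$, which is forced by $(\sC\ref{basic_dfn2})$ on the associated basic pair since $\coeff_{\sigma^M}E_M=\coeff_\sigma E_Z$; this translates, via the fact that $\sigma$ is a fixed component of $|3\sigma+el|$ of multiplicity $3-\lfloor e/n\rfloor$, into the inequality $e\geqslant n$, i.e.\ $h\leqslant 2n+6$. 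Combining the two-sided bounds forces $3n\leqslant 2n+6$, i.e.\ $n\leqslant 6$.

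The main obstacle will be the case analysis for $h_0\in\{4,5,6\}$: several competing inequalities must be juggled, and the cleanest finish in each subcase uses the $(\sF\ref{fund_dfn4})$ check on the $(-1)$-curve of $\F_1$ combined with the ruling swap on $\F_0$.
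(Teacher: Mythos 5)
Your proposal is correct, and apart from one step it runs along the same numerical lines as the paper: translate nefness of $L_Z$ and $K_Z+L_Z$, bigness of $K_Z+L_Z$, effectivity of $E_Z$, and the bound $\coeff_\sigma E_Z\leqslant 2$ into inequalities on $(h_0,h)$, then read off $k_Z=(L_Z\cdot E_Z)/2=9$. The lower bounds ($h\geqslant 3n$ from $L_Z\cdot\sigma\geqslant 0$, $h\geqslant 2n+2$ from $(K_Z+L_Z\cdot\sigma)\geqslant 0$, $h\geqslant 3$ for $n=0$ from bigness, $h\geqslant 5$ for $n=1$ from ($\sF$\ref{fund_dfn4}) on the $(-1)$-curve $\sigma$), the upper bound $e\geqslant n$ from $\coeff_\sigma E_Z\leqslant 2$ (your fixed-component formulation is equivalent to the paper's ``$E_Z\not\geqslant 3\sigma$''), and the conclusion $n\leqslant 6$ are exactly the paper's steps. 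The one place you diverge is $h_0=3$: the paper gets $h_0\leqslant 3$ in one line from $(2K_Z+L_Z\cdot l)<0$, which is already available from Lemma \ref{fund_big_lem} (\ref{fund_big_lem2}) and is recorded at the start of the proof of Theorem \ref{trip_thm}, and then excludes $h_0=2$ by bigness of $K_Z+L_Z$; you instead eliminate $h_0\in\{4,5,6\}$ by comparing $(K_Z+L_Z\cdot\sigma)\geqslant 0$ with failure of nefness of $2K_Z+L_Z$ on $\sigma$ (killing $n\geqslant 2$), invoking ($\sF$\ref{fund_dfn4}) on $\sigma\subset\F_1$ for $n=1$, and swapping the rulings of $\F_0$ for $n=0$. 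That elimination is sound, just longer; note only that the $\F_0$ swap amounts to re-choosing the fibration, which is harmless here because ($\sF$\ref{fund_dfn6})--($\sF$\ref{fund_dfn7}) are vacuous when $K_Z+L_Z$ is big, but the paper's convention has already normalized $l$ so that $(2K_Z+L_Z\cdot l)<0$, which is what makes its one-line argument available.
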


\begin{proof}
Since $K_Z+L_Z$ is nef and big and $(2K_Z+L_Z\cdot l)<0$, we have 
$h_0=3$ and $h\geqslant 2n+2$. 
Since $L_Z$ is nef, we have $h\geqslant 3n$.
Moreover, if $n=0$ then $h\geqslant 3$ since $K_Z+L_Z$ is big; 
if $n=1$ then $h\geqslant 5$ since $(2K_Z+L_Z\cdot\sigma)\geqslant 0$. 
We know that $E_Z\not\geqslant 3\sigma$. Thus $e=3(n+2)-h\geqslant n$. 
Finally, we have $k_Z=(L_Z\cdot E_Z)/2=9$. 
\end{proof}

\begin{claim}\label{Zb_sing_claim}
\begin{enumerate}
\renewcommand{\theenumi}{\arabic{enumi}}
\renewcommand{\labelenumi}{(\theenumi)}
\item\label{Zb_sing_claim1}
We have $(n, h)=(0, 3)$, 
$(1, 5)$, $(2, 6)$, $(2, 7)$, $(3, 9)$.
\item\label{Zb_sing_claim2}
Any irreducible 
component $C\leqslant E_Z$ apart from $\sigma$, $l$ is a section of $\F_n/\pr^1$ 
and $\coeff_CE_Z=2$. Moreover, either holds: 
\begin{enumerate}
\renewcommand{\theenumii}{\roman{enumii}}
\renewcommand{\labelenumii}{(\theenumii)}
\item\label{Zb_sing_claim21}
$C=\sigma_\infty$ with $n\geqslant 1$ and $(n, h)=(1, 5)$, $(2, 6)$, $(2, 7)$, $(3, 9)$. 
\item\label{Zb_sing_claim22}
$C\sim\sigma+(n+1)l$ and $(n, h)=(0, 3)$, $(1, 5)$, $(2, 6)$. 
\end{enumerate}
\end{enumerate}
\end{claim}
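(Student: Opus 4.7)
My plan is to establish part~(2) first, and then deduce part~(1) from a single self-intersection computation applied to $\sigma^M$.

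\emph{Proof of (2).} Let $C \leqslant E_Z$ be an irreducible component with $C \neq \sigma$ and $C$ not a fiber; write $C \sim a\sigma + bl$ with $a \geqslant 1$ and $b \geqslant an$. The $3$-basic pair condition $\coeff E_M \subset \{1,2\}$ gives $\coeff_C E_Z \in \{1,2\}$, and combined with $\coeff_\sigma E_Z \leqslant 2$ and $h_0 = 3$, one obtains $a \cdot \coeff_C E_Z \leqslant 3$, so $(a, \coeff_C E_Z) \in \{(1,1),(1,2),(2,1),(3,1)\}$.

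To rule out $a \geqslant 2$: for $a = 3$, effectivity $E_Z - C \geqslant 0$ forces $b \leqslant e = 3(n+2) - h$ and hence $h \leqslant 6$; then either $(L_Z \cdot C) = 3b + 3(h-3n) > 9 = \deg\Delta_Z$ in the smooth case (contradicting $(L_Z \cdot C) \leqslant \deg(\Delta_Z\cap C) \leqslant 9$), or Lemma~\ref{mult_seq_lem} yields $(C^M)^2 = -b - 3h + 6n + 4 < -6$ in the singular case, violating Table~\ref{graph_table}. An analogous analysis excludes $a = 2$; the residual smooth case $(n,h,b) = (0,3,1)$ is ruled out by Proposition~\ref{twocurve_prop} applied to $(C, \sigma)$, noting $(C \cdot \sigma) = 1$ but $\deg(\Delta_Z\cap C) + \deg(\Delta_Z\cap\sigma) - \deg\Delta_Z = 9 + 3 - 9 = 3$. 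To exclude $(a, \coeff_C E_Z) = (1, 1)$: Lemma~\ref{mult_seq_lem} with $p_a(C) = p_a(C^M) = 0$ gives $(C^M)^2 = (C^2) - (L_Z\cdot C) = 2n - b - h$, and since $C^M$ has coefficient $1$ in $E_M$, Table~\ref{graph_table} forces $(C^M)^2 \in \{-2,-3\}$, i.e., $b \in \{2n-h+2, 2n-h+3\}$; combined with $b \geqslant n$ and $h \geqslant \max\{2n+2, 3n\}$ from Claim~\ref{Z_num_claim}, the only possibility is $(n, b) = (0, 0)$, giving $C = \sigma_\infty = \sigma$ on $\F_0$, a contradiction. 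Thus $a = 1$ and $\coeff_C E_Z = 2$, and effectivity $E_Z - 2C \geqslant 0$ gives $b \leqslant (3n+6-h)/2$; with $b \geqslant n$ (and $b \geqslant 1$ when $n = 0$) this yields $b \in \{n, n+1\}$, i.e., $C = \sigma_\infty$ or $C \sim \sigma + (n+1)l$.

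\emph{Proof of (1).} By (2), $E_Z$ must contain exactly one component $C$ with $(a, \coeff_C E_Z) = (1, 2)$, so the $\sigma$-class $h_0 = 3$ of $E_Z$ equals $2 \cdot 1 + \coeff_\sigma E_Z$, forcing $\coeff_\sigma E_Z = 1$ and $\sigma \leqslant E_Z$. Applying Lemma~\ref{mult_seq_lem} to $\sigma$ and using $(L_M \cdot \sigma^M) = 0$ yields $(\sigma^M)^2 = (\sigma)^2 - (L_Z\cdot\sigma) = -n - (h-3n) = 2n - h$. Since $\sigma^M$ is a coefficient-$1$ component of $E_M$, Table~\ref{graph_table} forces $(\sigma^M)^2 \in \{-2, -3\}$, i.e., $h \in \{2n+2, 2n+3\}$. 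Intersecting this with the range from Claim~\ref{Z_num_claim} leaves precisely $(n, h) \in \{(0,3), (1,5), (2,6), (2,7), (3,9)\}$.

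The main obstacle lies in the case analysis for $a \geqslant 2$ in part~(2), which requires simultaneous control of $p_a(C)$, Lemma~\ref{mult_seq_lem}, and the admissible self-intersections from Table~\ref{graph_table}.
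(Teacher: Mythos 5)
Your overall strategy is the paper's: constrain each component of $E_Z$ by combining Lemma \ref{mult_seq_lem}, the vanishing $(L_M\cdot C^M)=0$, and the admissible weights/self-intersections from Table \ref{graph_table}, then feed the result into the numerical range of Claim \ref{Z_num_claim}. Two steps, however, do not hold as written. First, in your proof of (1) you assert that ``by (2), $E_Z$ must contain exactly one component $C$ with $(a,\coeff_CE_Z)=(1,2)$'' and deduce $\coeff_\sigma E_Z=1$, $\sigma\leqslant E_Z$. This existence claim is false for $n=0$: on $\pr^1\times\pr^1$ the divisor $E_Z$ can be supported entirely on minimal sections and fibers (this actually happens, e.g.\ in the types \textbf{$[$0;3,3$]_{22}$}, \textbf{$[$0;3,3$]_{23}$}, \textbf{$[$0;3,3$]_{33}$} of Theorem \ref{trip_thm}), and moreover for $n=0$ ``$\sigma$'' is not a single distinguished curve. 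The conclusion you need is still true, but it requires an extra argument you do not give: fibers contribute $0$ to the $\sigma$-degree $e_0=3$ of $E_Z$ and, by (2), every component apart from the $\sigma$- and $l$-classes contributes exactly $2$, so by parity some curve in the class of $\sigma$ occurs in $E_Z$ with coefficient exactly $1$; only then can you run the computation $((\sigma^M)^2)=2n-h\in\{-2,-3\}$. (For $n\geqslant1$ your reasoning is fine, and this self-intersection argument is a clean substitute for the paper's exclusion of $(1,6)$, $(1,7)$, $(2,8)$ via Lemmas \ref{ZS1}--\ref{ZS2}.)

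Second, in the $a=2$ residual case $(n,h,b)=(0,3,1)$ you apply Proposition \ref{twocurve_prop} to $(C,\sigma)$ ``noting $\deg(\Delta_Z\cap\sigma)=3$''. That equality comes from Lemma \ref{fund_big_lem}\,(4) and is available only for components of $E_Z$; you have not shown that any minimal section lies in $E_Z$ (the residual divisor $E_Z-C\sim\sigma+2l$ could a priori be an irreducible curve of class $\sigma+2l$, or a curve of class $\sigma+l$ plus a fiber), and for a minimal section not contained in $E_Z$ the number $\deg(\Delta_Z\cap\sigma)$ is unconstrained. The sub-case is fixable (the residual component would violate your own $(1,1)$-exclusion), but the detour is unnecessary: the same genus-formula computation you use in the singular case gives $((C^M)^2)=(C^2)-(L_Z\cdot C)-2p_a(C)=-5$ for $C\sim2\sigma+l$, which already contradicts Table \ref{graph_table} for a coefficient-$1$ component. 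This is exactly why the paper runs a single unified computation, $-((C^M)^2)=(h-n-2)m+u+2\geqslant4$, valid for all $m$ and independent of whether $C$ is smooth: it forces $\coeff_CE_Z=2$ at once and kills $m\geqslant2$ without any smooth/singular split, degree bound, or auxiliary curve.
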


\begin{proof}
Assume that there exists an irreducible component $C\leqslant E_Z$ apart from 
$\sigma$, $l$. (If $n\geqslant 1$, then such a component always exists since 
$3\sigma\not\leqslant E_Z$.) Set $C\sim m\sigma+(nm+u)l$ with 
$1\leqslant m\leqslant 3$ and $u\geqslant 0$. 
If $n=0$, then we assume further that $u\geqslant 1$. Furthermore, 
if $(n, h)=(0, 3)$, then we can further assume that $u\geqslant m$. 
By Lemma \ref{mult_seq_lem}, 
$nm^2+2um-((C^M)^2)=(C^2)-((C^M)^2)=(L_Z\cdot C)+2p_a(C)=
nm^2+(2u+h-n-2)m+u+2$. Thus $-((C^M)^2)=(h-n-2)m+u+2\geqslant 4$. 
This implies that $\coeff_CE_Z=2$. Thus $m=1$ 
(i.e., $C$ is a section) since $2C\leqslant E_Z$.
We have $\deg(\Delta_Z\cap C)=(L_Z\cdot C)=h+3u$. 
Since $\deg(\Delta_Z\cap C)\leqslant k_Z=9$, we have $u\leqslant 3-h/3(\leqslant 2)$. 
In particular, $n\leqslant 3$ since $h\leqslant 9$. 
Since $\sigma+(n+6-h-2u)l\sim E_Z-2C\geqslant 0$, we have $n+6-h-2u\geqslant 0$. 
If $u=2$ then $n+2\geqslant h$, a contradiction. 
If $u=1$, then  $(n, h)=(0, 3)$, $(0, 4)$, $(1, 5)$ or $(2, 6)$. 
If $u=0$, then $(n, h)=(1, 5)$, $(1, 6)$, $(1, 7)$, $(2, 6)$, $(2, 7)$, $(2, 8)$ or $(3, 9)$. 

We assume that $n=0$. If $\sigma\leqslant E_Z$, then $((\sigma^M)^2)=-h$ 
since $\deg(\Delta_Z\cap\sigma)=h$. Thus $\coeff_\sigma E_Z=2$ unless $h=3$. 
From the above claim, we must have $h=3$ if $n=0$. 

We assume that $(n, h)=(1, 6)$, $(1, 7)$ or $(2, 8)$. By the above claim, 
$E_Z=\sigma+2\sigma_\infty$ if $(n, h)=(1, 7)$ or $(2, 8)$; 
$E_Z=\sigma+2\sigma_\infty+l$ if $(n, h)=(1, 6)$. 
However, by Lemmas \ref{ZS1} and \ref{ZS2}, $\deg(\Delta_Z\cap\sigma)\leqslant 1$. 
This contradicts to the fact 
$\deg(\Delta_Z\cap\sigma)=(L_Z\cdot\sigma)=h-3n\geqslant 2$. 
Therefore $(n, h)=(0, 3)$, $(1, 5)$, $(2, 6)$, $(2, 7)$ or $(3, 9)$. 
\end{proof}

\subsubsection{The case $(n, h)=(0, 3)$.}\label{Fb_0_3_section}

In this case, we know that $E_Z\sim3\sigma+3l$. 
Assume that there exists an irreducible component $C\leqslant E_Z$ such that 
$C\sim\sigma+l$. 
Then $E_Z=2C+\sigma+l$. Set $Q:=\sigma\cap l$. Assume that $Q\in C$. 
We can assume that $\mult_Q(\Delta_Z\cap l)=1$. However, by Lemmas \ref{ZS1} 
and \ref{ZS2}, $3=\deg(\Delta_Z\cap l)=\mult_Q(\Delta_Z\cap l)$. This is a 
contradiction. Thus $C\cap\sigma\cap l=\emptyset$. 
Set $Q_\sigma:=C\cap\sigma$ and $Q_l:=C\cap l$. Then 
$\mult_Q\Delta_Z=1$, $\mult_{Q_\sigma}\Delta_Z=\mult_{Q_\sigma}(\Delta_Z\cap\sigma)
=2$ and 
$\mult_{Q_l}\Delta_Z=\mult_{Q_l}(\Delta_Z\cap l)=2$ by Lemma \ref{ZS2}. 
This is nothing but the type \textbf{$[$0;3,3$]_D$}.
Assume that any irreducible component of $E_Z$ is either $\sigma$ or $l$. 
We consider the case $E_Z=2\sigma_1+\sigma_2+2l_1+l_2$
$(\sigma_1$, $\sigma_2$: distinct minimal sections, $l_1$, $l_2$: distinct fibers$)$. 
Set $c:=\mult_{Q_{11}}(\Delta_Z\cap\sigma_1)$ and 
$d:=\mult_{Q_{11}}(\Delta_Z\cap l_1)$, where $Q_{11}:=\sigma_1\cap l_1$. 
Then $\mult_{Q_{11}}\Delta_Z=c+d$. 
We may assume that $c\geqslant d$. This induces 
the type \textbf{$[$0;3,3$]_{22}$({\bi c,d})}.
If $E_Z=2\sigma_1+\sigma_2+l_1+l_2+l_3$
$(\sigma_1$, $\sigma_2$: distinct minimal sections, 
$l_1$, $l_2$, $l_3$: distinct fibers$)$, 
then this induces the type \textbf{$[$0;3,3$]_{23}$}.
If $E_Z=\sigma_1+\sigma_2+\sigma_3+l_1+l_2+l_3$
$(\sigma_1$, $\sigma_2$, $\sigma_3$: distinct minimal sections, 
$l_1$, $l_2$, $l_3$: distinct fibers$)$, 
then this induces the type \textbf{$[$0;3,3$]_{33}$}.

\subsubsection{The case $(n, h)=(1, 5)$.}\label{Fb_1_5_section}

In this case, we know that $E_Z\sim3\sigma+4l$. 
Assume that there exists an irreducible component $C\leqslant E_Z$ with 
$C\sim\sigma+2l$. 
Then $E_Z=2C+\sigma$ and $\deg(\Delta_Z\cap C)=8$. Set $Q:=C\cap\sigma$. 
By Lemma \ref{ZS2}, $\mult_Q\Delta_Z=\mult_Q(\Delta_Z\cap\sigma)=2$. 
This is nothing but the type \textbf{$[$1;3,4$]_0$}.
Assume that $E_Z=2\sigma_\infty+\sigma+2l$. Set $Q_\infty:=\sigma_\infty\cap l$, 
$c:=\mult_{Q_\infty}(\Delta_Z\cap\sigma_\infty)$ and 
$d:=\mult_{Q_\infty}(\Delta_Z\cap l)$. Then $\mult_{Q_\infty}\Delta_Z=c+d$. 
This induces the type \textbf{$[$1;3,4$]_1$({\bi c,d})}.
Assume that $E_Z=2\sigma_\infty+\sigma+l_1+l_2$
($l_1$, $l_2$: distinct fibers). 
This induces the type \textbf{$[$1;3,4$]_2$}.

\subsubsection{The case $(n, h)=(2, 6)$.}\label{Fb_2_6_section}

In this case, we know that $E_Z\sim3\sigma+6l$. 
Assume that there exists an irreducible component $C\leqslant E_Z$ such that 
$C\sim\sigma+3l$. 
Then $E_Z=2C+\sigma$ and $\deg(\Delta_Z\cap C)=8$. Set $Q:=C\cap\sigma$. 
By Lemma \ref{ZS2}, $\mult_Q\Delta_Z=\mult_Q(\Delta_Z\cap\sigma)=2$. 
This is nothing but the type \textbf{$[$2;3,6$]_0$}.
Assume that $E_Z=2\sigma_\infty+\sigma+l_1+l_2$
($l_1$, $l_2$: distinct fibers). 
Since $\Delta_Z\cap\sigma=\emptyset$, we have $|\Delta_Z|\cap l_1\subset\{Q_1\}$, 
where $Q_1:=\sigma_\infty\cap l_1$. By Lemma \ref{ZS2}, 
$\mult_{Q_1}(\Delta_Z\cap l_1)\leqslant 2$. However, $\deg(\Delta_Z\cap l_1)=3$, 
which leads to the contradiction. 
Assume that $E_Z=2\sigma_\infty+\sigma+2l$. 
Set $Q:=\sigma_\infty\cap l$, $c:=\mult_Q(\Delta_Z\cap\sigma_\infty)$ 
and $d:=\mult_Q(\Delta_Z\cap l)$. 
Then $\mult_Q\Delta_Z=c+d$. This induces the type \textbf{$[$2;3,6$]_1$({\bi c,d})}.

\subsubsection{The case $(n, h)=(2, 7)$.}\label{Fb_2_7_section}

In this case, we know that $E_Z=2\sigma_\infty+\sigma+l$ 
by Claim \ref{Zb_sing_claim}. 
This case induces the type \textbf{$[$2;3,5$]_1$}.

\subsubsection{The case $(n, h)=(3, 9)$.}\label{Fb_3_9_section}

In this case, we know that $E_Z=2\sigma_\infty+\sigma$ by Claim \ref{Zb_sing_claim}. 
This case induces the type \textbf{$[$3;3,6$]$}.

\subsection{The case $Z=\F_n$ with $K_Z+L_Z$ non-big}\label{ZF_n2section}

We consider the case $Z=\F_n$ such that $K_Z+L_Z$ is not big. 
Set $L_Z\sim h_0\sigma+hl$, $E_Z\sim e_0\sigma+el$ and $k_Z:=\deg\Delta_Z$. 
Then $e_0=6-h_0$ and $e=3(n+2)-h$ hold 
since $E_Z\sim -3K_Z-L_Z$. We remark that $n\geqslant 1$ 
by the condition $(\sF$\ref{fund_dfn6}$)$. 

\begin{claim}\label{Zs_num_claim}
We have $h_0=2$ $($hence $e_0=4$$)$ and 
$\max\{n+3, 2n\}\leqslant h\leqslant n+6$. 
$($In particular, $n\leqslant 6$.$)$ 
Moreover, $k_Z=h-n+6$. 
\end{claim}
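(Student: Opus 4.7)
The plan is to derive each of the asserted constraints by combining the structural requirements of a median triplet $(Z,E_Z;\Delta_Z)$ from Definition \ref{fund_dfn} with the positivity data of Lemma \ref{fund_big_lem} and the coefficient bound on $E_M$ from Corollary \ref{dP-basic_cor}.

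First I would pin down $h_0$. Since $K_Z+L_Z$ is non-big, Definition \ref{fund_dfn} says it is trivial with respects to the fibration $\F_n\to\pr^1$, i.e.\ $(K_Z+L_Z\cdot l)=0$; writing $K_Z\sim -2\sigma-(n+2)l$, this rearranges to $h_0=2$ and hence $e_0=6-h_0=4$. The class of $K_Z+L_Z$ thus simplifies to $(h-n-2)l$, and the positivity $(K_Z+L_Z\cdot L_Z)=2(h-n-2)>0$ from Lemma \ref{fund_big_lem}\eqref{fund_big_lem1} gives $h\geqslant n+3$. Applying the same lemma's nefness of $L_Z$ to the minimal section yields $(L_Z\cdot\sigma)=h-2n\geqslant 0$, so $h\geqslant 2n$.

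Next I would prove the upper bound $h\leqslant n+6$ by a component analysis of $E_Z$. The starting observation is that every irreducible component $D\leqslant E_Z$ has $\coeff_D E_Z\in\{1,2\}$: the relation $E_M=\phi^*E_Z-2K_{M/Z}$ forces $\coeff_{D^M}E_M=\coeff_D E_Z$ for the non-$\phi$-exceptional strict transform, and Corollary \ref{dP-basic_cor}\eqref{dP-basic_cor1} bounds the coefficients appearing in $E_M$. Decomposing $E_Z=c\sigma+\sum_ia_iD_i+\sum_jb_jl_j$ with $D_i\sim\sigma+m_il$ ($m_i\geqslant n$) the non-minimal section components and $l_j$ the fiber components, comparison of classes yields $c+\sum_ia_i=4$ and $\sum_ia_im_i+\sum_jb_j=3n+6-h$. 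Since $c\leqslant 2$, one has $\sum_ia_i\geqslant 2$; in particular $E_Z$ carries a section, so $(\sF\ref{fund_dfn7})$ applies to force $\sigma\leqslant E_Z$ and $c\geqslant a_i$ for each $i$. Discarding the nonnegative fiber term gives $3n+6-h\geqslant n\sum_ia_i=n(4-c)$, which rearranges to $h\leqslant 6+n(c-1)\leqslant n+6$.

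Combining $2n\leqslant h\leqslant n+6$ forces $n\leqslant 6$. The degree formula $k_Z=h-n+6$ is then obtained from Lemma \ref{fund_big_lem}\eqref{fund_big_lem4} via the direct intersection computation $(L_Z\cdot E_Z)=(2\sigma+hl)\cdot(4\sigma+el)=-8n+4h+2e=2(h-n+6)$. I expect the main obstacle to be the upper-bound step: one needs both the coefficient transfer from $E_M$ to $E_Z$ and a careful bookkeeping of $(\sF\ref{fund_dfn7})$, whereas the remainder is linear algebra on $\Pic(\F_n)$.
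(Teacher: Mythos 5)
Your overall route is the same as the paper's: $h_0=2$ from the requirement that $K_Z+L_Z$ be trivial on the fibers of the chosen ruling, $h\geqslant n+3$ from positivity of $K_Z+L_Z$ (the paper uses nef and nontrivial, you use $(K_Z+L_Z\cdot L_Z)>0$ from Lemma \ref{fund_big_lem}, both fine), $h\geqslant 2n$ from $(L_Z\cdot\sigma)\geqslant 0$, and $k_Z=(L_Z\cdot E_Z)/2=h-n+6$; these steps are correct.

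The flaw is in your proof of the upper bound $h\leqslant n+6$. You decompose $E_Z=c\sigma+\sum_i a_iD_i+\sum_j b_jl_j$ with every non-fiber, non-minimal-section component $D_i$ a \emph{section}, $D_i\sim\sigma+m_il$, and you then argue "in particular $E_Z$ carries a section, so $(\sF 7)$ applies to force $\sigma\leqslant E_Z$". Neither point is justified: components of $E_Z$ may be irreducible multisections, and this actually happens in the classification — e.g.\ the median triplets of types \textbf{$[$1;4,4$]$} and \textbf{$[$1;4,5$]$} have $E_Z=2C$ or $2C+l$ with $C\sim 2\sigma+2l$ an irreducible bisection on $\F_1$, so $E_Z$ contains no section at all and $\sigma\not\leqslant E_Z$; your decomposition and the appeal to $(\sF 7)$ simply do not apply there. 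The repair is easy and shows $(\sF 7)$ is not needed: write $E_Z=c\sigma+D$ with $c=\coeff_\sigma E_Z\leqslant 2$ (your coefficient-transfer argument to $E_M$ is fine) and $D$ effective not containing $\sigma$; then $(D\cdot\sigma)\geqslant 0$ gives $e\geqslant(4-c)n\geqslant 2n$, i.e.\ $h\leqslant n+6$. Equivalently, allowing components $D_i\sim m_i\sigma+u_il$ with $u_i\geqslant m_in$ in your bookkeeping yields the same inequality. This is exactly the paper's (terser) argument "$E_Z\not\geqslant 3\sigma$, thus $e=3(n+2)-h\geqslant 2n$".
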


\begin{proof}
Since $K_Z+L_Z$ is nef, nontrivial, non-big and $(2K_Z+L_Z\cdot l)<0$, 
$h_0=2$ and $h\geqslant n+3$ hold.
Since $L_Z$ is nef, we have $h\geqslant 2n$.
We know that $E_Z\not\geqslant 3\sigma$. Thus $e=3(n+2)-h\geqslant 2n$.  
Finally, we have $k_Z=(L_Z\cdot E_Z)/2=h-n+6$. 
\end{proof}

\begin{claim}\label{Zs_sing_claim}
\begin{enumerate}
\renewcommand{\theenumi}{\arabic{enumi}}
\renewcommand{\labelenumi}{(\theenumi)}
\item\label{Zs_sing_claim1}
The pair $(n, h)$ is one of $(1, 4)$, 
$(1, 5)$, $(3, 6)$, $(4, 8)$, $(5, 10)$ or $(6, 12)$.
\item\label{Zs_sing_claim2}

\begin{enumerate}
\renewcommand{\theenumii}{\roman{enumii}}
\renewcommand{\labelenumii}{(\theenumii)}
\item\label{Zs_sing_claim21}
If $n=1$, then there exists a nonsingular curve $C$ with $C\sim 2\sigma+2l$ 
such that $2C\leqslant E_Z$. 
\item\label{Zs_sing_claim22}
If $n\geqslant 3$, then any irreducible component $C\leqslant E_Z$ apart from 
$\sigma$, $l$ is a section of $\F_n/\pr^1$ and either $C\sim\sigma+nl$ or 
$C\sim\sigma+(n+1)l$ holds. Furthermore, if $n\geqslant 4$, then such $C$ 
satisfies that $\coeff_CE_Z=2$. 
\end{enumerate}
\end{enumerate}
\end{claim}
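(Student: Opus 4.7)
The proof proceeds in the same spirit as Claim \ref{Zb_sing_claim}, by constraining the pair $(n, h)$ and then the possible irreducible components of $E_Z$ through the numerical data. Throughout, we write $L_Z \sim 2\sigma + hl$, $E_Z \sim 4\sigma + el$ with $e = 3(n+2) - h$, and $k_Z = h - n + 6$ from Claim \ref{Zs_num_claim}; by Corollary \ref{dP-basic_cor}\eqref{dP-basic_cor1} every irreducible component of $E_M$ is a nonsingular rational curve with self-intersection in $\{-2,-3,-4,-5,-6\}$ and coefficient in $\{1, 2\}$.

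First I dichotomize on $\sigma \leq E_Z$. If $\sigma \leq E_Z$ (Case A), Lemma \ref{fund_big_lem}\eqref{fund_big_lem4} together with condition $(\sF\ref{fund_dfn6})$ give $0 = \deg(\Delta_Z \cap \sigma) = (L_Z \cdot \sigma) = h - 2n$, so $h = 2n$; combined with $h \geq n+3$ this forces $n \geq 3$. If $\sigma \not\leq E_Z$ (Case B), condition $(\sF\ref{fund_dfn7})$ forbids any section from appearing in $E_Z$, so every non-fiber irreducible component $C \leq E_Z$ has the form $C \sim m\sigma + bl$ with $m \geq 2$ and $b \geq nm$.

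The key tool is a local computation: for a component $C \sim m\sigma + bl$ other than $\sigma$ or $l$, using $(L_M \cdot C^M) = 0$ together with Lemma \ref{mult_seq_lem} and $p_a(C^M) = 0$ yields
\[
((C^M)^2) = (C^2) - (L_Z \cdot C) - 2p_a(C),
\]
which must lie in $[-6, -2]$ by Corollary \ref{dP-basic_cor}\eqref{dP-basic_cor1}. Combined with $b \leq e/(\coeff_C E_Z)$ (coming from $E_Z - (\coeff_C E_Z) C \geq 0$) and $(L_Z \cdot C) \leq k_Z$ for nonsingular $C$ (Lemma \ref{fund_big_lem}\eqref{fund_big_lem4}), one enumerates as follows. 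In Case B with $n = 1$, direct computation rules out $h \in \{6, 7\}$ and shows that for $h \in \{4, 5\}$ the only solution is $(m, \coeff_C E_Z, b) = (2, 2, 2)$, giving $2C \leq E_Z$ with $C \sim 2\sigma + 2l$ and proving (2)(i). In Case B with $n = 2$, every $(m, c)$ either violates $((C^M)^2) \geq -3$ for $c = 1$ (via the genus formula $p_a(C) = \binom{m}{2}n + (m-1)(u-1)$ with $u = b - nm$) or $b \leq e/m$ together with $b \geq nm$ for $c = 2$, so no median triplet exists. In Case A with $n \geq 3$, substituting $h = 2n$ and $m \geq 2$ into $b \leq e/m$ gives $nm \leq b \leq (n+6)/m$, forcing $n(m^2 - 1) \leq 6$, which is impossible for $n \geq 3$; thus $m = 1$ and such a component is a section.

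For (2)(ii), a section $D \sim \sigma + bl$ satisfies $(L_Z \cdot D) = 2b$ and $((D^M)^2) = -n$ (by the same genus computation with $p_a(D) = 0$). The bound $2b \leq k_Z = n+6$ together with $b \geq n$ yields $b \in \{n, n+1\}$, with $b = n$ forced when $n \in \{5, 6\}$. Inspecting Table \ref{graph_table}, every $(-n)$-curve occurring in $E_M$ with $n \geq 4$ must carry coefficient $2$ (e.g.\ $\gA_1(2)$, $\gA_2(1,2)$, and the endpoints of $\gA_t(2,2)$ and $\gD_t(2)$), which gives the coefficient assertion. Collecting the surviving pairs produces the list $(1,4), (1,5), (3,6), (4,8), (5,10), (6,12)$ in (1). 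The main obstacle is the treatment of potentially singular components $C$ with $\coeff_C E_Z = 1$ (notably $(m, c) = (4, 1)$ and $(m, c) = (2, 1)$ in Case B), for which $\deg(\Delta_Z \cap C) < L_Z \cdot C$ and the genus contribution must be accounted for carefully; these are handled by $p_a(C^M) \leq p_a(C)$ together with the sharper bound $((C^M)^2) \geq -3$ for coefficient-$1$ components from Table \ref{graph_table}.
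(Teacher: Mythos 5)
Your overall strategy is the same as the paper's (the numerical constraints of Claim \ref{Zs_num_claim}, the identity $((C^M)^2)=(C^2)-(L_Z\cdot C)-2p_a(C)$ coming from $(L_M\cdot C^M)=0$ and Lemma \ref{mult_seq_lem}, the coefficient restrictions of Table \ref{graph_table}, and conditions $(\sF 6)$, $(\sF 7)$); you merely organize it by whether $\sigma\leqslant E_Z$ instead of by $m\geqslant 2$ versus $m=1$. But there is a genuine gap exactly at the step the claim is about, namely that for $n\geqslant 3$ no component $C\sim m\sigma+bl$ with $m\geqslant 2$ can occur. In your Case A you exclude $m\geqslant 2$ by "substituting into $b\leqslant e/m$", whereas the inequality your setup actually provides is $b\leqslant e/\coeff_CE_Z$, and at that point nothing forces $\coeff_CE_Z\geqslant m$. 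For a coefficient-one multisection the purely numerical constraints are satisfiable: with $n=3$, $h=2n=6$, $e_0=4$, $e=9$, the configuration $E_Z=2\sigma+C+l_1+l_2+l_3$ with $C\sim 2\sigma+6l$ and $\coeff_CE_Z=1$ passes both $\coeff_CE_Z\cdot m+\coeff_\sigma E_Z\leqslant 4$ and $\coeff_CE_Z\cdot b\leqslant e$, and since such a $C$ has $p_a(C)=2$ you cannot invoke $(L_Z\cdot C)\leqslant k_Z$ from Lemma \ref{fund_big_lem} \eqref{fund_big_lem4} either (as you yourself note for singular components). So the chain $nm\leqslant b\leqslant (n+6)/m$ is simply not available, and these configurations are not ruled out by anything you wrote. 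The correct exclusion — and the paper's — is the one you use only in Case B: for $m\geqslant 2$ one has $-((C^M)^2)=(h-n-2)m+2\geqslant 4$, so Table \ref{graph_table} forces $\coeff_CE_Z=2$, and then $2m+\coeff_\sigma E_Z\leqslant e_0=4$ fails because $\sigma\leqslant E_Z$. That argument needs to be run in Case A as well; as written, part (2)(ii) (and hence the "is a section" half of the claim) is unproved.

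A second, smaller omission: your dichotomy only establishes "Case A $\Rightarrow n\geqslant 3$", not the converse, yet you never treat Case B with $n\geqslant 3$ (where $(\sF 7)$ forbids sections, so a non-fiber component with $m\geqslant 2$ must exist). This case must be excluded both for the list in (1) and for (2)(ii); the computation is the same as your $n=2$ analysis ($\coeff_CE_Z=1$ is killed by $((C^M)^2)\leqslant -4$, and $\coeff_CE_Z=2$ forces $m=2$, $2b\leqslant 3n+6-h\leqslant 2n+3$ against $b\geqslant 2n$), so the repair is routine, but it is missing. The remaining parts — Case B with $n=1$ giving $2C\leqslant E_Z$, $C\sim 2\sigma+2l$ nonsingular since $p_a(C)=0$; and for sections $((D^M)^2)=-n$, $2b\leqslant k_Z=n+6$, hence $b\in\{n,n+1\}$ and coefficient $2$ for $n\geqslant 4$ by Table \ref{graph_table} — are correct and agree with the paper's proof.
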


\begin{proof}
Since $3\sigma\not\leqslant E_Z$, there exists an irreducible component 
$C\leqslant E_Z$ apart from $\sigma$, $l$. 
Set $C\sim m\sigma+(nm+u)l$ with $m\geqslant 1$, $u\geqslant 0$. 
Assume that $m\geqslant 2$. By Lemma \ref{mult_seq_lem}, 
$nm^2+2um-((C^M)^2)=(C^2)-((C^M)^2)=(L_Z\cdot C)+2p_a(C)=nm^2+(2u+h-n-2)m+2$. 
Thus $-((C^M)^2)=(h-n-2)m+2\geqslant 4$. This implies that $\coeff_CE_Z=2$.
Since $2C\leqslant E_Z$, $m=2$ and 
$3n+6-h\geqslant 2(2n+u)$. Hence $(n, h, u)=(1, 4, 0)$ or $(1, 5, 0)$. 

Assume that $m=1$, that is, $C$ is a section. By the condition 
$(\sF$\ref{fund_dfn7}$)$, $\sigma\leqslant E_Z$. 
By the condition $(\sF$\ref{fund_dfn6}$)$, $\Delta_Z\cap\sigma=\emptyset$. 
Thus $h=2n$. In particular, $n\geqslant 3$. 
We know that $\deg(\Delta_Z\cap C)=2n+2u\leqslant k_Z=n+6$. Thus $u=0$ or $1$. 
Moreover, $((C^M)^2)=(C^2)-\deg(\Delta_Z\cap C)=-n$. 
Thus if $n\geqslant 4$, then $\coeff_CE_Z=2$. 
\end{proof}

\subsubsection{The case $(n, h)=(1, 4)$.}\label{Fs_1_4_section}

In this case, we know that $E_Z=2C+l$ ($C\sim 2\sigma+2l$ nonsingular), 
$\deg(\Delta_Z\cap C)=8$ and $k_Z=9$. 
Assume that $|C\cap l|=\{Q\}$. 
Then $\mult_Q(\Delta_Z\cap l)=\deg(\Delta_Z\cap l)=2$. 
Set $c:=\mult_Q\Delta_Z$. By Lemma \ref{ZS5}, we have 
$\mult_Q(\Delta_Z\cap C)=c-1$. 
This is nothing but the type \textbf{$[$1;4,5$]_K$({\bi c})}.
Assume that $|C\cap l|=\{Q_1, Q_2\}$. 
By Lemma \ref{ZS2} and the fact $\deg(\Delta_Z\cap l)=2$, we can assume that 
$|\Delta_Z|\cap l=\{Q_1\}$ and $\mult_{Q_1}\Delta_Z=\mult_{Q_1}(\Delta_Z\cap l)=2$. 
This is nothing but the type \textbf{$[$1;4,5$]_A$}.

\subsubsection{The case $(n, h)=(1, 5)$.}\label{Fs_1_5_section}

In this case, we know that $E_Z=2C$ ($C\sim 2\sigma+2l$ nonsingular), 
$\deg(\Delta_Z\cap C)=10$ and $k_Z=10$. 
This is nothing but the type \textbf{$[$1;4,4$]$}.

\subsubsection{The case $(n, h)=(3, 6)$.}\label{Fs_3_6_section}

In this case, we know that $E_Z\sim4\sigma+9l$ and $k_Z=9$.
 
Assume that there exists an irreducible component $C\leqslant E_Z$ with 
$C\sim\sigma+4l$. Then $\deg(\Delta_Z\cap C)=8$. 
Since $3\sigma\not\leqslant E_Z$, there exists an irreducible component 
$C'\leqslant E_Z-C$ such that $C'$ is a section apart from $\sigma$. 
Assume that $C\neq C'$. We can write $C'\sim\sigma+(3+u)l$ with $u=0$ or $1$ and 
$\deg(\Delta_Z\cap C')=6+2u$. Thus $\deg(\Delta_Z\cap C\cap C')\geqslant 5+2u$ 
by Proposition \ref{twocurve_prop}. However, $(C\cdot C')=4+u$. 
This leads to a contradiction. 
Thus $\coeff_CE_Z=2$. By the condition $(\sF$\ref{fund_dfn7}$)$, we have 
$E_Z=2C+2\sigma+l$. Since $\Delta_Z\cap\sigma=\emptyset$, 
$C\cap\sigma\cap l=\emptyset$. 
This case induces the type \textbf{$[$3;4,9$]_A$}.

From now on, we can assume that any component of $E_Z$ is one of $\sigma_\infty$, 
$\sigma$ or $l$. 
Assume that $\coeff_\sigma E_Z=1$. By the condition $(\sF$\ref{fund_dfn7}$)$, 
$E_Z=\sigma_{\infty, 1}+\sigma_{\infty, 2}+\sigma_{\infty, 3}+\sigma$, where 
$\sigma_{\infty, i}$ are distinct sections at infinity. 
By Lemma \ref{ZS3}, 
$\sigma_{\infty, 1}\cap\sigma_{\infty, 2}\cap\sigma_{\infty, 3}=\emptyset$. 
Moreover, by Lemma \ref{ZS4}, for any $Q\in\sigma_{\infty, i}\cap\sigma_{\infty, j}$, 
$\Delta_Z$ is equal to $\sigma_{\infty, i}\cap\sigma_{\infty, j}$ around $Q$. 
This case is nothing but the type \textbf{$[$3;4,9$]_B$}.

Assume that $\coeff_\sigma E_Z=2$ and $2\sigma_\infty\leqslant E_Z$. 
Consider the case $E_Z=2\sigma_\infty+2\sigma+2l_1+l_2$ 
($l_1$, $l_2$: distinct fibers). 
Set $Q_1:=\sigma_\infty\cap l_1$, $c:=\mult_{Q_1}(\Delta_Z\cap\sigma_\infty)$ 
and $d:=\mult_{Q_1}(\Delta_Z\cap l_1)$. 
This case induces the type \textbf{$[$3;4,9$]_C$({\bi c,d})}.
Consider the case $E_Z=2\sigma_\infty+2\sigma+l_1+l_2+l_3$ 
($l_1$, $l_2$, $l_3$: distinct fibers). 
This case induces the type \textbf{$[$3;4,9$]_D$}.

Assume that $\coeff_\sigma E_Z=2$ and any other section $C$ satisfies that 
$\coeff_CE_Z\leqslant 1$. 
Consider the case $E_Z=\sigma_{\infty, 1}+\sigma_{\infty, 2}+2\sigma+2l_1+l_2$ 
($\sigma_{\infty, 1}$, $\sigma_{\infty, 2}$: distinct sections at infinity, 
$l_1$, $l_2$: distinct fibers). 
We know that $((\sigma_{\infty, i})^2)=-3$. Thus $\sigma_{\infty, i}^M$ is a connected 
component of $E_M$ for $i=1$, $2$. By Lemma \ref{ZS3}, 
$\sigma_{\infty, 1}\cap\sigma_{\infty, 2}\cap(l_1\cup l_2)=\emptyset$. 
By Lemmas \ref{ZS2} and \ref{ZS4}, this case induces the type \textbf{$[$3;4,9$]_E$}.
Consider the case $E_Z=\sigma_{\infty, 1}+\sigma_{\infty, 2}+2\sigma+l_1+l_2+l_3$ 
($\sigma_{\infty, 1}$, $\sigma_{\infty, 2}$: distinct sections at infinity, 
$l_1$, $l_2$, $l_3$: distinct fibers). 
By Lemma \ref{ZS3}, 
$\sigma_{\infty, 1}\cap\sigma_{\infty, 2}\cap(l_1\cup l_2\cup l_3)=\emptyset$. 
By Lemmas \ref{ZS2} and \ref{ZS4}, this case induces the type \textbf{$[$3;4,9$]_F$}.

\subsubsection{The case $(n, h)=(4, 8)$.}\label{Fs_4_8_section}

In this case, we know that $E_Z\sim4\sigma+10l$ and $k_Z=10$. 
Assume that there exists an irreducible component $C\leqslant E_Z$ with 
$C\sim\sigma+5l$. Then $E_Z=2C+2\sigma$ and $\deg(\Delta_Z\cap C)=10$. 
This case is nothing but the type \textbf{$[$4;4,10$]_0$}.
Assume that $\sigma_\infty\leqslant E_Z$. Then $2\sigma_\infty+2\sigma\leqslant E_Z$. 
Consider the case $E_Z=2\sigma_\infty+2\sigma+2l$. 
Set $Q:=\sigma_\infty\cap l$, $c:=\mult_Q(\Delta_Z\cap\sigma_\infty)$ 
and $d:=\mult_Q(\Delta_Z\cap l)$. 
This case induces the type \textbf{$[$4;4,10$]_1$({\bi c,d})}.
Consider the case $E_Z=2\sigma_\infty+2\sigma+l_1+l_2$ ($l_1$, $l_2$: distinct fibers). 
This case induces the type \textbf{$[$4;4,10$]_2$}.

\subsubsection{The case $(n, h)=(5, 10)$.}\label{Fs_5_10_section}

In this case, we know that $E_Z=2\sigma_\infty+2\sigma+l$ and $k_Z=11$. 
This case induces the type \textbf{$[$5;4,11$]_1$}.

\subsubsection{The case $(n, h)=(6, 12)$.}\label{Fs_6_12_section}

In this case, we know that $E_Z=2\sigma_\infty+2\sigma$ and $k_Z=12$. 
Since $\deg(\Delta_Z\cap C)=12$, this case is nothing but the type 
\textbf{$[$6;4,12$]_0$}.

As a consequence,  we have completed the proof of Theorem \ref{trip_thm}.

\section{Classification of bottom tetrads, I}\label{XCI_section}

We classify bottom tetrads $(X, E_X; \Delta_Z, \Delta_X)$ with big 
$2K_X+L_X$. 

\begin{thm}\label{tetI_thm}
The bottom tetrads $(X, E_X; \Delta_Z, \Delta_X)$ with 
big $2K_X+L_X$ 
are classified by the types defined as follows $($We assume that any of them 
satisfies that both $\Delta_X$ and $\Delta_Z$ satisfy the $(\nu1)$-condition.$)$: 

\smallskip

The case $X=\pr^2$ and $E_X=l$ $(l$ is a line$):$

\begin{description}
\item[{$[$1$]_0$}]
$\Delta_X\subset l$ with $\deg\Delta_X=4$ and $\Delta_Z=\emptyset$.
\end{description}

\smallskip

The case $X=\pr^2$ and $E_X=C$ $(C$ is a nonsingular conic$):$

\begin{description}
\item[{$[$2$]_0$}]
$\Delta_X\subset C$ with $\deg\Delta_X=7$ and $\Delta_Z=\emptyset$.
\end{description}

\smallskip

The case $E_X=2l$ $(l$ is a line$):$

\begin{description}
\item[{$[$2$]_{1A}$}]
$|\Delta_X|=\{P_1, P_2, P_3\}$ such that 
$(\mult_{P_i}\Delta_X, \mult_{P_i}(\Delta_X\cap l))=(2, 1)$ for any $i=1$, $2$, $3$.
$|\Delta_Z|=\{Q\}$ with 
$Q\in l^Z\setminus(\Gamma_{P_1, 1}\cup\Gamma_{P_2, 1}\cup\Gamma_{P_3, 1})$ 
such that $\mult_Q\Delta_Z=1$. 

\smallskip

\item[{$[$2$]_{1B}$}]
$|\Delta_X|=\{P_1, P_2, P_3\}$ such that 
$(\mult_{P_i}\Delta_X, \mult_{P_i}(\Delta_X\cap l))=(2, 1)$ for $i=1$, $2$ and 
$(\mult_{P_3}\Delta_X, \mult_{P_3}(\Delta_X\cap l))=(1, 1)$.
$|\Delta_Z|=\{Q\}$ with $Q=l^Z\cap \Gamma_{P_3, 1}$, 
$\Delta_Z\subset\Gamma_{P_3, 1}$ and $\deg\Delta_Z=2$.

\smallskip

\item[{$[$2$]_{1C}$}]
$|\Delta_X|=\{P_1, P_2\}$ such that 
$(\mult_{P_1}\Delta_X, \mult_{P_1}(\Delta_X\cap l))=(4, 2)$ and 
$(\mult_{P_2}\Delta_X, \mult_{P_2}(\Delta_X\cap l))=(2, 1)$.
$|\Delta_Z|=\{Q\}$ with 
$Q\in l^Z\setminus(\Gamma_{P_1, 2}\cup\Gamma_{P_2, 1})$ 
such that $\mult_Q\Delta_Z=1$. 

\smallskip

\item[{$[$2$]_{1D}$}]
$|\Delta_X|=\{P_1, P_2\}$ such that 
$(\mult_{P_1}\Delta_X, \mult_{P_1}(\Delta_X\cap l))=(4, 2)$ and 
$(\mult_{P_2}\Delta_X, \mult_{P_2}(\Delta_X\cap l))=(1, 1)$.
$|\Delta_Z|=\{Q\}$ with $Q=l^Z\cap \Gamma_{P_2, 1}$, 
$\Delta_Z\subset\Gamma_{P_2, 1}$ and $\deg\Delta_Z=2$.

\smallskip

\item[{$[$2$]_{1E}$({\bi c,d})} $((c,d)=(0,0)$, $(1,1)$ {\it or} $(1,2))$]
$|\Delta_X|=\{P_1, P_2\}$ such that 
$(\mult_{P_1}\Delta_X$, $\mult_{P_1}(\Delta_X\cap l))=(2, 2)$ and 
$(\mult_{P_2}\Delta_X$, $\mult_{P_2}(\Delta_X\cap l))=(2, 1)$.
$\deg\Delta_Z=3$, $\deg(\Delta_Z\cap\Gamma_{P_1, 2})=2$, $\deg(\Delta_Z\cap l^Z)=1$ 
such that $\Delta_Z\cap(\Gamma_{P_1, 1}\cup\Gamma_{P_2, 1})=\emptyset$, 
$\mult_Q(\Delta_Z\cap l^Z)=c$, $\mult_Q(\Delta_Z\cap\Gamma_{P_1, 2})=d$ 
and $\mult_Q\Delta_Z=c+d$, where $Q=l^Z\cap\Gamma_{P_1, 2}$.

\smallskip

\item[{$[$2$]_{1F}$}]
$|\Delta_X|=\{P_1, P_2\}$ such that 
$(\mult_{P_1}\Delta_X, \mult_{P_1}(\Delta_X\cap l))=(2, 2)$ and 
$(\mult_{P_2}\Delta_X, \mult_{P_2}(\Delta_X\cap l))=(1, 1)$.
$\deg\Delta_Z=4$, $\mult_Q\Delta_Z=\mult_Q(\Delta\cap\Gamma_{P_2, 1})=2$, 
$\deg(\Delta_Z\cap\Gamma_{P_1, 2})=2$ and 
$|\Delta_Z|\cap(l^Z\cup \Gamma_{P_1, 1})=\emptyset$, 
where $Q:=l^Z\cap\Gamma_{P_2, 1}$. 

\smallskip

\item[{$[$2$]_{1G}$}]
$|\Delta_X|=\{P_1, P_2\}$ such that 
$(\mult_{P_i}\Delta_X, \mult_{P_i}(\Delta_X\cap l))=(1, 1)$ for any $i=1$, $2$.
$\deg\Delta_Z=5$, $\deg(\Delta_Z\cap l^Z)=3$ and 
$\mult_{Q_i}\Delta_Z=\mult_{Q_i}(\Delta_Z\cap\Gamma_{P_i, 1})=2$ hold, 
where $Q_i:=l^Z\cap\Gamma_{P_i, 1}$. 

\smallskip

\item[{$[$2$]_{1H}$}]
$|\Delta_X|=\{P_1, P_2\}$ such that 
$(\mult_{P_1}\Delta_X, \mult_{P_1}(\Delta_X\cap l))=(2, 1)$ and 
$(\mult_{P_2}\Delta_X, \mult_{P_2}(\Delta_X\cap l))=(1, 1)$.
$\deg\Delta_Z=4$, $\deg(\Delta_Z\cap l^Z)=3$, 
$\mult_Q\Delta_Z=\mult_Q(\Delta_Z\cap\Gamma_{P_2, 1})=2$ and 
$\Delta_Z\cap\Gamma_{P_1, 1}=\emptyset$ hold, where 
$Q:=l^Z\cap\Gamma_{P_2, 1}$. 

\smallskip

\item[{$[$2$]_{1I}$}]
$|\Delta_X|=\{P_1, P_2\}$ such that 
$(\mult_{P_i}\Delta_X, \mult_{P_i}(\Delta_X\cap l))=(2, 1)$ for any $i=1$, $2$.
$\deg\Delta_Z=3$, $\Delta_Z\subset l^Z$ and 
$\Delta_Z\cap(\Gamma_{P_1, 1}\cup\Gamma_{P_2, 1})=\emptyset$. 

\smallskip

\item[{$[$2$]_{1J}$({\bi c,d})} $((c, d)=(0, 0), (1, 1), (2, 1), (3, 1), (1, 2))$]
$|\Delta_X|=\{P\}$ such that 
$(\mult_P\Delta_X, \mult_P(\Delta_X\cap l))=(2, 2)$.
$\deg\Delta_Z=5$, $\deg(\Delta_Z\cap l^Z)=3$, $\deg(\Delta_Z\cap\Gamma_{P, 2})=2$, 
$\Delta_Z\cap\Gamma_{P, 1}=\emptyset$, 
$c=\mult_Q(\Delta_Z\cap l^Z)$, 
$d=\mult_Q(\Delta_Z\cap\Gamma_{P, 2})$ and $\mult_Q\Delta_Z=c+d$, 
where $Q=l^Z\cap\Gamma_{P, 2}$.

\smallskip

\item[{$[$2$]_{1K}$}]
$|\Delta_X|=\{P\}$ such that 
$(\mult_P\Delta_X, \mult_P(\Delta_X\cap l))=(4, 2)$.
$\deg\Delta_Z=3$, $\Delta_Z\subset l^Z$ and $\Delta_Z\cap\Gamma_{P, 2}=\emptyset$. 

\smallskip

\item[{$[$2$]_{1L}$}]
$|\Delta_X|=\{P\}$ such that 
$(\mult_P\Delta_X, \mult_P(\Delta_X\cap l))=(2, 1)$.
$\deg\Delta_Z=5$, $\Delta_Z\subset l^Z$ and $\Delta_Z\cap\Gamma_{P, 1}=\emptyset$. 

\smallskip

\item[{$[$2$]_{1M}$}]
$|\Delta_X|=\{P\}$ such that 
$\mult_P\Delta_X=1$.
$\deg\Delta_Z=6$, $\deg(\Delta_Z\cap l^Z)=5$ 
and $\mult_Q\Delta_Z=\mult_Q(\Delta_Z\cap\Gamma_{P, 1})=2$, where 
$Q=l^Z\cap\Gamma_{P, 1}$. 

\smallskip

\item[{$[$2$]_{1N}$}]
$\Delta_X=\emptyset$, $\deg\Delta_Z=7$ and $\Delta_Z\subset l^Z$. 
\end{description}

\smallskip

The case $X=\pr^2$ and $E_X=l_1+l_2$ $(l_i$ are distinct lines. Set $P:=l_1\cap l_2$.$):$

\begin{description}
\item[{$[$2$]_{2A}$}]
$\deg\Delta_X=5$, $\deg(\Delta_X\cap l_i)=3$ and $\mult_P\Delta_X=1$. 
$|\Delta_Z|=\{Q_1, Q_2\}$ such that $\mult_{Q_i}\Delta_Z=1$, where 
$Q_i=l_i^Z\cap\Gamma_{P, 1}$. 

\smallskip

\item[{$[$2$]_{2B}$}]
$\deg\Delta_X=6$, $\deg(\Delta_X\cap l_i)=3$ and $P\not\in\Delta_X$. 
$|\Delta_Z|=\{Q\}$ such that $\mult_Q\Delta_Z=1$, where 
$Q=l_1^Z\cap l_2^Z$.

\end{description}

\smallskip

The case $X=\pr^1\times\pr^1:$

\begin{description}
\item[{$[$0;1,0$]$}]
$E_X=\sigma$, $\deg\Delta_X=3$, 
$\Delta_X\subset\sigma$ and $\Delta_Z=\emptyset$. 

\smallskip

\item[{$[$0;1,1$]_0$}]
$E_X=C$ such that $C$ is nonsingular, $C\in|\sigma+l|$, $\deg\Delta_X=5$, 
$\Delta_X\subset C$ and $\Delta_Z=\emptyset$. 

\smallskip

\item[{$[$0;1,1$]_1\langle$0$\rangle$}]
$E_X=\sigma+l$, $\deg\Delta_X=4$, 
$\deg(\Delta_X\cap\sigma)=\deg(\Delta_X\cap l)=2$, $P\not\in\Delta_X$, 
$\deg\Delta_Z=1$ and $Q\in\Delta_Z$, where $P=\sigma\cap l$ and 
$Q=\sigma^Z\cap l^Z$. 

\smallskip

\item[{$[$0;1,1$]_1\langle$1$\rangle$}]
$E_X=\sigma+l$, $\deg\Delta_X=3$, 
$\deg(\Delta_X\cap\sigma)=\deg(\Delta_X\cap l)=2$, $\mult_P\Delta_X=1$, 
$\deg\Delta_Z=2$ and $Q_\sigma$, $Q_l\in\Delta_Z$, where $P=\sigma\cap l$, 
$Q_\sigma=\sigma^Z\cap\Gamma_{P, 1}$ and $Q_l=l^Z\cap\Gamma_{P, 1}$. 
\end{description}

\smallskip

The case $X=\F_1:$

\begin{description}
\item[{$[$1;1,0$]$}]
$E_X=\sigma$, $\deg\Delta_X=2$, $\Delta_X\subset\sigma$ and 
$\Delta_Z=\emptyset$. 

\smallskip

\item[{$[$1;1,1$]_0$}]
$E_X=\sigma_\infty$, $\deg\Delta_X=4$, $\Delta_X\subset\sigma_\infty$ and 
$\Delta_Z=\emptyset$. 

\smallskip

\item[{$[$1;1,1$]_1\langle$0$\rangle$}]
$E_X=\sigma+l$, $\deg\Delta_X=3$, $P\not\in\Delta_X$, $\deg(\Delta_X\cap\sigma)=1$, 
$\deg(\Delta_X\cap l)=2$, $\deg\Delta_Z=1$ and $Q\in\Delta_Z$, 
where $P=\sigma\cap l$ and $Q=\sigma^Z\cap l^Z$. 

\smallskip

\item[{$[$1;1,1$]_1\langle$1$\rangle$}]
$E_X=\sigma+l$, $\deg\Delta_X=2$, $\mult_P\Delta_X=1$, 
$\deg(\Delta_X\cap l)=2$, $\deg\Delta_Z=2$ and $Q_\sigma$, $Q_l\in\Delta_Z$, 
where $P=\sigma\cap l$, $Q_\sigma=\sigma^Z\cap\Gamma_{P, 1}$ 
and $Q_l=l^Z\cap\Gamma_{P, 1}$. 
\end{description}

\smallskip

The case $X=\F_2:$

\begin{description}
\item[{$[$2;1,0$]$}]
$E_X=\sigma$, $\deg\Delta_X=1$, $\Delta_X\subset\sigma$ and $\Delta_Z=\emptyset$.  

\smallskip

\item[{$[$2;1,1$]$}]
$E_X=\sigma+l$, $\deg\Delta_X=2$, $\Delta_X\subset l\setminus\sigma$, 
$\deg\Delta_Z=1$ and 
$Q\in\Delta_Z$, where $Q=\sigma^Z\cap l^Z$. 

\smallskip

\item[{$[$2;1,2$]_0$}]
$E_X=\sigma_\infty$, $\deg\Delta_X=5$, $\Delta_X\subset\sigma_\infty$ and 
$\Delta_Z=\emptyset$. 

\smallskip

\item[{$[$2;1,2$]_{1A}$}]
$E_X=\sigma+2l$, $\deg\Delta_X=4$, $|\Delta_X|=\{P_1$, $P_2\}$ such that 
$P_i\not\in\sigma$ and $(\mult_{P_i}\Delta_X, \mult_{P_i}(\Delta_X\cap l))=(2, 1)$ 
for $i=1$, $2$. $\deg\Delta_Z=1$ and 
$\Delta_Z\subset l^Z\setminus(\sigma^Z\cup\Gamma_{P_1, 1}\cup\Gamma_{P_2, 1})$. 

\smallskip

\item[{$[$2;1,2$]_{1B}$}]
$E_X=\sigma+2l$, $\deg\Delta_X=3$, $|\Delta_X|=\{P_1$, $P_2\}$ such that 
$P_1$, $P_2\in l\setminus\sigma$, 
$(\mult_{P_1}\Delta_X, \mult_{P_1}(\Delta_X\cap l))=(2, 1)$ 
and $\mult_{P_2}\Delta_X=1$. $\deg\Delta_Z=2$ and 
$\mult_Q\Delta_Z=\mult_Q(\Delta_Z\cap\Gamma_{P_2, 1})=2$, 
where $Q=l^Z\cap\Gamma_{P_2, 1}$. 

\smallskip

\item[{$[$2;1,2$]_{1C}$}]
$E_X=\sigma+2l$, $\deg\Delta_X=4$, $|\Delta_X|=\{P\}$ such that 
$P\not\in\sigma$ and $(\mult_P\Delta_X, \mult_P(\Delta_X\cap l))=(4, 2)$. 
$\deg\Delta_Z=1$ and 
$\Delta_Z\subset l^Z\setminus(\sigma^Z\cup\Gamma_{P, 2})$. 

\smallskip

\item[{$[$2;1,2$]_{1D}$({\bi c,d})} $((c, d)=(0, 0), (1, 1), (1, 2))$]
$E_X=\sigma+2l$, $|\Delta_X|=\{P\}$, 
$\deg\Delta_X=2$, $\Delta_X\subset l\setminus\sigma$, 
$\deg\Delta_Z=3$, $\deg(\Delta_Z\cap l^Z)=1$, 
$\deg(\Delta_Z\cap\Gamma_{P, 2})=2$, 
$\Delta_Z\cap(\sigma^Z\cup\Gamma_{P, 1})=\emptyset$, 
$\mult_Q(\Delta_Z\cap l^Z)=c$, $\mult_Q(\Delta_Z\cap\Gamma_{P, 2})=d$, 
$\mult_Q\Delta_Z=c+d$, where $Q=l^Z\cap\Gamma_{P, 2}$. 

\smallskip

\item[{$[$2;1,2$]_{1E}$}]
$E_X=\sigma+2l$, $\deg\Delta_X=2$, $|\Delta_X|=\{P\}$ such that 
$P\not\in\sigma$ and $(\mult_P\Delta_X, \mult_P(\Delta_X\cap l))=(2, 1)$. 
$\deg\Delta_Z=3$ and 
$\Delta_Z\subset l^Z\setminus(\sigma^Z\cup\Gamma_{P, 1})$. 

\smallskip

\item[{$[$2;1,2$]_{1F}$}]
$E_X=\sigma+2l$, $\deg\Delta_X=1$, $|\Delta_X|=\{P\}$ such that 
$P\in l\setminus\sigma$ and $\mult_P\Delta_X=1$. 
$\deg\Delta_Z=4$, 
$\mult_Q\Delta_Z=\mult_Q(\Delta_Z\cap\Gamma_{P, 1})=2$ and 
$\Delta_Z\setminus\{Q\}\subset l^Z\setminus\sigma^Z$, where 
$Q=l^Z\cap\Gamma_{P, 1}$. 

\smallskip

\item[{$[$2;1,2$]_{1G}$}]
$E_X=\sigma+2l$, $\Delta_X=\emptyset$, $\deg\Delta_Z=5$ and 
$\Delta_Z\subset l^Z\setminus\sigma^Z$. 
\end{description}

\smallskip

The case $X=\F_3:$

\begin{description}
\item[{$[$3;1,0$]_0$}]
$E_X=\sigma$, $\Delta_X=\emptyset$ and $\Delta_Z=\emptyset$.  
\end{description}
\end{thm}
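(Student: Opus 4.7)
The plan is to establish Theorem \ref{tetI_thm} in both directions: verifying that each listed configuration is indeed a bottom tetrad, and showing that every bottom tetrad with big $2K_X+L_X$ appears in the list. For the direct verification, for each listed type one constructs the elimination $\phi\circ\psi\colon M\to X$ and checks the hypotheses of Proposition \ref{converse_prop}\eqref{converse_prop2}: that $2K_X+L_X$ is nef (indeed big by construction), $(K_X+L_X\cdot L_X)>2\deg\Delta_X$, and that $E_M$ has simple normal crossings with coefficients in $\{1,2\}$ and is $L_M$-orthogonal. These checks are routine once the local structures described in Lemmas \ref{XS1}--\ref{XS5} and \ref{ZS1}--\ref{ZS5} are applied, since the dual graphs arising there match the graphs in Table \ref{graph_table} required by Corollary \ref{dP-basic_cor}.

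The main content is the converse. I would first use Lemma \ref{FuBo_lem}\eqref{FuBo_lem1} to reduce to $X\simeq\pr^2$ or $\F_n$, and numerically determine all possible divisor classes $[L_X]$ and $[E_X]$. On $\pr^2$, writing $L_X\sim hl$, bigness of $2K_X+L_X$ forces $h\geqslant 7$, while nefness of $K_X+L_X$ combined with $(K_X+L_X\cdot L_X)>2\deg\Delta_X\geqslant 0$ bounds $h$ above; hence $E_X$ has degree at most $2$. On $\F_n$, writing $L_X\sim h_0\sigma+hl$, bigness of $2K_X+L_X\sim(h_0-4)\sigma+(h-2(n+2))l$ combined with nefness of $K_X+L_X$ and $L_X$ cuts out a small finite list of admissible $(n,h_0,h)$, and the constraint that $3\sigma\not\leqslant E_X$ further reduces the cases.

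For each admissible divisor class of $E_X$, an arithmetic-genus computation as in Claim \ref{P2_claim} and Claim \ref{Zb_sing_claim} forces each component to be a line, a nonsingular conic, $\sigma$, $l$, $\sigma_\infty$, or a section of small bidegree, with coefficient $\coeff_CE_X$ pinned down by $(C^M)^2\leqslant -4$ whenever the coefficient must be $2$. For each resulting $E_X$, the local analysis of Section \ref{local_section} (Lemmas \ref{XS1}--\ref{XS5}) determines, at each point $P\in|E_X|$, the possible pairs $(\mult_P\Delta_X,\mult_P(\Delta_X\cap E_X))$ and the induced local structure of $\Delta_Z$ over the exceptional curves $\Gamma_{P,i}$. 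The global bookkeeping uses Lemma \ref{FuBo_lem}\eqref{FuBo_lem3}, namely $\deg\Delta_X+\deg\Delta_Z=(L_X\cdot E_X)/2$, together with the componentwise version for each nonsingular $C\leqslant E_X$; combined with Lemma \ref{twocurve_prop} when $E_X$ has several components, this pins down the enumerated configurations exactly.

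The main obstacle will be the combinatorial branching in the $E_X=2l$ case on $\pr^2$, where $\Delta_X$ may be supported at one, two, or three points and each point carries one of the four local multiplicity patterns from Lemma \ref{XS1}, with corresponding contributions to $\Delta_Z$ over each $\Gamma_{P,i}$. The analogous branching occurs for $E_X=\sigma+2l$ on $\F_2$. In both cases, showing that the degree constraint $\deg\Delta_X+\deg\Delta_Z=7$ together with the $(\nu1)$-condition on $\Delta_Z$ at the transverse intersections $l^Z\cap\Gamma_{P,i}$ closes up to exactly the subtypes $[2]_{1A}$--$[2]_{1N}$, respectively $[2;1,2]_{1A}$--$[2;1,2]_{1G}$, will require careful enumeration; a similar but milder branching is expected for the types of the form $[0;1,1]_1\langle *\rangle$ and $[1;1,1]_1\langle *\rangle$, where the parameter in angle brackets records whether a transverse intersection point of $E_X$ itself lies in $\Delta_X$.
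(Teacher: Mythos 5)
Your proposal follows essentially the same route as the paper: sufficiency via Proposition \ref{converse_prop}, then reduction to $X\simeq\pr^2$ or $\F_n$ by Lemma \ref{FuBo_lem}, numerical determination of the classes of $L_X$ and $E_X$, self-intersection/genus computations (the analogue of Claim \ref{tetI_curve_claim}) to pin down the components of $E_X$, the local Lemmas \ref{XS1}--\ref{XS5} together with the bookkeeping $\deg\Delta_X+\deg\Delta_Z=(L_X\cdot E_X)/2$, and the case-by-case enumeration you flag for $E_X=2l$ and $E_X=\sigma+2l$. The only minor inaccuracy is that on $\pr^2$ the upper bound $h\leqslant 8$ comes from $E_X\sim(9-h)l$ being nonzero effective (not from nefness of $K_X+L_X$), but this does not affect the argument.
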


We start to prove Theorem \ref{tetI_thm}. Any tetrad in Theorem \ref{tetI_thm} 
is a bottom tetrad by Proposition \ref{converse_prop}. 
We see the converse. 

\subsection{The case $X=\pr^2$}\label{pr2_section}

Let $(X=\pr^2, E_X; \Delta_Z, \Delta_X)$ be a 
bottom tetrad, $L_X$ be the fundamental divisor, 
$\psi\colon Z\to X$ be the elimination of $\Delta_X$, 
$\phi\colon M\to Z$ be the elimination of $\Delta_Z$, 
$E_Z:=(E_X)_Z^{\Delta_X, 1}$ and 
$E_M:=(E_Z)_M^{\Delta_Z, 2}$.
Set $L_X\sim hl$, $E_X\sim el$, 
$k_X:=\deg\Delta_X$ and $k_Z:=\deg\Delta_Z$. 
Then $e=9-h$, $h\geqslant 6$ and $k_X+k_Z=he/2$ hold. 
Thus $(h, e, k_X+k_Z)=(6, 3, 9)$, $(7, 2, 7)$ or $(8, 1, 4)$. 
Moreover, if $h=6$ then $k_X\leqslant 8$ holds since $(K_X+L_X\cdot L_X)>2k_X$.

\begin{claim}\label{tetI_curve_claim}
Pick any nonsingular component $C\leqslant E_X$. 
\begin{enumerate}
\renewcommand{\theenumi}{\arabic{enumi}}
\renewcommand{\labelenumi}{(\theenumi)}
\item\label{tetI_curve_claim1}
If $C$ is a conic, then 
$(h, ((C^M)^2), \deg(\Delta_X\cap C), \deg(\Delta_Z\cap C^Z))=(6, -2, 6, 0)$, 
$(6, -3, 5, 2)$ or $(7, -3, 7, 0)$.
\item\label{tetI_curve_claim2}
If $C$ is a line, then 
$(h, ((C^M)^2), \deg(\Delta_X\cap C), \deg(\Delta_Z\cap C^Z))=(6, -2, 3, 0)$, 
$(6, -3, 2, 2)$, $(6, -4, 1, 4)$, $(6, -5, 0, 6)$, $(7, -3, 3, 1)$,
$(7, -4, 2, 3)$, $(7, -5, 1, 5)$, $(7, -6, 0, 7)$ or $(8, -3, 4, 0)$.
\end{enumerate}
\end{claim}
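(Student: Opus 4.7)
The plan is to derive two linear relations among $((C^M)^2)$, $\deg(\Delta_X \cap C)$ and $\deg(\Delta_Z \cap C^Z)$, bound $((C^M)^2)$ using Table~\ref{graph_table}, and then enumerate. Since $C$ is a nonsingular component of $E_X$, Lemma~\ref{FuBo_lem}(3) gives $(L_X \cdot C) = 2\deg(\Delta_X \cap C) + \deg(\Delta_Z \cap C^Z)$. Applying Lemma~\ref{mult_seq_lem} successively to $\psi \colon Z \to X$ and $\phi \colon M \to Z$, and using Proposition~\ref{elim_prop}(1) together with Lemma~\ref{fund_big_lem}(4) to identify $(K_{Z/X} \cdot C^Z) = \deg(\Delta_X \cap C)$ and $(K_{M/Z} \cdot C^M) = \deg(\Delta_Z \cap C^Z)$ (both valid because $C$ is nonsingular), one also obtains $(C^2) - ((C^M)^2) = \deg(\Delta_X \cap C) + \deg(\Delta_Z \cap C^Z)$. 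Substituting $(L_X \cdot C) = h\deg C$ and $(C^2) = (\deg C)^2$, the solutions are
\[
\deg(\Delta_X \cap C) = h\deg C - (\deg C)^2 + ((C^M)^2),
\]
\[
\deg(\Delta_Z \cap C^Z) = 2(\deg C)^2 - h\deg C - 2((C^M)^2).
\]

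Next, I bound $((C^M)^2)$. The strict transform $C^M$ is a component of $E_M$ with coefficient $c := \coeff_C E_X$, because strict transforms preserve coefficients under both $\psi$ and $\phi$ (since $K_{Z/X}$ and $K_{M/Z}$ are exceptional, the passages $E_Z = \psi^*E_X - K_{Z/X}$ and $E_M = \phi^*E_Z - 2K_{M/Z}$ leave the coefficient on $C^Z$ and $C^M$ unchanged). By Corollary~\ref{dP-basic_cor}(1) and inspection of Table~\ref{graph_table}, a weight-one vertex has self-intersection in $\{-2, -3\}$ and a weight-two vertex has self-intersection in $\{-2, -3, -4, -5, -6\}$. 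The degree bound $c \cdot \deg C \leqslant e = 9 - h$ further restricts $c$; in particular, any conic component must have $c = 1$, and $h = 8$ admits no conic component at all.

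The final step is a routine enumeration. For $C$ a conic the surviving options are $h \in \{6, 7\}$ with $((C^M)^2) \in \{-2, -3\}$, and imposing nonnegativity of the two formulas above leaves exactly the three triples of (1). For $C$ a line, substituting each admissible value $((C^M)^2) \in \{-2, -3, -4, -5, -6\}$ and discarding those cases where either degree is negative or where $\deg(\Delta_X \cap C) + \deg(\Delta_Z \cap C^Z) > k_X + k_Z$ (which equals $9$, $7$, $4$ for $h = 6, 7, 8$ respectively) yields precisely the nine triples of (2). The only substantive subtlety is the coefficient identification $\coeff_{C^M} E_M = \coeff_C E_X$, since this is what pins down the admissible range of $((C^M)^2)$ via the table; beyond that the argument is mechanical.
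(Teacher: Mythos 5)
Your proposal is correct and follows essentially the same route as the paper: the two numerical relations $(L_X\cdot C)=2\deg(\Delta_X\cap C)+\deg(\Delta_Z\cap C^Z)$ and $((C^M)^2)=(\deg C)^2-\deg(\Delta_X\cap C)-\deg(\Delta_Z\cap C^Z)$, combined with the self-intersection bounds for weight-one and weight-two components coming from Corollary \ref{dP-basic_cor} and Table \ref{graph_table}, followed by enumeration. You are in fact slightly more explicit than the paper (which invokes Corollary \ref{dP-basic_cor} only for $m=1$, $h=8$) in noting that the weight-one bound is also what rules out $((C^M)^2)\leqslant -4$ for a conic when $h=6$, but this is the same argument, just spelled out.
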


\begin{proof}
Set $m:=\deg C$ ($m=1$ or $2$). We note that if $m=2$ then $h\leqslant 7$. 
We also note that if $m=1$ and $h=8$ then $((C^M)^2)=-2$ or $-3$ by 
Corollary \ref{dP-basic_cor}. 
We have 
$hm=2\deg(\Delta_X\cap C)+\deg(\Delta_Z\cap C^Z)$ and 
$((C^M)^2)=m^2-\deg(\Delta_X\cap C)-\deg(\Delta_Z\cap C^Z)$. 
Thus the assertion holds.
\end{proof}

If $2K_X+L_X$ is big, then $h=7$ or $8$.
We consider the case $E_X=l$, i.e., $h=8$. 
Then $k_X=\deg(\Delta_X\cap l)=4$ 
and $k_Z=0$. This is nothing but the type \textbf{$[$1$]_0$}. 
Now we consider the case $E_X\sim 2l$, i.e., $h=7$. 

\subsubsection{The case $E_X=C$ $(C:$ nonsingular conic$)$}

In this case, we have $k_X=\deg(\Delta_X\cap C)=7$ 
and $k_Z=0$. This is nothing but the type \textbf{$[$2$]_0$}.

\subsubsection{The case $E_X=2l$ $(l:$ line$)$}

Set $d_X:=\deg(\Delta_X\cap l)$ and $d_Z:=\deg(\Delta_Z\cap l^Z)$. 
By Claim \ref{tetI_curve_claim}, we have $(d_X, d_Z, ((l^M)^2))=(3, 1, -3)$, 
$(2, 3, -4)$, $(1, 5, -5)$ or $(0, 7, -6)$. 

{\noindent\textbf{The case $(d_X, d_Z)=(3, 1)$:}}

By Lemma \ref{XS1}, one of the following holds: 

\begin{enumerate}
\renewcommand{\theenumi}{\Alph{enumi}}
\renewcommand{\labelenumi}{(\theenumi)}
\item\label{2_1_A}
$|\Delta_X|=\{P_1, P_2, P_3\}$ such that 
$(\mult_{P_i}\Delta_X, \mult_{P_i}(\Delta_X\cap l))=(2, 1)$ for any $i=1$, $2$, $3$.
\item\label{2_1_B}
$|\Delta_X|=\{P_1, P_2, P_3\}$ such that 
$(\mult_{P_i}\Delta_X, \mult_{P_i}(\Delta_X\cap l))=(2, 1)$ for $i=1$, $2$ and 
$(\mult_{P_3}\Delta_X, \mult_{P_3}(\Delta_X\cap l))=(1, 1)$.
\item\label{2_1_C}
$|\Delta_X|=\{P_1, P_2\}$ such that 
$(\mult_{P_1}\Delta_X, \mult_{P_1}(\Delta_X\cap l))=(4, 2)$ and 
$(\mult_{P_2}\Delta_X, \mult_{P_2}(\Delta_X\cap l))=(2, 1)$.
\item\label{2_1_D}
$|\Delta_X|=\{P_1, P_2\}$ such that 
$(\mult_{P_1}\Delta_X, \mult_{P_1}(\Delta_X\cap l))=(4, 2)$ and 
$(\mult_{P_2}\Delta_X, \mult_{P_2}(\Delta_X\cap l))=(1, 1)$.
\item\label{2_1_E}
$|\Delta_X|=\{P_1, P_2\}$ such that 
$(\mult_{P_1}\Delta_X, \mult_{P_1}(\Delta_X\cap l))=(2, 2)$ and 
$(\mult_{P_2}\Delta_X, \mult_{P_2}(\Delta_X\cap l))=(2, 1)$.
\item\label{2_1_F}
$|\Delta_X|=\{P_1, P_2\}$ such that 
$(\mult_{P_1}\Delta_X, \mult_{P_1}(\Delta_X\cap l))=(2, 2)$ and 
$(\mult_{P_2}\Delta_X, \mult_{P_2}(\Delta_X\cap l))=(1, 1)$.
\end{enumerate}
Indeed, if there exist two points $P_1$, $P_2\in\Delta_X$ such that 
$\mult_{P_i}\Delta_X=1$ for $i=1$, $2$, then $\deg\Delta_Z\geqslant 2$. 
This is a contradiction.

We consider the case \eqref{2_1_A}. Then $k_Z=1$ and 
$\Delta_Z\cap\Gamma_{P_i, 1}=\emptyset$ for $i=1$, $2$, $3$. 
This is nothing but the type \textbf{$[$2$]_{1A}$}.

We consider the case \eqref{2_1_B}. Then $k_Z=2$. Moreover, 
$\mult_Q\Delta_Z=\mult_Q(\Delta_Z\cap\Gamma_{P_3, 1})=2$ and 
$\mult_Q(\Delta_Z\cap l^Z)=1$, where $Q:=l^Z\cap\Gamma_{P_3, 1}$. 
This is nothing but the type \textbf{$[$2$]_{1B}$}.

We consider the case \eqref{2_1_C}. Then $k_Z=1$ and 
$\Delta_Z\subset l^Z\setminus(\Gamma_{P_1,2}\cup\Gamma_{P_2, 1})$. 
This is nothing but the type \textbf{$[$2$]_{1C}$}.

We consider the case \eqref{2_1_D}. Then $k_Z=2$. Moreover, 
$\mult_Q\Delta_Z=\mult_Q(\Delta_Z\cap\Gamma_{P_2, 1})=2$, 
where $Q:=l^Z\cap\Gamma_{P_2, 1}$. 
This is nothing but the type \textbf{$[$2$]_{1D}$}.

We consider the case \eqref{2_1_E}. Then $k_Z=3$, 
$\deg(\Delta_Z\cap\Gamma_{P_1, 2})=2$ and $\deg(\Delta_Z\cap l^Z)=1$. 
Set $Q:=l^Z\cap\Gamma_{P_1, 2}$, $c:=\mult_Q(\Delta_Z\cap l^Z)$ and 
$d:=\mult_Q(\Delta_Z\cap\Gamma_{P_1, 2}$). Then $\mult_Q\Delta_Z=c+d$. 
This is nothing but the type \textbf{$[$2$]_{1E}$({\bi c,d})}.

We consider the case \eqref{2_1_F}. Then $k_Z=4$. 
Moreover, $\deg\Delta_Z=4$, 
$\mult_Q\Delta_Z=\mult_Q(\Delta_Z\cap\Gamma_{P_2, 1})=2$ and  
$\deg(\Delta_Z\cap\Gamma_{P_1, 2})=2$ hold, 
where $Q:=l^Z\cap\Gamma_{P_2, 1}$. 
This is nothing but the type \textbf{$[$2$]_{1F}$}.

{\noindent\textbf{The case $(d_X, d_Z)=(2, 3)$:}}

By Lemma \ref{XS1}, one of the following holds: 

\begin{enumerate}
\setcounter{enumi}{6}
\renewcommand{\theenumi}{\Alph{enumi}}
\renewcommand{\labelenumi}{(\theenumi)}
\item\label{2_1_G}
$|\Delta_X|=\{P_1, P_2\}$ such that 
$(\mult_{P_i}\Delta_X, \mult_{P_i}(\Delta_X\cap l))=(1, 1)$ for any $i=1$, $2$.
\item\label{2_1_H}
$|\Delta_X|=\{P_1, P_2\}$ such that 
$(\mult_{P_1}\Delta_X, \mult_{P_1}(\Delta_X\cap l))=(2, 1)$ and 
$(\mult_{P_2}\Delta_X, \mult_{P_2}(\Delta_X\cap l))=(1, 1)$.
\item\label{2_1_I}
$|\Delta_X|=\{P_1, P_2\}$ such that 
$(\mult_{P_i}\Delta_X, \mult_{P_i}(\Delta_X\cap l))=(2, 1)$ for any $i=1$, $2$.
\item\label{2_1_J}
$|\Delta_X|=\{P\}$ such that 
$(\mult_P\Delta_X, \mult_P(\Delta_X\cap l))=(2, 2)$.
\item\label{2_1_K}
$|\Delta_X|=\{P\}$ such that 
$(\mult_P\Delta_X, \mult_P(\Delta_X\cap l))=(4, 2)$.
\end{enumerate}

We consider the case \eqref{2_1_G}. Then $k_Z=5$. 
Set $Q_i:=l^Z\cap\Gamma_{P_i, 1}$. Then 
$\mult_{Q_i}\Delta_Z=\mult_{Q_i}(\Delta_Z\cap\Gamma_{P_i, 1})=2$ and 
$\mult_{Q_i}(\Delta_Z\cap l^Z)=1$ hold. Moreover, there exists a point 
$Q\in l^Z\setminus\{Q_1, Q_2\}$ such that $\mult_Q\Delta_Z=1$ since 
$d_Z=3$. This is nothing but the type \textbf{$[$2$]_{1G}$}.

We consider the case \eqref{2_1_H}. Then $k_Z=4$. 
Set $Q:=l^Z\cap\Gamma_{P_2, 1}$. Then 
$\mult_Q\Delta_Z=\mult_Q(\Delta_Z\cap\Gamma_{P_2, 1})=2$ and 
$\mult_Q(\Delta_Z\cap l^Z)=1$ hold. Moreover, 
$\Delta_Z\cap\Gamma_{P_1, 1}=\emptyset$. 
This is nothing but the type \textbf{$[$2$]_{1H}$}.

We consider the case \eqref{2_1_I}. Then $k_Z=d_Z=3$. 
Moreover, $\Delta_Z\cap(\Gamma_{P_1, 1}\cup\Gamma_{P_2, 1})=\emptyset$. 
This is nothing but the type \textbf{$[$2$]_{1I}$}.

We consider the case \eqref{2_1_J}. Then $k_Z=5$. 
Set $Q:=l^Z\cap\Gamma_{P, 2}$, $c:=\mult_Q(\Delta_Z\cap l^Z)$ and 
$d:=\mult_Q(\Delta_Z\cap\Gamma_{P, 2})$. 
Then $\mult_Q\Delta_Z=c+d$. Moreover, $(c, d)=(0, 0)$, $(1, 1)$, $(2, 1)$, $(3, 1)$ or 
$(1, 2)$ since $k_Z=3$ and $\deg(\Delta_Z\cap\Gamma_{P, 2})=2$. 
This is nothing but the type \textbf{$[$2$]_{1J}$({\bi c,d})}.

We consider the case \eqref{2_1_K}. Then $k_Z=d_Z=3$, 
$\Delta_Z\cap\Gamma_{P, 2}=\emptyset$. 
This is nothing but the type \textbf{$[$2$]_{1K}$}.

{\noindent\textbf{The case $(d_X, d_Z)=(1, 5)$:}}

By Lemma \ref{XS1}, one of the following holds: 

\begin{enumerate}
\setcounter{enumi}{11}
\renewcommand{\theenumi}{\Alph{enumi}}
\renewcommand{\labelenumi}{(\theenumi)}
\item\label{2_1_L}
$|\Delta_X|=\{P\}$ such that 
$(\mult_P\Delta_X, \mult_P(\Delta_X\cap l))=(2, 1)$.
\item\label{2_1_M}
$|\Delta_X|=\{P\}$ such that 
$(\mult_P\Delta_X, \mult_P(\Delta_X\cap l))=(1, 1)$.
\end{enumerate}

We consider the case \eqref{2_1_L}. Then $k_Z=d_Z=5$, 
$\Delta_Z\cap\Gamma_{P, 1}=\emptyset$. 
This is nothing but the type \textbf{$[$2$]_{1L}$}.

We consider the case \eqref{2_1_M}. Then $k_Z=6$.
Set $Q:=l^Z\cap\Gamma_{P, 1}$. Then 
$\mult_Q\Delta_Z=\mult_Q(\Delta_Z\cap\Gamma_{P, 1})=2$ and 
$\mult_Q(\Delta_Z\cap l^Z)=1$. 
This is nothing but the type \textbf{$[$2$]_{1M}$}.

{\noindent\textbf{The case $(d_X, d_Z)=(0, 7)$:}}

In this case, $\Delta_X=\emptyset$, $\Delta_Z\subset l^Z$. 
This is nothing but the type \textbf{$[$2$]_{1N}$}.

\subsubsection{The case $E_X=l_1+l_2$ $(l_i:$ distinct lines$)$}

Set $P:=l_1\cap l_2$. By Claim \ref{tetI_curve_claim}, $((l_1^M)^2)=((l_2^M)^2)=-3$. 
Thus $(\deg(\Delta_X\cap l_i), \deg(\Delta_Z\cap l_i^Z))=(3, 1)$. 
Assume that $P\in\Delta_X$. Then $\mult_P\Delta_X=1$ by Lemma \ref{XS2}. 
This case induces the type \textbf{$[$2$]_{2A}$}. 
Assume that $P\not\in\Delta_X$. Then $\mult_Q\Delta_Z=1$ by Lemma \ref{ZS2}, 
where $Q:=l_1^Z\cap l_2^Z$. 
This case induces the type \textbf{$[$2$]_{2B}$}.

\subsection{The case $X=\F_n$}

Let $(X=\F_n, E_X; \Delta_Z, \Delta_X)$ be a 
bottom tetrad such that $2K_X+L_X$ is big, where $L_X$ is the fundamental divisor, 
$\psi\colon Z\to X$ be the elimination of $\Delta_X$, 
$\phi\colon M\to Z$ be the elimination of $\Delta_Z$, 
$E_Z:=(E_X)_Z^{\Delta_X, 1}$ and 
$E_M:=(E_Z)_M^{\Delta_Z, 2}$.
Set $L_X\sim h_0\sigma+hl$, $E_X\sim e_0\sigma+el$, 
$k_X:=\deg\Delta_X$ and $k_Z:=\deg\Delta_Z$. 
Then $e_0=6-h_0$ and $e=3(n+2)-h$. 
Since $2K_X+L_X$ is nef and big, we have $h_0=5$. Thus $e_0=1$. 
We know that $k_X+k_Z=(L_X\cdot E_X)/2=5n-2h+15$.

\begin{claim}\label{tetII_claim}
We have $(n , h, k_X+k_Z)=(0, 5, 5)$, $(0, 6, 3)$, $(1, 8, 4)$, $(1, 9, 2)$, $(2, 10, 5)$, 
$(2, 11, 3)$, $(2, 12, 1)$ or $(3, 15, 0)$.
\end{claim}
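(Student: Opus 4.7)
The claim is a numerical enumeration, which I would prove by extracting all linear constraints on $(n, h)$ from the bottom-tetrad axioms and then tabulating cases; as in the paragraph preceding the claim, write $L_X \sim 5\sigma + h l$ and $E_X \sim \sigma + el$ with $e = 3(n+2) - h$. First I would collect four basic constraints. Effectivity of $E_X$ gives $e \geq 0$, i.e.\ $h \leq 3n+6$. The nefness of $2K_X+L_X \sim \sigma + (h-2n-4)l$, which is part of $(\sB 4)$, intersected against $\sigma$ gives $h \geq 3n+4$. The bigness of $2K_X+L_X$ requires $(2K_X+L_X)^2 = 2h-5n-8 > 0$, which is automatic from $h \geq 3n+4$ when $n \geq 1$ and gives $h \geq 5$ when $n = 0$. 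Finally, $L_X$ is nef: from $(\sB 5)$ together with Lemma \ref{fund_big_lem} the divisor $L_Z$ is nef, and for any curve $C \subset X$ the projection formula yields $L_X \cdot C = L_Z \cdot C^Z + 2K_{Z/X} \cdot C^Z \geq 0$ (using that $K_{Z/X}$ is $\psi$-exceptional and effective while $C^Z$ is a strict transform), so pairing with $\sigma$ gives $h \geq 5n$.

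The main obstacle, and what distinguishes the final list from a naive enumeration, is the $(-1)$-curve clause of $(\sB 4)$. Since $\sigma^2 = -n$ on $\F_n$, the minimal section $\sigma$ is a $(-1)$-curve precisely when $n = 1$, and this is then the only $(-1)$-curve on $X$ (while $\pr^2$ and $\F_n$ for $n \neq 1$ carry no $(-1)$-curves at all). Computing $3K_X + L_X \sim -\sigma + (h - 3n - 6)l$ and intersecting with $\sigma$ gives $(3K_X + L_X) \cdot \sigma = h - 2n - 6$; thus on $X = \F_1$, $(\sB 4)$ forces $h \geq 8$, ruling out the otherwise-allowed case $(n, h) = (1, 7)$. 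I expect this to be the subtlest point.

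Combining everything, the admissible $h$ lies in $[\max(3n+4, 5n), 3n+6]$, tightened to $h \geq 8$ when $n = 1$; for $n \geq 4$ one has $5n > 3n+6$ so no $h$ survives, leaving $n \in \{0, 1, 2, 3\}$ with $h \in \{5, 6\},\ \{8, 9\},\ \{10, 11, 12\},\ \{15\}$ respectively. A direct computation gives $k_X + k_Z = (L_X \cdot E_X)/2 = (-5n + 5e + h)/2 = 5n + 15 - 2h$, and substituting these eight $(n, h)$ pairs produces exactly the triples listed in the claim.
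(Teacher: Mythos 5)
Your proposal is correct and follows essentially the same route as the paper: bound $h$ by $\max\{5n,3n+4\}\leqslant h\leqslant 3n+6$ from nefness of $L_X$ and $2K_X+L_X$ and effectivity of $E_X$, use bigness of $2K_X+L_X$ to force $h\geqslant 5$ when $n=0$, and use the $(-1)$-curve clause of $(\sB 4)$ on $\sigma\subset\F_1$ (equivalently $(E_X\cdot\sigma)\leqslant 0$, since $3K_X+L_X=-E_X$) to exclude $(n,h)=(1,7)$, then compute $k_X+k_Z=(L_X\cdot E_X)/2=5n-2h+15$. Your extra justifications (nefness of $L_X$ via pushing forward $L_Z$, uniqueness of the $(-1)$-curve on $\F_1$) are sound and merely make explicit what the paper leaves implicit.
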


\begin{proof}
We have $\max\{5n, 3n+4\}\leqslant h\leqslant 3n+6$ 
since $L_X$ and $2K_X+L_X$ are nef and big and $E_X$ is effective. 
In particular, $n\leqslant 3$. 
Moreover, if $n=0$, then $h\geqslant 5$. 
If $n=1$, then $h\geqslant 8$ since $(E_X\cdot\sigma)\leqslant 0$. 
\end{proof}

\subsubsection{The case $(n, h)=(0, 5)$}\label{tetII05_section}

In this case, $E_X\sim\sigma+l$. 
Assume that $E_X=C$, where $C$ is nonsingular. 
Then $\Delta_Z=\emptyset$ and $\Delta_X\subset C$. 
This is nothing but the type \textbf{$[$0;1,1$]_0$}. 
Assume that $E_X=\sigma+l$. Set $P:=\sigma\cap l$. 
Then $2\deg(\Delta_X\cap\sigma)+\deg(\Delta_Z\cap\sigma^Z)=5$ and 
$2\deg(\Delta_X\cap l)+\deg(\Delta_Z\cap l^Z)=5$. 
By Lemmas \ref{ZS2} and \ref{XS2}, 
if $P\not\in\Delta_X$ then this induces the type 
\textbf{$[$0;1,1$]_1\langle$0$\rangle$}; if $P\in\Delta_X$ then this induces the type 
\textbf{$[$0;1,1$]_1\langle$1$\rangle$}.

\subsubsection{The case $(n, h)=(0, 6)$}\label{tetII06_section}

In this case, $E_X=\sigma$. Thus $\Delta_Z=\emptyset$ and 
$\Delta_X\subset\sigma$. This is nothing but the type \textbf{$[$0;1,0$]$}.

\subsubsection{The case $(n, h)=(1, 8)$}\label{tetII18_section}

In this case, $E_X\sim\sigma+l$. Assume that $E_X=\sigma_\infty$. 
Then $\Delta_Z=\emptyset$ and $\Delta_X\subset\sigma_\infty$. 
This is nothing but the type \textbf{$[$1;1,1$]_0$}. 
Assume that $E_X=\sigma+l$. Set $P:=\sigma\cap l$. 
Then $2\deg(\Delta_X\cap\sigma)+\deg(\Delta_Z\cap\sigma^Z)=3$ and 
$2\deg(\Delta_X\cap l)+\deg(\Delta_Z\cap l^Z)=5$. 
By Lemmas \ref{ZS2} and \ref{XS2}, 
we can show that if $P\not\in\Delta_X$ then this induces the type 
\textbf{$[$1;1,1$]_1\langle$0$\rangle$}; if $P\in\Delta_X$ then this induces the type 
\textbf{$[$1;1,1$]_1\langle$1$\rangle$}.

\subsubsection{The case $(n, h)=(1, 9)$}\label{tetII19_section}

In this case, $E_X=\sigma$. Thus $\Delta_Z=\emptyset$ and 
$\Delta_X\subset\sigma$. This is nothing but the type \textbf{$[$1;1,0$]$}.

\subsubsection{The case $(n, h)=(2, 10)$}\label{tetII210_section}

In this case, $E_X\sim\sigma+2l$.

{\noindent\textbf{The case $E_X=\sigma_\infty$:}}

Then $\Delta_Z=\emptyset$ and $\Delta_X\subset\sigma_\infty$. 
This is nothing but the type \textbf{$[$2;1,2$]_0$}.

{\noindent\textbf{The case $E_X=\sigma+l_1+l_2$} ($l_1$, $l_2$ are distinct):}

In this case, $\Delta_X\cap\sigma=\emptyset$ and $\Delta_Z\cap\sigma^Z=\emptyset$. 
Thus $\sigma^M$, $l_1^M\leqslant E_M$ meet together. This contradicts to 
Corollary \ref{dP-basic_cor}.

{\noindent\textbf{The case $E_X=\sigma+2l$:}}

In this case, $\Delta_X\cap\sigma=\emptyset$ and $\Delta_Z\cap\sigma^Z=\emptyset$. 
Set $d_X:=\deg(\Delta_X\cap l)$ and $d_Z:=\deg(\Delta_Z\cap l^Z)$. 
Since $2d_X+d_Z=5$, we have $(d_X, d_Z)=(2, 1)$, $(1, 3)$ or $(0, 5)$. 

We consider the case $(d_X, d_Z)=(2, 1)$. One of the following holds: 

\begin{enumerate}
\renewcommand{\theenumi}{\Alph{enumi}}
\renewcommand{\labelenumi}{(\theenumi)}
\item\label{212_1A}
$|\Delta_X|\cap l=\{P_1, P_2\}$ such that 
$(\mult_{P_i}\Delta_X, \mult_{P_i}(\Delta_X\cap l))=(2, 1)$ for $i=1$, $2$. 
\item\label{212_1B}
$|\Delta_X|\cap l=\{P_1, P_2\}$ such that 
$(\mult_{P_1}\Delta_X, \mult_{P_1}(\Delta_X\cap l))=(2, 1)$ and 
$(\mult_{P_2}\Delta_X, \mult_{P_2}(\Delta_X\cap l))=(1, 1)$. 
\item\label{212_1C}
$|\Delta_X|\cap l=\{P\}$ such that 
$(\mult_P\Delta_X, \mult_P(\Delta_X\cap l))=(4, 2)$. 
\item\label{212_1D}
$|\Delta_X|\cap l=\{P\}$ such that 
$(\mult_P\Delta_X, \mult_P(\Delta_X\cap l))=(2, 2)$. 
\end{enumerate}
We can show that the case ({\tt X})
(${\tt X}\in\{$A, B, C$\}$) corresponds to the 
type \textbf{$[$2;1,2$]_{1{\tt X}}$}. 
We consider the case \eqref{212_1D}. Set $Q:=l^Z\cap\Gamma_{P, 2}$, 
$c:=\mult_Q(\Delta_Z\cap l^Z)$ and 
$d:=\mult_Q(\Delta_Z\cap\Gamma_{P, 2})$. 
Then we can show that this case corresponds to the 
type \textbf{$[$2;1,2$]_{1D}$}.

We consider the case $(d_X, d_Z)=(1, 3)$. One of the following holds: 

\begin{enumerate}
\setcounter{enumi}{4}
\renewcommand{\theenumi}{\Alph{enumi}}
\renewcommand{\labelenumi}{(\theenumi)}
\item\label{212_1E}
$|\Delta_X|\cap l=\{P\}$ such that 
$(\mult_P\Delta_X, \mult_P(\Delta_X\cap l))=(2, 1)$. 
\item\label{212_1F}
$|\Delta_X|\cap l=\{P\}$ such that 
$(\mult_P\Delta_X, \mult_P(\Delta_X\cap l))=(1, 1)$. 
\end{enumerate}
The case ({\tt X})
(${\tt X}\in\{$E, F$\}$) corresponds to the 
type \textbf{$[$2;1,2$]_{1{\tt X}}$}. 

We consider the case $(d_X, d_Z)=(0, 5)$. Then $\Delta_X=\emptyset$ and 
$\Delta_Z\subset l^Z$. This is nothing but the type \textbf{$[$2;1,2$]_{1G}$}.

\subsubsection{The case $(n, h)=(2, 11)$}\label{tetII211_section}

In this case, $E_X=\sigma+l$. Then 
$\Delta_X\cap\sigma=\emptyset$ and $\deg(\Delta_Z\cap\sigma^Z)=1$. 
Thus $\deg\Delta_Z=1$, $|\Delta_Z|=\{Q\}$, where $Q:=\sigma^Z\cap l^Z$. 
Moreover, we have $\deg(\Delta_X\cap l)=2$. 
This is nothing but the type \textbf{$[$2;1,1$]$}.

\subsubsection{The case $(n, h)=(2, 12)$}\label{tetII212_section}

In this case, $E_X=\sigma$. Thus $\Delta_Z=\emptyset$ and 
$\Delta_X\subset\sigma$. This is nothing but the type \textbf{$[$2;1,0$]$}.

\subsubsection{The case $(n, h)=(3, 15)$}\label{tetII315_section}

In this case, $E_X=\sigma$, $\Delta_X=\emptyset$ and $\Delta_Z=\emptyset$. 
This is nothing but the type \textbf{$[$3;1,0$]$}.

As a consequence,  we have completed the proof of Theorem \ref{tetI_thm}.

\section{Classification of bottom tetrads, II}\label{XCII_section}

We classify bottom tetrads $(X, E_X; \Delta_Z, \Delta_X)$ such that 
$X=\F_n$, $2K_X+L_X$ is non-big and nontrivial. 

\begin{thm}\label{tetII_thm}
The bottom tetrads $(X, E_X; \Delta_Z, \Delta_X)$ such that $X=\F_n$ 
and non-big, nontrivial $2K_X+L_X$ 
are classified by the types defined as follows $($We assume that any of them 
satisfies that both $\Delta_X$ and $\Delta_Z$ satisfy the $(\nu1)$-condition.$)$:

\smallskip

The case $X=\pr^1\times\pr^1:$

\begin{description}
\item[{$[$0;2,0$]$}]
$E_X=2\sigma$, $\Delta_X=\emptyset$, 
$\deg\Delta_Z=6$ and $\Delta_Z\subset\sigma^Z$. 
\end{description}

\smallskip

The case $X=\F_1:$

\begin{description}
\item[{$[$1;2,0$]$}]
$E_X=2\sigma$, $\Delta_X=\emptyset$, 
$\deg\Delta_Z=5$ 
and 
$\Delta_Z\subset\sigma^Z$. 

\smallskip

\item[{$[$1;2,1$]_{1A}$}]
$E_X=2\sigma+l$, $\deg\Delta_X=2$, 
$\Delta_X\subset l\setminus\sigma$, 
$\deg\Delta_Z=4$ 
and 
$\Delta_Z\subset\sigma^Z\setminus l^Z$. 

\smallskip

\item[{$[$1;2,1$]_{1B}$}]
$E_X=2\sigma+l$, $\deg\Delta_X=1$, 
$\Delta_X\subset l\setminus\sigma$, 
$\deg\Delta_Z=5$, 
$\mult_Q\Delta_Z=\mult_Q(\Delta_Z\cap l^Z)=2$
and 
$\Delta_Z\setminus\{Q\}\subset\sigma^Z$, 
where $Q=\sigma^Z\cap l^Z$. 

\smallskip

\item[{$[$1;2,2$]_U$}]
$E_X=C$ with $C:$ nonsingular and $C\sim 2\sigma+2l$, $\deg\Delta_X=7$, 
$\Delta_X\subset C$ and $\Delta_Z=\emptyset$. 

\smallskip

\item[{$[$1;2,2$]_{0A}$}]
$E_X=2\sigma_\infty$, $\deg\Delta_X=2$, 
$|\Delta_X|=\{P\}$, $\mult_P(\Delta_X\cap\sigma_\infty)=1$, 
$\deg\Delta_Z=5$ and $\Delta_Z\subset\sigma_\infty^Z\setminus\Gamma_{P, 1}$. 

\smallskip

\item[{$[$1;2,2$]_{0B}$}]
$E_X=2\sigma_\infty$, $\deg\Delta_X=1$, 
$|\Delta_X|=\{P\}$ with $P\in\sigma_\infty$, 
$\deg\Delta_Z=6$, $\mult_Q\Delta_Z=\mult_Q(\Delta_Z\cap\Gamma_{P, 1})=2$, 
$\Delta_Z\setminus\{Q\}\subset\sigma_\infty^Z$, where 
$Q=\sigma_\infty\cap\Gamma_{P, 1}$. 

\smallskip

\item[{$[$1;2,2$]_{0C}$}]
$E_X=2\sigma_\infty$, $\Delta_X=\emptyset$, 
$\deg\Delta_Z=7$ and $\Delta_Z\subset\sigma_\infty^Z$. 

\smallskip

\item[{$[$1;2,2$]_{1A}$}]
$E_X=2\sigma+2l$, $\deg\Delta_X=4$, 
$\Delta_X\cap\sigma=\emptyset$, 
$|\Delta_X|=\{P_1, P_2\}$, 
$(\mult_{P_i}\Delta_X, \mult_{P_i}(\Delta_X\cap l_i))=(2, 1)$ for $i=1$, $2$, 
$\deg\Delta_Z=3$ 
and 
$\Delta_Z\subset\sigma^Z\setminus l^Z$. 

\smallskip

\item[{$[$1;2,2$]_{1B}$}]
$E_X=2\sigma+2l$, $\deg\Delta_X=4$, 
$\Delta_X\cap\sigma=\emptyset$, 
$|\Delta_X|=\{P\}$, 
$(\mult_P\Delta_X, \mult_P(\Delta_X\cap l))=(4, 2)$, 
$\deg\Delta_Z=3$, 
$\Delta_Z\subset\sigma^Z\setminus l^Z$. 

\smallskip

\item[{$[$1;2,2$]_{1C}$}]
$E_X=2\sigma+2l$, $\deg\Delta_X=2$, 
$\Delta_X\cap\sigma=\emptyset$, 
$|\Delta_X|=\{P\}$, 
$\Delta_X\subset l$, 
$\deg\Delta_Z=5$, 
$\deg(\Delta_Z\cap\sigma^Z)=3$, $\deg(\Delta_Z\cap\Gamma_{P, 2})=2$ 
and 
$\Delta_Z\subset(\sigma^Z\cup\Gamma_{P, 2})\setminus(l^Z\cup\Gamma_{P, 1})$. 

\smallskip

\item[{$[$1;2,2$]_{1D}$({\bi c,d})} $((c, d)=(0, 0)$, $(1, 1)$, $(2, 1)$, $(3, 1)$, $(1, 2))$]
$E_X=2\sigma+2l$, $\deg\Delta_X=2$, 
$|\Delta_X|=\{P\}$, 
$(\mult_P\Delta_X, \mult_P(\Delta_X\cap l))=(2, 1)$, 
$\Delta_X\cap\sigma=\emptyset$, 
$\deg\Delta_Z=5$, 
$\Delta_Z\cap\Gamma_{P, 1}=\emptyset$, 
$\deg(\Delta_Z\cap\sigma^Z)=3$, $\deg(\Delta_Z\cap l^Z)=2$, 
$\mult_Q(\Delta_Z\cap\sigma^Z)=c$, $\mult_Q(\Delta_Z\cap l^Z)=d$
and $\mult_Q\Delta_Z=c+d$, where $Q=\sigma^Z\cap l^Z$. 

\smallskip

\item[{$[$1;2,2$]_{1E}$({\bi c,d})} $((c, d)=(0, 0)$, $(1, 1)$, $(2, 1)$, $(3, 1))$]
$E_X=2\sigma+2l$, $\deg\Delta_X=1$, 
$|\Delta_X|=\{P\}$, 
$P\in l\setminus\sigma$, 
$\deg\Delta_Z=6$, 
$\deg(\Delta_Z\cap\sigma^Z)=3$, $\deg(\Delta_Z\cap l^Z)=2$, 
$\mult_{Q_1}\Delta_Z=\mult_{Q_1}(\Delta_Z\cap\Gamma_{P, 1})=2$, 
$\mult_{Q_0}(\Delta_Z\cap\sigma^Z)=c$, $\mult_{Q_0}(\Delta_Z\cap l^Z)=d$
and $\mult_{Q_0}\Delta_Z=c+d$, where 
$Q_0=\sigma^Z\cap l^Z$ and $Q_1=l^Z\cap\Gamma_{P, 1}$. 

\smallskip

\item[{$[$1;2,2$]_{1F}$({\bi c,d})} $((c, d)=(0, 0)$, $(1, 1),\dots,(3, 1)$, $(1, 2),\dots,(1, 4))$]
$E_X=2\sigma+2l$, $\Delta_X=\emptyset$, 
$\deg\Delta_Z=7$, 
$\deg(\Delta_Z\cap\sigma^Z)=3$, $\deg(\Delta_Z\cap l^Z)=4$, 
$\mult_Q(\Delta_Z\cap\sigma^Z)=c$, $\mult_Q(\Delta_Z\cap l^Z)=d$
and $\mult_Q\Delta_Z=c+d$, where $Q=\sigma^Z\cap l^Z$. 

\smallskip

\item[{$[$1;2,2$]_{2A}$}]
$E_X=2\sigma+l_1+l_2$ $(l_1$, $l_2:$ distinct fibers$)$, $\deg\Delta_X=4$, 
$\Delta_X\cap\sigma=\emptyset$, 
$\deg(\Delta_X\cap l_1)=\deg(\Delta_X\cap l_2)=2$, 
$\deg\Delta_Z=3$ and 
$\Delta_Z\subset\sigma^Z\setminus(l_1^Z\cup l_2^Z)$. 

\smallskip

\item[{$[$1;2,2$]_{2B}$}]
$E_X=2\sigma+l_1+l_2$ $(l_1$, $l_2:$ distinct fibers$)$, $\deg\Delta_X=3$, 
$\Delta_X\cap\sigma=\emptyset$, 
$\deg(\Delta_X\cap l_1)=1$, $\deg(\Delta_X\cap l_2)=2$, 
$\deg\Delta_Z=4$, 
$\mult_Q\Delta_Z=\mult_Q(\Delta_Z\cap l_1^Z)=2$
and 
$\Delta_Z\setminus\{Q\}\subset\sigma^Z\setminus l_2^Z$, 
where $Q=\sigma^Z\cap l_1^Z$. 

\smallskip

\item[{$[$1;2,2$]_{2C}$}]
$E_X=2\sigma+l_1+l_2$ $(l_1$, $l_2:$ distinct fibers$)$, $\deg\Delta_X=2$, 
$\Delta_X\cap\sigma=\emptyset$, 
$\deg(\Delta_X\cap l_1)=\deg(\Delta_X\cap l_2)=1$, 
$\deg\Delta_Z=5$, 
$\mult_{Q_i}\Delta_Z=\mult_{Q_i}(\Delta_Z\cap l_i^Z)=2$ for $i=1$, $2$
and 
$\Delta_Z\setminus\{Q_1, Q_2\}\subset\sigma^Z$, 
where $Q_i=\sigma^Z\cap l_i^Z$. 
\end{description}

\smallskip

The case $X=\F_2:$

\begin{description}
\item[{$[$2;2,0$]$}]
$E_X=2\sigma$, $\Delta_X=\emptyset$, 
$\deg\Delta_Z=4$ 
and 
$\Delta_Z\subset\sigma^Z$. 

\smallskip

\item[{$[$2;2,1$]_{1A}$}]
$E_X=2\sigma+l$, $\deg\Delta_X=2$, 
$\Delta_X\subset l\setminus\sigma$, 
$\deg\Delta_Z=3$ 
and 
$\Delta_Z\subset\sigma^Z\setminus l^Z$. 

\smallskip

\item[{$[$2;2,1$]_{1B}$}]
$E_X=2\sigma+l$, $\deg\Delta_X=1$, 
$\Delta_X\subset l\setminus\sigma$, 
$\deg\Delta_Z=4$, 
$\mult_Q\Delta_Z=\mult_Q(\Delta_Z\cap l^Z)=2$
and 
$\Delta_Z\setminus\{Q\}\subset\sigma^Z$, 
where $Q=\sigma^Z\cap l^Z$. 

\smallskip

\item[{$[$2;2,2$]_{1A}$}]
$E_X=2\sigma+2l$, $\deg\Delta_X=4$, 
$\Delta_X\cap\sigma=\emptyset$, 
$|\Delta_X|=\{P_1, P_2\}$, 
$(\mult_{P_i}\Delta_X, \mult_{P_i}(\Delta_X\cap l))=(2, 1)$ for $i=1$, $2$, 
$\deg\Delta_Z=2$ 
and 
$\Delta_Z\subset\sigma^Z\setminus l^Z$. 

\smallskip

\item[{$[$2;2,2$]_{1B}$}]
$E_X=2\sigma+2l$, $\deg\Delta_X=4$, 
$\Delta_X\cap\sigma=\emptyset$, 
$|\Delta_X|=\{P\}$, 
$(\mult_P\Delta_X, \mult_P(\Delta_X\cap l))=(4, 2)$, 
$\deg\Delta_Z=2$,
$\Delta_Z\subset\sigma^Z\setminus l^Z$. 

\smallskip

\item[{$[$2;2,2$]_{1C}$}]
$E_X=2\sigma+2l$, $\deg\Delta_X=2$, 
$\Delta_X\cap\sigma=\emptyset$, 
$|\Delta_X|=\{P\}$, 
$\Delta_X\subset l$, 
$\deg\Delta_Z=4$, 
$\deg(\Delta_Z\cap\sigma^Z)=2$, $\deg(\Delta_Z\cap\Gamma_{P, 2})=2$ 
and 
$\Delta_Z\subset(\sigma^Z\cup\Gamma_{P, 2})\setminus(l^Z\cup\Gamma_{P, 1})$. 

\smallskip

\item[{$[$2;2,2$]_{1D}$({\bi c,d})} $((c, d)=(0, 0)$, $(1, 1)$, $(2, 1)$, $(1, 2))$]
$E_X=2\sigma+2l$, $\deg\Delta_X=2$, 
$|\Delta_X|=\{P\}$, 
$(\mult_{P}\Delta_X, \mult_{P}(\Delta_X\cap l_1))=(2, 1)$, 
$\Delta_X\cap\sigma=\emptyset$, 
$\deg\Delta_Z=4$, 
$\Delta_Z\cap\Gamma_{P, 1}=\emptyset$, 
$\deg(\Delta_Z\cap\sigma^Z)=\deg(\Delta_Z\cap l^Z)=2$, 
$\mult_Q(\Delta_Z\cap\sigma^Z)=c$, $\mult_Q(\Delta_Z\cap l^Z)=d$
and $\mult_Q\Delta_Z=c+d$, where $Q=\sigma^Z\cap l^Z$. 

\smallskip

\item[{$[$2;2,2$]_{1E}$({\bi c,d})} $((c, d)=(0, 0)$, $(1, 1)$, \,{\it or}\, $(2, 1))$]
$E_X=2\sigma+2l$, 
$|\Delta_X|=\{P\}$, $\deg\Delta_X=1$, 
$P\in l\setminus\sigma$, 
$\deg\Delta_Z=5$, 
$\deg(\Delta_Z\cap\sigma^Z)=\deg(\Delta_Z\cap l^Z)=2$, 
$\mult_{Q_1}\Delta_Z=\mult_{Q_1}(\Delta_Z\cap\Gamma_{P, 1})=2$, 
$\mult_{Q_0}(\Delta_Z\cap\sigma^Z)=c$, $\mult_{Q_0}(\Delta_Z\cap l^Z)=d$
and $\mult_{Q_0}\Delta_Z=c+d$, where $Q_0=\sigma^Z\cap l^Z$ and 
$Q_1=l^Z\cap\Gamma_{P, 1}$. 

\smallskip

\item[{$[$2;2,2$]_{1F}$({\bi c,d})} $((c, d)=(0, 0)$, $(1, 1),\dots,(1, 4)$, 
\,\it{or}\, $(2, 1))$]
$E_X=2\sigma+2l$, $\Delta_X=\emptyset$, 
$\deg\Delta_Z=6$, 
$\deg(\Delta_Z\cap\sigma^Z)=2$, $\deg(\Delta_Z\cap l^Z)=4$, 
$\mult_Q(\Delta_Z\cap\sigma^Z)=c$, $\mult_Q(\Delta_Z\cap l^Z)=d$
and $\mult_Q\Delta_Z=c+d$, where $Q=\sigma^Z\cap l^Z$. 

\smallskip

\item[{$[$2;2,2$]_{2A}$}]
$E_X=2\sigma+l_1+l_2$ $(l_1$, $l_2:$ distinct fibers$)$, $\deg\Delta_X=4$, 
$\Delta_X\cap\sigma=\emptyset$, 
$\deg(\Delta_X\cap l_1)=\deg(\Delta_X\cap l_2)=2$, 
$\deg\Delta_Z=2$ and 
$\Delta_Z\subset\sigma^Z\setminus(l_1^Z\cup l_2^Z)$. 

\smallskip

\item[{$[$2;2,2$]_{2B}$}]
$E_X=2\sigma+l_1+l_2$ $(l_1$, $l_2:$ distinct fibers$)$, $\deg\Delta_X=3$, 
$\Delta_X\cap\sigma=\emptyset$, 
$\deg(\Delta_X\cap l_1)=1$, $\deg(\Delta_X\cap l_2)=2$, 
$\deg\Delta_Z=3$, 
$\mult_Q\Delta_Z=\mult_Q(\Delta_Z\cap l_1^Z)=2$
and 
$\Delta_Z\setminus\{Q\}\subset\sigma^Z\setminus l_2^Z$, 
where $Q=\sigma^Z\cap l_1^Z$. 

\smallskip

\item[{$[$2;2,2$]_{2C}$}]
$E_X=2\sigma+l_1+l_2$ $(l_1$, $l_2:$ distinct fibers$)$, $\deg\Delta_X=2$, 
$\Delta_X\cap\sigma=\emptyset$, 
$\deg(\Delta_X\cap l_1)=\deg(\Delta_X\cap l_2)=1$, 
$\deg\Delta_Z=4$ and 
$\mult_{Q_i}\Delta_Z=\mult_{Q_i}(\Delta_Z\cap l_i^Z)=2$ for $i=1$, $2$, 
where $Q_i=\sigma^Z\cap l_i^Z$. 

\smallskip

\item[{$[$2;2,3$]_V$}]
$E_X=\sigma+C$ with $C:$ nonsingular, $C\sim\sigma+3l$, 
$\deg\Delta_X=6$, $\Delta_X\subset C\setminus\sigma$, 
$\deg\Delta_Z=1$ 
and 
$Q\in\Delta_Z$, 
where $Q=\sigma^Z\cap C^Z$. 

\smallskip

\item[{$[$2;2,3$]_H\langle$0$\rangle$}]
$E_X=\sigma+\sigma_\infty+l$, 
$\deg\Delta_X=5$, $P\not\in\Delta_X$, $\Delta_X\cap\sigma=\emptyset$, 
$\deg(\Delta_X\cap\sigma_\infty)=4$ and $\deg(\Delta_X\cap l)=1$, 
where $P=\sigma_\infty\cap l$.
$\deg\Delta_Z=2$ 
and 
$|\Delta_Z|=\{Q, Q_\infty\}$, 
where $Q=\sigma^Z\cap l^Z$ and $Q_\infty=\sigma_\infty^Z\cap l^Z$. 

\smallskip

\item[{$[$2;2,3$]_H\langle$1$\rangle$}]
$E_X=\sigma+\sigma_\infty+l$, 
$\deg\Delta_X=4$, $\mult_P\Delta_X=1$, $\Delta_X\subset\sigma_\infty$, 
where $P=\sigma_\infty\cap l$.
$\deg\Delta_Z=3$ 
and 
$Q_1$, $Q_2$, $Q_3\in\Delta_Z$, 
where $Q_1=\sigma^Z\cap l^Z$, $Q_2=\sigma_\infty^Z\cap\Gamma_{P,1}$ and 
$Q_3=l^Z\cap\Gamma_{P,1}$. 

\smallskip

\item[{$[$2;2,3$]_{2A1}$}]
$E_X=2\sigma+2l_1+l_2$ $(l_1$, $l_2:$ distinct fibers$)$, $\deg\Delta_X=6$, 
$\Delta_X\cap\sigma=\emptyset$, 
$|\Delta_X|\cap l_1=\{P_1, P_2\}$, 
$(\mult_{P_i}\Delta_X, \mult_{P_i}(\Delta_X\cap l_i))=(2, 1)$ for $i=1$, $2$, 
$\deg(\Delta_X\cap l_2)=2$, 
$\deg\Delta_Z=1$ and 
$\Delta_Z\subset\sigma^Z\setminus(l_1^Z\cup l_2^Z)$. 

\smallskip

\item[{$[$2;2,3$]_{2A2}$}]
$E_X=2\sigma+2l_1+l_2$ $(l_1$, $l_2:$ distinct fibers$)$, $\deg\Delta_X=5$, 
$\Delta_X\cap\sigma=\emptyset$, 
$|\Delta_X|\cap l_1=\{P_1, P_2\}$, 
$(\mult_{P_i}\Delta_X, \mult_{P_i}(\Delta_X\cap l_i))=(2, 1)$ for $i=1$, $2$, 
$\deg(\Delta_X\cap l_2)=1$, 
$\deg\Delta_Z=2$ and 
$\mult_Q\Delta_Z=\mult_Q(\Delta_Z\cap l_2^Z)=2$, 
where $Q=\sigma^Z\cap l_2^Z$. 

\smallskip

\item[{$[$2;2,3$]_{2B1}$}]
$E_X=2\sigma+2l_1+l_2$ $(l_1$, $l_2:$ distinct fibers$)$, $\deg\Delta_X=6$, 
$\Delta_X\cap\sigma=\emptyset$, 
$|\Delta_X|\cap l_1=\{P\}$, 
$(\mult_P\Delta_X, \mult_P(\Delta_X\cap l_1))=(4, 2)$, 
$\deg(\Delta_X\cap l_2)=2$, 
$\deg\Delta_Z=1$ and 
$\Delta_Z\subset\sigma^Z\setminus(l_1^Z\cup l_2^Z)$. 

\smallskip

\item[{$[$2;2,3$]_{2B2}$}]
$E_X=2\sigma+2l_1+l_2$ $(l_1$, $l_2:$ distinct fibers$)$, $\deg\Delta_X=5$, 
$\Delta_X\cap\sigma=\emptyset$, 
$|\Delta_X|\cap l_1=\{P\}$, 
$(\mult_P\Delta_X, \mult_P(\Delta_X\cap l_1))=(4, 2)$, 
$\deg(\Delta_X\cap l_2)=1$, 
$\deg\Delta_Z=2$ and 
$\mult_Q\Delta_Z=\mult_Q(\Delta_Z\cap l_2^Z)=2$, 
where $Q=\sigma^Z\cap l_2^Z$. 

\smallskip

\item[{$[$2;2,3$]_{2C1}$}]
$E_X=2\sigma+2l_1+l_2$ $(l_1$, $l_2:$ distinct fibers$)$, $\deg\Delta_X=4$, 
$\Delta_X\cap\sigma=\emptyset$, 
$|\Delta_X|\cap l_1=\{P\}$, 
$(\mult_P\Delta_X, \mult_P(\Delta_X\cap l_1))=(2, 2)$, 
$\deg(\Delta_X\cap l_2)=2$, 
$\deg\Delta_Z=3$, 
$\deg(\Delta_Z\cap\sigma^Z)=1$, $\deg(\Delta_Z\cap\Gamma_{P, 2})=2$ 
and 
$\Delta_Z\cap(l_1^Z\cup l_2^Z\cup\Gamma_{P, 1})=\emptyset$. 

\smallskip

\item[{$[$2;2,3$]_{2C2}$}]
$E_X=2\sigma+2l_1+l_2$ $(l_1$, $l_2:$ distinct fibers$)$, $\deg\Delta_X=3$, 
$\Delta_X\cap\sigma=\emptyset$, 
$|\Delta_X|\cap l_1=\{P\}$, 
$(\mult_{P}\Delta_X, \mult_{P}(\Delta_X\cap l_1))=(2, 2)$, 
$\deg(\Delta_X\cap l_2)=1$, 
$\deg\Delta_Z=4$, 
$\mult_Q\Delta_Z=\mult_Q(\Delta_Z\cap l_2^Z)=2$, 
$\deg(\Delta_Z\cap\Gamma_{P, 2})=2$ and 
$\Delta_Z\cap(l_1^Z\cup\Gamma_{P, 1})=\emptyset$, 
where $Q=\sigma^Z\cap l_2^Z$. 

\smallskip

\item[{$[$2;2,3$]_{2D1}$({\bi c,d})} $((c, d)=(0, 0)$, $(1, 1)$, $(1, 2))$]
$E_X=2\sigma+2l_1+l_2$ $(l_1$, $l_2:$ distinct fibers$)$, $\deg\Delta_X=4$, 
$\Delta_X\cap\sigma=\emptyset$, 
$|\Delta_X|\cap l_1=\{P\}$, 
$(\mult_P\Delta_X, \mult_P(\Delta_X\cap l_1))=(2, 1)$, 
$\deg(\Delta_X\cap l_2)=2$, 
$\deg\Delta_Z=3$, 
$\deg(\Delta_Z\cap\sigma^Z)=1$, $\deg(\Delta_Z\cap l_1^Z)=2$, 
$\mult_Q(\Delta_Z\cap\sigma^Z)=c$, 
$\mult_Q(\Delta_Z\cap l_1^Z)=d$, 
$\mult_Q\Delta_Z=c+d$, 
and 
$\Delta_Z\cap(l_2^Z\cup\Gamma_{P, 1})=\emptyset$, 
where $Q=\sigma^Z\cap l_1^Z$. 

\smallskip

\item[{$[$2;2,3$]_{2D2}$}]
$E_X=2\sigma+2l_1+l_2$ $(l_1$, $l_2:$ distinct fibers$)$, $\deg\Delta_X=3$, 
$\Delta_X\cap\sigma=\emptyset$, 
$|\Delta_X|\cap l_1=\{P\}$, 
$(\mult_P\Delta_X, \mult_P(\Delta_X\cap l_1))=(2, 1)$, 
$\deg(\Delta_X\cap l_2)=1$, 
$\deg\Delta_Z=4$, 
$\mult_Q\Delta_Z=\mult_Q(\Delta_Z\cap l_2^Z)=2$, 
$\deg(\Delta_Z\cap l_1^Z)=2$ and 
$\Delta_Z\cap l_1^Z\cap(\sigma^Z\cup\Gamma_{P, 1})=\emptyset$, 
where $Q=\sigma^Z\cap l_2^Z$. 

\smallskip

\item[{$[$2;2,3$]_{2E1}$({\bi c,d})} $((c, d)=(0, 0)$, $(1, 1))$]
$E_X=2\sigma+2l_1+l_2$ $(l_1$, $l_2:$ distinct fibers$)$, $\deg\Delta_X=3$, 
$\Delta_X\cap\sigma=\emptyset$, 
$|\Delta_X|\cap l_1=\{P\}$, 
$\deg(\Delta_X\cap l_2)=2$, 
$\deg\Delta_Z=4$, 
$\deg(\Delta_Z\cap\sigma^Z)=1$, $\deg(\Delta_Z\cap l_1^Z)=2$, 
$\Delta_Z\cap l_2^Z=\emptyset$, 
$\mult_{Q_1}\Delta_Z=\mult_{Q_1}(\Delta_Z\cap\Gamma_{P, 1})=2$, 
$\mult_{Q_2}(\Delta_Z\cap\sigma^Z)=c$, 
$\mult_{Q_2}(\Delta_Z\cap l_1^Z)=d$ and 
$\mult_{Q_2}\Delta_Z=c+d$, 
where $Q_1=l_1^Z\cap\Gamma_{P, 1}$ and 
$Q_2=\sigma^Z\cap l_1^Z$. 

\smallskip

\item[{$[$2;2,3$]_{2E2}$}]
$E_X=2\sigma+2l_1+l_2$ $(l_1$, $l_2:$ distinct fibers$)$, $\deg\Delta_X=2$, 
$\Delta_X\cap\sigma=\emptyset$, 
$|\Delta_X|\cap l_1=\{P\}$, 
$\deg(\Delta_X\cap l_2)=1$, 
$\deg\Delta_Z=5$, 
$\mult_{Q_1}\Delta_Z=\mult_{Q_1}(\Delta_Z\cap\Gamma_{P, 1})=2$, 
$\mult_{Q_2}\Delta_Z=\mult_{Q_2}(\Delta_Z\cap l_2^Z)=2$, 
$\deg(\Delta_Z\cap l_1^Z)=2$ and 
$\Delta_Z\cap\sigma^Z\cap l_1^Z=\emptyset$, 
where $Q_1=l_1^Z\cap\Gamma_{P, 1}$ and
$Q_2=\sigma^Z\cap l_2^Z$. 

\smallskip

\item[{$[$2;2,3$]_{2F1}$({\bi c,d})} $((c, d)=(0, 0)$, $(1, 1),\dots,(1, 4))$]
$E_X=2\sigma+2l_1+l_2$ $(l_1$, $l_2:$ distinct fibers$)$, $\deg\Delta_X=2$, 
$\Delta_X\subset l_2\setminus\sigma$, 
$\deg\Delta_Z=5$, 
$\deg(\Delta_Z\cap\sigma^Z)=1$, $\deg(\Delta_Z\cap l_1^Z)=4$, 
$\Delta_Z\cap l_2^Z=\emptyset$, 
$\mult_Q(\Delta_Z\cap\sigma^Z)=c$, 
$\mult_Q(\Delta_Z\cap l_1^Z)=d$ and 
$\mult_Q\Delta_Z=c+d$, 
where $Q=\sigma^Z\cap l_1^Z$. 

\smallskip

\item[{$[$2;2,3$]_{2F2}$}]
$E_X=2\sigma+2l_1+l_2$ $(l_1$, $l_2:$ distinct fibers$)$, $\deg\Delta_X=1$, 
$\Delta_X\subset l_2\setminus\sigma$, 
$\deg\Delta_Z=6$, 
$\mult_Q\Delta_Z=\mult_Q(\Delta_Z\cap l_2^Z)=2$, 
$\deg(\Delta_Z\cap l_1^Z)=4$ and 
$\Delta_Z\cap\sigma^Z\cap l_1^Z=\emptyset$, 
where $Q=\sigma^Z\cap l_2^Z$. 

\smallskip

\item[{$[$2;2,3$]_{3A}$}]
$E_X=2\sigma+l_1+l_2+l_3$ $(l_1$, $l_2$, $l_3:$ distinct fibers$)$, $\deg\Delta_X=6$, 
$\Delta_X\cap\sigma=\emptyset$, 
$\deg(\Delta_X\cap l_i)=2$ for $i=1$, $2$, $3$, 
$\deg\Delta_Z=1$ and 
$\Delta_Z\subset\sigma^Z\setminus(l_1^Z\cup l_2^Z\cup l_3^Z)$. 

\smallskip

\item[{$[$2;2,3$]_{3B}$}]
$E_X=2\sigma+l_1+l_2+l_3$ $(l_1$, $l_2$, $l_3:$ distinct fibers$)$, $\deg\Delta_X=5$, 
$\Delta_X\cap\sigma=\emptyset$, 
$\deg(\Delta_X\cap l_i)=2$ for $i=1$, $2$, 
$\deg(\Delta_X\cap l_3)=1$, 
$\deg\Delta_Z=2$ and 
$\mult_Q\Delta_Z=\mult_Q(\Delta_Z\cap l_3^Z)=2$, 
where 
$Q=\sigma^Z\cap l_3^Z$. 
\end{description}

\smallskip

The case $X=\F_3:$

\begin{description}
\item[{$[$3;2,0$]$}]
$E_X=2\sigma$, $\Delta_X=\emptyset$, 
$\deg\Delta_Z=3$ 
and 
$\Delta_Z\subset\sigma^Z$. 

\smallskip

\item[{$[$3;2,1$]_{1A}$}]
$E_X=2\sigma+l$, $\deg\Delta_X=2$, 
$\Delta_X\subset l\setminus\sigma$, 
$\deg\Delta_Z=2$ 
and 
$\Delta_Z\subset\sigma^Z\setminus l^Z$. 

\smallskip

\item[{$[$3;2,1$]_{1B}$}]
$E_X=2\sigma+l$, $\deg\Delta_X=1$, 
$\Delta_X\subset l\setminus\sigma$, 
$\deg\Delta_Z=3$, 
$\mult_Q\Delta_Z=\mult_Q(\Delta_Z\cap l^Z)=2$
and 
$\Delta_Z\setminus\{Q\}\subset\sigma^Z$, 
where $Q=\sigma^Z\cap l^Z$. 

\smallskip

\item[{$[$3;2,2$]_{1A}$}]
$E_X=2\sigma+2l$, $\deg\Delta_X=4$, 
$\Delta_X\cap\sigma=\emptyset$, 
$|\Delta_X|=\{P_1, P_2\}$, 
$(\mult_{P_i}\Delta_X, \mult_{P_i}(\Delta_X\cap l))=(2, 1)$ for $i=1$, $2$, 
$\deg\Delta_Z=1$ 
and 
$\Delta_Z\subset\sigma^Z\setminus l^Z$. 

\smallskip

\item[{$[$3;2,2$]_{1B}$}]
$E_X=2\sigma+2l$, $\deg\Delta_X=4$, 
$\Delta_X\cap\sigma=\emptyset$, 
$|\Delta_X|=\{P\}$, 
$(\mult_P\Delta_X, \mult_P(\Delta_X\cap l))=(4, 2)$, 
$\deg\Delta_Z=1$ 
and 
$\Delta_Z\subset\sigma^Z\setminus l^Z$. 

\smallskip

\item[{$[$3;2,2$]_{1C}$}]
$E_X=2\sigma+2l$, $\deg\Delta_X=2$, 
$\Delta_X\cap\sigma=\emptyset$, 
$|\Delta_X|=\{P\}$, 
$\Delta_X\subset l$, 
$\deg\Delta_Z=3$, 
$\deg(\Delta_Z\cap\sigma^Z)=1$, $\deg(\Delta_Z\cap\Gamma_{P, 2})=2$ 
and 
$\Delta_Z\subset(\sigma^Z\cup\Gamma_{P, 2})\setminus(l^Z\cup\Gamma_{P, 1})$. 

\smallskip

\item[{$[$3;2,2$]_{1D}$({\bi c,d})} $((c, d)=(0, 0)$, $(1, 1)$, \,{\it or}\, $(1, 2))$]
$E_X=2\sigma+2l$, 
$|\Delta_X|=\{P\}$, $\deg\Delta_X=2$, 
$(\mult_P\Delta_X, \mult_P(\Delta_X\cap l))=(2, 1)$, 
$\Delta_X\cap\sigma=\emptyset$, 
$\deg\Delta_Z=3$, 
$\Delta_Z\cap\Gamma_{P, 1}=\emptyset$, 
$\deg(\Delta_Z\cap\sigma^Z)=1$, $\deg(\Delta_Z\cap l^Z)=2$, 
$\mult_Q(\Delta_Z\cap\sigma^Z)=c$, $\mult_Q(\Delta_Z\cap l^Z)=d$
and $\mult_Q\Delta_Z=c+d$, where $Q=\sigma^Z\cap l^Z$. 

\smallskip

\item[{$[$3;2,2$]_{1E}$({\bi c,d})} $((c, d)=(0, 0)$, $(1, 1))$]
$E_X=2\sigma+2l$, 
$|\Delta_X|=\{P\}$, $\deg\Delta_X=1$, 
$P\in l\setminus\sigma$, 
$\deg\Delta_Z=4$, 
$\deg(\Delta_Z\cap\sigma^Z)=1$, $\deg(\Delta_Z\cap l^Z)=2$, 
$\mult_{Q_1}\Delta_Z=\mult_{Q_1}(\Delta_Z\cap\Gamma_{P, 1})=2$, 
$\mult_{Q_2}(\Delta_Z\cap\sigma^Z)=c$, $\mult_{Q_2}(\Delta_Z\cap l^Z)=d$
and $\mult_{Q_2}\Delta_Z=c+d$, where $Q_1=l^Z\cap\Gamma_{P_1, 1}$ 
and $Q_2=\sigma^Z\cap l^Z$. 

\smallskip

\item[{$[$3;2,2$]_{1F}$({\bi c,d})} $((c, d)=(0, 0)$, $(1, 1),\dots,(1, 4))$]
$E_X=2\sigma+2l$, $\Delta_X=\emptyset$, 
$\deg\Delta_Z=5$, 
$\deg(\Delta_Z\cap\sigma^Z)=1$, $\deg(\Delta_Z\cap l^Z)=4$, 
$\mult_Q(\Delta_Z\cap\sigma^Z)=c$, $\mult_Q(\Delta_Z\cap l^Z)=d$
and $\mult_Q\Delta_Z=c+d$, where $Q=\sigma^Z\cap l^Z$. 

\smallskip

\item[{$[$3;2,2$]_{2A}$}]
$E_X=2\sigma+l_1+l_2$ $(l_1$, $l_2:$ distinct fibers$)$, $\deg\Delta_X=4$, 
$\Delta_X\cap\sigma=\emptyset$, 
$\deg(\Delta_X\cap l_1)=\deg(\Delta_X\cap l_2)=2$, 
$\deg\Delta_Z=1$ and 
$\Delta_Z\subset\sigma^Z\setminus(l_1^Z\cup l_2^Z)$. 

\smallskip

\item[{$[$3;2,2$]_{2B}$}]
$E_X=2\sigma+l_1+l_2$ $(l_1$, $l_2:$ distinct fibers$)$, $\deg\Delta_X=3$, 
$\Delta_X\cap\sigma=\emptyset$, 
$\deg(\Delta_X\cap l_1)=1$, $\deg(\Delta_X\cap l_2)=2$, 
$\deg\Delta_Z=2$ and 
$\mult_Q\Delta_Z=\mult_Q(\Delta_Z\cap l_1^Z)=2$, 
where $Q=\sigma^Z\cap l_1^Z$. 

\smallskip

\item[{$[$3;2,3$]_0$}]
$E_X=\sigma+\sigma_\infty$, 
$\deg\Delta_X=6$, $\Delta_X\subset\sigma_\infty$ and $\Delta_Z=\emptyset$. 

\smallskip

\item[{$[$3;2,3$]_{2A}$}]
$E_X=2\sigma+2l_1+l_2$ $(l_1$, $l_2:$ distinct fibers$)$, $\deg\Delta_X=6$, 
$\Delta_X\cap\sigma=\emptyset$, 
$|\Delta_X|\cap l_1=\{P_1, P_2\}$, 
$(\mult_{P_i}\Delta_X, \mult_{P_i}(\Delta_X\cap l_1))=(2, 1)$ for $i=1$, $2$, 
$\deg(\Delta_X\cap l_2)=2$ and 
$\Delta_Z=\emptyset$. 

\smallskip

\item[{$[$3;2,3$]_{2B}$}]
$E_X=2\sigma+2l_1+l_2$ $(l_1$, $l_2:$ distinct fibers$)$, $\deg\Delta_X=6$, 
$\Delta_X\cap\sigma=\emptyset$, 
$|\Delta_X|\cap l_1=\{P\}$, 
$(\mult_P\Delta_X, \mult_P(\Delta_X\cap l_1))=(4, 2)$, 
$\deg(\Delta_X\cap l_2)=2$ and 
$\Delta_Z=\emptyset$. 

\smallskip

\item[{$[$3;2,3$]_{2C}$}]
$E_X=2\sigma+2l_1+l_2$ $(l_1$, $l_2:$ distinct fibers$)$, $\deg\Delta_X=4$, 
$\Delta_X\cap\sigma=\emptyset$, 
$|\Delta_X|\cap l_1=\{P\}$, 
$(\mult_P\Delta_X, \mult_P(\Delta_X\cap l_1))=(2, 2)$, 
$\deg(\Delta_X\cap l_2)=2$, 
$\deg\Delta_Z=2$ and 
$\Delta_Z\subset\Gamma_{P, 2}\setminus(l_1^Z\cup\Gamma_{P, 1})$. 

\smallskip

\item[{$[$3;2,3$]_{2D}$}]
$E_X=2\sigma+2l_1+l_2$ $(l_1$, $l_2:$ distinct fibers$)$, $\deg\Delta_X=4$, 
$\Delta_X\cap\sigma=\emptyset$, 
$|\Delta_X|\cap l_1=\{P\}$, 
$(\mult_P\Delta_X, \mult_P(\Delta_X\cap l_1))=(2, 1)$, 
$\deg(\Delta_X\cap l_2)=2$, 
$\deg\Delta_Z=2$ and 
$\Delta_Z\subset l_1^Z\setminus(\sigma^Z\cup\Gamma_{P, 1})$. 

\smallskip

\item[{$[$3;2,3$]_{2E}$}]
$E_X=2\sigma+2l_1+l_2$ $(l_1$, $l_2:$ distinct fibers$)$, $\deg\Delta_X=3$, 
$\Delta_X\cap\sigma=\emptyset$, 
$|\Delta_X|\cap l_1=\{P\}$, 
$\deg(\Delta_X\cap l_2)=2$, 
$\deg\Delta_Z=3$, 
$\mult_Q\Delta_Z=\mult_Q(\Delta_Z\cap\Gamma_{P, 1})=2$ 
and $\Delta_Z\setminus\{Q\}\subset l_1^Z\setminus\sigma^Z$, 
where $Q=l_1^Z\cap\Gamma_{P, 1}$. 

\smallskip

\item[{$[$3;2,3$]_{2F}$}]
$E_X=2\sigma+2l_1+l_2$ $(l_1$, $l_2:$ distinct fibers$)$, $\deg\Delta_X=2$, 
$\Delta_X\subset l_2\setminus\sigma$, 
$\deg\Delta_Z=4$ and $\Delta_Z\subset l_1^Z\setminus\sigma^Z$. 

\smallskip

\item[{$[$3;2,3$]_{3}$}]
$E_X=2\sigma+l_1+l_2+l_3$ $(l_1$, $l_2$, $l_3:$ distinct fibers$)$, $\deg\Delta_X=6$, 
$\Delta_X\cap\sigma=\emptyset$, 
$\deg(\Delta_X\cap l_i)=2$ for $i=1$, $2$, $3$ and 
$\Delta_Z=\emptyset$. 
\end{description}

\smallskip

The case $X=\F_4:$

\begin{description}
\item[{$[$4;2,0$]$}]
$E_X=2\sigma$, $\Delta_X=\emptyset$, 
$\deg\Delta_Z=2$ 
and 
$\Delta_Z\subset\sigma^Z$. 

\smallskip

\item[{$[$4;2,1$]_{1A}$}]
$E_X=2\sigma+l$, $\deg\Delta_X=2$, 
$\Delta_X\subset l\setminus\sigma$, 
$\deg\Delta_Z=1$ 
and 
$\Delta_Z\subset\sigma^Z\setminus l^Z$. 

\smallskip

\item[{$[$4;2,1$]_{1B}$}]
$E_X=2\sigma+l$, $\deg\Delta_X=1$, 
$\Delta_X\subset l\setminus\sigma$, 
$\deg\Delta_Z=2$ and 
$\mult_Q\Delta_Z=\mult_Q(\Delta_Z\cap l^Z)=2$, 
where $Q=\sigma^Z\cap l^Z$. 

\smallskip

\item[{$[$4;2,2$]_{1A}$}]
$E_X=2\sigma+2l$, $\deg\Delta_X=4$, 
$\Delta_X\cap\sigma=\emptyset$, 
$|\Delta_X|=\{P_1, P_2\}$, 
$(\mult_{P_i}\Delta_X, \mult_{P_i}(\Delta_X\cap l))=(2, 1)$ for $i=1$, $2$ 
and 
$\Delta_Z=\emptyset$. 

\smallskip

\item[{$[$4;2,2$]_{1B}$}]
$E_X=2\sigma+2l$, $\deg\Delta_X=4$, 
$\Delta_X\cap\sigma=\emptyset$, 
$|\Delta_X|=\{P\}$, 
$(\mult_P\Delta_X, \mult_P(\Delta_X\cap l))=(4, 2)$ and 
$\Delta_Z=\emptyset$. 

\smallskip

\item[{$[$4;2,2$]_{1C}$}]
$E_X=2\sigma+2l$, $\deg\Delta_X=2$, 
$\Delta_X\cap\sigma=\emptyset$, 
$|\Delta_X|=\{P\}$, 
$\Delta_X\subset l$, 
$\deg\Delta_Z=2$ 
and 
$\Delta_Z\subset\Gamma_{P, 2}\setminus(l^Z\cup\Gamma_{P, 1})$. 

\smallskip

\item[{$[$4;2,2$]_{1D}$}]
$E_X=2\sigma+2l$, 
$|\Delta_X|=\{P\}$, $\deg\Delta_X=2$, 
$\Delta_X\cap\sigma=\emptyset$, 
$(\mult_P\Delta_X, \mult_P(\Delta_X\cap l))=(2, 1)$, 
$\deg\Delta_Z=2$ and $\Delta_Z\subset l^Z\setminus(\sigma^Z\cup\Gamma_{P, 1})$. 

\smallskip

\item[{$[$4;2,2$]_{1E}$}]
$E_X=2\sigma+2l$, 
$|\Delta_X|=\{P\}$, $\deg\Delta_X=1$, 
$P\in l\setminus\sigma$, 
$\deg\Delta_Z=3$, 
$\mult_Q\Delta_Z=\mult_Q(\Delta_Z\cap\Gamma_{P, 1})=2$ and 
$\Delta_Z\setminus\{Q\}\subset l^Z\setminus\sigma^Z$, 
where $Q=l^Z\cap\Gamma_{P, 1}$. 

\smallskip

\item[{$[$4;2,2$]_{1F}$}]
$E_X=2\sigma+2l$, $\Delta_X=\emptyset$, 
$\deg\Delta_Z=4$ and $\Delta_Z\subset l^Z\setminus\sigma^Z$. 

\smallskip

\item[{$[$4;2,2$]_2$}]
$E_X=2\sigma+l_1+l_2$ $(l_1$, $l_2:$ distinct fibers$)$, $\deg\Delta_X=4$, 
$\Delta_X\cap\sigma=\emptyset$, 
$\deg(\Delta_X\cap l_1)=\deg(\Delta_X\cap l_2)=2$ and 
$\Delta_Z=\emptyset$. 
\end{description}

\smallskip

The case $X=\F_5:$

\begin{description}
\item[{$[$5;2,0$]$}]
$E_X=2\sigma$, $\Delta_X=\emptyset$, 
$\deg\Delta_Z=1$ 
and 
$\Delta_Z\subset\sigma^Z$. 

\smallskip

\item[{$[$5;2,1$]_1$}]
$E_X=2\sigma+l$, $\deg\Delta_X=2$, 
$\Delta_X\subset l\setminus\sigma$ and 
$\Delta_Z=\emptyset$. 
\end{description}

\smallskip

The case $X=\F_6:$

\begin{description}
\item[{$[$6;2,0$]$}]
$E_X=2\sigma$, $\Delta_X=\emptyset$ and 
$\deg\Delta_Z=\emptyset$. 
\end{description}
\end{thm}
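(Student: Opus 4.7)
The plan is to mirror the structure of the proof of Theorem \ref{tetI_thm}. The ``if'' direction will be immediate from Proposition \ref{converse_prop}, whose hypotheses reduce to routine numerical checks done case by case. The bulk of the argument is the converse.

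I will begin by extracting numerical invariants. Writing $L_X\sim h_0\sigma+hl$ and $E_X\sim (6-h_0)\sigma+(3n+6-h)l$, and using that a nef divisor of self-intersection zero on $\F_n$ is proportional to the fiber class $l$ when $n\geqslant 1$, and proportional to $l$ or to $\sigma$ when $n=0$, I may assume (after possibly swapping rulings when $n=0$) that $h_0=4$, so $2K_X+L_X=(h-2n-4)l$. The nefness of $L_X$, the effectiveness of $E_X$, and the nontriviality of $2K_X+L_X$ then translate into
\[
n\leqslant 6,\qquad \max\{4n,2n+5\}\leqslant h\leqslant 3n+6,
\]
together with the degree identity $\deg\Delta_X+\deg\Delta_Z=(L_X\cdot E_X)/2=2n+12-h$ coming from Lemma \ref{FuBo_lem}(3). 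This will produce precisely the finite table of admissible pairs $(n,h)$ matching the types in the statement, with the single exception $(n,h)=(0,5)$. I will rule out this pair using Proposition \ref{twocurve_prop}: when $n=0$ condition $(\sB 6)$ forces $\Delta_X=\emptyset$, so $Z=X$ and $E_Z=2\sigma+l$, hence $\deg(\Delta_Z\cap\sigma)+\deg(\Delta_Z\cap l)-\deg\Delta_Z=5+4-7=2>1=(\sigma\cdot l)$, contradicting Proposition \ref{twocurve_prop}; this explains the absence of any ``$[0;2,1]$'' type.

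Next, for each admissible $(n,h)$, I will enumerate the possible reduced shapes of $E_X\in|2\sigma+(3n+6-h)l|$. Possible components are $\sigma$, $\sigma_\infty$, a fiber $l$, and (only in low $n$) an irreducible section of bidegree $(1,u)$ or a bisection of bidegree $(2,u)$; a short computation using Lemma \ref{mult_seq_lem} together with the admissible graphs of Corollary \ref{dP-basic_cor} will yield the bound $((C^M)^2)\geqslant -6$ for each component $C\leqslant E_X$---an analogue of Claim \ref{tetI_curve_claim}---which kills most spurious candidates. Condition $(\sB 6)$ forces $\Delta_X\cap\sigma=\emptyset$ for $n\geqslant 1$, and conditions $(\sB 7)$ and $(\sB 8)$ further restrict the sections that may appear in $E_X$. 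With the shape of $E_X$ fixed, I will determine $\Delta_X$ at each of its support points via Lemmas \ref{XS1}--\ref{XS5}, and then pin down $\Delta_Z$ from Lemmas \ref{ZS1}--\ref{ZS5}, using as numerical input only the identity $\deg(\Delta_Z\cap E_0^Z)=(L_X\cdot E_0)-2\deg(\Delta_X\cap E_0)$ for each nonsingular $E_0\leqslant E_X$.

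The hard part will be purely combinatorial bookkeeping rather than any new idea. The case $(n,h)=(2,9)$ alone already produces four reduced shapes of $E_X$---namely $\sigma+C$ with $C$ nonsingular of bidegree $(1,3)$, $\sigma+\sigma_\infty+l$, $2\sigma+2l_1+l_2$, and $2\sigma+l_1+l_2+l_3$---and each shape branches further according to the multiplicities and tangency patterns of $\Delta_X$ along the components of $E_X$ and to how $\Delta_Z$ is distributed among the exceptional curves of the elimination, producing all of the subtypes $[2;2,3]_V$, $[2;2,3]_H\langle 0\rangle$, $[2;2,3]_H\langle 1\rangle$, $[2;2,3]_{2A1},\dots,[2;2,3]_{3B}$. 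Sweeping through every admissible $(n,h)$ and every shape in this manner, and using the simple-normal-crossing requirement on $E_M$ (Corollary \ref{dP-basic_cor}) to prohibit illegal collisions between points of $\Delta_X$ and $\Delta_Z$, will reproduce exactly the list in the statement.
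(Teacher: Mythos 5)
Your proposal follows essentially the same route as the paper's proof: the converse via Proposition \ref{converse_prop}, the numerical constraints $h_0=4$, $\max\{4n,2n+5\}\leqslant h\leqslant 3n+6$ and $\deg\Delta_X+\deg\Delta_Z=2n+12-h$, and then a shape-by-shape analysis of $E_X$ using conditions ($\sB$6)--($\sB$8), Lemmas \ref{XS1}--\ref{XS5} and \ref{ZS1}--\ref{ZS5}, and the degree identities of Lemma \ref{FuBo_lem}, exactly as in the paper. The only deviations are cosmetic: you exclude $(n,h)=(0,5)$ via Proposition \ref{twocurve_prop} (which works, though you should also dispose of the other possible decompositions of $E_X\sim 2\sigma+l$, not only $E_X=2\sigma+l$), where the paper instead uses the reduced-fiber dichotomy, and you package the restriction on components of $E_X$ as a self-intersection bound coming from Corollary \ref{dP-basic_cor} where the paper enumerates the finitely many non-fiber, non-$\sigma$ components explicitly.
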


We start to prove Theorem \ref{tetII_thm}. Any tetrad in Theorem \ref{tetII_thm} 
is a bottom tetrad by Proposition \ref{converse_prop}. 
We see the converse. 
Let $(X=\F_n, E_X; \Delta_Z, \Delta_X)$ be a 
bottom tetrad such that $2K_X+L_X$ is non-big and nontrivial, 
where $L_X$ is the fundamental divisor, 
$\psi\colon Z\to X$ be the elimination of $\Delta_X$, 
$\phi\colon M\to Z$ be the elimination of $\Delta_Z$, 
$E_Z:=(E_X)_Z^{\Delta_X, 1}$ and 
$E_M:=(E_Z)_M^{\Delta_Z, 2}$.
Set $L_X\sim h_0\sigma+hl$, $E_X\sim e_0\sigma+el$, 
$k_X:=\deg\Delta_X$ and $k_Z:=\deg\Delta_Z$. 
Then $e_0=6-h_0$ and $e=3(n+2)-h$. 
Since $2K_X+L_X$ is nef and non-big, we can assume that $h_0=4$. Thus $e_0=2$. 
We know that $k_X+k_Z=(L_X\cdot E_X)/2=2n-h+12$.

\begin{claim}\label{tetIII_claim}
We have $(n , h)=(0, 5)$, $(0, 6)$, $(1, 7)$, $(1, 8)$, $(1, 9)$, 
$(2, 9)$, $(2, 10)$, $(2, 11)$, $(2, 12)$, $(3, 12)$, $(3, 13)$, $(3, 14)$, $(3, 15)$, $(4, 16)$, 
$(4, 17)$, $(4, 18)$, $(5, 20)$, $(5, 21)$ or $(6, 24)$.
\end{claim}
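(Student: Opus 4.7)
The plan is to extract three linear inequalities in $(n,h)$ from the structural assumptions on a bottom tetrad in case (\ref{bot_dfn2}) and then enumerate integer solutions. Throughout I will use the standard intersection theory on $\F_n$ ($\sigma^2=-n$, $(\sigma\cdot l)=1$, $l^2=0$) together with $K_X\sim-2\sigma-(n+2)l$ and the normalization $h_0=4$, $e_0=2$ already justified at the start of this section.

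First, because $h_0=4$, one has $2K_X+L_X\sim(h-2n-4)l$. The nefness of $2K_X+L_X$ required by ($\sB$\ref{bottom_dfn4}) gives $h\geq 2n+4$, and the standing hypothesis of this section that $2K_X+L_X$ is nontrivial then upgrades this to $h\geq 2n+5$. Next, $E_X\sim 2\sigma+(3n+6-h)l$ is a nonzero effective divisor, and since the effective cone of $\F_n$ is spanned by $\sigma$ and $l$ (or by the two fiber classes when $n=0$), one must have $h\leq 3n+6$. Finally, for $n\geq 1$ I test nefness of $L_Z$ against the minimal section: by ($\sB$\ref{bottom_dfn6}), $\Delta_X\cap\sigma=\emptyset$, so the elimination $\psi\colon Z\to X$ is an isomorphism on a neighborhood of $\sigma$, forcing $(K_{Z/X}\cdot\sigma^Z)=0$. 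Combined with $L_Z=\psi^*L_X-2K_{Z/X}$ and the nefness of $L_Z$ from Lemma \ref{FuBo_lem}\eqref{FuBo_lem2}, this yields $0\leq(L_Z\cdot\sigma^Z)=(L_X\cdot\sigma)=h-4n$, so $h\geq 4n$.

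Putting everything together, the admissible integer pairs are those with $\max(2n+5,\,4n)\leq h\leq 3n+6$. No $n\geq 7$ contributes because $4n>3n+6$ there, while for $0\leq n\leq 6$ a direct case-by-case enumeration (the interval endpoints being respectively $\{5,6\}$, $\{7,8,9\}$, $\{9,\dots,12\}$, $\{12,\dots,15\}$, $\{16,17,18\}$, $\{20,21\}$, $\{24\}$) recovers the nineteen listed pairs. The whole argument is essentially bookkeeping; the one subtle point is recognizing that the bound $h\geq 4n$ must be drawn from the nefness of $L_Z$ on $Z$ and not of $L_X$ on $X$, since $L_X$ is not a priori nef on $\F_n$ for large $n$ (for example, $-2K_X$ fails to be nef once $n\geq 3$, so nothing in the definition of a $3$-fundamental multiplet of length two directly guarantees $L_X\cdot\sigma\geq 0$), and the link between the two uses condition ($\sB$\ref{bottom_dfn6}) in an essential way.
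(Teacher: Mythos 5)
Your proof is correct and follows essentially the same route as the paper's own argument: the same three linear inequalities $h\geqslant 2n+5$ (from $2K_X+L_X\sim(h-2n-4)l$ nef and nontrivial), $h\geqslant 4n$, and $h\leqslant 3n+6$ (from $E_X\sim 2\sigma+(3n+6-h)l$ effective), followed by the same enumeration. The only divergence is in justifying $h\geqslant 4n$: the paper simply invokes that $L_X$ is nef, which --- contrary to your closing caveat --- does follow without appealing to ($\sB$6), since $\psi^*L_X=L_Z+2K_{Z/X}$ with $L_Z$ nef (Lemma \ref{FuBo_lem} \eqref{FuBo_lem2}) and $K_{Z/X}$ effective and $\psi$-exceptional, so $(L_X\cdot\sigma)=(L_Z\cdot\sigma^Z)+2(K_{Z/X}\cdot\sigma^Z)\geqslant 0$; your detour through ($\sB$6) is harmless but not needed.
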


\begin{proof}
Since $2K_X+L_X\sim(h-2n-4)l$ is nef and nontrivial, we have $h\geqslant 2n+5$. 
Moreover, $4n\leqslant h\leqslant 3n+6$ holds 
since $L_X$ is nef and $E_X$ is effective. In particular, $n\leqslant 6$. 
\end{proof}

We consider the case that $E_X$ contains an irreducible 
component $C$ such that $C$ is 
neither $\sigma$ nor $l$. Then one of the following holds: 

\begin{enumerate}
\renewcommand{\theenumi}{\arabic{enumi}}
\renewcommand{\labelenumi}{(\theenumi)}
\item\label{III_inf01}
$(n, h)=(0, 5)$ and $C\sim\sigma+l$.
\item\label{III_inf02}
$(n, h)=(0, 5)$ and $C\sim 2\sigma+l$.
\item\label{III_inf03}
$(n, h)=(1, 7)$ and $C=\sigma_\infty$.
\item\label{III_inf04}
$(n, h)=(1, 7)$ and $C\sim\sigma+2l$.
\item\label{III_inf05}
$(n, h)=(1, 7)$ and $C\sim 2\sigma+2l$.
\item\label{III_inf06}
$(n, h)=(1, 8)$ and $C=\sigma_\infty$.
\item\label{III_inf07}
$(n, h)=(2, 9)$ and $C=\sigma_\infty$.
\item\label{III_inf08}
$(n, h)=(2, 9)$ and $C\sim\sigma+3l$.
\item\label{III_inf09}
$(n, h)=(2, 10)$ and $C=\sigma_\infty$.
\item\label{III_inf10}
$(n, h)=(3, 12)$ and $C=\sigma_\infty$.
\end{enumerate}

We consider the case \eqref{III_inf01}. Then $E_X=\sigma+C$ and 
$\Delta_X=\emptyset$. Thus $k_Z\leqslant 1$. This leads to a contradiction. 
We consider the case \eqref{III_inf02}. Then $E_X=C$ and 
$\Delta_X=\emptyset$. Thus $k_Z=0$. This leads to a contradiction. 
We consider the case \eqref{III_inf03}. If $\coeff_{\sigma_\infty}E_X=1$, then 
$\deg(\Delta_Z\cap\sigma_\infty^Z)\leqslant 1$ by Lemma \ref{XS2}. 
Since $2\deg(\Delta_X\cap\sigma_\infty)+\deg(\Delta_Z\cap\sigma_\infty^Z)=7$, 
we have 
$\deg(\Delta_X\cap\sigma_\infty)=3$. 
This contradicts to the conditions $(\sB 7)$ 
and $(\sB 8)$. 
Thus $E_X=2\sigma_\infty$. By $(\sB 7)$ 
and $(\sB 8)$, we have 
$\deg(\Delta_X\cap\sigma_\infty)\leqslant 1$. 
Assume that $\deg(\Delta_X\cap\sigma_\infty)=1$. Then 
$\deg(\Delta_Z\cap\sigma_\infty^Z)=5$, $|\Delta_X|=\{P\}$, and either 
$(\mult_P\Delta_X, \mult_P(\Delta_X\cap\sigma_\infty))=(2, 1)$ or $(1, 1)$. 
We can show that these cases correspond to the types 
\textbf{$[$1;2,2$]_{0A}$} and \textbf{$[$1;2,2$]_{0B}$} respectively. 
Assume that $\deg(\Delta_X\cap\sigma_\infty)=0$. Then $\Delta_X=\emptyset$, 
$\deg(\Delta_Z\cap\sigma_\infty^Z)=7$ and $\Delta_Z\subset\sigma_\infty^Z$. 
This is nothing but the type \textbf{$[$1;2,2$]_{0C}$}. 
We consider the case \eqref{III_inf04}. Then $E_X=\sigma+C$ and 
$\deg(\Delta_Z\cap C^Z)\leqslant 2$ by Lemmas \ref{XS2} and \ref{XS4}. 
Since $2\deg(\Delta_X\cap C)+\deg(\Delta_Z\cap C^Z)=11$, 
we have 
$\deg(\Delta_X\cap\sigma_\infty)=5$. 
This contradicts to the conditions $(\sB 7)$ and 
$(\sB 8)$. 
We consider the case \eqref{III_inf05}. Then $E_X=C$, $C$ is nonsingular, 
$\Delta_X\subset C$ and $\Delta_Z=\emptyset$. 
This is nothing but the type \textbf{$[$1;2,2$]_U$}. 
We consider the case \eqref{III_inf06}. Then $E_X=\sigma+\sigma_\infty$, 
$\Delta_X\cap\sigma=\emptyset$ and $\Delta_Z\cap\sigma^Z=\emptyset$, 
which leads to a contradiction. 
Indeed, $E_M$ does not contain any $(-1)$-curve. 
We consider the case \eqref{III_inf07}. Then $E_X=\sigma+\sigma_\infty+l$ and 
$2\deg(\Delta_X\cap\sigma_\infty)+\deg(\Delta_Z\cap\sigma_\infty^Z)=9$. 
By Lemma \ref{XS2}, we have $\deg(\Delta_Z\cap\sigma_\infty^Z)=1$ 
and $\deg(\Delta_Z\cap\sigma^Z)=1$. 
Set $P:=\sigma_\infty\cap l$. 
If $P\not\in\Delta_X$, then the case corresponds to the type 
\textbf{$[$2;2,3$]_H\langle$0$\rangle$}. 
If $P\in\Delta_X$, then the case corresponds to the type 
\textbf{$[$2;2,3$]_H\langle$1$\rangle$}. 
We consider the case \eqref{III_inf08}. Then $E_X=\sigma+C$ and 
$2\deg(\Delta_X\cap C)+\deg(\Delta_Z\cap C^Z)=13$. 
By Lemma \ref{XS2}, we have $\deg(\Delta_Z\cap C^Z)=1$. 
This corresponds to the type \textbf{$[$2;2,3$]_V$}. 
We consider the case \eqref{III_inf09}. Then $E_X=\sigma+\sigma_\infty$, 
$\Delta_X\cap\sigma=\emptyset$ and $\Delta_Z\cap\sigma^Z=\emptyset$, 
which leads to a contradiction. 
Indeed, any irreducible connected component of $E_M$ is not a $(-2)$-curve 
by Corollary \ref{dP-basic_cor}. 
We consider the case \eqref{III_inf10}. Then $E_X=\sigma+\sigma_\infty$, 
$\Delta_X\subset\sigma_\infty$ and $\Delta_Z=\emptyset$. 
This is nothing but the type \textbf{$[$3;2,3$]_0$}.

From now on, we can assume that $E_X=2\sigma+\sum_{i=1}^jc_il_i$, where 
$l_i$ are distinct fibers and $c_i>0$ with $\sum_{i=1}^jc_i=e$. 
Indeed, if $(n, h)=(0, 5)$ and $E_Z=\sigma+\sigma'+l$, or 
$(n, h)=(0, 6)$ and $E_Z=\sigma+\sigma'$ ($\sigma$, $\sigma'$ are distinct minimal 
sections), then $\Delta_X=\emptyset$ and $k_Z\leqslant 2$. This leads to 
a contradiction. 
Set $d_i^X:=\deg(\Delta_X\cap l_i)$ and $d_i^Z:=\deg(\Delta_Z\cap l_i^Z)$. 
We know that $2d_i^X+d_i^Z=4$. Thus $(d_i^X, d_i^Z)=(2, 0)$, $(1, 2)$ or $(0, 4)$. 

Assume the case $c_i=2$ for some $i$. 
Then one of the following holds: 

\begin{enumerate}
\renewcommand{\theenumi}{\Alph{enumi}}
\renewcommand{\labelenumi}{(\theenumi)}
\item\label{III_A}
$(d_i^X, d_i^Z)=(2, 0)$, $|\Delta_X|=\{P_1, P_2\}$ and 
$(\mult_{P_t}\Delta_X, \mult_{P_t}(\Delta_X\cap l_i))=(2, 1)$ for $t=1$, $2$. 
\item\label{III_B}
$(d_i^X, d_i^Z)=(2, 0)$, $|\Delta_X|=\{P\}$ and 
$(\mult_P\Delta_X, \mult_P(\Delta_X\cap l_i))=(4, 2)$. 
\item\label{III_C}
$(d_i^X, d_i^Z)=(2, 0)$, $|\Delta_X|=\{P\}$ and 
$(\mult_P\Delta_X, \mult_P(\Delta_X\cap l_i))=(2, 2)$. 
\item\label{III_D}
$(d_i^X, d_i^Z)=(1, 2)$, $|\Delta_X|=\{P\}$ and 
$(\mult_P\Delta_X, \mult_P(\Delta_X\cap l_i))=(2, 1)$. 
\item\label{III_E}
$(d_i^X, d_i^Z)=(1, 2)$, $|\Delta_X|=\{P\}$ and 
$(\mult_P\Delta_X, \mult_P(\Delta_X\cap l_i))=(1, 1)$. 
\item\label{III_F}
$(d_i^X, d_i^Z)=(0, 4)$. 
\end{enumerate}

Assume the case $c_i=1$ for some $i$. 
By Lemma \ref{ZS2}, one of the following holds: 

\begin{enumerate}
\renewcommand{\theenumi}{\arabic{enumi}}
\renewcommand{\labelenumi}{(\theenumi)}
\item\label{III_1}
$(d_i^X, d_i^Z)=(2, 0)$.
\item\label{III_2}
$(d_i^X, d_i^Z)=(1, 2)$. 
\end{enumerate}

We note that $\mult_Q\Delta_Z=\mult_Q(\Delta_Z\cap l_i^Z)=2$ and 
$\mult_Q(\Delta_Z\cap\sigma^Z)=1$ for the case \eqref{III_2} since 
$\Delta_X\cap\sigma=\emptyset$, where $Q:=\sigma^Z\cap l_i^Z$.

\subsection{The case $(n, h)=(0, 5)$}\label{tetIII05_section}

In this case, $k_X=0$, $j=1$ and $c_1=1$, which leads to a contradiction; 
neither the case \eqref{III_1} nor \eqref{III_2} occurs.

\subsection{The case $(n, h)=(0, 6)$}\label{tetIII06_section}

In this case, $k_X=0$, $k_Z=6$, $E_X=2\sigma$ and $\deg(\Delta_Z\cap\sigma^Z)=6$. 
This case is nothing but the type \textbf{$[$0;2,0$]$}.

\subsection{The case $(n, h)=(1, 7)$}\label{tetIII17_section}

Assume that $j=1$. Then $c_1=2$. We can show that the case $({\tt X})$ 
(${\tt X}\in\{$\ref{III_A}$,\dots,$\ref{III_F}$\}$) corresponds to the type 
\textbf{$[$1;2,2$]_{1{\tt X}}$}. More precisely, the case $({\tt X})$ 
(${\tt X}\in\{$\ref{III_D}, \ref{III_E}, \ref{III_F}$\}$) with 
$c:=\mult_Q(\Delta_Z\cap\sigma^Z)$ and $d:=\mult_Q(\Delta_Z\cap l_1^Z)$
corresponds to the type \textbf{$[$1;2,2$]_{1{\tt X}}$({\bi c,d})}, where 
$Q:=\sigma^Z\cap l_1^Z$. 

Assume that $j=2$. Then $c_1=c_2=1$. If both $l_1$ and $l_2$ satisfy 
the condition \eqref{III_1}, then this corresponds to the type \textbf{$[$1;2,2$]_{2A}$}. 
If $l_1$ satisfies the condition \eqref{III_1}
and $l_2$ satisfies the condition \eqref{III_2}, 
then this corresponds to the type \textbf{$[$1;2,2$]_{2B}$}. 
If both $l_1$ and $l_2$ satisfy 
the condition \eqref{III_2}, then this corresponds to the type \textbf{$[$1;2,2$]_{2C}$}.

\subsection{The case $(n, h)=(1, 8)$}\label{tetIII18_section}

In this case, $j=1$ and $c_1=1$. 
If $l_1$ satisfies the condition \eqref{III_1}, 
then this corresponds to the type \textbf{$[$1;2,1$]_{1A}$}. 
If $l_1$ satisfies the condition \eqref{III_2}, 
then this corresponds to the type \textbf{$[$1;2,1$]_{1B}$}.

\subsection{The case $(n, h)=(1, 9)$}\label{tetIII19_section}

In this case, $k_X=0$, $k_Z=5$, $E_X=2\sigma$ and $\deg(\Delta_Z\cap\sigma^Z)=5$. 
This case is nothing but the type \textbf{$[$1;2,0$]$}.

\subsection{The case $(n, h)=(2, 9)$}\label{tetIII29_section}

Assume that $j=2$. Then we can assume that $c_1=2$ and $c_2=1$. 
We can show that the case $({\tt X})$, $({\tt y})$
(${\tt X}\in\{$\ref{III_A}$,\dots,$\ref{III_F}$\}$, ${\tt y}\in\{$\ref{III_1}, \ref{III_2}$\}$) 
corresponds to the type 
\textbf{$[$2;2,3$]_{2{\tt Xy}}$}. More precisely, the case $({\tt X})$, \eqref{III_1} 
(${\tt X}\in\{$\ref{III_D}, \ref{III_E}, \ref{III_F}$\}$) with 
$c:=\mult_Q(\Delta_Z\cap\sigma^Z)$ and $d:=\mult_Q(\Delta_Z\cap l_1^Z)$
corresponds to the type \textbf{$[$2;2,3$]_{2{\tt X}1}$({\bi c,d})}, where 
$Q:=\sigma^Z\cap l_1^Z$. 

Assume that $j=3$. Then $c_1=c_2=c_3=1$. 
Since $\deg(\Delta_Z\cap\sigma^Z)=1$, we can assume that either 
$(d_1^Z, d_2^Z, d_3^Z)=(0, 0, 0)$ or $(0, 0, 2)$ holds. 
The case $(d_1^Z, d_2^Z, d_3^Z)=(0, 0, 0)$ corresponds to the 
type \textbf{$[$2;2,3$]_{3A}$} and the case $(d_1^Z, d_2^Z, d_3^Z)=(0, 0, 2)$ 
corresponds to the type \textbf{$[$2;2,3$]_{3B}$}.

\subsection{The case $(n, h)=(2, 10)$}\label{tetIII210_section}

Assume that $j=1$. Then $c_1=2$. We can show that the case $({\tt X})$ 
(${\tt X}\in\{$\ref{III_A}$,\dots,$\ref{III_F}$\}$) corresponds to the type 
\textbf{$[$2;2,2$]_{1{\tt X}}$}. More precisely, the case $({\tt X})$ 
(${\tt X}\in\{$\ref{III_D}, \ref{III_E}, \ref{III_F}$\}$) with 
$c:=\mult_Q(\Delta_Z\cap\sigma^Z)$ and $d:=\mult_Q(\Delta_Z\cap l_1^Z)$
corresponds to the type \textbf{$[$2;2,2$]_{1{\tt X}}$({\bi c,d})}, where 
$Q:=\sigma^Z\cap l_1^Z$. 

Assume that $j=2$. Then $c_1=c_2=1$. 
We can assume that one of 
$(d_1^Z, d_2^Z)=(0, 0)$, $(2, 0)$ or $(2, 2)$ holds. 
We can show that the case $(d_1^Z, d_2^Z)=(0, 0)$ corresponds to the 
type \textbf{$[$2;2,2$]_{2A}$}, 
the case $(d_1^Z, d_2^Z)=(2, 0)$ corresponds to the 
type \textbf{$[$2;2,2$]_{2B}$}, 
and the case $(d_1^Z, d_2^Z)=(2, 2)$ corresponds to the 
type \textbf{$[$2;2,2$]_{2C}$}.

\subsection{The case $(n, h)=(2, 11)$}\label{tetIII211_section}

In this case, $j=1$ and $c_1=1$. 
If $l_1$ satisfies the condition \eqref{III_1}, 
then this corresponds to the type \textbf{$[$2;2,1$]_{1A}$}. 
If $l_1$ satisfies the condition \eqref{III_2}, 
then this corresponds to the type \textbf{$[$2;2,1$]_{1B}$}.

\subsection{The case $(n, h)=(2, 12)$}\label{tetIII212_section}

In this case, $k_X=0$, $k_Z=4$, $E_X=2\sigma$ and $\deg(\Delta_Z\cap\sigma^Z)=4$. 
This case is nothing but the type \textbf{$[$2;2,0$]$}.

\subsection{The case $(n, h)=(3, 12)$}\label{tetIII312_section}

In this case, we have $\Delta_Z\cap\sigma^Z=\emptyset$. 
Assume that $j=2$. Then we can assume that $c_1=2$ and $c_2=1$. 
We know that the curve $l_2$ satisfies the condition \eqref{III_1}. 
We can show that the case $({\tt X})$ 
(${\tt X}\in\{$\ref{III_A}$,\dots,$\ref{III_F}$\}$) 
corresponds to the type 
\textbf{$[$3;2,3$]_{2{\tt X}}$}. 

Assume that $j=3$. Then $c_1=c_2=c_3=1$ and $(d_1^Z, d_2^Z, d_3^Z)=(0, 0, 0)$ hold. 
This corresponds to the type \textbf{$[$3;2,3$]_3$}.

\subsection{The case $(n, h)=(3, 13)$}\label{tetIII313_section}

Assume that $j=1$. Then $c_1=2$. We can show that the case $({\tt X})$ 
(${\tt X}\in\{$\ref{III_A}$,\dots,$\ref{III_F}$\}$) corresponds to the type 
\textbf{$[$3;2,2$]_{1{\tt X}}$}. More precisely, the case $({\tt X})$ 
(${\tt X}\in\{$\ref{III_D}, \ref{III_E}, \ref{III_F}$\}$) with 
$c:=\mult_Q(\Delta_Z\cap\sigma^Z)$ and $d:=\mult_Q(\Delta_Z\cap l_1^Z)$
corresponds to the type \textbf{$[$3;2,2$]_{1{\tt X}}$({\bi c,d})}, where 
$Q:=\sigma^Z\cap l_1^Z$. 

Assume that $j=2$. Then $c_1=c_2=1$. 
Since $\deg(\Delta_Z\cap\sigma^Z)=1$, we can assume that either 
$(d_1^Z, d_2^Z)=(0, 0)$ or $(2, 0)$ holds. 
We can show that the case $(d_1^Z, d_2^Z)=(0, 0)$ corresponds to the 
type \textbf{$[$3;2,2$]_{2A}$} 
and the case $(d_1^Z, d_2^Z)=(2, 0)$ corresponds to the 
type \textbf{$[$3;2,2$]_{2B}$}.

\subsection{The case $(n, h)=(3, 14)$}\label{tetIII314_section}

In this case, $j=1$ and $c_1=1$. 
If $l_1$ satisfies the condition \eqref{III_1}, 
then this corresponds to the type \textbf{$[$3;2,1$]_{1A}$}. 
If $l_1$ satisfies the condition \eqref{III_2}, 
then this corresponds to the type \textbf{$[$3;2,1$]_{1B}$}.

\subsection{The case $(n, h)=(3, 15)$}\label{tetIII315_section}

In this case, $k_X=0$, $k_Z=3$, $E_X=2\sigma$ and $\deg(\Delta_Z\cap\sigma^Z)=3$. 
This case is nothing but the type \textbf{$[$3;2,0$]$}.

\subsection{The case $(n, h)=(4, 16)$}\label{tetIII416_section}

In this case, we have $\Delta_Z\cap\sigma^Z=\emptyset$. 
Assume that $j=1$. Then $c_1=2$. 
We can show that the case $({\tt X})$ 
(${\tt X}\in\{$\ref{III_A}$,\dots,$\ref{III_F}$\}$) 
corresponds to the type 
\textbf{$[$4;2,2$]_{1{\tt X}}$}. 

Assume that $j=2$. Then $c_1=c_2=1$ and $(d_1^Z, d_2^Z)=(0, 0)$ hold. 
This corresponds to the type \textbf{$[$4;2,2$]_2$}.

\subsection{The case $(n, h)=(4, 17)$}\label{tetIII417_section}

In this case, $j=1$ and $c_1=1$. 
If $l_1$ satisfies the condition \eqref{III_1}, 
then this corresponds to the type \textbf{$[$4;2,1$]_{1A}$}. 
If $l_1$ satisfies the condition \eqref{III_2}, 
then this corresponds to the type \textbf{$[$4;2,1$]_{1B}$}.

\subsection{The case $(n, h)=(4, 18)$}\label{tetIII418_section}

In this case, $k_X=0$, $k_Z=2$, $E_X=2\sigma$ and $\deg(\Delta_Z\cap\sigma^Z)=2$. 
This case is nothing but the type \textbf{$[$4;2,0$]$}.

\subsection{The case $(n, h)=(5, 20)$}\label{tetIII520_section}

We note that $\Delta_Z\cap\sigma^Z=\emptyset$. 
In this case, $j=1$, $c_1=1$ and 
the curve $l_1$ satisfies the condition \eqref{III_1}. 
This corresponds to the type \textbf{$[$5;2,1$]_1$}.

\subsection{The case $(n, h)=(5, 21)$}\label{tetIII521_section}

In this case, $k_X=0$, $k_Z=1$, $E_X=2\sigma$ and $\deg(\Delta_Z\cap\sigma^Z)=1$. 
This case is nothing but the type \textbf{$[$5;2,0$]$}.

\subsection{The case $(n, h)=(6, 24)$}\label{tetIII624_section}

In this case, $k_X=k_Z=0$ and $E_X=2\sigma$. 
This case is nothing but the type \textbf{$[$6;2,0$]$}.

As a consequence, we have completed the proof of Theorem \ref{tetII_thm}.

\section{Classification of bottom tetrads, III}\label{XCIII_section}

We classify bottom tetrads $(X, E_X; \Delta_Z, \Delta_X)$ with trivial 
$2K_X+L_X$.

\begin{thm}\label{tetIII_thm}
The bottom tetrads $(X, E_X; \Delta_Z, \Delta_X)$ with trivial $2K_X+L_X$
are classified by the types defined as follows $($We assume that any of them 
satisfies that $\Delta_Z$ satisfies the $(\nu1)$-condition.$)$: 

\smallskip

The case $X=\pr^2$ and 
$E_X=C$ $(C$ is an irreducible nodal cubic curve. Let $P$ be the singular 
point of $C$.$):$

\begin{description}
\item[{$[$3$]_{NA}$}]
$\Delta_X\subset C\setminus\{P\}$ and $\deg\Delta_X=8$. 
$\Delta_Z=\{Q\}$ and $\deg\Delta_Z=1$, where $Q$ is the singular point of $C^Z$. 

\smallskip

\item[{$[$3$]_{NB}$}]
$\deg\Delta_X=7$, $\mult_P\Delta_X=1$ and $\Delta_X\setminus\{P\}\subset C$. 
$|\Delta_Z|=\{Q_1, Q_2\}$ and $\mult_{Q_i}\Delta_Z=1$, where 
$\{Q_1, Q_2\}=C^Z\cap\Gamma_{P, 1}$. 
\end{description}

\smallskip

The case $X=\pr^2$ and 
$E_X=C$ $(C$ is an irreducible cuspidal cubic curve. Let $P$ be the singular 
point of $C$.$):$

\begin{description}
\item[{$[$3$]_{CA}$}]
$\Delta_X\subset C\setminus\{P\}$ and $\deg\Delta_X=8$. 
$\Delta_Z=\{Q\}$ and $\deg\Delta_Z=1$, where $Q$ is the singular point of $C^Z$. 

\smallskip

\item[{$[$3$]_{CB}$}]
$\deg\Delta_X=7$, $\mult_P\Delta_X=1$ and $\Delta_X\setminus\{P\}\subset C$. 
$|\Delta_Z|=\{Q\}$, 
$\mult_Q\Delta_Z=\mult_Q(\Delta_Z\cap C^Z)=\mult_Q(\Delta_Z\cap\Gamma_{P, 1})=2$, 
where $\{Q\}=C^Z\cap\Gamma_{P, 1}$. 
\end{description}

\smallskip

The case $X=\pr^2$ and 
$E_X=C+l$ $(C$ is a nonsingular conic and $l$ is a line. $C$ and $l$ 
meet two points $P_1$, $P_2$.$):$

\begin{description}
\item[{$[$3$]_{AA}$}]
$\deg\Delta_X=5$, $\deg(\Delta_X\cap C)=5$, $\deg(\Delta_X\cap l)=2$ and 
$\mult_{P_i}\Delta_X=1$ for $i=1$, $2$. 
$\deg\Delta_Z=4$ and $|\Delta_Z|=\{Q_{1C}$, $Q_{1l}$, $Q_{2C}$, $Q_{2l}\}$, where 
$Q_{iC}:=C^Z\cap\Gamma_{P_i, 1}$ and $Q_{il}:=l^Z\cap\Gamma_{P_i, 1}$. 

\smallskip

\item[{$[$3$]_{AB}$}]
$\deg\Delta_X=6$, $\deg(\Delta_X\cap C)=5$, $\deg(\Delta_X\cap l)=2$,
$P_2\not\in\Delta_X$ and $\mult_{P_1}\Delta_X=1$. 
$\deg\Delta_Z=3$ and $|\Delta_Z|=\{Q_2$, $Q_{1C}$, $Q_{1l}\}$, where 
$Q_2:=C^Z\cap l^Z$, 
$Q_{1C}:=C^Z\cap\Gamma_{P_1, 1}$ and $Q_{1l}:=l^Z\cap\Gamma_{P_1, 1}$. 
\end{description}

\smallskip

The case $X=\pr^2$ and 
$E_X=C+l$ $(C$ is a nonsingular conic and $l$ is a line. $C$ and $l$ 
contacts with each other at one point $P$.$):$

\begin{description}
\item[{$[$3$]_{KA}$}]
$\deg\Delta_X=5$, $\deg(\Delta_X\cap C)=5$, $\deg(\Delta_X\cap l)=2$ and 
$\mult_P\Delta_X=\mult_P(\Delta_X\cap C)=\mult_P(\Delta_X\cap l)=2$.
$\deg\Delta_Z=4$, 
$\mult_{Q_C}\Delta_Z=\mult_{Q_C}(\Delta_Z\cap C^Z)=2$ and 
$\mult_{Q_l}\Delta_Z=\mult_{Q_l}(\Delta_Z\cap l^Z)=2$, 
where $Q_C=C^Z\cap\Gamma_{P, 2}$ and $Q_l=l^Z\cap\Gamma_{P, 2}$. 

\smallskip

\item[{$[$3$]_{KB}\langle${\bi b}$\rangle$} $(2\leqslant b\leqslant 6)$]
$\deg\Delta_X=7$, $\deg(\Delta_X\cap C)=6$, $\deg(\Delta_X\cap l)=3$, 
$b=\mult_P\Delta_X=\mult_P(\Delta_X\cap C)$ and 
$\mult_P(\Delta_X\cap l)=2$. 
$\deg\Delta_Z=2$, $\Delta_Z\subset\Gamma_{P, b}$ and 
$\Delta_Z\cap(C^Z\cup l^Z\cup\Gamma_{P, b-1})=\emptyset$. 

\smallskip

\item[{$[$3$]_{KC}\langle${\bi b}$\rangle$} $(2\leqslant b\leqslant 5)$]
$\deg\Delta_X=6$, $\deg(\Delta_X\cap C)=5$, $\deg(\Delta_X\cap l)=3$, 
$b=\mult_P\Delta_X=\mult_P(\Delta_X\cap C)$ and 
$\mult_P(\Delta_X\cap l)=2$. 
$\deg\Delta_Z=3$, $\mult_Q\Delta_Z=\mult_Q(\Delta_Z\cap C^Z)=2$ and 
$\Delta_Z\setminus\{Q\}\subset\Gamma_{P, b}
\setminus(l^Z\cup\Gamma_{P, b-1})$, 
where $Q=C^Z\cap\Gamma_{P, b}$. 
\end{description}

\smallskip

The case $X=\pr^2$ and $E_X=2l_1+l_2$ $(l_i$ are distinct lines. Set $P:=l_1\cap l_2$.$):$

\begin{description}
\item[{$[$3$]_{2A}\langle${\bi b}$\rangle$} $(1\leqslant b\leqslant 3)$]
$\deg\Delta_X=5$, $b=\mult_P\Delta_X=\mult_P(\Delta_X\cap l_2)$, 
$\mult_P(\Delta_X\cap l_1)=1$, $|\Delta_X|\cap l_1=\{P, P_1\}$ with 
$(\mult_{P_1}\Delta_X$, $\mult_{P_1}(\Delta_X\cap l_1))=(2, 2)$ and 
$\deg(\Delta_X\cap l_2)=3$. 
$\deg\Delta_Z=4$, $\Delta_Z\subset\Gamma_{P, b}\cup\Gamma_{P_1, 2}$, 
$\Delta_Z\cap
(l_1^Z\cup l_2^Z\cup\Gamma_{P_1, 1}\cup\Gamma_{P, b-1})=\emptyset$ 
and $\deg(\Delta_Z\cap\Gamma_{P, b})=\deg(\Delta_Z\cap\Gamma_{P_1, 2})=2$.

\smallskip

\item[{$[$3$]_{2B1}\langle$1$\rangle$({\bi c,d})} $((c, d)=(0, 0), (1, 1), (1, 2))$]
$\deg\Delta_X=4$, 
$|\Delta_X|\cap l_1=\{P, P_1\}$ with 
$\mult_{P_1}\Delta_X=1$, $\mult_P\Delta_X=1$ and 
$\deg(\Delta_X\cap l_2)=3$. 
$\deg\Delta_Z=5$, $\deg(\Delta_Z\cap l_1^Z)=\deg(\Delta_Z\cap\Gamma_{P, 1})=2$, 
$\mult_{Q_2}\Delta_Z=\mult_{Q_2}(\Delta_Z\cap\Gamma_{P_1, 1})=2$,  
$\Delta_Z\cap l_2^Z=\emptyset$, 
$\mult_{Q_1}(\Delta_Z\cap l_1^Z)=c$, $\mult_{Q_1}(\Delta_Z\cap\Gamma_{P, 1})=d$ 
and $\mult_{Q_1}\Delta_Z=c+d$, 
where $Q_1=l_1^Z\cap\Gamma_{P,1}$ and $Q_2=l_1^Z\cap\Gamma_{P_1, 1}$.

\smallskip

\item[{$[$3$]_{2B1}\langle${\bi b}$\rangle$} $(2\leqslant b\leqslant 3)$]
$\deg\Delta_X=4$, $b=\mult_P\Delta_X=\mult_P(\Delta_X\cap l_2)$, 
$|\Delta_X|\cap l_1=\{P, P_1\}$ with 
$\mult_{P_1}\Delta_X=1$ and 
$\deg(\Delta_X\cap l_2)=3$. 
$\deg\Delta_Z=5$, $\deg(\Delta_Z\cap l_1^Z)=\deg(\Delta_Z\cap\Gamma_{P, b})=2$, 
$\mult_Q\Delta_Z=\mult_Q(\Delta_Z\cap\Gamma_{P_1, 1})=2$ and 
$\Delta_Z\cap(l_2^Z\cup\Gamma_{P, 1}\cup\Gamma_{P, b-1})=\emptyset$, 
where $Q=l_1^Z\cap\Gamma_{P_1, 1}$.

\smallskip

\item[{$[$3$]_{2B2}\langle$1$\rangle$({\bi c,d})} $((c, d)=(0, 0), (1, 1))$]
$\deg\Delta_X=3$, $\mult_P\Delta_X=1$, 
$|\Delta_X|\cap l_1=\{P, P_1\}$ with 
$\mult_{P_1}\Delta_X=1$ and 
$\deg(\Delta_X\cap l_2)=2$. $\deg\Delta_Z=6$, 
$\deg(\Delta_Z\cap l_1^Z)=\deg(\Delta_Z\cap\Gamma_{P, 1})=2$, 
$\mult_{Q_1}\Delta_Z=\mult_{Q_1}(\Delta_Z\cap l_2^Z)=2$, 
$\mult_{Q_2}\Delta_Z=\mult_{Q_2}(\Delta_Z\cap\Gamma_{P_1, 1})=2$, 
$\mult_{Q_3}(\Delta_Z\cap l_1^Z)=c$, $\mult_{Q_3}(\Delta_Z\cap\Gamma_{P, 1})=d$ 
and $\mult_{Q_3}\Delta_Z=c+d$, 
where $Q_1=l_2^Z\cap\Gamma_{P, 1}$, $Q_2=l_1^Z\cap\Gamma_{P_1, 1}$
and $Q_3=l_1^Z\cap\Gamma_{P, 1}$.

\smallskip

\item[{$[$3$]_{2B2}\langle$2$\rangle$}]
$\deg\Delta_X=3$, $\mult_P\Delta_X=\mult_P(\Delta_X\cap l_2)=2$, 
$|\Delta_X|\cap l_1=\{P, P_1\}$ with 
$\mult_{P_1}\Delta_X=1$.
$\deg\Delta_Z=6$, $\mult_Q\Delta_Z=\mult_Q(\Delta_Z\cap l_2^Z)=2$, 
$\deg(\Delta_Z\cap l_1^Z)=\deg(\Delta_Z\cap\Gamma_{P, 2})=2$, 
$\mult_{Q_1}\Delta_Z=\mult_{Q_1}(\Delta_Z\cap\Gamma_{P_1, 1})=2$ and 
$\Delta_Z\cap\Gamma_{P, 1}=\emptyset$, 
where $Q=l_2^Z\cap\Gamma_{P, 2}$ and $Q_1=l_1^Z\cap\Gamma_{P_1, 1}$.

\smallskip

\item[{$[$3$]_{2C1}\langle$1$\rangle$({\bi c,d})} $((c, d)=(0, 0), (1, 1),\dots,(4, 1), (1, 2))$]
$\deg\Delta_X$$=$$3$, $\mult_P\Delta_X=1$ and 
$\Delta_X\subset l_2$. 
$\deg\Delta_Z=6$, $\deg(\Delta_Z\cap l_1^Z)=4$, 
$\deg(\Delta_Z\cap\Gamma_{P, 1})=2$, 
$\Delta_Z\cap l_2^Z=\emptyset$, 
$\mult_Q(\Delta_Z\cap l_1^Z)=c$, $\mult_Q(\Delta_Z\cap\Gamma_{P, 1})=d$, 
$\mult_Q\Delta_Z=c+d$, 
where $Q=l_1^Z\cap\Gamma_{P,1}$.

\smallskip

\item[{$[$3$]_{2C1}\langle${\bi b}$\rangle$} $(2\leqslant b\leqslant 3)$]
$\deg\Delta_X=3$, $b=\mult_P\Delta_X=\mult_P(\Delta_X\cap l_2)$, 
$\mult_P(\Delta_X\cap l_1)=1$ and 
$\Delta_X\subset l_2$. 
$\deg\Delta_Z=6$, $\deg(\Delta_Z\cap l_1^Z)=4$, 
$\deg(\Delta_Z\cap\Gamma_{P, b})=2$ and 
$\Delta_Z\cap(\Gamma_{P, b-1}\cup l_2^Z)=\emptyset$.

\smallskip

\item[{$[$3$]_{2C2}\langle$1$\rangle$({\bi c,d})} $((c, d)=(0, 0)$, $(1, 1),\dots,(4, 1))$]
$\deg\Delta_X=2$ and $\Delta_X\subset l_2$, $\mult_P\Delta_X=1$.
$\deg\Delta_Z=7$, $\deg(\Delta_Z\cap l_1^Z)=4$, 
$\deg(\Delta_Z\cap\Gamma_{P, 1})=2$, 
$\mult_{Q_1}(\Delta_Z\cap l_1^Z)=c$, $\mult_{Q_1}(\Delta_Z\cap\Gamma_{P, 1})=d$, 
$\mult_{Q_1}\Delta_Z=c+d$ and 
$\mult_{Q_2}\Delta_Z=\mult_{Q_2}(\Delta_Z\cap l_2^Z)=2$,  
where $Q_1=l_1^Z\cap\Gamma_{P,1}$ and $Q_2=l_2^Z\cap\Gamma_{P, 1}$. 

\smallskip

\item[{$[$3$]_{2C2}\langle$2$\rangle$}]
$\deg\Delta_X=2$ and $\mult_P\Delta_X=\mult_P(\Delta_X\cap l_2)=2$. 
$\deg\Delta_Z=7$, $\deg(\Delta_Z\cap l_1^Z)=4$, 
$\deg(\Delta_Z\cap\Gamma_{P, 2})=2$, 
$\Delta_Z\cap\Gamma_{P, 1}=\emptyset$ and 
$\mult_Q\Delta_Z=\mult_Q(\Delta_Z\cap l_2^Z)=2$, 
where $Q=l_2^Z\cap\Gamma_{P, 2}$. 

\smallskip

\item[{$[$3$]_{2C3}\langle${\bi b}$\rangle$} $(3\leqslant b\leqslant 5)$]
$\deg\Delta_X=5$, $b=\mult_P\Delta_X=\mult_P(\Delta_X\cap l_2)+2$, 
$\mult_P(\Delta_X\cap l_1)=1$ and 
$\deg(\Delta_X\cap l_2)=3$. 
$\deg\Delta_Z=4$, $\Delta_Z\subset l_1^Z$ 
and $\Delta_Z\cap\Gamma_{P,1}=\emptyset$. 

\smallskip

\item[{$[$3$]_{2D}$({\bi c,d})} $((c, d)=(0, 0), (1, 1), (1, 2), (2, 1))$]
$\deg\Delta_X=5$, $P\not\in\Delta_X$, 
$|\Delta_X|\cap l_1=\{P_1\}$, 
$(\mult_{P_1}\Delta_X$, $\mult_{P_1}(\Delta_X\cap l_1))=(2, 2)$, 
$\deg(\Delta_X\cap l_2)=3$. $\deg\Delta_Z=4$, 
$\mult_Q(\Delta_Z\cap l_1^Z)=c$, $\mult_Q(\Delta_Z\cap\Gamma_{P_1, 2})=d$, 
$\mult_Q\Delta_Z=c+d$, 
$\deg(\Delta_Z\cap l_1^Z)=\deg(\Delta_Z\cap\Gamma_{P_1, 2})=2$,
$\Delta_Z\cap(l_2^Z\cup\Gamma_{P_1, 1})=\emptyset$, 
where $Q=l_1^Z\cap\Gamma_{P_1, 2}$. 

\smallskip

\item[{$[$3$]_{2E}$}]
$\deg\Delta_X=5$, $P\not\in\Delta_X$, 
$|\Delta_X|\cap l_1=\{P_1\}$ with 
$(\mult_{P_1}\Delta_X$, $\mult_{P_1}(\Delta_X\cap l_1))=(2, 1)$ and 
$\deg(\Delta_X\cap l_2)=3$. $\deg\Delta_Z=4$, 
$\Delta_Z\subset l_1^Z$ and $\Delta_Z\cap(l_2^Z\cup\Gamma_{P_1, 1})=\emptyset$. 

\smallskip

\item[{$[$3$]_{2F1}$}]
$\deg\Delta_X=3$, $P\not\in\Delta_X$, 
$|\Delta_X|\cap l_1=\{P_1\}$ with 
$\mult_{P_1}\Delta_X=1$ and 
$\deg(\Delta_X\cap l_2)=2$. $\deg\Delta_Z=6$, 
$\mult_{Q_1}\Delta_Z=\mult_{Q_1}(\Delta_Z\cap l_2^Z)=2$, $\deg(\Delta_Z\cap l_1^Z)=2$, 
$\mult_{Q_2}\Delta_Z=\mult_{Q_2}(\Delta_Z\cap\Gamma_{P_1, 1})=2$, 
$\deg(\Delta_Z\cap l_1^Z)=4$, where $Q_1=l_1^Z\cap l_2^Z$ and 
$Q_2=l_1^Z\cap\Gamma_{P_1, 1}$. 

\smallskip

\item[{$[$3$]_{2F2}$}]
$\deg\Delta_X=4$, $P\not\in\Delta_X$, 
$|\Delta_X|\cap l_1=\{P_1\}$, 
$\mult_{P_1}\Delta_X=1$, 
$\deg(\Delta_X\cap l_2)=3$. $\deg\Delta_Z=5$, 
$\mult_Q\Delta_Z=\mult_Q(\Delta_Z\cap\Gamma_{P_1, 1})=2$, 
$\deg(\Delta_Z\cap l_1^Z)=4$, $\Delta_Z\cap l_2^Z=\emptyset$, 
where $Q=l_1^Z\cap\Gamma_{P_1, 1}$. 

\smallskip

\item[{$[$3$]_{2G1}$}]
$\deg\Delta_X=2$, $P\not\in\Delta_X$, 
$\Delta_X\subset l_2$. $\deg\Delta_Z=7$, 
$\mult_Q\Delta_Z=\mult_Q(\Delta_Z\cap l_2^Z)=2$ and 
$\deg(\Delta_Z\cap l_1^Z)=6$, where $Q=l_1^Z\cap l_2^Z$. 

\smallskip

\item[{$[$3$]_{2G2}$}]
$\deg\Delta_X=3$, $P\not\in\Delta_X$, 
$\Delta_X\subset l_2$. $\deg\Delta_Z=6$, 
$\Delta_Z\subset l_1^Z$ and $\Delta_Z\cap l_2^Z=\emptyset$. 
\end{description}

\smallskip

The case $X=\pr^2$ and $E_X=l_1+l_2+l_3$ $(l_i$ are distinct lines and 
$l_1\cap l_2\cap l_3=\emptyset$. Set $P_{ij}:=l_i\cap l_{j} (1\leqslant i<j\leqslant 3)$.$):$

\begin{description}
\item[{$[$3$]_{3A}$}]
$\deg\Delta_X=4$, $\mult_{P_{12}}\Delta_X=\mult_{P_{13}}\Delta_X=1$, 
$P_{23}\not\in\Delta_X$ and 
$\deg(\Delta_X\cap l_i)=2$ for $i=1$, $2$, $3$. 
$\deg\Delta_Z=5$ and $|\Delta_Z|=\{Q_{12}$, $Q_{21}$, $Q_{13}$, $Q_{31}$, $Q_{23}\}$, 
where 
$Q_{12}=l_1^Z\cap\Gamma_{P_{12}, 1}$, $Q_{21}=l_2^Z\cap\Gamma_{P_{12}, 1}$, 
$Q_{13}=l_1^Z\cap\Gamma_{P_{13}, 1}$, $Q_{31}=l_3^Z\cap\Gamma_{P_{13}, 1}$, 
and $Q_{23}=l_2^Z\cap l_3^Z$. 

\smallskip

\item[{$[$3$]_{3B}$}]
$\deg\Delta_X=3$ and $|\Delta_X|=\{P_{12}$, $P_{13}$, $P_{23}\}$.
$\deg\Delta_Z=6$ and $|\Delta_Z|=\{Q_{12}$, $Q_{21}$, $Q_{13}$, $Q_{31}$, $Q_{23}$, 
$Q_{32}\}$, where 
$Q_{12}=l_1^Z\cap\Gamma_{P_{12}, 1}$, $Q_{21}=l_2^Z\cap\Gamma_{P_{12}, 1}$, 
$Q_{13}=l_1^Z\cap\Gamma_{P_{13}, 1}$, $Q_{31}=l_3^Z\cap\Gamma_{P_{13}, 1}$, 
$Q_{23}=l_2^Z\cap\Gamma_{P_{23}, 1}$ and $Q_{32}=l_3^Z\cap\Gamma_{P_{23}, 1}$. 
\end{description}

\smallskip

The case $X=\pr^1\times\pr^1:$

\begin{description}
\item[{$[$0;2,2$]_0$}]
$E_X=2C$ such that $C:$ nonsingular, $C\sim\sigma+l$, 
$\Delta_X=\emptyset$, $\deg\Delta_Z=8$ and $\Delta_Z\subset C^Z$. 

\smallskip

\item[{$[$0;2,2$]_1$({\bi c,d})} $((c, d)=(0, 0), (1, 1),\dots, (4, 1))$]
$E_X=2\sigma+2l$ and 
$\Delta_X=\emptyset$. $\deg\Delta_Z=8$, 
$\deg(\Delta_Z\cap\sigma^Z)=\deg(\Delta_Z\cap l^Z)=4$, 
$\mult_Q(\Delta_Z\cap\sigma^Z)=c$, 
$\mult_Q(\Delta_Z\cap l^Z)=d$ and $\mult_Q\Delta_Z=c+d$, where 
$Q=\sigma^Z\cap l^Z$. 
\end{description}

\smallskip

The case $X=\F_2:$

\begin{description}
\item[{$[$2;2,4$]_0$}]
$E_X=2\sigma_\infty$, $\Delta_X=\emptyset$, $\deg\Delta_Z=8$ and 
$\Delta_Z\subset\sigma_\infty^Z$. 

\smallskip

\item[{$[$2;2,4$]_1$}]
$E_X=2\sigma+2l_1+2l_2$ $(l_1$, $l_2:$ distinct fibers$)$, 
$\Delta_X=\emptyset$, $\deg\Delta_Z=8$, 
$\deg(\Delta_Z\cap l_1^Z)=\deg(\Delta_Z\cap l_2^Z)=4$ 
and $\Delta_Z\cap\sigma^Z=\emptyset$. 
\end{description}
\end{thm}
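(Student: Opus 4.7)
The plan is to mirror the structure of the proofs of Theorems \ref{tetI_thm} and \ref{tetII_thm}. The converse direction, that every listed tetrad is indeed a bottom tetrad, will follow directly from Proposition \ref{converse_prop}: for each listed case I would verify that $\Supp E_M$ is simple normal crossing, $\coeff E_M \subset \{1,2\}$, and $(L_M \cdot E_0) = 0$ for every component $E_0 \leqslant E_M$, using the explicit descriptions of $\Delta_X$ and $\Delta_Z$ together with the local pictures produced by Lemmas \ref{ZS1}--\ref{ZS5} and \ref{XS1}--\ref{XS5}. These verifications are routine. The main work is the forward direction.

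So let $(X, E_X; \Delta_Z, \Delta_X)$ be a bottom tetrad with $2K_X + L_X \sim 0$. First I would pin down the numerical data. Writing $K_X \sim -3l$ on $\pr^2$ forces $L_X \sim 6l$ and $E_X \sim 3l$, so $E_X$ is an effective cubic and by Lemma \ref{FuBo_lem}\eqref{FuBo_lem3} one has $\deg\Delta_X + \deg\Delta_Z = 9$. On $\F_n$, writing $L_X \sim h_0\sigma + hl$ forces $h_0 = 4$ and $h = 2n+4$, hence $E_X \sim 2\sigma + (n+2)l$; condition (C) of Definition \ref{bot_dfn3} restricts to $n \in \{0, 2\}$ with $\Delta_X = \emptyset$, and $\deg\Delta_Z = 8$.

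Next I would carry out a case analysis. On $\pr^2$, the cubic $E_X$ splits into six geometric types: irreducible nodal, irreducible cuspidal, smooth conic plus transverse line, smooth conic plus tangent line, $2l_1 + l_2$, and three distinct lines. In the three-line case, nonconcurrence is enforced by condition ($\sB$10) and the distribution of $\Delta_X$ over the pairwise intersections is further restricted by the same condition. Inside each case, I would apply the local classification of Sections \ref{ZS_section}--\ref{XS_section} componentwise, combined with the global accounting $2\deg(\Delta_X \cap C) + \deg(\Delta_Z \cap C^Z) = (L_X \cdot C)$ per component $C \leqslant E_X$ coming from Lemma \ref{FuBo_lem}\eqref{FuBo_lem3}. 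For $X \simeq \pr^1 \times \pr^1$ or $\F_2$, since $\Delta_X = \emptyset$ the elimination $\psi$ is an isomorphism, so the problem reduces to describing $\Delta_Z \subset X$ subject to the $(\nu 1)$-condition and the componentwise intersection constraints, handled by Lemmas \ref{ZS1}--\ref{ZS2} applied directly to $(X, E_X; \Delta_Z)$.

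The chief obstacle will be the case $E_X = 2l_1 + l_2$ on $\pr^2$, governed by condition ($\sB$11). The parameter $b := \mult_P\Delta_X$ at $P := l_1 \cap l_2$ ranges over several regimes (forced by Lemma \ref{XS2} to satisfy $b = \mult_P(\Delta_X \cap l_2)$ with $\mult_P(\Delta_X \cap l_1) \leqslant 1$), each producing distinct subtypes according to whether an auxiliary base point $P_1 \in l_1 \setminus \{P\}$ is present (with $\mult_{P_1}\Delta_X \leqslant 2$), and according to the distribution of $\Delta_Z$ among $l_1^Z$, $l_2^Z$, and the exceptional chains $\Gamma_{P,*}$ and $\Gamma_{P_1,*}$. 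The configurations excluded by Lemma \ref{reduction1_lem} are precisely those ruled out by ($\sB$11), so once those restrictions are imposed the enumeration stabilizes and matches the listed families $[3]_{2A}\langle b\rangle$, $[3]_{2B1}\langle b\rangle$, $[3]_{2B2}\langle b\rangle$, $[3]_{2C*}\langle b\rangle$, $[3]_{2D}$, $[3]_{2E}$, $[3]_{2F*}$, $[3]_{2G*}$. A parallel but tamer analysis produces the families $[3]_{KB}\langle b\rangle$ and $[3]_{KC}\langle b\rangle$ in the tangent conic-plus-line case, parametrized by the contact order of $\Delta_X$ at the tangency point $P$.
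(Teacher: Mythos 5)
Your plan is correct and follows essentially the same route as the paper's own proof: the converse via Proposition \ref{converse_prop}, the numerical reduction ($E_X\sim 3l$ and $\deg\Delta_X+\deg\Delta_Z=9$ on $\pr^2$; $\Delta_X=\emptyset$, $n\in\{0,2\}$, $\deg\Delta_Z=8$ forced by condition (C) on $\F_n$), the case split according to the geometry of the cubic $E_X$, and the enumeration within each case by the local Lemmas \ref{ZS1}--\ref{ZS5}, \ref{XS1}--\ref{XS5} together with conditions ($\sB$9)--($\sB$11), which is exactly how the paper handles the conic-plus-line and $2l_1+l_2$ cases.
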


We start to prove Theorem \ref{tetIII_thm}. Any tetrad in Theorem \ref{tetIII_thm} 
is a bottom tetrad by Proposition \ref{converse_prop}. 
We see the converse.

\subsection{The case $X=\pr^2$}

We consider the case $X=\pr^2$ and $E_X\sim 3l$. 
Set $\psi\colon Z\to X$, $\phi\colon M\to Z$, $E_Z$, $E_X$, $k_Z$ and $k_X$ as 
in the beginning of Section \ref{pr2_section}.
We note that $k_X\leqslant 8$ holds.

\subsubsection{The case $E_X=C$ $(C:$ irreducible singular cubic$)$}

Let $P$ be the singular point of $C$. 
We note that $\mult_PC=2$. 
By Lemmas \ref{XS2} and \ref{XS5}, $C^M$ is a connected component of $E_M$. 
Thus $((C^M)^2)=-3$. 
Assume that $P\not\in\Delta_X$. Then $E_Z=C^Z$ and $C^Z$ has a unique singular 
point $Q$ (the point over $P$). Thus $k_Z=1$ and $|\Delta_Z|=\{Q\}$. 
Since $((C^M)^2)=-3$, $k_X=8$ and $\Delta_X\subset C\setminus\{P\}$. 
This case is nothing but the type \textbf{$[$3$]_{NA}$} (if $C$ nodal) or the 
type \textbf{$[$3$]_{CA}$} (if $C$ cuspidal). 
Assume that $P\in\Delta_X$. By Lemmas \ref{XS2} and \ref{XS5}, 
$\mult_P\Delta_X=1$, $E_Z=C^Z+\Gamma_{P, 1}$ and $C^Z$ is nonsingular. 
If $C$ is nodal, then $|C^Z\cap\Gamma_{P, 1}|=\{Q_1, Q_2\}$. 
Thus $k_Z=2$ and $|\Delta_Z|=\{Q_1, Q_2\}$ by Lemma \ref{ZS2}. 
Since $((C^M)^2)=-3$, $\deg(\Delta_X\setminus\{P\})=6$ and 
$\Delta_X\setminus\{P\}\subset C$. 
This case is nothing but the type \textbf{$[$3$]_{NB}$}. 
If $C$ is cuspidal, then $|C^Z\cap\Gamma_{P, 1}|=\{Q\}$ and 
$\mult_Q(C^Z\cap\Gamma_{P, 1})=2$. 
Thus $k_Z=2$ and $|\Delta_Z|=\{Q\}$ by Lemma \ref{ZS4}. 
Since $((C^M)^2)=-3$, $\deg(\Delta_X\setminus\{P\})=6$ and 
$\Delta_X\setminus\{P\}\subset C$. 
This case is nothing but the type \textbf{$[$3$]_{CB}$}.

\subsubsection{The case $E_X=C+l$ $(C:$ nonsingular conic and $l:$ line that meet 
two points$)$}

Set $\{P_1, P_2\}:=C\cap l$. 
By Lemmas \ref{ZS2} and \ref{XS2}, both $C^M$ and $l^M$ are $(-3)$-curves 
and $\mult_{P_i}\Delta_X\leqslant 1$. 
Thus $\deg(\Delta_X\cap C)=5$, $\deg(\Delta_Z\cap C^Z)=2$, 
$\deg(\Delta_X\cap l)=2$ and $\deg(\Delta_Z\cap l^Z)=2$. 
By the condition $(\sB 9)$, we can assume that $P_1\in\Delta_X$.
If $P_2\in\Delta_X$, then this induces the type \textbf{$[$3$]_{AA}$}. 
If $P_2\not\in\Delta_X$, 
then this induces the type \textbf{$[$3$]_{AB}$}.

\subsubsection{The case $E_X=C+l$ $(C:$ nonsingular conic and $l:$ line that contacts 
with each other$)$}

Set $P:=|C\cap l|$, $d_C^X:=\deg(\Delta_X\cap C)$, $d_C^Z:=\deg(\Delta_Z\cap C^Z)$, 
$d_l^X:=\deg(\Delta_X\cap l)$ and $d_l^Z:=\deg(\Delta_Z\cap l^Z)$. 
By Claim \ref{tetI_curve_claim}, 
$(d_C^X, d_C^Z, ((C^M)^2))=(6, 0, -2)$ or $(5, 2, -3)$, and 
$(d_l^X, d_l^Z, ((l^M)^2))=(3, 0, -2)$ or $(2, 2, -3)$.
By the condition $(\sB 9)$, $P\in\Delta_X$. 

Assume that 
$\mult_P(\Delta_X\cap l)>\mult_P(\Delta_X\cap C)$. Then 
$\mult_P(\Delta_X\cap l)=b$ and $\mult_P(\Delta_X\cap C)=2$ by Lemma \ref{XS4}. 
In this case, $\Delta_Z\cap\Gamma_{P, 2}=\emptyset$. Thus $((C^M)^2)=-2$. 
In particular, $\deg(\Delta_X\cap C\setminus\{P\})=4$. Since $b\geqslant 3$, 
we have $b=d_X^l=3$. In particular, $\Delta_X\cap l\setminus\{P\}=\emptyset$. 
This contradicts to the condition $(\sB 9)$. This implies that 
$b=\mult_P(\Delta_X\cap C)\geqslant\mult_P(\Delta_X\cap l)=2$ 
by Lemma \ref{XS4}.

We consider the case $((l^M)^2)=-3$.
Set $Q_l:=l^Z\cap\Gamma_{P, 2}$ and $Q_C:=C^Z\cap\Gamma_{P, b}$. 
Since $l^M\cap\Gamma_{P, 2}^M=\emptyset$, we have $b=2$. 
Moreover, $\mult_{Q_l}\Delta_Z=\mult_{Q_l}(\Delta_Z\cap l^Z)=2$ and 
$\mult_{Q_l}(\Delta_Z\cap\Gamma_{P, 2})=1$. 
Assume that $Q_C\not\in\Delta_Z$. Then $((C^M)^2)=-2$. 
In this case, $\deg(\Delta_X\cap C\setminus\{P\})=4$ and 
$\Delta_X\cap l\setminus\{P\}=\emptyset$. 
This contradicts to the condition $(\sB 9)$.
Thus $Q_C\in\Delta_Z$, $((C^M)^2)=-3$, 
$\mult_{Q_C}\Delta_Z=\mult_{Q_C}(\Delta_Z\cap C^Z)=2$ and 
$\mult_{Q_C}(\Delta_Z\cap\Gamma_{P, 2})=1$. 
This case induces the type \textbf{$[$3$]_{KA}$}.

We consider the case $((l^M)^2)=-2$. 
If $((C^M)^2)=-2$, then $2\leqslant b\leqslant 6$. 
Moreover, $\Delta_Z\subset\Gamma_{P, b}$. 
This case induces the type \textbf{$[$3$]_{KB}\langle${\bi b}$\rangle$}. 
If $((C^M)^2)=-3$, then $2\leqslant b\leqslant 5$. 
Moreover, $\mult_Q\Delta_Z=\mult_Q(\Delta_Z\cap C^Z)=2$ and 
$\mult_Q(\Delta_Z\cap\Gamma_{P, b})=1$, where $Q:=C^Z\cap\Gamma_{P, b}$. 
This case induces the type \textbf{$[$3$]_{KC}\langle${\bi b}$\rangle$}.

\subsubsection{The case $E_X=2l_1+l_2$ $(l_i:$ distinct lines$)$ and $P\in\Delta_X$, 
where $P=l_1\cap l_2$}

Set $d_i^X:=\deg(\Delta_X\cap l_i)$, 
$d_i^Z:=\deg(\Delta_Z\cap l_i^Z)$ and $b:=\mult_P\Delta_X$. 
Then $(d_1^X, d_1^Z, ((l_1^M)^2))=(3, 0, -2)$, $(2, 2, -3)$ or $(1, 4, -4)$, 
and $(d_2^X, d_2^Z, ((l_2^M)^2))=(3, 0, -2)$ or $(2, 2, -3)$.
By Lemma \ref{XS2}, we have $\mult_P(\Delta_X\cap l_1)=1$. 
Moreover, one of the following holds: 

\begin{enumerate}
\renewcommand{\theenumi}{\arabic{enumi}}
\renewcommand{\labelenumi}{(\theenumi)}
\item\label{3_2_-1}
$b=\mult_P(\Delta_X\cap l_2)\leqslant 3$, $((l_2^M)^2)=-2$ and 
$\Delta_Z\cap l_2^Z=\emptyset$.
\item\label{3_2_-2}
$b=\mult_P(\Delta_X\cap l_2)\leqslant 2$, $((l_2^M)^2)=-3$, 
$\mult_Q\Delta_Z=\mult_Q(\Delta_Z\cap l_2^Z)=2$, 
$\mult_Q(\Delta_Z\cap\Gamma_{P, b})=1$, $k_X\neq 4$ and 
$\deg(\Delta_Z\cap\Gamma_{P, b})=2$, where $Q:=l_2^Z\cap\Gamma_{P, b}$. 
\item\label{3_2_-3}
$b=\mult_P(\Delta_X\cap l_2)+2\leqslant 5$, $((l_2^M)^2)=-2$, 
$\Delta_Z\cap l_2^Z=\emptyset$ and $\Delta_X\cap l_1\setminus\{P\}=\emptyset$.
\end{enumerate}

{\noindent\textbf{The case $d_1^X=3$:}}

In this case, $|\Delta_X|\cap l_1=\{P, P_1\}$ with 
$(\mult_{P_1}\Delta_X, \mult_{P_1}(\Delta_X\cap l_1))=(2, 2)$.
Moreover, $b=\mult_P(\Delta_X\cap l_2)$ and $k_X=d_1^X+d_2^X-1=4$. Therefore 
only the case \eqref{3_2_-1} occurs. This case induces the type 
\textbf{$[$3$]_{2A}\langle${\bi b}$\rangle$}.

{\noindent\textbf{The case $d_1^X=2$:}}

In this case, $|\Delta_X|\cap l_1=\{P, P_1\}$ and 
$\mult_{P_1}(\Delta_X\cap l_1)=1$. 
Assume that $\mult_{P_1}\Delta_X=2$. 
Then $d_2^X=3$. However, in this case, 
we must have $\mult_{P_1}(\Delta_X\cap l_1)=2$ or $\deg(\Delta_X\cap l_1)=1$ 
by the condition $(\sB 11)$. This is a contradiction. 
Thus 
$\mult_{P_1}\Delta_X=1$.
In this case, $k_X=1+d_2^X$. 
Assume that $l_2$ satisfies the case $({\tt y})$ for ${\tt y}\in\{1, 2\}$. 
If $b\geqslant 2$, 
then this case corresponds to the type 
\textbf{$[$3$]_{2B{\tt y}}\langle${\bi b}$\rangle$}. 
Assume the case $b=1$. Set $Q:=l_1^Z\cap\Gamma_{P, 1}$, 
$c:=\mult_Q(\Delta_Z\cap l_1^Z)$ and 
$d:=\mult_Q(\Delta_Z\cap\Gamma_{P, 1})$. 
Then this case corresponds to the type 
\textbf{$[$3$]_{2B{\tt y}}\langle$1$\rangle$({\bi c,d})}.

{\noindent\textbf{The case $d_1^X=1$:}}

We can show that the case ({\tt y}) 
(${\tt y}\in\{$\ref{3_2_-1}, \ref{3_2_-2}, \ref{3_2_-3}$\}$) corresponds to the 
type \textbf{$[$3$]_{2C{\tt y}}\langle${\bi b}$\rangle$} 
unless ${\tt y}\in\{$\ref{3_2_-1}, \ref{3_2_-2}$\}$ and $b=1$. 
Assume that $b=1$. 
Set $Q:=l_1^Z\cap\Gamma_{P, 1}$, 
$c:=\mult_Q(\Delta_Z\cap l_1^Z)$ and $d:=\mult_Q(\Delta_Z\cap\Gamma_{P, 1})$.  
If ${\tt y}\in\{$\ref{3_2_-1}, \ref{3_2_-2}$\}$, then this corresponds to the 
type \textbf{$[$3$]_{2C{\tt y}}\langle$1$\rangle$({\bi c,d})}.

\subsubsection{The case $E_X=2l_1+l_2$ $(l_i:$ distinct lines$)$ and $P\not\in\Delta_X$, 
where $P=l_1\cap l_2$}

Let $Q\in Z$ be the inverse image of $P\in X$. 
In this case, $Q\in\Delta_Z$ if and only if $((l_2^M)^2)=-3$. 
We note that $d_1^X\leqslant 2$.

{\noindent\textbf{The case $d_1^X=2$:}}

In this case, 
$|\Delta_X|\cap l_1=\{P_1\}$ and 
$(\mult_{P_1}\Delta_X, \mult_{P_1}(\Delta_X\cap l_1))=(2, 2)$. 
Set $Q_1:=l_1^Z\cap\Gamma_{P_1, 2}$, $c:=\mult_{Q_1}(\Delta_Z\cap l_1^Z)$ and 
$d:=\mult_{Q_1}(\Delta_Z\cap\Gamma_{P_1, 2})$. 
Assume that $Q\in\Delta_Z$. Then $d_2^X=2$ and $k_X=4$. This is a contradiction. 
Thus $Q\not\in\Delta_Z$, This corresponds to the type 
\textbf{$[$3$]_{2D}$({\bi c,d})}.

{\noindent\textbf{The case $d_1^X=1$:}}

In this case, one of the following holds:

\begin{enumerate}
\setcounter{enumi}{0}
\renewcommand{\theenumi}{\Alph{enumi}}
\renewcommand{\labelenumi}{(\theenumi)}
\item\label{3_2_E-}
$|\Delta_X|\cap l_1=\{P_1\}$ with 
$(\mult_{P_1}\Delta_X, \mult_{P_1}(\Delta_X\cap l_1))=(2, 1)$.
\item\label{3_2_F-}
$|\Delta_X|\cap l_1=\{P_1\}$ with 
$(\mult_{P_1}\Delta_X, \mult_{P_1}(\Delta_X\cap l_1))=(1, 1)$.
\end{enumerate}

We consider the case \eqref{3_2_E-}. 
Assume that $Q\in\Delta_Z$. Then $d_2^X=2$ and $k_X=4$, a contradiction. 
Thus $Q\not\in\Delta_Z$. This corresponds to the type \textbf{$[$3$]_{2E}$}. 
We consider the case \eqref{3_2_F-}. If $Q\in\Delta_Z$, then this corresponds to 
the type \textbf{$[$3$]_{2F1}$}. 
If $Q\not\in\Delta_Z$, then this corresponds to the type \textbf{$[$3$]_{2F2}$}.

{\noindent\textbf{The case $d_1^X=0$:}}

If $Q\in\Delta_Z$, then this corresponds to 
the type \textbf{$[$3$]_{2G1}$}. 
If $Q\not\in\Delta_Z$, then this corresponds to the type \textbf{$[$3$]_{2G2}$}.

\subsubsection{The case $E_X=l_1+l_2+l_3$ $(l_i:$ distinct lines$)$}

Set $P_{ij}:=l_i\cap l_j$ for $1\leqslant i<j\leqslant 3$. 
By the condition $(\sB 10)$, 
$l_1\cap l_2\cap l_3=\emptyset$ and we can assume that 
$P_{12}$, $P_{13}\in\Delta_X$. 
By Lemma \ref{XS1}, 
$\mult_{P_{ij}}\Delta_X\leqslant 1$ and any component of $E_M$ is reduced. 
Thus $((l_i^M)^2)=-3$ for $i=1$, $2$, $3$. 
If $P_{23}\not\in\Delta_X$, 
then this corresponds to the type \textbf{$[$3$]_{3A}$}. 
If $P_{23}\in\Delta_X$, then this corresponds to the type 
\textbf{$[$3$]_{3B}$}.

\subsection{The case $X=\F_n$}

Let $(X=\F_n, E_X; \Delta_Z, \Delta_X)$ be a 
bottom tetrad such that $2K_X+L_X$ is trivial, 
where $L_X$ is the fundamental divisor. 
We note that $\Delta_X=\emptyset$ and $n=0$ or $2$. In particular, 
$Z=X$. 
Let $\phi\colon M\to Z$ be the elimination of $\Delta_Z$, 
$E_M:=(E_X)_M^{\Delta_Z, 2}$.
Since $2K_X+L_X$ is trivial, we have $L_X\sim 4\sigma+2(n+2)l$, 
$E_X\sim 2\sigma+(n+2)l$ 
and $\deg\Delta_Z=8$.

\subsubsection{The case $n=0$}\label{tetIII0_section}

Take an irreducible component $C\leqslant E_X$. 
Assume that $C$ is singular. 
Then $E_X=C$. In this case, $C$ has a unique singular point which is 
locally isomorphic to plane cubic singularity 
since $C$ is a rational curve. Thus $\deg\Delta_Z\leqslant 1$, 
a contradiction. 
Assume that $C\sim\sigma+2l$. Then $E_X=C+\sigma$ and 
$\deg\Delta_Z\leqslant 2$ 
by Lemmas \ref{ZS2} and \ref{ZS4}, a contradiction. 
Assume that $C\sim\sigma+l$. If $\coeff_CE_X=1$, then 
$\deg\Delta_Z\leqslant 3$ 
by Lemmas \ref{ZS2} and \ref{ZS3}, a contradiction. 
Thus $E_X=2C$. In this case, $\Delta_Z\subset C$. This is nothing but the 
type \textbf{$[$0;2,2$]_0$}. 

From now on, we can assume that any component of $E_X$ is either $\sigma$ or $l$. 
By Lemma \ref{ZS2}, we have $E_X=2\sigma+2l$. 
Set $c:=\mult_Q(\Delta_Z\cap\sigma)$ and $d:=\mult_Q(\Delta_Z\cap l)$. 
We may assume that $c\geqslant d$.
Then $\mult_Q\Delta_Z=c+d$ by Lemma \ref{ZS2}. 
Moreover, $\deg(\Delta_Z\cap\sigma)=\deg(\Delta_Z\cap l)=4$. This is nothing but the 
type \textbf{$[$0;2,2$]_1$({\bi c,d})}.

\subsubsection{The case $n=2$}\label{tetIII2_section}

By the argument in Section \ref{tetIII0_section}, we have 
$E_X=2\sigma_\infty$ or $2\sigma+2l_1+2l_2$. 
If $E_X=2\sigma_\infty$, then this corresponds to the type \textbf{$[$2;2,4$]_0$}. 
If $E_X=2\sigma+2l_1+2l_2$, then this corresponds to the type \textbf{$[$2;2,4$]_1$}. 

Consequently, we have completed the proof of Theorem \ref{tetIII_thm}.

\section{Structure properties}\label{struc_section}

In this section, we treat some structure properties of bottom tetrads, median 
triplets and $3$-basic pairs.

\begin{definition}\label{type_dfn}
For the type of the form \textbf{$[\bullet]_\bullet$($\bullet$)} 
(resp.\ \textbf{$[\bullet]_\bullet\langle\bullet\rangle$($\bullet$)}, 
\textbf{$[\bullet]_\bullet\langle\bullet\rangle$}) of
a bottom tetrad, the form \textbf{$[\bullet]_\bullet$} 
(resp.\ \textbf{$[\bullet]_\bullet\langle\bullet\rangle$}, 
\textbf{$[\bullet]_\bullet\langle\bullet\rangle$})
is said to be the \emph{median part} of the type. 
\end{definition}

The next proposition ensures that there is no overlapping in 
bottom tetrads and in median triplets. 
The proof is essentially same as \cite[Theorem 4.9]{N}.

\begin{table}[p]
\caption{The weighted dual graphs of $E_Z$ for the bottom tetrads 
$(X=\pr^2, E_X; \Delta_Z, \Delta_X)$ with $E_X\sim -K_X$.}\label{E_Z_graph}
\begin{tabular}{|c|c||c|c|}
\hline
{\tiny median} & {\scriptsize Graph} & 
{\tiny median} & {\scriptsize Graph} \\ 
{\tiny part of the type} & & {\tiny part of the type} & \\ \hline \hline 
 & \rule[0mm]{40mm}{0mm}
 & 
 & \rule[0mm]{40mm}{0mm}
\\
\textbf{$[$3$]_{NA}$} &
\begin{picture}(5,5)(0,0)
    \put(0, 2){\makebox(0, 0){\large$\oslash$}}
    \put(7, -6){\makebox(0, 0){\tiny $(1)$}}
    \put(3, 9){\makebox(0, 0)[b]{{\tiny (nodal)}}}
\end{picture}
& 
\textbf{$[$3$]_{NB}$} &
\begin{picture}(4,5)(10,0)
    \put(0, 0){\makebox(0, 0){\textcircled{\tiny $1$}}}
    \put(7, -8){\makebox(0, 0){\tiny $(1)$}}
    \qbezier(5,4)(11,8)(17,4)
    \qbezier(5,0)(11,-4)(17,0)
    \put(22, 0){\makebox(0, 0){\textcircled{\tiny $1$}}}
    \put(29, -8){\makebox(0, 0){\tiny $(1)$}}
\end{picture}
\\
 & & & 
 
\\ \hline

 & & & 
\\
\textbf{$[$3$]_{CA}$} & 
\begin{picture}(45,5)(0,0)
    \put(20, 2){\makebox(0, 0){\large$\oslash$}}
    \put(27, -6){\makebox(0, 0){\tiny $(1)$}}
    \put(23, 9){\makebox(0, 0)[b]{{\tiny (cuspidal)}}}
\end{picture}
 & \textbf{$[$3$]_{CB}$} & 
\begin{picture}(35,5)(10,0)
    \put(10, 2){\makebox(0, -3){\textcircled{\tiny $1$}}}
    \put(17, -6){\makebox(0, 0){\tiny $(1)$}}
    \put(15, 2){\line(1, 0){8}} 
    \put(23, -1){\framebox(6, 6){\tiny $2$}}
    \put(29, 2){\line(1, 0){8}} 
    \put(42, 2){\makebox(0, -3){\textcircled{\tiny $1$}}}
    \put(49, -6){\makebox(0, 0){\tiny $(1)$}} 
\end{picture}
\\
 & & & 
  
\\ \hline

 & & & 
\\
\textbf{$[$3$]_{AA}$} &
\begin{picture}(20,10)(0,0)
    \put(0, -5){\makebox(0, 0){\textcircled{\tiny $1$}}}
    \put(7, -13){\makebox(0, 0){\tiny $(1)$}}
    \put(0, 2){\line(0, 1){10}} 
    \put(0, 16){\makebox(0, 0){\textcircled{\tiny $1$}}}
    \put(7, 8){\makebox(0, 0){\tiny $(1)$}}
    \put(5, 18){\line(1, 0){10}} 
    \put(20, -5){\makebox(0, 0){\textcircled{\tiny $1$}}}
    \put(27, -13){\makebox(0, 0){\tiny $(1)$}}
    \put(5, -3){\line(1, 0){10}}
    \put(20, 2){\line(0, 1){10}}
    \put(20, 16){\makebox(0, 0){\textcircled{\tiny $1$}}}
    \put(27, 8){\makebox(0, 0){\tiny $(1)$}}
\end{picture}
 & \textbf{$[$3$]_{BB}$} & 
\begin{picture}(20,10)(0,0)
    \put(0, -5){\makebox(0, 0){\textcircled{\tiny $1$}}}
    \put(7, -13){\makebox(0, 0){\tiny $(1)$}}
    \put(0, 2){\line(0, 1){10}} 
    \put(0, 16){\makebox(0, 0){\textcircled{\tiny $1$}}}
    \put(7, 8){\makebox(0, 0){\tiny $(1)$}}
    \put(5, -5){\line(5, 2){15}} 
    \put(5, 16){\line(5, -2){15}} 
    \put(23, 4){\makebox(0, 0){\textcircled{\tiny $1$}}}
    \put(30, -3){\makebox(0, 0){\tiny $(1)$}}
\end{picture}
\\
 & & & 

\\ \hline

 & & & 
\\
\textbf{$[$3$]_{KA}$} &
\begin{picture}(40,10)(0,-15)
    \put(0, 0){\makebox(0, 0){\textcircled{\tiny $1$}}}
    \put(7, -8){\makebox(0, 0){\tiny $(1)$}}
    \put(5, 0){\line(1, 0){10}} 
    \put(20, 0){\makebox(0, 0){\textcircled{\tiny $1$}}}
    \put(27, -8){\makebox(0, 0){\tiny $(2)$}}
    \put(25, 0){\line(1, 0){10}}
    \put(40, 0){\makebox(0, 0){\textcircled{\tiny $2$}}}
    \put(47, -8){\makebox(0, 0){\tiny $(1)$}}
    \put(20, -4){\line(0, -1){8}}
    \put(20, -19){\makebox(0, 0){\textcircled{\tiny $1$}}}
    \put(27, -27){\makebox(0, 0){\tiny $(1)$}}
\end{picture}
 & \textbf{$[$3$]_{KB}\langle${\bi b}$\rangle$} & 
\begin{picture}(35,10)(26,-15)
    \put(0, 0){\makebox(0, 0){\textcircled{\tiny $2$}}}
    \put(7, -8){\makebox(0, 0){\tiny $(1)$}}
    \put(5, 1){\line(1, 0){8}} 
    \put(18, 0){\makebox(0, 0){\textcircled{\tiny $1$}}}
    \put(25, -8){\makebox(0, 0){\tiny $(2)$}}
    \put(23, 1){\line(1, 0){8}}
    \put(36, 0){\makebox(0, 0){\textcircled{\tiny $2$}}}
    \put(43, -8){\makebox(0, 0){\tiny $(2)$}}
    \put(41, 1){\line(1, 0){5}}
    \put(47, 1){\line(1, 0){2}}
    \put(50, 1){\line(1, 0){2}}
    \put(53, 1){\line(1, 0){5}}
    \put(63, 0){\makebox(0, 0){\textcircled{\tiny $2$}}}
    \put(70, -8){\makebox(0, 0){\tiny $(2)$}}
    \put(68, 1){\line(1, 0){8}}
    \put(81, 0){\makebox(0, 0){\textcircled{\tiny $2$}}}
    \put(88, -8){\makebox(0, 0){\tiny $(1)$}}
    \put(63, -4){\line(0, -1){8}} 
    \put(63, -19){\makebox(0, 0){\textcircled{\tiny $2$}}}
    \put(70, -27){\makebox(0, 0){\tiny $(1)$}}
    \put(20, -25){\makebox(0, 0){\tiny ($b+2$ vertices)}}
\end{picture}
\\
 & & & 
 
\\ \hline

 & & & 
\\
\textbf{$[$3$]_{KC}\langle${\bi b}$\rangle$}  &
\begin{picture}(35,10)(26,-15)
    \put(0, 0){\makebox(0, 0){\textcircled{\tiny $1$}}}
    \put(7, -8){\makebox(0, 0){\tiny $(1)$}}
    \put(5, 1){\line(1, 0){8}} 
    \put(18, 0){\makebox(0, 0){\textcircled{\tiny $1$}}}
    \put(25, -8){\makebox(0, 0){\tiny $(2)$}}
    \put(23, 1){\line(1, 0){8}}
    \put(36, 0){\makebox(0, 0){\textcircled{\tiny $2$}}}
    \put(43, -8){\makebox(0, 0){\tiny $(2)$}}
    \put(41, 1){\line(1, 0){5}}
    \put(47, 1){\line(1, 0){2}}
    \put(50, 1){\line(1, 0){2}}
    \put(53, 1){\line(1, 0){5}}
    \put(63, 0){\makebox(0, 0){\textcircled{\tiny $2$}}}
    \put(70, -8){\makebox(0, 0){\tiny $(2)$}}
    \put(68, 1){\line(1, 0){8}}
    \put(81, 0){\makebox(0, 0){\textcircled{\tiny $2$}}}
    \put(88, -8){\makebox(0, 0){\tiny $(1)$}}
    \put(63, -4){\line(0, -1){8}} 
    \put(63, -19){\makebox(0, 0){\textcircled{\tiny $2$}}}
    \put(70, -27){\makebox(0, 0){\tiny $(1)$}}
    \put(20, -25){\makebox(0, 0){\tiny ($b+2$ vertices)}}
\end{picture}
 & \textbf{$[$3$]_{2A}\langle${\bi b}$\rangle$} & 
\begin{picture}(35,10)(35,-10)
    \put(0, 0){\makebox(0, 0){\textcircled{\tiny $2$}}}
    \put(7, -8){\makebox(0, 0){\tiny $(1)$}}
    \put(5, 1){\line(1, 0){8}} 
    \put(18, 0){\makebox(0, 0){\textcircled{\tiny $1$}}}
    \put(25, -8){\makebox(0, 0){\tiny $(2)$}}
    \put(23, 1){\line(1, 0){8}}
    \put(36, 0){\makebox(0, 0){\textcircled{\tiny $2$}}}
    \put(43, -8){\makebox(0, 0){\tiny $(2)$}}
    \put(41, 1){\line(1, 0){5}} 
    \put(47, 1){\line(1, 0){2}} 
    \put(50, 1){\line(1, 0){2}} 
    \put(53, 1){\line(1, 0){5}} 
    \put(63, 0){\makebox(0, 0){\textcircled{\tiny $2$}}}
    \put(70, -8){\makebox(0, 0){\tiny $(2)$}}
    \put(68, 1){\line(1, 0){8}} 
    \put(81, 0){\makebox(0, 0){\textcircled{\tiny $1$}}}
    \put(88, -8){\makebox(0, 0){\tiny $(2)$}}
    \put(86, 1){\line(1, 0){8}} 
    \put(99, 0){\makebox(0, 0){\textcircled{\tiny $2$}}}
    \put(106, -8){\makebox(0, 0){\tiny $(1)$}}
    \put(30, -20){\makebox(0, 0){\tiny ($b+4$ vertices)}}
\end{picture}
\\
 & & & 

\\ \hline

 & & & 
\\ 
\textbf{$[$3$]_{2B1}\langle${\bi b}$\rangle$}  &
\begin{picture}(35,10)(35,-10)
    \put(0, 0){\makebox(0, 0){\textcircled{\tiny $2$}}}
    \put(7, -8){\makebox(0, 0){\tiny $(1)$}}
    \put(5, 1){\line(1, 0){8}} 
    \put(18, 0){\makebox(0, 0){\textcircled{\tiny $1$}}}
    \put(25, -8){\makebox(0, 0){\tiny $(2)$}}
    \put(23, 1){\line(1, 0){8}}
    \put(36, 0){\makebox(0, 0){\textcircled{\tiny $2$}}}
    \put(43, -8){\makebox(0, 0){\tiny $(2)$}}
    \put(41, 1){\line(1, 0){5}} 
    \put(47, 1){\line(1, 0){2}} 
    \put(50, 1){\line(1, 0){2}} 
    \put(53, 1){\line(1, 0){5}} 
    \put(63, 0){\makebox(0, 0){\textcircled{\tiny $2$}}}
    \put(70, -8){\makebox(0, 0){\tiny $(2)$}}
    \put(68, 1){\line(1, 0){8}} 
    \put(81, 0){\makebox(0, 0){\textcircled{\tiny $1$}}}
    \put(88, -8){\makebox(0, 0){\tiny $(2)$}}
    \put(86, 1){\line(1, 0){8}} 
    \put(99, 0){\makebox(0, 0){\textcircled{\tiny $1$}}}
    \put(106, -8){\makebox(0, 0){\tiny $(1)$}}
    \put(30, -20){\makebox(0, 0){\tiny ($b+3$ vertices)}}
\end{picture}
 & \textbf{$[$3$]_{2B2}\langle$1$\rangle$}  &
\begin{picture}(30,10)(15,0)
    \put(0, 0){\makebox(0, 0){\textcircled{\tiny $1$}}}
    \put(7, -8){\makebox(0, 0){\tiny $(1)$}}
    \put(5, 1){\line(1, 0){8}} 
    \put(18, 0){\makebox(0, 0){\textcircled{\tiny $1$}}}
    \put(25, -8){\makebox(0, 0){\tiny $(2)$}}
    \put(23, 1){\line(1, 0){8}}
    \put(36, 0){\makebox(0, 0){\textcircled{\tiny $1$}}}
    \put(43, -8){\makebox(0, 0){\tiny $(2)$}}
    \put(41, 1){\line(1, 0){8}} 
    \put(54, 0){\makebox(0, 0){\textcircled{\tiny $1$}}}
    \put(61, -8){\makebox(0, 0){\tiny $(1)$}}
\end{picture}
\\
 & & & 

\\ \hline

 & & & 
\\
\textbf{$[$3$]_{2B2}\langle$2$\rangle$}& 
\begin{picture}(30,10)(20,0)
    \put(0, 0){\makebox(0, 0){\textcircled{\tiny $1$}}}
    \put(7, -8){\makebox(0, 0){\tiny $(1)$}}
    \put(5, 1){\line(1, 0){8}} 
    \put(18, 0){\makebox(0, 0){\textcircled{\tiny $1$}}}
    \put(25, -8){\makebox(0, 0){\tiny $(2)$}}
    \put(23, 1){\line(1, 0){8}}
    \put(36, 0){\makebox(0, 0){\textcircled{\tiny $2$}}}
    \put(43, -8){\makebox(0, 0){\tiny $(2)$}}
    \put(41, 1){\line(1, 0){8}} 
    \put(54, 0){\makebox(0, 0){\textcircled{\tiny $1$}}}
    \put(61, -8){\makebox(0, 0){\tiny $(2)$}}
    \put(59, 1){\line(1, 0){8}} 
    \put(72, 0){\makebox(0, 0){\textcircled{\tiny $1$}}}
    \put(79, -8){\makebox(0, 0){\tiny $(1)$}}
\end{picture}
 & \textbf{$[$3$]_{2C1}\langle${\bi b}$\rangle$}& 
\begin{picture}(35,10)(25,-10)
    \put(0, 0){\makebox(0, 0){\textcircled{\tiny $2$}}}
    \put(7, -8){\makebox(0, 0){\tiny $(1)$}}
    \put(5, 1){\line(1, 0){8}} 
    \put(18, 0){\makebox(0, 0){\textcircled{\tiny $1$}}}
    \put(25, -8){\makebox(0, 0){\tiny $(2)$}}
    \put(23, 1){\line(1, 0){8}}
    \put(36, 0){\makebox(0, 0){\textcircled{\tiny $2$}}}
    \put(43, -8){\makebox(0, 0){\tiny $(2)$}}
    \put(41, 1){\line(1, 0){5}} 
    \put(47, 1){\line(1, 0){2}} 
    \put(50, 1){\line(1, 0){2}} 
    \put(53, 1){\line(1, 0){5}} 
    \put(63, 0){\makebox(0, 0){\textcircled{\tiny $2$}}}
    \put(70, -8){\makebox(0, 0){\tiny $(2)$}}
    \put(68, 1){\line(1, 0){8}} 
    \put(81, 0){\makebox(0, 0){\textcircled{\tiny $0$}}}
    \put(88, -8){\makebox(0, 0){\tiny $(2)$}}
    \put(30, -20){\makebox(0, 0){\tiny ($b+3$ vertices)}}
\end{picture}
\\
 & & & 
 
\\ \hline

 & & & 
\\
\textbf{$[$3$]_{2C2}\langle$1$\rangle$}   &
\begin{picture}(20,10)(10,0)
    \put(0, 0){\makebox(0, 0){\textcircled{\tiny $1$}}}
    \put(7, -8){\makebox(0, 0){\tiny $(1)$}}
    \put(5, 1){\line(1, 0){8}} 
    \put(18, 0){\makebox(0, 0){\textcircled{\tiny $1$}}}
    \put(25, -8){\makebox(0, 0){\tiny $(2)$}}
    \put(23, 1){\line(1, 0){8}}
    \put(36, 0){\makebox(0, 0){\textcircled{\tiny $0$}}}
    \put(43, -8){\makebox(0, 0){\tiny $(2)$}}
\end{picture} 
 & \textbf{$[$3$]_{2C2}\langle$2$\rangle$}& 
\begin{picture}(25,10)(15,0)
    \put(0, 0){\makebox(0, 0){\textcircled{\tiny $1$}}}
    \put(7,-8 ){\makebox(0, 0){\tiny $(1)$}}
    \put(5, 1){\line(1, 0){8}} 
    \put(18, 0){\makebox(0, 0){\textcircled{\tiny $1$}}}
    \put(25, -8){\makebox(0, 0){\tiny $(2)$}}
    \put(23, 1){\line(1, 0){8}}
    \put(36, 0){\makebox(0, 0){\textcircled{\tiny $2$}}}
    \put(43, -8){\makebox(0, 0){\tiny $(2)$}}
    \put(41, 1){\line(1, 0){8}}
    \put(54, 0){\makebox(0, 0){\textcircled{\tiny $0$}}}
    \put(61, -8){\makebox(0, 0){\tiny $(2)$}}
\end{picture}
\\
 & & & 
 
\\ \hline

 & & & 
\\
\textbf{$[$3$]_{2C3}\langle${\bi b}$\rangle$}   &
\begin{picture}(35,10)(33,-15)
    \put(18, 0){\makebox(0, 0){\textcircled{\tiny $0$}}}
    \put(25, -8){\makebox(0, 0){\tiny $(2)$}}
    \put(23, 1){\line(1, 0){8}}
    \put(36, 0){\makebox(0, 0){\textcircled{\tiny $2$}}}
    \put(43, -8){\makebox(0, 0){\tiny $(2)$}}
    \put(41, 1){\line(1, 0){5}}
    \put(47, 1){\line(1, 0){2}}
    \put(50, 1){\line(1, 0){2}}
    \put(53, 1){\line(1, 0){5}}
    \put(63, 0){\makebox(0, 0){\textcircled{\tiny $2$}}}
    \put(70, -8){\makebox(0, 0){\tiny $(2)$}}
    \put(68, 1){\line(1, 0){8}}
    \put(81, 0){\makebox(0, 0){\textcircled{\tiny $2$}}}
    \put(88, -8){\makebox(0, 0){\tiny $(1)$}}
    \put(63, -4){\line(0, -1){8}} 
    \put(63, -19){\makebox(0, 0){\textcircled{\tiny $2$}}}
    \put(70, -27){\makebox(0, 0){\tiny $(1)$}}
    \put(25, -25){\makebox(0, 0){\tiny ($b+1$ vertices)}}
\end{picture}
 & \textbf{$[$3$]_{2D}$}  & 
\begin{picture}(25,10)(15,0)
    \put(0, 0){\makebox(0, 0){\textcircled{\tiny $2$}}}
    \put(7, -8){\makebox(0, 0){\tiny $(1)$}}
    \put(5, 1){\line(1, 0){8}} 
    \put(18, 0){\makebox(0, 0){\textcircled{\tiny $1$}}}
    \put(25, -8){\makebox(0, 0){\tiny $(2)$}}
    \put(23, 1){\line(1, 0){8}}
    \put(36, 0){\makebox(0, 0){\textcircled{\tiny $1$}}}
    \put(43, -8){\makebox(0, 0){\tiny $(2)$}}
    \put(41, 1){\line(1, 0){8}}
    \put(54, 0){\makebox(0, 0){\textcircled{\tiny $2$}}}
    \put(61, -8){\makebox(0, 0){\tiny $(1)$}}
\end{picture}
\\
 & & & 

\\ \hline

 & & & 
\\
\textbf{$[$3$]_{2E}$}   &
\begin{picture}(20,10)(10,0)
    \put(0, 0){\makebox(0, 0){\textcircled{\tiny $2$}}}
    \put(7, -8){\makebox(0, 0){\tiny $(1)$}}
    \put(5, 1){\line(1, 0){8}} 
    \put(18, 0){\makebox(0, 0){\textcircled{\tiny $1$}}}
    \put(25, -8){\makebox(0, 0){\tiny $(2)$}}
    \put(23, 1){\line(1, 0){8}}
    \put(36, 0){\makebox(0, 0){\textcircled{\tiny $2$}}}
    \put(43, -8){\makebox(0, 0){\tiny $(1)$}}
\end{picture} 
 & \textbf{$[$3$]_{2F1}$}  & 
\begin{picture}(20,10)(10,0)
    \put(0, 0){\makebox(0, 0){\textcircled{\tiny $1$}}}
    \put(7, -8){\makebox(0, 0){\tiny $(1)$}}
    \put(5, 1){\line(1, 0){8}} 
    \put(18, 0){\makebox(0, 0){\textcircled{\tiny $0$}}}
    \put(25, -8){\makebox(0, 0){\tiny $(2)$}}
    \put(23, 1){\line(1, 0){8}}
    \put(36, 0){\makebox(0, 0){\textcircled{\tiny $1$}}}
    \put(43, -8){\makebox(0, 0){\tiny $(1)$}}
\end{picture} 
\\
 & & & 
\\ \hline

 & & & 
\\
\textbf{$[$3$]_{2F2}$}   &
\begin{picture}(25,10)(5,0)
    \put(0, 0){\makebox(0, 0){\textcircled{\tiny $2$}}}
    \put(7, -8){\makebox(0, 0){\tiny $(1)$}}
    \put(5, 1){\line(1, 0){8}} 
    \put(18, 0){\makebox(0, 0){\textcircled{\tiny $0$}}}
    \put(25, -8){\makebox(0, 0){\tiny $(2)$}}
    \put(23, 1){\line(1, 0){8}}
    \put(36, 0){\makebox(0, 0){\textcircled{\tiny $1$}}}
    \put(43, -8){\makebox(0, 0){\tiny $(1)$}}
\end{picture}
  & \textbf{$[$3$]_{2G1}$}  & 
\begin{picture}(15,10)(5,0)
    \put(0, 1){\makebox(0, 0){\tiny $1$}}
    \put(0, 1){\circle{14}}
    \put(10, -8){\makebox(0, 0){\tiny $(1)$}}
    \put(7, 1){\line(1, 0){10}} 
    \put(24, 1){\makebox(0, 0){{\tiny $-1$}}}
    \put(24, 1){\circle{14}}
    \put(34, -8){\makebox(0, 0){\tiny $(2)$}}
\end{picture}\\
 & & & 

\\ \hline

 & & & 
\\
\textbf{$[$3$]_{2G2}$}   &
\begin{picture}(15,10)(5,0)
    \put(0, 1){\makebox(0, 0){\tiny $2$}}
    \put(0, 1){\circle{14}}
    \put(10, -8){\makebox(0, 0){\tiny $(1)$}}
    \put(7, 1){\line(1, 0){10}} 
    \put(24, 1){\makebox(0, 0){{\tiny $-1$}}}
    \put(24, 1){\circle{14}}
    \put(34, -8){\makebox(0, 0){\tiny $(2)$}}
\end{picture}
   & \textbf{$[$3$]_{3A}$}  & 
\begin{picture}(30,15)(5,5)
    \put(0, 0){\makebox(0, 0){\textcircled{\tiny $1$}}}
    \put(7, -8){\makebox(0, 0){\tiny $(1)$}}
    \put(0, 7){\line(0, 1){10}} 
    \put(0, 21){\makebox(0, 0){\textcircled{\tiny $1$}}}
    \put(7, 13){\makebox(0, 0){\tiny $(1)$}}
    \put(5, 23){\line(1, 0){10}} 
    \put(5, 0){\line(1, 0){10}} 
    \put(20, 0){\makebox(0, 0){\textcircled{\tiny $1$}}}
    \put(27, -8){\makebox(0, 0){\tiny $(1)$}} 
    \put(20, 21){\makebox(0, 0){\textcircled{\tiny $1$}}}
    \put(27, 13){\makebox(0, 0){\tiny $(1)$}}
    \put(25, 0){\line(5, 2){15}} 
    \put(25, 21){\line(5, -2){15}} 
    \put(43, 9){\makebox(0, 0){\textcircled{\tiny $1$}}}
    \put(50, 2){\makebox(0, 0){\tiny $(1)$}}
\end{picture}
\\
 & & & 

\\ \hline

 & & & 
\\
\textbf{$[$3$]_{3B}$}   &
\begin{picture}(25,15)(10,5)
    \put(0, 0){\makebox(0, 0){\textcircled{\tiny $1$}}}
    \put(7, -8){\makebox(0, 0){\tiny $(1)$}}
    \put(0, 7){\line(0, 1){10}} 
    \put(0, 21){\makebox(0, 0){\textcircled{\tiny $1$}}}
    \put(7, 13){\makebox(0, 0){\tiny $(1)$}}
    \put(5, 23){\line(1, 0){10}} 
    \put(5, 0){\line(1, 0){10}} 
    \put(20, 0){\makebox(0, 0){\textcircled{\tiny $1$}}}
    \put(27, -8){\makebox(0, 0){\tiny $(1)$}}
    \put(20, 21){\makebox(0, 0){\textcircled{\tiny $1$}}}
    \put(27, 13){\makebox(0, 0){\tiny $(1)$}}
    \put(25, 23){\line(1, 0){10}} 
    \put(40, 0){\makebox(0, 0){\textcircled{\tiny $1$}}}
    \put(47, -8){\makebox(0, 0){\tiny $(1)$}}
    \put(25, 2){\line(1, 0){10}}
    \put(40, 7){\line(0, 1){10}}
    \put(40, 21){\makebox(0, 0){\textcircled{\tiny $1$}}}
    \put(47, 13){\makebox(0, 0){\tiny $(1)$}}
\end{picture}

& &
\\
& & &
 
\\ \hline
\end{tabular}
\end{table}

\begin{proposition}\label{kaburi_prop}
\begin{enumerate}
\renewcommand{\theenumi}{\arabic{enumi}}
\renewcommand{\labelenumi}{(\theenumi)}
\item\label{kaburi_prop1}
Let $(Z_i, E_{Z_i}; \Delta_{Z_i})$ $(i=1$, $2)$ 
be median triplets such that both give the same $3$-basic pair 
$(M, E_M)$. Then the type of each triplet is same. 
\item\label{kaburi_prop2}
Let $(X_i, E_{X_i}; \Delta_Z, \Delta_{X_i})$ $(i=1$, $2)$ 
be bottom tetrads such that both give the same pseudo-median triplet 
$(Z, E_Z; \Delta_Z)$. Then the median part of each tetrad is same. 
\item\label{kaburi_prop3}
Let $(X, E_X; \Delta_Z, \Delta_X)$ be a bottom tetrad, $(Z, E_Z; \Delta_Z)$ be the 
associated pseudo-median triplet and $(Z', E_{Z'}; \Delta_{Z'})$ be another 
pseudo-median triplet. If both $(Z, E_Z; \Delta_Z)$ and $(Z', E_{Z'}; \Delta_{Z'})$ 
give same $3$-basic pair, then the two triplets are isomorphic to each other. 
In particular, $(Z', E_{Z'}; \Delta_{Z'})$ is not a median triplet. 
\end{enumerate}
\end{proposition}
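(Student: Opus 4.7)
The plan is to adapt Nakayama's uniqueness argument \cite[Theorem 4.9]{N} to our three-layer setting. The common thread is that the morphisms $M\to Z$ and $Z\to X$ arising in Theorem \ref{corresp_thm} are obtained by running $(iK+L)$-MMPs $(i=2,3)$, and both the MMPs and the additional normalization choices imposed by $(\sF 6)$--$(\sF 7)$ and $(\sB 6)$--$(\sB 11)$ are intrinsically determined by the basic pair.

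For part \eqref{kaburi_prop1}, given $(M,E_M)$, the morphism $\phi\colon M\to Z$ from any median triplet is the composition of contractions of $(-1)$-curves $\gamma\subset M$ (and their iterated strict transforms) with $(2K_M+L_M\cdot\gamma)<0$; since $K_M+L_M$ is nef by $(\sC 3)$, this forces $(L_M\cdot\gamma)=1$ at each step. Because median means $2K_Z+L_Z$ is non-nef, $Z=\F_n$ is a Mori fiber space, and the fibration is pinned down by $(\sF 6)$--$(\sF 7)$ together with \cite[Lemma 4.4]{N}: among sections contained in $E_Z$ with maximal coefficient, the minimal section is the one with most negative self-intersection, and its presence (or its absence when $n=0$) fixes the $\F_n$-structure. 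Once $Z$ is fixed, $E_Z=\phi_*E_M$, and $\Delta_Z$ is recovered from $\sI_{\Delta_Z}=\phi_*\sO_M(-K_{M/Z})$ by Proposition \ref{elim_prop}. Hence the triplet, and thus its type, is uniquely determined.

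For part \eqref{kaburi_prop2}, the analogous analysis pins down $\psi\colon Z\to X$ as a composition of contractions of $(-1)$-curves with $(3K_Z+L_Z\cdot\gamma)<0$, forcing $(L_Z\cdot\gamma)\in\{0,1,2\}$ since $2K_Z+L_Z$ is nef. The surface $X$ is $\pr^2$ or $\F_n$ by \cite[Theorem 2.1]{mori}, and its isomorphism class together with $E_X$ is pinned down according to the trichotomy of Definition \ref{bottom_dfn}: in case \eqref{bot_dfn1} by bigness and positivity of $K_X+L_X$, in case \eqref{bot_dfn2} by $(\sB 6)$--$(\sB 8)$ and \cite[Lemma 4.4]{N}, and in case \eqref{bot_dfn3} by the reduction Lemmas \ref{reduction1_lem}--\ref{reduction3_lem} producing a unique normal form satisfying $(\sB 9)$--$(\sB 11)$. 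In each case $\Delta_X$ is recovered from $Z\to X$ via Proposition \ref{elim_prop}, so the median part of the type is determined by $(Z,E_Z;\Delta_Z)$ alone.

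For part \eqref{kaburi_prop3}, the decisive invariant is whether $2K_Z+L_Z$ is nef: this is equivalent to whether the $(2K+L)$-MMP on $(M,E_M)$ terminates in a $(2K+L)$-minimal model (non-median case, associated to a bottom tetrad by Lemma \ref{FuBo_lem}) or in a Mori fiber space (median case). This dichotomy is intrinsic to $(M,E_M)$, so if $(Z,E_Z;\Delta_Z)$ comes from a bottom tetrad then any other pseudo-median triplet $(Z',E_{Z'};\Delta_{Z'})$ producing the same basic pair is also of non-median type, ruling out the median branch; the uniqueness argument of part \eqref{kaburi_prop1}, now applied in the minimal-model-terminating branch, then forces $(Z',E_{Z'};\Delta_{Z'})\simeq(Z,E_Z;\Delta_Z)$. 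The hardest step is case \eqref{bot_dfn3} of part \eqref{kaburi_prop2}, where $X$ is not intrinsically distinguished at the level of abstract surfaces: two different birational contractions $Z\to\pr^2$ may both produce valid $3$-fundamental multiplets of length two, and one must verify that Lemmas \ref{reduction1_lem}--\ref{reduction3_lem} are exhaustive so that after applying them the resulting $(X,E_X;\Delta_Z,\Delta_X)$ is a canonical invariant of the associated pseudo-median triplet. This is a finite combinatorial check carried out case-by-case on the configuration of $E_X$ and $\Delta_X$.
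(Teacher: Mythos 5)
There is a genuine gap, and it sits exactly at the step you yourself flag as "the hardest": the case $2K_X+L_X$ trivial, $X\simeq\pr^2$, in parts \eqref{kaburi_prop2} and \eqref{kaburi_prop3}. You propose to settle it by showing that Lemmas \ref{reduction1_lem}--\ref{reduction3_lem} are "exhaustive" and produce a \emph{unique} canonical normal form $(X,E_X;\Delta_Z,\Delta_X)$, and you then defer this to an unperformed "finite combinatorial check." But those lemmas, as stated, only prove the \emph{existence} of a replacement tetrad satisfying $(\sB 9)$--$(\sB 11)$; they say nothing about uniqueness of the output, and full uniqueness of the tetrad is in fact stronger than what the proposition asserts (only the \emph{median part of the type} must agree — different bottom tetrads with the same median part can well give the same pseudo-median triplet, e.g.\ via different choices of lines blown down on $Z$). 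So the proposed mechanism both overshoots the target and is left unverified, which is precisely the content that needs proof. The paper resolves this case by a different, concrete device: for every median part occurring with $E_X\sim -K_X$ it computes the weighted dual graph of $E_Z$ — an invariant intrinsic to the pseudo-median triplet $(Z,E_Z;\Delta_Z)$ — and Table \ref{E_Z_graph} shows these graphs are pairwise distinct, so the median part is read off from the triplet; this simultaneously gives part \eqref{kaburi_prop3} in the trivial case. Without this table (or an equivalent completed distinguishing invariant), your argument is incomplete exactly where the conditions $(\sB 9)$--$(\sB 11)$ are doing their work.

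Two smaller points. In part \eqref{kaburi_prop1} your case analysis is miscalibrated: "median" does not force $Z\simeq\F_n$ with a Mori fibration (median triplets with $Z=\pr^2$ exist, e.g.\ the types $[$4$]_\bullet$, $[$5$]_\bullet$); the relevant dichotomy, as in the paper, is whether $K_M+L_M$ (equivalently $K_Z+L_Z$) is big — in the big case the triplet is unique outright, and only in the non-big case is the $\pr^1$-fibration canonical and the $\F_n$-model normalized by $(\sF 6)$--$(\sF 7)$. In part \eqref{kaburi_prop3}, your claim that "minimal model vs.\ Mori fiber space outcome of the $(2K+L)$-MMP" is an intrinsic invariant of $(M,E_M)$ is plausible but asserted without proof; the paper instead deduces \eqref{kaburi_prop3} from the same uniqueness analysis as \eqref{kaburi_prop1}--\eqref{kaburi_prop2} (plus the table in the trivial case), concluding $(Z',E_{Z'};\Delta_{Z'})\simeq(Z,E_Z;\Delta_Z)$ and hence that it cannot be median since $2K_Z+L_Z$ is nef.
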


\begin{proof}
\eqref{kaburi_prop1}
Let $L_M$ be the fundamental divisor of a $3$-basic pair $(M, E_M)$. 
If $K_M+L_M$ is big, then the corresponding $3$-fundamental triplet is unique up to 
isomorphism. If $K_M+L_M$ is non-big, then the compositions $M\to Z_i\to\pr^1$ are 
same. Thus the assertion follows from the conditions $(\sF 6)$ and $(\sF 7)$. 

\eqref{kaburi_prop2}, \eqref{kaburi_prop3}
Let $L_Z$ be the fundamental divisor of a pseudo-median triplet $(Z, E_Z, \Delta_Z)$. 
If $2K_Z+L_Z$ is big, then the corresponding bottom tetrad is unique up to 
isomorphism. If $2K_Z+L_Z$ is non-big and nontrivial, 
then the compositions $Z\to X_i\to\pr^1$ are 
same. Thus the assertion follows from the conditions $(\sB 6)$, $(\sB 7)$ and $(\sB 8)$.
From now on, assume that $2K_Z+L_Z$ is trivial, that is, $E_Z\sim -K_X$. 
We can assume that $X=\pr^2$. 
In this case, the weighted dual graphs of $E_Z$ are different if the median part 
of the type of bottom tetrads are different by Table \ref{E_Z_graph}. 
Therefore the assertion follows. 
\end{proof}

Finally, as an immediate consequence, we can 
give the weighted dual graphs of all of the $3$-basic pairs.

\begin{proposition}\label{dual_graph_prop}
\begin{enumerate}
\renewcommand{\theenumi}{\arabic{enumi}}
\renewcommand{\labelenumi}{(\theenumi)}
\item\label{dual_graph_prop1}
Let $(Z, E_Z; \Delta_Z)$ be a median triplet and 
$(M, E_M)$ be the associated $3$-basic pair. 
Then the symbol of the weighted dual graph of $E_M$ 
is characterized by the type of the $3$-fundamental triplet 
and is listed in Table \ref{E_M_graph_F}. 
\item\label{dual_graph_prop2}
Let $(X, E_X; \Delta_Z, \Delta_X)$ be a bottom tetrad and $(M, E_M)$ be the associated 
$3$-basic pair. 
Then the symbol $($see Table \ref{graph_table}$)$ of the weighted dual graph of $E_M$ 
is characterized by the type of the bottom tetrad 
and is listed in Tables \ref{E_M_graph_B_k2}, \ref{E_M_graph_B_k1} and 
\ref{E_M_graph_B_k0}. 
\end{enumerate}
\end{proposition}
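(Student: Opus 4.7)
The plan is to verify the statement case-by-case, using the classification lists from Theorems \ref{trip_thm}, \ref{tetI_thm}, \ref{tetII_thm}, \ref{tetIII_thm} as input, and the local analyses in Section \ref{local_section} as the main computational tool. Since each connected component of the dual graph of $E_M$ corresponds to a non-Gorenstein singular point of the log del Pezzo surface $S$, we know \emph{a priori} from Corollary \ref{dP-basic_cor}\eqref{dP-basic_cor1} that each connected component must appear in Table \ref{graph_table}; so the task is to identify \emph{which} symbol $\gA_t(\ast,\ast)$ or $\gD_t(\ast)$ arises from each component, and to verify that the answer depends only on the type.

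For part \eqref{dual_graph_prop1}, I would proceed as follows. Fix a median triplet $(Z,E_Z;\Delta_Z)$ of a given type. The classification in Theorem \ref{trip_thm} specifies $E_Z$ globally and pins down the scheme structure of $\Delta_Z$ at each support point. At each point $Q\in\Delta_Z$, exactly one of Lemmas \ref{ZS1}--\ref{ZS5} applies and outputs the local picture of $E_M=(E_Z)_M^{\Delta_Z,2}$ over $Q$: the chain $\Gamma_{Q,1},\dots,\Gamma_{Q,k}$ of exceptional curves of $\phi$, their coefficients in $E_M$, and the way the strict transforms of the components of $E_Z$ through $Q$ plug in. Away from $|\Delta_Z|$ the morphism $\phi$ is an isomorphism, so the remaining components and their intersections come directly from $E_Z$; self-intersections are adjusted by $((C^M)^2)=(C^2)-\deg(\Delta_Z\cap C)$ for nonsingular $C\leqslant E_Z$ (and similarly for nodal/cuspidal curves via Lemma \ref{mult_seq_lem}). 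Glueing these local pieces assembles the full weighted dual graph of $E_M$; matching each connected component against Table \ref{graph_table} identifies its symbol.

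For part \eqref{dual_graph_prop2}, the approach is the same but one step longer. Given a bottom tetrad $(X,E_X;\Delta_Z,\Delta_X)$, I would first apply Lemmas \ref{XS1}--\ref{XS5} at each point $P\in\Delta_X$ to produce the precise local structure of $E_Z=(E_X)_Z^{\Delta_X,1}$ over $P$ together with the restriction of $\Delta_Z$ to that region. This reconstructs the associated pseudo-median triplet $(Z,E_Z;\Delta_Z)$, to which the procedure of part \eqref{dual_graph_prop1} then applies. One subtlety is that the component of $E_M$ containing the strict transform of a component of $E_X$ may receive contributions from both $\psi$-exceptional and $\phi$-exceptional curves, so some connected components of the graph of $E_M$ correspond to the union of the local pictures over several distinct points of $\Delta_X$ and $\Delta_Z$ (linked through a curve such as $l^M$ or $\sigma^M$); accounting for this glueing is the place where careful bookkeeping is needed.

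The main obstacle is purely combinatorial: there are $77$ median triplet types and $45+115+63=223$ bottom tetrad types, so the argument is essentially an enumeration. However, each individual case is mechanical once the local lemmas of Section \ref{local_section} are in hand, and the uniformity of the local output (the chains described in Lemmas \ref{ZS1}--\ref{ZS5} and \ref{XS1}--\ref{XS5}) means that cases differing only by a numerical parameter (e.g.\ the families \textbf{$[4]_2(c,d)$} or \textbf{$[3]_{KB}\langle b\rangle$}) collapse into a single uniform verification. The resulting graphs are recorded in Tables \ref{E_M_graph_F}, \ref{E_M_graph_B_k2}, \ref{E_M_graph_B_k1}, \ref{E_M_graph_B_k0}, and the fact that all connected components belong to the list of Table \ref{graph_table} serves as a built-in consistency check on each computation.
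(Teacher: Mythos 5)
Your proposal is correct and matches the paper's (implicit) argument: the proposition is stated there as an immediate consequence of the classification Theorems \ref{trip_thm}, \ref{tetI_thm}, \ref{tetII_thm}, \ref{tetIII_thm}, whose proofs already determine $E_M$ over each point of $\Delta_X$ and $\Delta_Z$ via the local Lemmas \ref{ZS1}--\ref{ZS5} and \ref{XS1}--\ref{XS5}, with Corollary \ref{dP-basic_cor} guaranteeing each connected component matches a symbol in Table \ref{graph_table}. Your case-by-case assembly of the local data (including the glueing remark for part \eqref{dual_graph_prop2}) is exactly the bookkeeping the paper relies on.
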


\begin{longtable}[p]{|c|c||c|c|}
\caption{The symbol of the weighted dual graph of $E_M$ 
for median triplets.}\label{E_M_graph_F}
\\
\hline
Type & Symbol & Type & Symbol  \\ \hline \hline
 \textbf{$[$4$]_0$} & $\gA_1(2)$ 
& \textbf{$[$4$]_{2}$({\bi c,d})} & $\gA_{s(c,d)+2}(2,2)$
\\ \hline
\textbf{$[$5$]_{K}$} & $\gD_4(2)+\gA_1(1)$ 
& \textbf{$[$5$]_{A}$} & $\gA_3(1,1)+\gA_1(1)$ 
\\ \hline
 \textbf{$[$5$]_{3}$({\bi c,d})} & $\gA_{s(c,d)+4}(1,1)+\gA_1(1)$
&\textbf{$[$5$]_{4}$} & $\gD_4(1)+3\gA_1(1)$ 
\\ \hline
 \textbf{$[$5$]_{5}$} & $5\gA_1(1)$ 
& \textbf{$[$0;3,3$]_{D}$} & $\gA_3(1,1)+2\gA_1(1)$
\\ \hline
\textbf{$[$0;3,3$]_{22}$({\bi c,d})} & $\gA_{s(c,d)+4}(1,1)+2\gA_1(1)$ 
& \textbf{$[$0;3,3$]_{23}$} & $\gD_4(1)+4\gA_1(1)$ 
\\ \hline
 \textbf{$[$0;3,3$]_{33}$} & $6\gA_1(1)$
&\textbf{$[$1;3,4$]_{0}$} & $\gA_2(1,2)+\gA_1(1)$ 
\\ \hline
 \textbf{$[$1;3,4$]_{1}$({\bi c,d})} & $\gA_{s(c,d)+3}(1,2)+\gA_1(1)$ 
& \textbf{$[$1;3,4$]_{2}$} & $\gA_3(1,1)+3\gA_1(1)$
\\ \hline
\textbf{$[$1;4,4$]$} & $\gA_1(2)$ 
& \textbf{$[$1;4,5$]_{K}$({\bi c})} & $\gD_{c+1}(2)$ 
\\ \hline
\textbf{$[$1;4,5$]_{A}$} & $\gA_3(1,1)$ 
& \textbf{$[$2;3,5$]_1$} & $\gA_2(1,2)+2\gA_1(1)$ 
\\ \hline
 \textbf{$[$2;3,6$]_{0}$} & $\gA_2(1,2)$
&\textbf{$[$2;3,6$]_{1}$({\bi c,d})} & $\gA_{s(c,d)+3}(1,2)$ 
\\ \hline
 \textbf{$[$3;3,6$]$} & $\gA_1(1)+\gA_1(2)$ 
& \textbf{$[$3;4,9$]_{A}$} & $\gA_4(1,1)$
\\ \hline
\textbf{$[$3;4,9$]_{B}$} & $4\gA_1(1)$ 
& \textbf{$[$3;4,9$]_{C}$({\bi c,d})} & $\gA_{s(c,d)+5}(1,1)$ 
\\ \hline
 \textbf{$[$3;4,9$]_{D}$} & $2\gD_4(1)$
&\textbf{$[$3;4,9$]_{E}$} & $\gD_5(1)+2\gA_1(1)$ 
\\ \hline
 \textbf{$[$3;4,9$]_{F}$} & $\gD_4(1)+2\gA_1(1)$ 
 &\textbf{$[$4;4,10$]_{0}$} & $\gA_2(2,2)$
 \\ \hline
\textbf{$[$4;4,10$]_{1}$({\bi c,d})} & $\gA_{s(c,d)+3}(2,2)$
& \textbf{$[$4;4,10$]_2$} & $2\gA_3(1,1)$ 
\\ \hline
 \textbf{$[$5;4,11$]_{1}$} & $2\gA_2(1,2)$
& \textbf{$[$6;4,12$]_{0}$} & $2\gA_1(2)$
\\ \hline
\end{longtable}

\begin{longtable}[p]{|c|c||c|c|}
\caption{The symbol of the weighted dual graph of $E_M$ 
for bottom tetrads with big $2K_X+L_X$.}\label{E_M_graph_B_k2}
\\ \hline
Type & Symbol & Type & Symbol  \\ \hline \hline
\textbf{$[$1$]_0$} & $\gA_1(1)$ 
& \textbf{$[$2$]_0$} & $\gA_1(1)$ 
\\ \hline
 \textbf{$[$2$]_{1A}$} & $\gD_4(1)$ 
&\textbf{$[$2$]_{1B}$} & $\gD_4(1)+\gA_1(1)$ 
\\ \hline
 \textbf{$[$2$]_{1C}$} & $\gD_5(1)$ 
& \textbf{$[$2$]_{1D}$} & $\gD_5(1)+\gA_1(1)$
\\ \hline
\textbf{$[$2$]_{1E}$({\bi c,d})} & $\gA_{s(c,d)+4}(1,1)$ 
& \textbf{$[$2$]_{1F}$} & $\gA_4(1,1)+\gA_1(1)$ 
\\ \hline
 \textbf{$[$2$]_{1G}$} & $\gA_3(1,1)+2\gA_1(1)$
&\textbf{$[$2$]_{1H}$} & $\gA_3(1,1)+\gA_1(1)$ 
\\ \hline
 \textbf{$[$2$]_{1I}$} & $\gA_3(1,1)$ 
& \textbf{$[$2$]_{1J}$({\bi c,d})} & $\gA_{s(c,d)+3}(1,2)$
\\ \hline
\textbf{$[$2$]_{1K}$} & $\gD_4(2)$ 
& \textbf{$[$2$]_{1L}$} & $\gA_2(1,2)$ 
\\ \hline
 \textbf{$[$2$]_{1M}$} & $\gA_2(1,2)+\gA_1(1)$
&\textbf{$[$2$]_{1N}$} & $\gA_1(2)$ 
\\ \hline
 \textbf{$[$2$]_{2A}$} & $3\gA_1(1)$ 
& \textbf{$[$2$]_{2B}$} & $2\gA_1(1)$
\\ \hline
\textbf{$[$0;1,0$]$} & $\gA_1(1)$ 
& \textbf{$[$0;1,1$]_0$} & $\gA_1(1)$ 
\\ \hline
 \textbf{$[$0;1,1$]_1\langle$0$\rangle$} & $2\gA_1(1)$
&\textbf{$[$0;1,1$]_1\langle$1$\rangle$} & $3\gA_1(1)$ 
\\ \hline
 \textbf{$[$1;1,0$]$} & $\gA_1(1)$ 
& \textbf{$[$1;1,1$]_0$} & $\gA_1(1)$
\\ \hline
\textbf{$[$1;1,1$]_1\langle${0}$\rangle$} & $2\gA_1(1)$ 
&\textbf{$[$1;1,1$]_1\langle${1}$\rangle$} & $3\gA_1(1)$ 
\\ \hline
 \textbf{$[$2;1,0$]$} & $\gA_1(1)$ 
& \textbf{$[$2;1,1$]$} & $2\gA_1(1)$
\\ \hline
\textbf{$[$2;1,2$]_0$} & $\gA_1(1)$ 
& \textbf{$[$2;1,2$]_{1A}$} & $\gD_4(1)$ 
\\ \hline
 \textbf{$[$2;1,2$]_{1B}$} & $\gD_4(1)+\gA_1(1)$
&\textbf{$[$2;1,2$]_{1C}$} & $\gD_5(1)$ 
\\ \hline
 \textbf{$[$2;1,2$]_{1D}$({\bi c,d})} & $\gA_{s(c,d)+4}(1,1)$ 
& \textbf{$[$2;1,2$]_{1E}$} & $\gA_3(1,1)$
\\ \hline
\textbf{$[$2;1,2$]_{1F}$} & $\gA_3(1,1)+\gA_1(1)$ 
& \textbf{$[$2;1,2$]_{1G}$} & $\gA_2(1,2)$ 
\\ \hline
 \textbf{$[$3;1,0$]_0$} & $\gA_1(1)$&&
 \\ \hline

\end{longtable}

\bigskip\bigskip

\begin{longtable}[p]{|c|c||c|c|}
\caption{The symbol of the weighted dual graph of $E_M$ 
for bottom tetrads with non-big, non-trivial $2K_X+L_X$.}\label{E_M_graph_B_k1}
\\ \hline
Type & Symbol & Type & Symbol 
 \\ \hline \hline
\textbf{$[$0;2,0$]$} & $\gA_1(2)$ 
&\textbf{$[$1;2,0$]$} & $\gA_1(2)$
\\ \hline
\textbf{$[$1;2,1$]_{1A}$} & $\gA_2(1,2)$ 
& \textbf{$[$1;2,1$]_{1B}$} & $\gA_2(1,2)+\gA_1(1)$
\\ \hline
 \textbf{$[$1;2,2$]_U$} & $\gA_1(1)$ 
& \textbf{$[$1;2,2$]_{0A}$} & $\gA_2(1,2)$ 
\\ \hline
 \textbf{$[$1;2,2$]_{0B}$} & $\gA_2(1,2)+\gA_1(1)$
&\textbf{$[$1;2,2$]_{0C}$} & $\gA_1(2)$ 
\\ \hline
 \textbf{$[$1;2,2$]_{1A}$} & $\gD_4(2)$ 
& \textbf{$[$1;2,2$]_{1B}$} & $\gD_5(2)$
\\ \hline
\textbf{$[$1;2,2$]_{1C}$} & $\gA_4(1,2)$ 
& \textbf{$[$1;2,2$]_{1D}$({\bi c,d})} & $\gA_{s(c,d)+3}(1,2)$ 
\\ \hline
 \textbf{$[$1;2,2$]_{1E}$({\bi c,d})} & $\gA_{s(c,d)+3}(1,2)+\gA_1(1)$
&\textbf{$[$1;2,2$]_{1F}$({\bi c,d})} & $\gA_{s(c,d)+2}(2,2)$ 
\\ \hline
 \textbf{$[$1;2,2$]_{2A}$} & $\gA_3(1,1)$ 
& \textbf{$[$1;2,2$]_{2B}$} & $\gA_3(1,1)+\gA_1(1)$
\\ \hline
\textbf{$[$1;2,2$]_{2C}$} & $\gA_3(1,1)+2\gA_1(1)$ 
& \textbf{$[$2;2,0$]$} & $\gA_1(2)$ 
\\ \hline
 \textbf{$[$2;2,1$]_{1A}$} & $\gA_2(1,2)$
&\textbf{$[$2;2,1$]_{1B}$} & $\gA_2(1,2)+\gA_1(1)$ 
\\ \hline
 \textbf{$[$2;2,2$]_{1A}$} & $\gD_4(2)$ 
& \textbf{$[$2;2,2$]_{1B}$} & $\gD_5(2)$
\\ \hline
\textbf{$[$2;2,2$]_{1C}$} & $\gA_4(1,2)$ 
& \textbf{$[$2;2,2$]_{1D}$({\bi c,d})} & $\gA_{s(c,d)+3}(1,2)$ 
\\ \hline
 \textbf{$[$2;2,2$]_{1E}$({\bi c,d})} & $\gA_{s(c,d)+3}(1,2)+\gA_1(1)$
&\textbf{$[$2;2,2$]_{1F}$({\bi c,d})} & $\gA_{s(c,d)+2}(2,2)$ 
\\ \hline
 \textbf{$[$2;2,2$]_{2A}$} & $\gA_3(1,1)$ 
&\textbf{$[$2;2,2$]_{2B}$} & $\gA_3(1,1)+\gA_1(1)$
\\ \hline
\textbf{$[$2;2,2$]_{2C}$} & $\gA_3(1,1)+2\gA_1(1)$ 
& \textbf{$[$2;2,3$]_{V}$} & $2\gA_1(1)$ 
\\ \hline
 \textbf{$[$2;2,3$]_{H}\langle${0}$\rangle$} & $3\gA_1(1)$
&\textbf{$[$2;2,3$]_{H}\langle${1}$\rangle$} & $4\gA_1(1)$
\\ \hline
\textbf{$[$2;2,3$]_{2A1}$} & $\gD_5(1)$ 
& \textbf{$[$2;2,3$]_{2A2}$} & $\gD_5(1)+\gA_1(1)$ 
\\ \hline
 \textbf{$[$2;2,3$]_{2B1}$} & $\gD_6(1)$
&\textbf{$[$2;2,3$]_{2B2}$} & $\gD_6(1)+\gA_1(1)$ 
\\ \hline 
\textbf{$[$2;2,3$]_{2C1}$} & $\gA_5(1,1)$ 
& \textbf{$[$2;2,3$]_{2C2}$} & $\gA_5(1,1)+\gA_1(1)$
\\ \hline
\textbf{$[$2;2,3$]_{2D1}$({\bi c,d})} & $\gA_{s(c,d)+4}(1,1)$ 
& \textbf{$[$2;2,3$]_{2D2}$} & $\gA_{s(c,d)+4}(1,1)+\gA_1(1)$ 
\\ \hline
 \textbf{$[$2;2,3$]_{2E1}$({\bi c,d})} & $\gA_{s(c,d)+4}(1,1)+\gA_1(1)$
&\textbf{$[$2;2,3$]_{2E2}$} & $\gA_4(1,1)+2\gA_1(1)$ 
\\ \hline
 \textbf{$[$2;2,3$]_{2F1}$({\bi c,d})} & $\gA_{s(c,d)+3}(1,2)$ 
& \textbf{$[$2;2,3$]_{2F2}$} & $\gA_3(1,2)+\gA_1(1)$
\\ \hline
\textbf{$[$2;2,3$]_{3A}$} & $\gD_4(1)$ 
&\textbf{$[$2;2,3$]_{3B}$} & $\gD_4(1)+\gA_1(1)$ 
\\ \hline
 \textbf{$[$3;2,0$]$} & $\gA_1(2)$
&\textbf{$[$3;2,1$]_{1A}$} & $\gA_2(1,2)$ 
\\ \hline
 \textbf{$[$3;2,1$]_{1B}$} & $\gA_2(1,2)+\gA_1(1)$ 
& \textbf{$[$3;2,2$]_{1A}$} & $\gD_4(2)$
\\ \hline
\textbf{$[$3;2,2$]_{1B}$} & $\gD_5(2)$ 
& \textbf{$[$3;2,2$]_{1C}$} & $\gA_4(1,2)$ 
\\ \hline
 \textbf{$[$3;2,2$]_{1D}$({\bi c,d})} & $\gA_{s(c,d)+3}(1,2)$
&\textbf{$[$3;2,2$]_{1E}$({\bi c,d})} & $\gA_{s(c,d)+3}(1,2)+\gA_1(1)$ 
\\ \hline
 \textbf{$[$3;2,2$]_{1F}$({\bi c,d})} & $\gA_{s(c,d)+2}(2,2)$ 
& \textbf{$[$3;2,2$]_{2A}$} & $\gA_3(1,1)$
\\ \hline
\textbf{$[$3;2,2$]_{2B}$} & $\gA_3(1,1)+\gA_1(1)$ 
& \textbf{$[$3;2,3$]_{0}$} & $2\gA_1(1)$ 
\\ \hline
 \textbf{$[$3;2,3$]_{2A}$} & $\gD_5(1)$
&\textbf{$[$3;2,3$]_{2B}$} & $\gD_6(1)$ 
\\ \hline
 \textbf{$[$3;2,3$]_{2C}$} & $\gA_5(1,1)$ 
& \textbf{$[$3;2,3$]_{2D}$} & $\gA_4(1,1)$
\\ \hline
\textbf{$[$3;2,3$]_{2E}$} & $\gA_4(1,1)+\gA_1(1)$ 
& \textbf{$[$3;2,3$]_{2F}$} & $\gA_3(1,2)$ 
\\ \hline
 \textbf{$[$3;2,3$]_{3}$} & $\gD_4(1)$
&\textbf{$[$4;2,0$]$} & $\gA_1(2)$ 
\\ \hline
 \textbf{$[$4;2,1$]_{1A}$} & $\gA_2(1,2)$ 
& \textbf{$[$4;2,1$]_{1B}$} & $\gA_2(1,2)+\gA_1(1)$
\\ \hline
\textbf{$[$4;2,2$]_{1A}$} & $\gD_4(2)$ 
& \textbf{$[$4;2,2$]_{1B}$} & $\gD_5(2)$ 
\\ \hline
 \textbf{$[$4;2,2$]_{1C}$} & $\gA_4(1,2)$
&\textbf{$[$4;2,2$]_{1D}$} & $\gA_3(1,2)$ 
\\ \hline
 \textbf{$[$4;2,2$]_{1E}$} & $\gA_3(1,2)+\gA_1(1)$ 
& \textbf{$[$4;2,2$]_{1F}$} & $\gA_2(2,2)$
\\ \hline
\textbf{$[$4;2,2$]_{2}$} & $\gA_3(1,1)$ 
& \textbf{$[$5;2,0$]$} & $\gA_1(2)$ 
\\ \hline
 \textbf{$[$5;2,1$]_{1}$} & $\gA_2(1,2)$
&\textbf{$[$6;2,0$]$} & $\gA_1(2)$ 
\\ \hline

\end{longtable}

\begin{longtable}[p]{|c|c||c|c|}
\caption{The symbol of the weighted dual graph of $E_M$ 
for bottom tetrads with $2K_X+L_X\sim 0$.}\label{E_M_graph_B_k0}
\\ \hline
Type & Symbol & Type & Symbol  \\ \hline \hline

 \textbf{$[$3$]_{NA}$} & $\gA_1(1)$ 
& \textbf{$[$3$]_{NB}$} & $2\gA_1(1)$
\\ \hline
\textbf{$[$3$]_{CA}$} & $\gA_1(1)$ 
& \textbf{$[$3$]_{CB}$} & $2\gA_1(1)$ 
\\ \hline
 \textbf{$[$3$]_{AA}$} & $4\gA_1(1)$
&\textbf{$[$3$]_{AB}$} & $3\gA_1(1)$ 
\\ \hline
 \textbf{$[$3$]_{KA}$} & $\gD_4(1)+2\gA_1(1)$ 
& \textbf{$[$3$]_{KB}\langle${\bi b}$\rangle$} & $\gD_{b+2}(1)$
\\ \hline
\textbf{$[$3$]_{KC}\langle${\bi b}$\rangle$} & $\gD_{b+2}(1)+\gA_1(1)$ 
& \textbf{$[$3$]_{2A}\langle${\bi b}$\rangle$} & $\gA_{b+4}(1,1)$ 
\\ \hline
 \textbf{$[$3$]_{2B1}\langle$1$\rangle$({\bi c,d})} & $\gA_{s(c,d)+4}(1,1)+\gA_1(1)$
&\textbf{$[$3$]_{2B1}\langle${\bi b}$\rangle$} & $\gA_{b+3}(1)+\gA_1(1)$ 
\\ \hline
 \textbf{$[$3$]_{2B2}\langle${1}$\rangle$({\bi c,d})} & $\gA_{s(c,d)+4}(1,1)+2\gA_1(1)$ 
& \textbf{$[$3$]_{2B2}\langle${2}$\rangle$} & $\gA_5(1,1)+2\gA_1(1)$
\\ \hline
\textbf{$[$3$]_{2C1}\langle${1}$\rangle$({\bi c,d})} & $\gA_{s(c,d)+3}(1,2)$ 
& \textbf{$[$3$]_{2C1}\langle${\bi b}$\rangle$} & $\gA_{b+2}(1,2)$ 
\\ \hline
 \textbf{$[$3$]_{2C2}\langle${1}$\rangle$({\bi c,d})} & $\gA_{s(c,d)+3}(1,2)+\gA_1(1)$
&\textbf{$[$3$]_{2C2}\langle${2}$\rangle$} & $\gA_4(1,2)+\gA_1(1)$ 
\\ \hline
 \textbf{$[$3$]_{2C3}\langle${\bi b}$\rangle$} & $\gD_{b+1}(2)$ 
& \textbf{$[$3$]_{2D}$({\bi c,d})} & $\gA_{s(c,d)+4}(1,1)$
\\ \hline
\textbf{$[$3$]_{2E}$} & $\gA_3(1,1)$ 
& \textbf{$[$3$]_{2F1}$} & $\gA_3(1,1)+2\gA_1(1)$ 
\\ \hline
 \textbf{$[$3$]_{2F2}$} & $\gA_3(1,1)+\gA_1(1)$
&\textbf{$[$3$]_{2G1}$} & $\gA_2(1,2)+\gA_1(1)$ 
\\ \hline
 \textbf{$[$3$]_{2G2}$} & $\gA_2(1,2)$ 
& \textbf{$[$3$]_{3A}$} & $5\gA_1(1)$
\\ \hline
\textbf{$[$3$]_{3B}$} & $6\gA_1(1)$ 
& \textbf{$[$0;2,2$]_{0}$} & $\gA_1(2)$ 
\\ \hline
 \textbf{$[$0;2,2$]_{1}$({\bi c,d})} & $\gA_{s(c,d)+2}(2,2)$
&\textbf{$[$2;2,4$]_{0}$} & $\gA_1(2)$ 
\\ \hline
 \textbf{$[$2;2,4$]_{1}$} & $\gA_3(2,2)$ &&
\\ \hline
\end{longtable}

\end{document}